\tikzset{every edge quotes/.style =
          { fill = white,
            sloped,
            execute at begin node = $,
            execute at end node   = $  }}
\newtheorem{dummy}{}[section]
\newtheorem{thm}[dummy]{Theorem}
\newtheorem{prop}[dummy]{Proposition}
\newtheorem{pr}[dummy]{Proposition}
\newtheorem{lem}[dummy]{Lemma}
\newtheorem{conj}{Conjecture}
\theoremstyle{definition}
\newtheorem{definition}[dummy]{Definition}
\newtheorem{dfn}[dummy]{Definition}
\newtheorem{nn}[dummy]{Notation}
\theoremstyle{remark}
\newtheorem{rmk}[dummy]{Remark}
\newtheorem{ex}[dummy]{Example}
\newtheorem{obs}[dummy]{Observation}
\newcommand{\consta}{{\frac{1}{2}}}
 \newcommand{\Mbarr}{{{\overline{\mathcal{M}}}^{\frac{1}{r}}_{0,k,\vec{a}}}}
 \newcommand{\CL}{{\mathbb{L}}}
\newcommand{\alt}{{\text{alt}}}
\newcommand{\tw}{\text{tw}}
 \newcommand{\Ass}{{{\,\maltese\,}}}
\newcommand{\oPM}{{\overline{\mathcal{PM}}}^{1/r}}
\newcommand{\oPMb}{{\overline{\mathcal{PM}}}}
\newcommand{\oPMr}{{{\overline{\mathcal{PM}}}^{\frac{1}{r}}_{0,k,\vec{a}}}}
\newcommand{\oQMb}{{\overline{\mathcal{QM}}}}
\newcommand{\for}{\operatorname{for}}
\newcommand{\For}{\operatorname{For}}
\newcommand{\nontilde}{}
 \newcommand{\M}{\ensuremath{\overline{\mathcal{M}}}}
 \renewcommand{\d}{\ensuremath{\partial}}
 \newcommand{\<}{\left<}
 \renewcommand{\>}{\right>}
 \DeclareMathOperator{\res}{res}
\numberwithin{equation}{section}
\DeclareMathOperator{\Id}{Id}
\DeclareMathOperator{\rk}{rank}
\def\C{{\mathbb C}}
\def\Mbar{{\overline{\mathcal M}}}
\def\Mbarstar{{\overline{\mathcal M^*}}}
\def\GPI{{\text{GPI}}}
\def\PI{{\text{PI}}}
\def\h{{\mathfrak h}}
\def\sGPI{\text{sGPI}}
\def\PI{\text{PI}}
\def\ev{\text{ev}}
\DeclareMathOperator*{\bboxplus}{\scalerel*{\boxplus}{\sum}}
\DeclareMathOperator*{\bass}{\scalerel*{\maltese}{\sum}}
\DeclareMathOperator*{\bboxtimes}{\scalerel*{\boxtimes}{\sum}}
\title{Open $r$-spin theory in genus one, and the Gelfand--Dikii wave function}
\date{}
\author{Ran J. Tessler \and Yizhen Zhao}
\begin{document}
\maketitle
\begin{abstract}
We construct the $g=1$ sector of the open $r$-spin theory, that is, an open $r$-spin theory on the moduli space of cylinders. This is the second construction of a $g>0$ open intersection theory, which includes descendents (the first is the all genus construction of the intersection theory on moduli of open Riemann surfaces with boundaries \cite{PST14,Tes15}, whose $g=1$ case equals to the $r=2$ case of our construction). Unlike the construction of \cite{Tes15}, in order to construct the $r$-spin cylinder theory we had to overcome the foundational problem of dimension jump loci, which in analogous closed theories has been treated using virtual fundamental class techniques, that are currently absent in the open setting. For this reason our construction is much more involved, and relies on the point insertion technique developed in \cite{TZ1,TZ2}.

We prove that the open $g=1$ potential equals, after a coordinate change, to the $g=1$ part of the Gelfand--Dikii wave function, thus confirming a conjecture of \cite{BCT3}.
We also prove that our $g=1$ intersection numbers satisfy a $g=1$ recursion, also predicted in \cite{BCT3,gomez2021open}. This recursion is the $g=1$ analogue of Solomon's famous $g=0$ Open WDVV equation \cite{sol_owdvv}, with descendents, and is also the universal $g=1$ recursion for $F$-Cohomological field theories \cite{alexandrov2023construction}.
Again, this is first geometric construction which is not the $g=1$ sector of \cite{PST14,Tes15}, proven to satisfy this universal recursion.
\end{abstract}

\maketitle

\setcounter{tocdepth}{1}
\tableofcontents

\section{Introduction}\label{sec intro}
\subsection{Background on closed $r$-spin theory and Witten's $r$-spin conjecture}
\subsubsection{Witten's KdV conjecture}
Witten's conjecture \cite{Witten2DGravity}, proven by Kontsevich~\cite{Kontsevich}, is one of the cornerstones in the modern study of the intersection theory on the moduli space of stable curves~$\Mbar_{g,n},$ as well as in the development of Gromov--Witten theory. Let $\psi_1, \ldots, \psi_n \in H^2(\Mbar_{g,n})$ be the first Chern classes of the relative cotangent line bundles at the $n$ marked points, and denote by
\[
F^c(t_0,t_1,\ldots,\lambda) := \sum_{\substack{g \geq 0, n\geq 1\\2g-2+n>0}} \sum_{d_1,\ldots,d_n\geq 0} \frac{\lambda^{2g-2}}{n!}\left(\int_{\Mbar_{g,n}} \psi_1^{d_1} \cdots \psi_n^{d_n} \right)t_{d_1} \cdots t_{d_n}
\]
the \emph{closed potential}, a generating function of their integrals, which are called \emph{intersection numbers}. The superscript `$c$' stands for ``closed'', as opposed to the open theory which is the focus of this paper.  $\lambda$ and $\{t_i\}_{i\ge 0}$  are formal variables.  The Witten--Kontsevich theorem reads that $\exp(F^c)$ is a tau-function of the Korteweg--de Vries (KdV) hierarchy, and equivalently that  $\exp(F^c)$ satisfies the Virasoro constraints, which is some set of partial differential equations. Each of these equivalent formulations uniquely determines all intersection numbers.
\subsubsection{Witten's $r$-spin intersection theory}
A year after the proposal of his original conjecture, Witten suggested a generalized conjecture \cite{Witten93}, in which the moduli space of curves is enhanced to the moduli space of $r$-spin curves.  An $r$-spin structure on a smooth marked curve $(C;z_1, \ldots, z_n)$ is a line bundle $S$ together with an isomorphism
\[S^{\otimes r} \cong \omega_{C}\left(-\sum_{i=1}^n a_i[z_i]\right),\]
where $a_i \in \{0,1,\ldots, r-1\}$ are called \emph{twists} and $\omega_C$ denotes the canonical bundle.  There is a natural compactification $\Mbar_{g,\{a_1, \ldots, a_n\}}^{1/r}$ of the moduli space of $r$-spin structures on smooth curves, and this space admits a virtual fundamental class $c_W$ known as \emph{Witten's class}. In genus zero, Witten's class is the Euler class of the \emph{Witten bundle} $(R^1\pi_*\mathcal{S})^{\vee},$ where $\pi: \mathcal{C} \rightarrow \Mbar_{0,\{a_1, \ldots, a_n\}}^{1/r}$ is the universal curve, $\mathcal{S}$ is the universal $r$-spin structure.  In higher genus, on the other hand, $R^1\pi_*\mathcal{S}$ is generally not a vector bundle, and defining the Witten's class involved subtle virtual fundamental class techniques \cite{PV,ChiodoWitten,Moc06,FJR,CLL}.

The {\it (closed) $r$-spin intersection numbers} are defined by
\begin{gather}\label{eq:closed r-spin}
\langle\tau^{a_1}_{d_1}\cdots\tau^{a_n}_{d_n}\rangle^{\frac{1}{r},c}_g:=r^{1-g}\int_{\M^{1/r}_{g,\{a_1, \ldots, a_n\}}} \hspace{-1cm} c_W \cup \psi_1^{d_1} \cdots \psi_n^{d_n}.
\end{gather}
The \emph{closed $r$-spin potential}
is the generating function of the closed $r$-spin intersection numbers\[
F^{\frac{1}{r},c}(t^*_*,\lambda):=\sum_{\substack{g \geq 0, n \geq 1\\2g-2+n>0}} \sum_{\substack{0 \leq a_1, \ldots, a_n \leq r-1\\ d_1, \ldots, d_n \geq 0}} \frac{\lambda^{2g-2}}{n!}\langle\tau^{a_1}_{d_1}\cdots\tau^{a_n}_{d_n}\rangle^{\frac{1}{r},c}_g t^{a_1}_{d_1} \cdots t^{a_n}_{d_n},
\]
where again $\lambda$ and $t^a_d$  for $0\le a\le r-1$ and $d\ge 0$ are formal variables.

\subsubsection{The Gelfand--Dikii (GD) integrable hierarchy and the $r$-spin conjecture}\label{subsub:rGD_closed}

Recall that a {\it pseudo-differential operator}~$A$ is a Laurent series
\[
A=\sum_{n=-\infty}^m a_n(T_1,T_2,\ldots,\lambda)\partial_x^n,
\]
where $m$ is an integer,  $T_i$ for $i\geq 1,~\lambda$ and $\partial_x$ are formal variables, and $a_n(T_1,T_2,\ldots,\lambda)\in\C[\lambda,\lambda^{-1}][[T_1,T_2,\ldots]]$.     The space of pseudo differential operators is a non-commutative associative algebra, in which the multiplication $\circ$ is induced from the usual product on $\C[\lambda,\lambda^{-1}][[T_1,T_2,\ldots]]$ and 
\[
\d_x^k\circ f:=\sum_{l=0}^\infty\frac{k(k-1)\ldots(k-l+1)}{l!}\frac{\d^lf}{\d x^l}\d_x^{k-l},
\]
where $k$ is an integer, $f\in\mathbb C[\lambda,\lambda^{-1}][[T_*]]$, and the variable $x$ is identified with $T_1$. Let $r\geq 2$ be any integer, and $A$ a pseudo-differential operator of the form
\[
A=\partial_x^r+\sum_{n=1}^\infty a_n\partial_x^{r-n},
\]
then $A$ has a unique $r$th root, meaning a unique pseudo-differential operator $A^{\frac{1}{r}}$ of the form
\[
A^{\frac{1}{r}}=\partial_x+\sum_{n=0}^\infty b_n\partial_x^{-n}
\]
satisfying $\left(A^{\frac{1}{r}}\right)^r=A$.

Let $r\geq 2$, and consider the operator
$$
L:=\partial_x^r+\sum_{i=0}^{r-2}f_i\partial_x^i,\quad f_i\in\C[\lambda^{\pm1}][[T_*]].
$$
It is not hard to check that for any $n\geq 1$, the commutator $[(L^{n/r})_+,L]$ has the form $\sum_{i=0}^{r-2}h_i\partial_x^i$ with $h_i\in\C[\lambda^{\pm1}][[T_*]]$, where $(\cdot)_+$ denotes the part of a pseudo-differential operator composed of the non-negative powers of $\partial_x$. The {\it $r$th Gelfand--Dikii hierarchy}, sometimes called the \emph{$r$KdV hierarchy} is the system of partial differential equations for $f_0,f_1,\ldots,f_{r-2}$ given by:
\begin{equation}
    \label{eq:GD_hier}
\frac{\partial L}{\partial T_n}=\lambda^{n-1}[(L^{n/r})_+,L],\quad n\geq 1.
\end{equation}

Denote by $L$ the solution of the system specified by the initial condition
\begin{equation}\label{eq:init_cond_L}
L|_{T_{\geq 2}=0}=\partial_x^r+\lambda^{-r}rx,
\end{equation}
Witten's $r$-spin conjecture states that $\frac{\partial F^{\frac{1}{r},c}}{\partial t^{r-1}_d}=0$ for $d\geq 0$ (the \emph{Ramond vanishing phenomenon}) and that under the change of variables
\begin{gather}\label{eq:closed r-spin change of variables}
T_k=\frac{1}{(-r)^{\frac{3k}{2(r+1)}-\frac{1}{2}-d}k!_r}t^a_d,\quad 0\le a\le r-2,\quad d\ge 0,
\end{gather}
where $k=a+1+rd$ and
$$
k!_r:=\prod_{i=0}^d(a+1+ri),
$$
it holds that
$$
\res L^{n/r}=\lambda^{1-n}\frac{\partial^2 F^{\frac{1}{r},c}}{\partial T_1\partial T_n}
$$
whenever $n\geq 1$ is not divisible by $r$, and $\res L^{n/r}$ denotes the coefficient of $\partial_x^{-1}$ in $L^{n/r}$.

In the language of integrable hierarchies this means that  $\exp(F^{\frac{1}{r},c})$ becomes, after a simple change of variables, a \emph{tau-function} of the $r$th Gelfand--Dikii hierarchy.

This result was proven by Faber--Shadrin--Zvonkine \cite{FSZ10}.  The case $r=2$ is equivalent to the original conjecture of Witten. Witten's $r$-spin construction initiated the study of Fan--Jarvis--Ruan--Witten \cite{FJR} theory of quantum singularities.

\subsubsection{Closed extended $g=0$ $r$-spin theory}
\cite{JKV2} showed that the genus-zero $r$-spin theory can be extended to allow a single marked point of twist $-1$. This theory was further studied in \cite{BCT_Closed_Extended}, and was shown to be related to the Gelfand--Dikii hierarchy as well. We denote the intersection numbers in this extended theory by $\langle\cdots\rangle^{\frac{1}{r},ext}.$ Extended intersection numbers which include a $-1$ twist need not to satisfy the Ramond vanishing, and are crucial for writing recursions for open intersection numbers.

\subsection{Open $r$-spin theory, and the open analogue of Witten's $r$-spin conjecture}
\subsubsection{The open analogue of Witten's KdV conjecture}
The study of intersection theory on the moduli spaces of surfaces with boundary was initiated by Pandharipande, Solomon, and the third author in \cite{PST14} (the disk case) and \cite{ST1,Tes15} (the $g>0$ case).  These works consider the moduli space $\Mbar_{g,k,l}$, which parametrizes tuples $(\Sigma; x_1, \ldots, x_k; z_1, \ldots, z_l)$ in which $\Sigma$ is a stable Riemann surface with boundary, endowed with an additional structure called \emph{graded spin structure}, $x_i \in \partial\Sigma$ are boundary marked points, and $z_j \in \Sigma \setminus \partial\Sigma$ are internal marked points. When $g=0$ the grading data is trivial and $\Sigma$ is just a stable marked disk. The intersection numbers on $\Mbar_{g,k,l}$, which should be thought of as integrals of $\psi$-classes at the internal marked points, were constructed and hence the \emph{open potential} $F^o(t_0,t_1,\ldots,s,\lambda)$ was defined as a direct generalization of $F^c$; the new formal variable $s$ tracks the number of boundary marked points. \cite{PST14} conjectured an open analogue of Witten's conjecture for these intersection numbers: That the open potential satisfies open analogues of the Virasoro constraints and of the KdV equations. The proof of these conjectures was obtained by combining the results of \cite{Bur16,Tes15,BT17}. A survey for physicists can be found in \cite{DijkWit}.

There were two conceptual difficulties in defining the open intersection numbers. The first was that the na\"ive moduli of Riemann surfaces with boundary has no canonical orientation, and is sometimes non orientable, when $g>0,$ what complicates the definition of integrals. The other, more intricate, problem is the presence of real codimension $1$ boundaries for the moduli. Because of these boundaries any reasonable definition of integral should involve an imposition of boundary conditions. The inclusion of graded spin structures solved these problems. The moduli of graded spin structure is an oriented orbifold cover of the na\"ive moduli of surfaces with boundaries, see \cite[\S 5]{Tes15}. Moreover, this structure picks, for every stratum of nodal surface, and each boundary node, a distinguished \emph{illegal} half node. The \emph{forgetful boundary conditions} on the $\psi_i$ ``classes'' are essentially that on a boundary stratum the section of $\psi_i$ is pulled back from the moduli which parametrizes surfaces obtained by normalizing the nodes of the stratum, and forgetting the illegal side. See \cite[\S 2]{Tes15} for details. 
\subsubsection{Genus-zero open $r$-spin intersection theory}
An open analogue of Witten's $r$-spin theory was constructed in genus $0,$ the disk case in \cite{BCT1,BCT2} and \cite{TZ1,TZ2}.
The work \cite{BCT1} constructed the moduli space of \emph{graded $r$-spin disks}~$\Mbar_{0,k,\{a_1, \ldots, a_l\}}^{1/r},$ which parametrizes stable marked disks with a spin structure and a grading. The twists $a_1,\ldots,a_l$ of internal points again ranged in $\{0,\ldots,r-1\},$ but boundary marked points were required to have twist $r-2.$ \cite{BCT1} also defined an open analogue of Witten's bundle $\mathcal{W}=(R^1\pi_*\mathcal{S})^{\vee}$, again in $g=0$ and for the same objects, and proved that the bundle is canonically relatively oriented.
\cite{BCT2} constructed an intersection theory for graded $r$-spin disks. As in the spinless case described above, the geometry enters the most in defining meaningful boundary conditions.
It turns out that for some strata, or, equivalently, some types of boundary nodes, the forgetful boundary condition scheme still applies. But for most of the boundary strata it fails. The main new geometric insight of \cite{BCT2} is the discovery of a hidden positivity structure, which again builds on the grading. This has led to imposing the \emph{positivity boundary conditions} on the remaining boundaries. 
The open, genus $0$ $r$-spin intersection numbers are defined by
\begin{equation}
\label{eq:intnums}
\langle\tau_{d_1}^{a_1}\cdots\tau^{a_l}_{d_l}\sigma^k\rangle^{\frac{1}{r},o}_0:=\int_{\oPM_{0,k,\{a_1, \ldots, a_l\}}} e\left( \mathcal{W} \oplus \bigoplus_{i=1}^l \mathbb{L}_i^{\oplus d_i}, \mathbf{s}_{\text{canonical}}\right),
\end{equation}
where, on a manifold with boundary $M$, the notation $e(E,s)$ denotes the Euler class of $E$ relative to the boundary condition $s$ of $E|_{\partial M}\to \partial M$. $\oPMr$ is a modification of $\Mbarr$ obtained by removing certain boundary strata.\footnote{One can also work directly on $\Mbarr$ but then the definition becomes more technically complicated.} Finally, the section $\mathbf{s}_{\text{canonical}}$ is any choice of nowhere-vanishing section of $\mathcal{W} \oplus \bigoplus_{i=1}^l\mathbb{L}_i^{\oplus d_i}|_{\partial\oPMr}$ which satisfies both the forgetful, and the positivity boundary conditions.  It was shown that any two such choices of boundary sections yield the same intersection number.
It was also shown that in $g=0$ these numbers are non zero only if
\begin{equation}\label{eq:dim_equal_rk}
2\sum_{i=1}^l d_i +\frac{2\sum_{i=1}^la_i+k(r-2)+(g-1)(r-2)}{r}=2l+k+3g-3.
\end{equation}
In particular, 
\begin{equation}\label{eq:mod_r_general_g}
{2\sum_{i=1}^la_i+k(r-2)+(g-1)(r-2)}=0~\mod~r.
\end{equation}
and
\begin{equation}\label{eq:mod_2_general_g}
\frac{2\sum_{i=1}^la_i+k(r-2)+(g-1)(r-2)}{r} = k+g-1~\mod~2
\end{equation}
hold when setting $g=0.$

The genus $0$ open $r$-spin potential is the generating function
\[
F^{\frac{1}{r},o}_0(t^*_*,s):=\sum_{\substack{k,l\geq 0\\k+2l>2}} \sum_{\substack{0 \leq a_1, \ldots, a_l \leq r-1\\ d_1, \ldots, d_l \geq 0}} \frac{1}{k!l!}\<\tau^{a_1}_{d_1}\cdots\tau^{a_l}_{d_l}\sigma^k\>^{\frac{1}{r},o}_0 t^{a_1}_{d_1} \cdots t^{a_l}_{d_l}s^k.
\]
The goal of \cite{TZ1,TZ2} was to define intersection theories which allow more types of boundary twists.
The new geometric insight was that the forgetful boundary conditions should be replaced by the new \emph{point insertion boundary conditions}.
\cite{TZ2} defined $\lfloor\frac{r}{2}\rfloor$ open $r$-spin intersection theories, indexed by $\mathfrak{h}\in\{0,1,\ldots,\lfloor\frac{r}{2}\rfloor-1\}.$ The $\mathfrak{h}=0$ zero was shown in \cite[\S 5]{TZ2} to carry the same information as the open $r$-spin theory of \cite{BCT2}, and a minor modification of it produced the exact same intersection numbers as \cite{BCT2}, given in \eqref{eq:intnums}, see \cite[Remark 5.7]{TZ2}.
This last intersection theory, defined via the point insertion and positivity boundary conditions, is the starting point of this work.   

\subsubsection{The $r$GD wave function and the conjecture of \cite{BCT3}}
With $L$ as in \S \ref{subsub:rGD_closed}, let~$\Phi(T_*,\lambda)$ be the solution of the system of equations
\begin{gather}\label{eq:wave_func_eqs}
\frac{\partial\Phi}{\partial T_n}=\lambda^{n-1}(L^{n/r})_+\Phi,\quad n\geq 1,
\end{gather}
which satisfies the initial condition
$$
\left.\Phi\right|_{T_{\geq 2}=0}=1.
$$
The system of differential equations~\eqref{eq:wave_func_eqs} for the function~$\Phi$ coincides with the system of differential equations for the wave function of the KP hierarchy.
This function $\Phi$, for $r=2$ was first studied in \cite{Bur16}. For general $r$ it was first studied in~\cite{BY15}, where the authors found an explicit formula for $\Phi$ in terms of the wave function.

Let $\phi:=\log\Phi$ and expand it as a power series in $\lambda:$
$$
\phi=\sum_{g\in\mathbb{Z}}\lambda^{g-1}\phi_g,\quad \phi_g\in\C[[T_*]].
$$
While $F^{\frac{1}{r},c}_0$, depends only on the variables $t^0_d,\ldots,t^{r-2}_d$, the function~$F^{\frac{1}{r},o}_0$ depends also on $t^{r-1}_d$ and $s$. We relate $T_{mr}$ and $t^{r-1}_{m-1}$ via
\begin{gather}\label{eq:r-1_coord_change}
T_{mr}=\frac{1}{(-r)^{\frac{m(r-2)}{2(r+1)}}m!r^m}t^{r-1}_{m-1},\quad m\ge 1.
\end{gather}

The main algebraic result of \cite{BCT2} is:

\begin{thm}\label{thm:main_BCT2}
It holds that
\begin{gather}\label{eq:main_res_BCT}
F^{\frac{1}{r},o}_0=\frac{1}{\sqrt{-r}}\phi_0\big|_{t^{r-1}_d\mapsto \frac{1}{\sqrt{-r}}(t^{r-1}_d-r\delta_{d,0}s)}-\frac{1}{\sqrt{-r}}\phi_0\big|_{t^{r-1}_d\mapsto\frac{1}{\sqrt{-r}}t^{r-1}_d}.
\end{gather}
\end{thm}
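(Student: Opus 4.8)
The plan is to characterise both sides of \eqref{eq:main_res_BCT} by the same list of recursions together with initial data, and then check each side against that list. On the geometric side I would first record the structural relations satisfied by the genus-zero open $r$-spin disk invariants, all extracted from the geometry of $\oPM_{0,k,\vec{a}}$: an \emph{open string equation} and an \emph{open dilaton equation}, coming from the map forgetting an internal point of twist $0$ and from the comparison of $\psi$-classes under it, and an \emph{open topological recursion relation}, equivalent to Solomon's \emph{Open WDVV} equation with descendents \cite{sol_owdvv}, coming from a boundary (degeneration) relation on $\oPM_{0,k,\vec{a}}$ together with an analysis of how the Witten bundle $\mathcal{W}$, the cotangent lines $\mathbb{L}_i$, and the canonical (point-insertion and positivity) boundary conditions restrict to the boundary strata. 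A routine induction on $(k,l,\sum d_i)$ then shows that these relations, the values on the small moduli (base cases), and the genus-zero closed $r$-spin data --- fixed by Witten's $r$-spin conjecture \cite{FSZ10} and by its one-twist-$(-1)$ extension of \cite{BCT_Closed_Extended}, which is precisely what allows internal twist-$(r-1)$ insertions to be eliminated --- determine $F^{\frac{1}{r},o}_0$ uniquely.

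Next I would verify that the right-hand side $G$ of \eqref{eq:main_res_BCT} satisfies the very same system. Writing $\phi=\log\Phi$ and differentiating the linear system \eqref{eq:wave_func_eqs} yields a Riccati-type hierarchy for $\phi$; extracting its $\lambda^{-1}$-coefficient produces a compatible system of first-order (Hamilton--Jacobi type) PDE for $\phi_0$, whose coefficients are polynomials in the $\res L^{n/r}$, hence, by \cite{FSZ10} and the change of variables \eqref{eq:closed r-spin change of variables}, in the second derivatives of $F^{\frac{1}{r},c}_0$. The commutativity of the flows \eqref{eq:wave_func_eqs}, combined with the dispersionless limit of \eqref{eq:GD_hier}, reproduces exactly the bilinear identity demanded by Open WDVV; the initial condition $\Phi|_{T_{\geq 2}=0}=1$ together with \eqref{eq:init_cond_L} feeds the correct base cases; and the string and dilaton equations for $G$ come from the $T_1$-translation and Euler-type scaling symmetries of \eqref{eq:wave_func_eqs} and \eqref{eq:r-1_coord_change}. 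One then matches the coefficients of the two systems: here one uses \cite{BCT_Closed_Extended} (equivalently the explicit formula of \cite{BY15} expressing $\Phi$ through the KdV wave function) to identify $\partial_{t^{r-1}_0}\phi_0$, evaluated at the shifted argument, with the generating function of open invariants carrying one boundary insertion. This is consistent with the two elementary features of \eqref{eq:main_res_BCT}: $G|_{s=0}=0$, mirroring the vanishing of open invariants with no boundary marked point, and $\partial_s G = (\partial_{t^{r-1}_0}\phi_0)\big|_{t^{r-1}_d\mapsto \frac{1}{\sqrt{-r}}(t^{r-1}_d-r\delta_{d,0}s)}$, so that the role of the boundary variable $s$ on the open side is played by a shift of $t^{r-1}_0$ on the wave-function side.

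The main obstacle is the geometric input: proving the open topological recursion relation / Open WDVV equation for the $r$-spin disk theory with the point-insertion and positivity boundary conditions. In the spinless theory these follow by pulling back the (forgetful) boundary section, but here the forgetful boundary conditions fail at most boundary strata, so one instead has to construct, for each relevant degeneration, an explicit homotopy between the restriction of the canonical section $\mathbf{s}_{\text{canonical}}$ and a section pulled back from the normalisation of the stratum, while keeping track of the relative orientations of $\mathcal{W}$ so that the signs in the recursion come out right. Dually, the non-formal step on the integrable side is the identification of $\phi_0$ with the extended closed $r$-spin potential and the term-by-term matching of the $(L^{n/r})_+$-expansion with the Open WDVV summands. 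Once both dictionaries are established, the uniqueness argument of the first paragraph closes the proof, and the coordinate bookkeeping through \eqref{eq:closed r-spin change of variables}, \eqref{eq:r-1_coord_change} and the prefactor $\tfrac{1}{\sqrt{-r}}$ is routine.
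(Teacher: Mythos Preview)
This theorem is not proved in the present paper: it is quoted in the introduction as ``the main algebraic result of \cite{BCT2}'' and used only as motivation and background for the genus-one construction that the paper actually carries out. There is therefore no proof in the paper to compare your proposal against.

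That said, your outline is broadly consonant with what the paper reports about \cite{BCT2}: the paper records (Theorem~\ref{thm TRR BCT}) that the genus-zero open TRRs were established in \cite{BCT2}, and the overall scheme of (i) deriving open string/dilaton/TRR from the geometry of $\oPM_{0,k,\vec a}$, (ii) proving these determine $F^{\frac{1}{r},o}_0$, and (iii) matching against the dispersionless wave-function equations is exactly the architecture \cite{BCT2,BCT3} use. One correction of emphasis: you describe the hard geometric step as proving Open WDVV ``with the point-insertion and positivity boundary conditions,'' but in \cite{BCT2} the disk theory and its TRR are set up with the forgetful-plus-positivity boundary conditions; the point-insertion reformulation comes only in \cite{TZ1,TZ2}, and the present paper (Proposition~\ref{prop compare with BCT in g=0}) shows the two give the same numbers. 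If you intend to reprove Theorem~\ref{thm:main_BCT2} in the point-insertion framework you would need that comparison as an input, whereas if you work in the original \cite{BCT2} framework the boundary-condition analysis you sketch is the one carried out there.
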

\noindent This provides the open $r$-spin version of Witten's conjecture in genus zero. A mirror theoretic interpretation of the invariants constructed in \cite{BCT2} appears in \cite{GKT}.

In the case $r=2$~\cite{Bur16} proved  an explicit formula for the spinless open potential $F^o$ of \cite{PST14,Tes15} in terms of the function $\phi=\log\Phi$.

We should point out that at the moment no analogous results are known or conjectured for the $\lfloor r/2\rfloor$ open $r$-spin theories constructed in \cite{TZ1,TZ2}, even though all $g=0$ intersection numbers of these theories are fully calculated in \cite{TZ2}, and despite seemingly related predictions from \cite{Horiprivate} and implicitly also from \cite{Hori}.

\cite{BCT3} conjectured an  all genus genus generalization of Theorem \ref{thm:main_BCT2}. More precisely, they conjectured the following. 
\begin{conj}[The rGD wave function conjecture of \cite{BCT3}]\label{conj:main_conj_BCT}
For every genus $g\geq 1$, there is a geometric construction of open $r$-spin intersection numbers
\begin{gather}\label{eq:open_numbers_all_g}
\langle\tau^{\alpha_1}_{d_1}\cdots\tau^{\alpha_l}_{d_l}\sigma^k\rangle^{\frac{1}{r},o}_g.
\end{gather}
Moreover, if we define the genus $g$ potential~$F_g^{\frac{1}{r},o}(t^*_*,s)$ by
\begin{equation}\label{eq:g_pot}
F_g^{\frac{1}{r},o}(t^*_*,s):=\sum_{l,k\ge 0}\frac{1}{l!k!}\sum_{\substack{0\le\alpha_1,\ldots,\alpha_l\le r-1\\d_1,\ldots,d_l\ge 0}}\langle\tau^{\alpha_1}_{d_1}\cdots\tau^{\alpha_l}_{d_l}\sigma^k\rangle^{\frac{1}{r},o}_g t^{\alpha_1}_{d_1}\cdots t^{\alpha_l}_{d_l}s^k.
\end{equation}
Then for every $g\geq 1$, we have
\begin{equation}\label{eq:bct_conj}
F^{\frac{1}{r},o}_g=\left.(-r)^{\frac{g-1}{2}}\phi_g\right|_{t^{r-1}_d\mapsto\frac{1}{\sqrt{-r}}(t^{r-1}_d-\delta_{d,0}rs)}.
\end{equation}
\end{conj}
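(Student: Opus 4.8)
The plan is to prove Conjecture \ref{conj:main_conj_BCT} genus by genus: for each $g\ge 1$ one first \emph{constructs} the numbers \eqref{eq:open_numbers_all_g} and then \emph{identifies} the potential \eqref{eq:g_pot} with $\phi_g$ through \eqref{eq:bct_conj}. For $g=1$ all the steps below are carried out in the body of this paper; for $g\ge 2$ they must be developed, and the real weight of a full proof sits there. \textbf{Construction.} For each $g$ I would build the moduli space $\M^{1/r}_{g,k,\vec{a}}$ of stable graded $r$-spin surfaces with boundary of open genus $g$ (for $g=1$ the graded $r$-spin cylinders of this paper; for $g\ge 2$ the relevant real locus of the stack of genus-$g$ $r$-spin curves). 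The natural integrand is the Witten bundle $\mathcal W=(R^1\pi_*\mathcal S)^\vee$, but in genus $\ge 1$ the rank of $R^1\pi_*\mathcal S$ jumps along the locus where $H^0(\mathcal S)\ne 0$, so $\mathcal W$ is no longer a vector bundle; in the closed theory this is cured by a virtual class, which has no open analogue. I would bypass it exactly as for $g=1$, via the point-insertion technique of \cite{TZ1,TZ2} (whose $g=0$ shadow is the closed extended theory of \cite{JKV2,BCT_Closed_Extended}): adjoin to the surface enough auxiliary internal marked points of twist $-1$ that, on the enlarged moduli, $R^1\pi_*$ of the twisted-down universal spin bundle becomes locally free, and then recover the genuine invariants by a forgetful pushforward that removes the inserted points.

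\textbf{Boundary conditions and definition.} On $\partial\M^{1/r}_{g,k,\vec{a}}$ I would impose the positivity boundary conditions of \cite{BCT2} together with the point-insertion conditions of \cite{TZ2}, both extended to genus $g$; as in \cite{Tes15,BCT1} the grading distinguishes an illegal half-node at each boundary node, and this is what makes the two families of conditions mutually consistent and forces the relative Euler class $e\!\left(\mathcal W\oplus\bigoplus_i\mathbb{L}_i^{\oplus d_i},\mathbf s\right)$ to be independent of the admissible section $\mathbf s$. This defines $\langle\tau^{\alpha_1}_{d_1}\cdots\tau^{\alpha_l}_{d_l}\sigma^k\rangle^{1/r,o}_g$, hence $F_g^{1/r,o}$; the dimension constraint \eqref{eq:dim_equal_rk} with this $g$, together with \eqref{eq:mod_r_general_g}--\eqref{eq:mod_2_general_g}, restricts which numbers are nonzero.

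\textbf{Recursion and matching.} The structural heart is a system of recursions for the $\langle\cdots\rangle^{1/r,o}_g$ obtained by analysing the real-codimension-one boundary of $\M^{1/r}_{g,k,\vec{a}}$ (disk and cylinder bubbling, and the genus-reducing and separating internal degenerations), together with the closed $r$-spin relations of Faber--Shadrin--Zvonkine \cite{FSZ10} and the extended relations of \cite{BCT_Closed_Extended}. For $g=1$ this is the recursion proved here --- the one-loop analogue of Solomon's open WDVV \cite{sol_owdvv}, predicted in \cite{BCT3,gomez2021open}, equivalently the universal genus-one $F$-CohFT recursion of \cite{alexandrov2023construction}; for $g\ge 2$ one needs the corresponding universal genus-$g$ open recursion, which I would extract from \cite{alexandrov2023construction} or derive directly from the boundary geometry, and one must verify that this recursion together with the genus-$g$ string and dilaton equations and all $g'<g$ data determines $F_g^{1/r,o}$ uniquely. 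On the integrable side, differentiating $\phi=\log\Phi$ and feeding in \eqref{eq:wave_func_eqs} and \eqref{eq:GD_hier} produces a hierarchy of equations for the $\phi_g$; combined with Theorem \ref{thm:main_BCT2} (the $g=0$ case) and the closed $r$-spin string equation, these reduce --- under \eqref{eq:closed r-spin change of variables}, \eqref{eq:r-1_coord_change} and the substitution $t^{r-1}_d\mapsto\frac{1}{\sqrt{-r}}\bigl(t^{r-1}_d-\delta_{d,0}rs\bigr)$ --- to precisely the recursion above with the same initial data, and matching them inductively in $g$ yields \eqref{eq:bct_conj}. A slicker route would be to package the open numbers into a single generating object and verify the KP/GD linear equations directly, but establishing that the numbers organize into such an object itself comes down to these recursions.

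\textbf{Main obstacle.} The hard part is the construction for $g\ge 2$: taming the dimension-jump loci with no virtual class. For $g=1$ the jump is mild, one explicit family of $-1$-point insertions suffices, and the cylinder geometry is under tight control; for larger $g$ one must insert a growing number of points, prove the twisted-down $R^1\pi_*$ is genuinely locally free on the enlarged moduli (an $H^0$-vanishing statement for a line bundle of controlled degree on the universal curve, with care about the behaviour along the boundary and nodes), and show that the forgetful pushforward that removes the inserted points is well defined along $\partial\M^{1/r}_{g,k,\vec{a}}$, i.e.\ compatible with the positivity and point-insertion conditions. Securing that compatibility, rather than the algebra of the matching step, is where essentially all the new difficulty lies --- and it is why \cite{BCT3} could state \eqref{eq:bct_conj} only as a conjecture for $g\ge 1$.
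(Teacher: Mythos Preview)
The statement is a conjecture; the paper establishes only the case $g=1$ (Theorem~\ref{thm:wave_func}), and its proof is short once the construction and Theorem~\ref{thm:trr_g1} are in hand: \cite{BCT3} shows that the right-hand side of \eqref{eq:bct_conj} satisfies the same $g=1$ TRR, and the TRR determines all coefficients, so the two sides agree. Your ``recursion and matching'' paragraph captures this endgame correctly.

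However, your description of the \emph{construction} contains a substantive misunderstanding that would derail an actual proof. Point insertion, as used here and in \cite{TZ1,TZ2}, does not ``adjoin auxiliary internal marked points of twist $-1$'' to make $R^1\pi_*\mathcal S$ locally free. It inserts twist-$0$ internal points paired with twist-$(r-2)$ boundary points, and its purpose is to glue moduli spaces along spurious codimension-one boundary strata (the type-BI/AI boundaries), not to cure the dimension jump. The dimension-jump locus $\mathcal Z^{dj}$ is handled by simply \emph{removing} it from the moduli (see \S\ref{subsec Witten bundle}); this introduces new noncompact ends, and one must show the canonical sections are nonvanishing near them. Your proposed mechanism---twisting down by inserted $-1$ points to force $H^0$-vanishing---is not what is done here, and for $g\ge 2$ it is not clear such an approach works: the jump loci are higher-dimensional and more intricate, which is precisely why the paper lists $g>1$ as an open problem requiring virtual-cycle machinery.

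You also miss the genuinely new idea that makes $g=1$ go through. Positivity of the Witten bundle, which drives the $g=0$ boundary conditions, \emph{fails} in $g=1$ (see the discussion preceding Theorem~\ref{thm:int_nums_well_def}). The fix is not ``positivity plus point insertion'' but rather new TRR-type boundary conditions imposed on the \emph{cotangent-line} summands $\mathbb L_i$: the canonical sections are built as tame perturbations of explicit TRR multisections (\S\ref{sec def of trr sections}, Definition~\ref{def tame perturbation}), and this is what replaces positivity where it breaks. Dimension counting (Remark~\ref{rmk vanish without psi g=1}) guarantees at least one descendent is present in any nonzero $g=1$ number, so this mechanism is always available---a $g=1$ miracle that has no obvious analogue in higher genus. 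Your outline does not mention this ingredient, and without it the boundary conditions you describe are insufficient even at $g=1$.
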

\begin{rmk}
In open Gromov--Witten theory, unlike the closed theory, not much is known or even conjectured about higher-genus invariants. The conjecture above is on of the few conjectures that describes a full, all-genus open Gromov--Witten theory, and also one of the few conjectures that relate the potential to an integrable hierarchy.\footnote{As mentioned, an analogous conjecture was made in \cite{PST14} for the $r=2$ case, and it was proven in~\cite{BT17}. In~\cite{BPTZ}, the full stationary open Gromov--Witten theory for maps to $(\mathbb{CP}^1,\mathbb{RP}^1)$ is conjectured.}
\end{rmk}
\subsubsection{The universal $g=1$ open topological recursion relations, string and dilaton relations}
In \cite{sol_owdvv} Solomon showed that certain $g=0$ open GW theories satisfy a universal relation which he termed the \emph{open WDVV}. Also the genus $0$ $r$-spin theory of \cite{BCT1,BCT2,BCT3} can be shown to satisfy the open WDVV. While not all open GW and open FJRW theories are expected to satisfy the open WDVV (e.g. the rank $2$ Fermat theory studied in \cite{GKT2} does not satisfy this relation), a large class of open GW theories, which include  the OGW theory of $(\mathbb{CP}^n,\mathbb{RP}^n)$ for $n$ odd \cite{solomon2023relative},  do satisfy.

In \cite{alexandrov2023construction} Alexandrov, Basalaev and Buryak construct an algebraic extension of the open potentials of theories satisfying the open WDVV to higher genus and to include descendents. They show that their extensions satisfy certain universal equations: the \emph{open string} and \emph{open dilaton} equations for all genus, and $g=0,1$ \emph{topological recursion relations}. See also \cite{gomez2021open}.
They conjecture that there exist geometric constructions of these extensions. We refer the reader to their paper for further details.

In the case of $r$-spin theory, if we denote by $F^{\frac{1}{r}}(t_\star^\star,s)$ the conjectural extension, and $F^{\frac{1}{r}}_g$ is its genus $g$ component, then we can write these equations concretely in terms of intersection numbers as follows. We will assume in all cases that the left hand side of the equation satisfies \eqref{eq:dim_equal_rk}.
\begin{itemize}
\item[(a)] (Boundary marked point $g=0$ TRR) Suppose $l,k\geq 1$.  Then
\begin{align*}
\< \tau_{d_1+1}^{a_1}\prod_{i=2}^l\tau^{a_i}_{d_i}\sigma^k\>^{\frac{1}{r},o}_0=&
\sum_{a=-1}^{r-2}\sum_{S \sqcup R = \{2,\ldots,l\}}\left\langle \tau_0^{a}\tau_{d_1}^{a_1}\prod_{i \in S}\tau_{d_i}^{a_i}\right\rangle^{\frac{1}{r},{ext}}_0
\left\langle \tau_0^{r-2-a}\prod_{i\in R}\tau^{a_i}_{d_i}\sigma^k\right\rangle^{\frac{1}{r},o}_0+\\
&+\sum_{\substack{S \sqcup R = \{2,\ldots,l\} \\ k_1 + k_2 = k-1}} \binom{k-1}{k_1} \left\langle \tau^{a_1}_{d_1}\prod_{i \in S} \tau^{a_i}_{d_i}\sigma^{k_1}\right\rangle^{\frac{1}{r},o}_0 \left\langle \prod_{i \in R} \tau^{a_i}_{d_i} \sigma^{k_2+2}\right\rangle^{\frac{1}{r}, o}_0.
\end{align*}
\item[(b)] (Internal marked point $g=0$ TRR) Suppose $l\geq 2$.  Then
\begin{align*}
\<\tau_{d_1+1}^{a_1}\prod_{i=2}^l\tau^{a_i}_{d_i}\sigma^k\>^{\frac{1}{r},o}_0=&
\sum_{a=-1}^{r-2}\sum_{S \sqcup R = \{3,\ldots,l\}}\left\langle \tau_0^{a}\tau_{d_1}^{a_1}\prod_{i \in S}\tau_{d_i}^{a_i}\right\rangle^{\frac{1}{r},{ext}}_0
\left\langle \tau_0^{r-2-a}\tau^{a_2}_{d_2}\prod_{i\in R}\tau^{a_i}_{d_i}\sigma^k\right\rangle^{\frac{1}{r},o}_0+\\
&+\sum_{\substack{S \sqcup R = \{3,\ldots,l\} \\ k_1 + k_2 = k}} \binom{k}{k_1} \left\langle \tau^{a_1}_{d_1} \prod_{i \in S} \tau^{a_i}_{d_i}\sigma^{k_1}\right\rangle^{\frac{1}{r},o}_0 \left\langle \tau^{a_2}_{d_2}\prod_{i \in R} \tau^{a_i}_{d_i} \sigma^{k_2+1}\right\rangle^{\frac{1}{r}, o}_0.
\end{align*}
\item[(c)](Genus $1$ TRR)
\begin{equation}
\begin{split}
\left\langle \tau_{d_{1}+1}^{a_1}\prod_{i\in [l]\setminus \{1\}}\tau^{a_i}_{d_i}\sigma^k\right\rangle^{\frac{1}{r},o}_1\hspace{-0.2cm}
=&\sum_{\substack{J_1 \sqcup J_2 = [l]\setminus\{1\}\\ -1\le a \le r-2}}\hspace{-0.1cm}\left\langle \tau_0^{a}\tau_{d_{1}}^{a_{1}}\prod_{i \in J_1}\tau_{d_i}^{a_i}\right\rangle^{\frac{1}{r},\text{ext}}_0
\hspace{-0.1cm}\left\langle \tau_0^{r-2-a}\prod_{i\in J_2}\tau^{a_i}_{d_i}\sigma^k\right\rangle^{\frac{1}{r},o}_1\\
&+\hspace{-0.1cm}\sum_{\substack{J_1 \sqcup J_2 =  [l]\setminus\{1\} \\ k_1+k_2=k}} \hspace{-0.1cm} \binom{k}{k_1}\left\langle \tau^{a_{1}}_{d_{1}}\prod_{i \in J_1} \tau^{a_i}_{d_i}\sigma^{k_1}\right\rangle^{\frac{1}{r},o}_0 \hspace{-0.1cm}\left\langle \prod_{i \in J_2} \tau^{a_i}_{d_i} \sigma^{k_2+1}\right\rangle^{\frac{1}{r}, o}_1\\
&+\frac{1}{2}\left\langle \prod_{i\in [l]}\tau^{a_i}_{d_i}\sigma^{k+1}\right\rangle^{\frac{1}{r},o}_0.
    \end{split}
\end{equation}\label{eq:g=1_trr}
\item[(d)](Open String)
Assume $2g-2+k+2l>0$ then
\begin{equation}\label{eq:open_string}
\left\langle\tau^0_0\prod_{i=1}^l\tau^{a_i}_{d_i}\sigma^k\right\rangle^{\frac{1}{r},o}_g=\sum_{j=1}^l\left\langle\tau^{a_j}_{d_j-1}\prod_{i=1,i\neq j}^l\tau^{a_i}_{d_i}\sigma^k\right\rangle^{\frac{1}{r},o}_g,
\end{equation}
where an intersection number which includes $\tau^*_{-1}$ is defined to be $0.$
\item[(e)](Open Dilaton)
Assume $2g-2+k+2l>0$, then
\begin{equation}\label{eq:open_dilaton}
\left\langle\tau^1_0\prod_{i=1}^l\tau^{a_i}_{d_i}\sigma^k\right\rangle^{\frac{1}{r},o}_g=(g+l+k-1)\left\langle\prod_{i=1}^l\tau^{a_i}_{d_i}\sigma^k\right\rangle^{\frac{1}{r},o}_g
\end{equation}
\end{itemize}
The first two equations were proven to hold in \cite{BCT2}.

\subsection{This work: An $r$-spin cylinder intersection theory}
In this work we move up to genus $1,$ and construct the $g=1$ open $r$-spin theory, or equivalently the \emph{open $r$-spin cylinder theory}. This is the first rigorous construction of an open FJRW theory in positive genus.
The difficulties which appeared in the construction of the $r$-spin disk theories, that is the orientation problem and the choice of boundary conditions, appear in the cylinder theory as well, and are more challenging to overcome. In addition, we encounter a new difficulty: a dimension jump in the Witten ``bundle''. Thus, it is a true bundle only on the complement of a subspace.

To treat the dimension jump problem we first identified the dimension jump loci in the moduli, and then restricted our attention to sections of our bundles-of-interest which have a prescribed non vanishing behaviour near these loci.
This is essentially equivalent to excluding (neighbourhoods of) these loci from the moduli space, thus obtaining new boundaries, and imposing prescribed boundary conditions.
 
The orientation problem has been solved in \cite[\S 3]{TZ1}, where it was shown that away from the dimension jump loci the Witten bundle has a canonical relative orientation, which allows us to integrate.

A key difficulty in defining boundary conditions was that the positivity of the Witten bundle's sections, which played a central role in constructing the $g=0$ theory, proving independence of choices for intersection numbers, and calculating these numbers, surprisingly breaks in $g=1.$
This expected positivity has lead the first-named-author to anticipate, in \cite{BCT3}, that the (back then) yet-to-be-defined open $r$-spin theory in $g=1$ would satisfy the genus-one topological recursion relation Theorem \ref{thm:trr_g1} below.

But there is a cure for this problem, which comes from the relative cotangent lines $\CL_i.$ Dimension-versus-Rank considerations \eqref{eq:dim_equal_rk} show that only intersection numbers which contain at least one $\psi_i$ class may be non zero. Miraculously, there exist completely new, natural, boundary conditions for the $\psi_i$ classes at those boundaries that the expected positivity does not hold. Interestingly, in order to define these new boundary conditions consistently we need to work with the point-insertion analogue of the BCT $r$-spin theory mentioned above. We do not know if this is a fundamental problem or just a technicality. We refer to sections of direct sums of the Witten bundle and relative cotangent lines satisfying the above boundary conditions as \emph{canonical}.

To summarize, combining the above ideas we have shown (see Definition \ref{dfn correlator} and Theorem \ref{thm indenpdent of order} for a precise statement)
\begin{thm}\label{thm:int_nums_well_def}
Assume that the twists $a_1,\ldots,a_l$ and numbers of descendents $d_1,\ldots,d_l$ satisfy~\eqref{eq:dim_equal_rk} with $g=1$. We can define an integral
\[\int_{\widetilde{\mathcal{PM}}^{1/r}_{1,k,\{a_1, \ldots, a_l\}}} e\left( \mathcal{W} \oplus \bigoplus_{i=1}^l \mathbb{L}_i^{\oplus d_i}, \mathbf{s}_{\text{canonical}}\right),
\]
where ${\widetilde{\mathcal{PM}}^{1/r}_{1,k,\{a_1, \ldots, a_l\}}}$ is the point insertion moduli space of $r$-spin cylinders with internal points twisted by $a_1,\ldots,a_l$ and $k$ boundary markings twisted $r-2$, after removing certain boundary strata, and $\mathbf{s}_{\text{canonical}}$ is an arbitrary choice of nowhere-vanishing canonical section of $\mathcal{W} \oplus \bigoplus_{i=1}^l\mathbb{L}_i^{\oplus d_i}|_{\partial{\widetilde{\mathcal{PM}}^{1/r}_{1,k,\{a_1, \ldots, a_l\}}}}$.

Moreover, the integral is independent of the choice of the non-vanishing canonical $\mathbf{s}_{\text{canonical}}.$ 
\end{thm}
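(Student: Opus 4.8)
The plan is to follow the standard scheme for relative Euler classes in open intersection theory, as in \cite{PST14,Tes15,BCT2,TZ2}, but to pay special attention to the extra boundary components created by removing neighbourhoods of the dimension-jump loci. First I would recall that, by \eqref{eq:dim_equal_rk} with $g=1$, the rank of $\mathcal{W}\oplus\bigoplus_{i=1}^l\mathbb{L}_i^{\oplus d_i}$ equals the real dimension of $\widetilde{\mathcal{PM}}^{1/r}_{1,k,\{a_1,\ldots,a_l\}}$, and that this bundle carries the canonical relative orientation constructed in \cite[\S 3]{TZ1}. Thus, for a fixed nowhere-vanishing canonical boundary section $\mathbf{s}$, the integral is \emph{defined} as the signed count of zeros of a smooth extension $\widetilde{\mathbf{s}}$ of $\mathbf{s}$ to all of $\widetilde{\mathcal{PM}}^{1/r}_{1,k,\{a_1,\ldots,a_l\}}$ which is transverse to the zero section, where, as usual, one passes to multisections to achieve transversality in the orbifold setting and the count becomes rational. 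Although $\widetilde{\mathcal{PM}}^{1/r}_{1,k,\{a_1,\ldots,a_l\}}$ is not compact, the prescribed non-vanishing behaviour of $\mathbf{s}$ near the removed dimension-jump loci forces the zeros of any extension sufficiently $C^0$-close to $\mathbf{s}$ to lie in a fixed compact region bounded away from those loci and away from the boundary; hence such a transverse extension exists and has finitely many zeros. Independence of the count on the extension is the special case $\mathbf{s}_0=\mathbf{s}_1$ of the argument below.

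Second, I would reduce the independence statement to the purely topological claim that any two nowhere-vanishing canonical sections $\mathbf{s}_0,\mathbf{s}_1$ of $\bigl(\mathcal{W}\oplus\bigoplus_{i=1}^l\mathbb{L}_i^{\oplus d_i}\bigr)|_{\partial}$ are homotopic through nowhere-vanishing canonical sections. Granting this, one picks transverse extensions $\widetilde{\mathbf{s}}_0,\widetilde{\mathbf{s}}_1$, pulls the bundle back to $\widetilde{\mathcal{PM}}^{1/r}_{1,k,\{a_1,\ldots,a_l\}}\times[0,1]$, and extends the data ($\widetilde{\mathbf{s}}_0$ on the bottom face, $\widetilde{\mathbf{s}}_1$ on the top face, and the homotopy on $\partial\times[0,1]$) to a transverse section $S$ which is nowhere-vanishing on $\partial\times[0,1]$ and, by the same compactness-plus-boundary-behaviour argument, has $S^{-1}(0)$ a compact oriented $1$-manifold with corners disjoint from the removed loci. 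Its oriented boundary is $\widetilde{\mathbf{s}}_1^{-1}(0)\times\{1\}$ minus $\widetilde{\mathbf{s}}_0^{-1}(0)\times\{0\}$ — the corners of $\widetilde{\mathcal{PM}}^{1/r}_{1,k,\{a_1,\ldots,a_l\}}$ contribute nothing since $S$ is nowhere-vanishing over all of $\partial\times[0,1]$ — so the two signed zero counts coincide.

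Third, for the homotopy claim I would analyse the canonical boundary conditions stratum by stratum. On the strata where the positivity condition is available, it is convex: a convex combination of sections positive in the relevant sense is again positive, hence nowhere-vanishing, so the linear homotopy $\mathbf{s}_t=(1-t)\mathbf{s}_0+t\mathbf{s}_1$ already works. On the strata where positivity fails in $g=1$, the newly introduced $\mathbb{L}_i$-boundary conditions take over, and one must check that these — together with the point-insertion description of the relevant cotangent-line sections — again cut out a convex, or at least fibrewise contractible, family of nowhere-vanishing sections, and that the two descriptions agree along the overlaps. I expect this to be cleanest as an induction on $\dim\widetilde{\mathcal{PM}}^{1/r}_{1,k,\{a_1,\ldots,a_l\}}$: each boundary stratum is covered by a product of lower-dimensional moduli of $r$-spin disk and cylinder type, the canonical conditions restrict to canonical conditions on the factors, the inductive hypothesis supplies the needed contractibility on each stratum, and one glues the piecewise homotopies with a partition of unity subordinate to the stratification, checking at each gluing that non-vanishing is preserved.

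I expect the main obstacle to be precisely this third step: verifying that the new $\mathbb{L}_i$-class boundary conditions, which have no genus-zero analogue (there convex positivity alone sufficed), genuinely define a well-behaved family of nowhere-vanishing sections, that they are consistent with the positivity conditions on the strata where both are imposed, and that this consistency survives passage to the deepest corners of $\widetilde{\mathcal{PM}}^{1/r}_{1,k,\{a_1,\ldots,a_l\}}$ as well as to the artificial boundary components arising from the dimension-jump loci. A secondary technical point is making the compactness estimate of the first step uniform along the homotopy, so that a single family of neighbourhoods of the removed loci confines the zero sets of $\widetilde{\mathbf{s}}_0$, $\widetilde{\mathbf{s}}_1$ and $S$ simultaneously.
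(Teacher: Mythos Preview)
Your overall architecture is the standard relative Euler class scheme, and your first two steps are essentially what the paper does as well (Observation~\ref{obs non vanish of section near boundary} and the paragraph after it give the compactness argument you sketch; \cite[Lemma 4.12]{BCT2} is the cobordism step). The divergence is in your third step, and it is a genuine gap rather than a mere technicality.

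The paper's canonical sections are not simply ``sections satisfying a convex condition on each boundary face.'' They are $\gamma$-\emph{canonical} for a chosen total order $\gamma$ on the set $\{i:d_i>0\}$, and the $\mathbb{L}$-part of the boundary condition (Definition~\ref{def tame perturbation}) is: on each genus-one vertex $u$, the section $t_{j,u}$ must \emph{coincide with the specific TRR multisection} $\mathfrak s_{\nu(j),u}$ outside some neighbourhood $K_{j,u}$ of its zero locus, with the further constraint that the partial sum $T_{j-1,u}$ built from the \emph{earlier} (in the order $\gamma$) sections is nowhere-vanishing on $K_{j,u}$. This is neither convex nor contractible in any obvious sense: two $\gamma$-canonical sections may have different $K_{j,u}$'s, and a convex combination need not equal $\mathfrak s_{\nu(j),u}$ outside either of them. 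Worse, for two different orders $\gamma_1,\gamma_2$ the conditions are structurally different, so there is no evident common family of nowhere-vanishing sections through which to homotope. Your proposed fix---induction on dimension plus partition of unity---would require exactly the fibrewise contractibility you are trying to verify, so the argument is circular at that point.

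The paper avoids this entirely by taking an indirect, algebraic route. It first proves (Proposition~\ref{prop trr g=1 ordered}) that for \emph{any} fixed $(\gamma,\mathbf s)$ the correlator satisfies a topological recursion relation expressing it in terms of genus-zero correlators and genus-one correlators with strictly smaller $D=\sum d_i$. The homotopy used in that proof (Step~4) is not between two arbitrary canonical sections but between $\mathbf s$ and a specific auxiliary section $\mathbf s^{re}\oplus s^{trr}_{i_1}$ built from the maximal element $i_1=\gamma(|I^{d>0}|)$; the tame-perturbation hypothesis is precisely what makes \emph{that particular} homotopy nowhere-vanishing where it needs to be (Observation~\ref{obs on genus one K}). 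Independence of $(\gamma,\mathbf s)$ is then deduced by induction on $D$ (Theorem~\ref{thm indenpdent of order}): the base case $D=0$ is zero by dimension, and for the inductive step one applies the TRR to two choices $(\gamma_1,\mathbf s^1)$ and $(\gamma_2,\mathbf s^2)$, notes that all right-hand-side terms are already independent by the inductive hypothesis, and finally checks that the two resulting expressions agree by a symmetry argument using the genus-zero TRRs. So independence is a \emph{corollary} of the recursion, not a prerequisite for it.
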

We denote the above integral by $\left\langle\tau_{d_1}^{a_1}\ldots\tau^{a_l}_{d_l}\sigma^k\right\rangle^{\frac{1}{r},o}_1.$
These numbers are taken to be $0$ if \eqref{eq:dim_equal_rk} for $g=1$ fails.

The above intersection numbers can also be calculated. The main algebraic results are
\begin{thm}\label{thm:trr_g1}
The open $g=1$ TRR \eqref{eq:g=1_trr} holds, whenever the left-hand side, satisfies \eqref{eq:dim_equal_rk}.
\end{thm}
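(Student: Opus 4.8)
The plan is to prove \eqref{eq:g=1_trr} geometrically, following the strategy used for the genus-zero open topological recursion relations in \cite{BCT2} and for the classical genus-one recursion on $\overline{\mathcal{M}}_{1,n}$, while incorporating the features special to the cylinder theory constructed above: the dimension jump of the Witten bundle, the failure of the positivity boundary conditions, and the consequent use of the new $\psi$-class boundary conditions that are available only on the point-insertion moduli. Throughout we assume the left-hand side satisfies \eqref{eq:dim_equal_rk} with $g=1$, so that all integrals of Theorem \ref{thm:int_nums_well_def} have the expected dimension.

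\emph{Step 1: a boundary expression for the extra $\psi$-class.} We would write $\psi_1^{d_1+1}=\psi_1\cdot\psi_1^{d_1}$, absorb $\psi_1^{d_1}$ into the bundle $E:=\mathcal{W}\oplus\mathbb{L}_1^{\oplus d_1}\oplus\bigoplus_{i\ge 2}\mathbb{L}_i^{\oplus d_i}$, and realize the remaining factor of $\psi_1$ by a canonical section $\xi$ of the extra copy of $\mathbb{L}_1$, so that the left-hand side of \eqref{eq:g=1_trr} equals $\int_{\widetilde{\mathcal{PM}}^{1/r}_{1,k,\vec{a}}}e(E\oplus\mathbb{L}_1,\mathbf{s})$ with $\mathbf{s}|_{\mathbb{L}_1}=\xi$, and this integral localizes on three types of boundary divisors: (i) $D^{\mathrm{int}}$, where a closed genus-zero component carrying the internal point $z_1$ is attached to the cylinder at an internal node — here $\xi$ vanishes transversally; (ii) $D^{\partial}$, where a genus-zero disk carrying $z_1$ is attached to the cylinder at a boundary node; and (iii) $D^{\mathrm{cont}}$, the genus-reducing divisor where a boundary-to-boundary arc of the annulus is pinched, turning the cylinder into a disk with a boundary self-node. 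The divisors $D^{\partial}$ and $D^{\mathrm{cont}}$ are real-codimension-one strata of $\widetilde{\mathcal{PM}}^{1/r}_{1,k,\vec{a}}$ and contribute through the boundary conditions imposed on $\xi$ there. This is the cylinder analogue of the genus-one relation $\psi_1=c\,\delta_{\mathrm{irr}}+(\text{separating divisors})$ on $\overline{\mathcal{M}}_{1,n}$; the decisive new point is that $\xi$ must be \emph{canonical} — it must respect the forgetful and positivity boundary conditions where they apply, the new $\psi$-class boundary conditions at the positivity-failure loci, and the prescribed non-vanishing near the dimension-jump loci of $\mathcal{W}$ — and it is precisely here that the point-insertion construction of \cite{TZ1,TZ2} is needed.

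\emph{Step 2: restriction and factorization.} On each divisor we would restrict $e(E\oplus\mathbb{L}_1,\mathbf{s})$ and pass to the normalization. The Witten bundle splits at a node, $\mathcal{W}|_{\mathrm{node}}\cong\mathcal{W}'\oplus\mathcal{W}''$ up to the standard correction when the node carries twist $-1$ (equivalently twist $r-1$ on the other branch); this correction is exactly what produces the closed genus-zero \emph{extended} correlators $\langle\cdots\rangle^{\frac1r,\mathrm{ext}}_0$ and the range $-1\le a\le r-2$ in the first sum of \eqref{eq:g=1_trr}. Each $\mathbb{L}_i$ pulls back from the component carrying $z_i$, and $\mathbb{L}_1$ restricted to $D^{\mathrm{int}}$ or $D^{\partial}$ becomes the relative cotangent line at $z_1$ on the genus-zero component, which is why $\tau^{a_1}_{d_1}$ reappears on the genus-zero side. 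The heart of this step is to check that canonical sections restrict to canonical sections on the normalized pieces — essentially the compatibility underlying Theorem \ref{thm:int_nums_well_def} — so that the contribution of each divisor is a product of a genus-zero and a genus-one correlator of the required type; in particular $D^{\mathrm{cont}}$ contributes $\tfrac12\langle\prod_{i\in[l]}\tau^{a_i}_{d_i}\sigma^{k+1}\rangle^{\frac1r,o}_0$, the extra $\sigma$ coming from the two half-nodes, which after normalization become boundary markings of twist $r-2$, and the factor $\tfrac12$ from the $\mathbb{Z}/2$ interchanging the two branches.

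\emph{Step 3: assembly and the main obstacle.} Summing the contributions reproduces \eqref{eq:g=1_trr}: the factor $\binom{k}{k_1}$ records the distribution of the $k$ boundary markings between the two components in the second sum, the extra $\sigma^{k_2+1}$ (resp. $\sigma^{k+1}$) records the normalized boundary node, and the sum over $a$ records the admissible internal-node twists. We expect the genuine difficulty to lie in Step 1 and in the $D^{\mathrm{cont}}$ contribution: producing a section $\xi$ of $\mathbb{L}_1$ that simultaneously realizes the boundary relation and is canonical — compatible with the new $\psi$-class boundary conditions and with the prescribed behaviour near the dimension-jump locus — and then verifying that the dimension-jump locus contributes nothing extra and that the genus-reducing divisor enters with coefficient exactly $\tfrac12$. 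This geometric route specializes for $r=2$ to the genus-one recursion underlying \cite{Tes15,BT17}, and its output is consistent with the universal $F$-cohomological field theory recursion of \cite{alexandrov2023construction}.
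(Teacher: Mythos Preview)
Your broad outline---replace the extra copy of $\mathbb{L}_1$ by a specific section, localize to its zero locus, and factorize---is indeed what the paper does, and your identification of the divisor $D^{\mathrm{int}}$ with the closed-extended contribution is correct. But the treatment of the other two terms has a genuine gap.

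You describe $D^{\partial}$ and $D^{\mathrm{cont}}$ as real-codimension-one strata that ``contribute through the boundary conditions imposed on $\xi$ there.'' This is not how the mechanism works. Those codimension-one strata are exactly the boundaries that are either cancelled by point insertion (twist-zero illegal side) or carry the positivity constraint (NS$+$, R, CB). By design, a canonical multisection is \emph{nowhere vanishing} near all such strata; they contribute nothing to the zero count. The disk$\times$cylinder term and the $\tfrac12$-term instead arise from honest real-codimension-two loci that live inside the point-insertion pieces of the glued moduli. Concretely, the paper works with two different cotangent lines $\mathbb{L}_{i_1}$ and $\mathbb{L}^*_{i_1}$ on $\widetilde{\mathcal{PM}}^{1/r,0}_{1,B,I}$ (see \S\ref{sec uni cotangent line over glued moduli}); the natural genus-one TRR multisection $\mathfrak{s}$ of \S\ref{sec basic trr section}, pulled back from the basic moduli $\Mbar_{1,0,1}$ (where it is \emph{nowhere vanishing}), lives in $\mathbb{L}^*_{i_1}$ and vanishes only along the $D^{\mathrm{int}}$-type loci. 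To obtain a section of $\mathbb{L}_{i_1}$ one tensors with the dual of the comparison section $\tilde{\mathfrak{t}}_{i_1,\mathbb{L}\to\mathbb{L}^*}$ of $\mathbb{L}^*_{i_1}\otimes\mathbb{L}_{i_1}^\vee$, introducing ``poles'' along the loci $Z^{R,T}_{sp}$ and $Z^1_{nsp}$; these poles are then converted to zeros with the opposite sign of vanishing (Remark~\ref{rmk divosr like section}). It is these converted poles that produce, after factorizing along the detaching maps \eqref{eq detach sp} and \eqref{eq detach nsp}, the second and third lines of \eqref{eq:g=1_trr}. Your proposal contains no analogue of this $\mathbb{L}_{i_1}$ vs.\ $\mathbb{L}^*_{i_1}$ comparison, and without it there is no section $\xi$ of $\mathbb{L}_{i_1}$ with the correct zero structure.

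Two further points. First, the coefficient $\tfrac12$ is not the $\mathbb{Z}/2$ swapping the branches of a self-node; it is the weight $\tfrac12$ of one of the two branches of the TRR \emph{multisection} $\mathfrak{s}_{i_1,v,r_1,r_2}$ on a $2$-rooted genus-zero vertex, only one of which vanishes along $Z^1_{nsp}$ (see the last paragraph of Step~5 in the proof of Proposition~\ref{prop trr g=1 ordered}). Second, the proof that the modified section $\bm{s}^{mod}=\bm{s}^{re}\oplus s^{trr}_{i_1}$ has the same zero count as the original canonical $\bm{s}$ (Step~4 of that proof) uses the tame-perturbation-of-TRR structure built into the definition of canonical section---in particular the nesting condition \ref{item def tame non vanish in K} in Definition~\ref{def tame perturbation}, which guarantees that the linear homotopy is admissible on the genus-one vertex. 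Finally, the paper does not prove the TRR directly for the invariants $\langle\cdots\rangle^{\frac1r,o}_1$: Proposition~\ref{prop trr g=1 ordered} first proves an \emph{ordered} TRR (with respect to $\gamma$ and $\bm{s}$), and Theorem~\ref{thm trr g=1 unordered} is then established simultaneously with independence of choices (Theorem~\ref{thm indenpdent of order}) by an induction on $D=\sum d_i$.
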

The proof is very different from proofs of $g=0$ TRR and $g=0$ open WDVV which have appeared in the literature \cite{PST14,BCT2,TZ2,solomon2023relative} and relies strongly on $g=1$ properties.
This is the first $g=1$ open model which is proven to satisfy the universal $g=1$ TRR, except for the special case $r=2,$ which can be shown algebraically to satisfy TRR by combining the proof of the open analogue of Witten's conjecture \cite{BT17} and \cite[Theorem 1]{BCT3}.
\begin{thm}\label{thm:wave_func}
    The potential of the genus-one open $r$-spin numbers, defined via \eqref{eq:g_pot} for $g=1,$ satisfies the $g=1$ case of the rGD wave function conjecture \ref{conj:main_conj_BCT}.
\end{thm}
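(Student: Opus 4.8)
The plan is to characterize $F^{\frac1r,o}_1$ intrinsically by a reconstruction from genus-zero data, and then to verify that the right-hand side of \eqref{eq:bct_conj} satisfies the very same reconstruction. On the geometric side the relevant structural equations are the genus-one topological recursion relation \eqref{eq:g=1_trr} (Theorem \ref{thm:trr_g1}) together with the open string \eqref{eq:open_string} and open dilaton \eqref{eq:open_dilaton} equations in genus one; the latter two I would establish along the lines of the genus-zero arguments of \cite{BCT2}, by analyzing the pull-back of canonical sections under the morphism forgetting an internal marked point of twist $0$, resp.\ $1$. The key elementary observation is that whenever $l\ge 1$ the dimension constraint \eqref{eq:dim_equal_rk} in genus one forces $\sum_i d_i\ge 1$: it gives $\sum_i d_i=l-(\sum_i a_i-k)/r$, and $\sum_i a_i\le l(r-1)<lr$. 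Hence every nonzero genus-one correlator with $l\ge 1$ carries an internal insertion of positive descendent, and so occurs as the left-hand side of an instance of \eqref{eq:g=1_trr}.

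Next I would prove the reconstruction lemma: \eqref{eq:g=1_trr}, together with the genus-zero open correlators (known by \cite{BCT2,TZ2}), the genus-zero closed and extended correlators (known by \cite{FSZ10,BCT_Closed_Extended}), and the single seed value $\langle\,\rangle^{\frac1r,o}_1$, determines every number $\langle\tau^{a_1}_{d_1}\cdots\tau^{a_l}_{d_l}\sigma^k\rangle^{\frac1r,o}_1$. Indeed, a glance at the three groups of terms on the right-hand side of \eqref{eq:g=1_trr} shows that each genus-one correlator appearing there has either strictly fewer internal marked points than the left-hand side, or the same number of internal marked points but strictly smaller total internal descendent degree, whereas the remaining terms involve only genus-zero open and extended correlators; a double induction — first on $\sum_i d_i$, then on $l$ — therefore closes. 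The only genus-one correlator not produced this way is $\langle\,\rangle^{\frac1r,o}_1$, which by \eqref{eq:dim_equal_rk} (with $l=0$, whence $k=0$) is the unique genus-one number with $l=0$; since $2g-2+k+2l=0$ there it does not correspond to a stable moduli space, so it is $0$. Thus $F^{\frac1r,o}_1$ is uniquely determined by its genus-zero inputs. (Alternatively one may phrase this as identifying $F^{\frac1r,o}_1$ with the genus-one $F$-CohFT extension of the $r$-spin open-WDVV solution of \cite{alexandrov2023construction}.)

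It remains to check that $\widetilde F_1:=\phi_1\big|_{t^{r-1}_d\mapsto\frac{1}{\sqrt{-r}}(t^{r-1}_d-\delta_{d,0}rs)}$ solves \eqref{eq:g=1_trr}, \eqref{eq:open_string} and \eqref{eq:open_dilaton} with the same genus-zero inputs and the same seed. The seed matches trivially, since $\Phi|_{T_{\ge 2}=0}=1$ gives $\phi_1|_{T_{\ge 2}=0}=0$. For the recursion, the genus-zero open inputs are packaged by $\phi_0$ via Theorem \ref{thm:main_BCT2}, while the genus-zero closed and extended inputs are packaged by the operator $L$ via the Faber--Shadrin--Zvonkine identities $\res L^{n/r}=\lambda^{1-n}\partial_{T_1}\partial_{T_n}F^{\frac1r,c}$; extracting the $\lambda^{0}$-coefficient of the genus expansion of the wave-function equations \eqref{eq:wave_func_eqs}, written as $\partial_{T_n}\phi=\lambda^{n-1}\Phi^{-1}\big((L^{n/r})_+\Phi\big)$, should produce — after transport through the coordinate changes \eqref{eq:closed r-spin change of variables} and \eqref{eq:r-1_coord_change} and the $s$-shift — exactly the open genus-one TRR, string and dilaton for $\widetilde F_1$. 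This is the algebraic prediction underlying \cite{BCT3,gomez2021open}; it can also be obtained from the explicit description of $\Phi$ in \cite{BY15}, or from the uniqueness of the extension in \cite{alexandrov2023construction}. Combining with the reconstruction lemma gives $F^{\frac1r,o}_1=\widetilde F_1$, which is \eqref{eq:bct_conj} for $g=1$.

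The main obstacle is this last algebraic step: one has to carry out the genus expansion of \eqref{eq:wave_func_eqs} and reconcile the three coordinate changes — the closed $r$-spin change \eqref{eq:closed r-spin change of variables}, the $t^{r-1}$-change \eqref{eq:r-1_coord_change}, and the $s$-shift — so that the resulting identity reproduces the precise combinatorial form of \eqref{eq:g=1_trr}, binomial coefficients and the final $\tfrac12$-term included. The remaining ingredients — the genus-one open string and dilaton on the geometric side, the elementary vanishing of the seed, and the well-foundedness of the double induction — are routine once Theorem \ref{thm:trr_g1} is granted.
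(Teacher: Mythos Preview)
Your approach is essentially the same as the paper's: reduce to a reconstruction argument via the genus-one TRR (Theorem~\ref{thm:trr_g1}) and then verify that the right-hand side of \eqref{eq:bct_conj} satisfies the same recursion. The paper's proof is two sentences: it cites \cite[\S 3.4]{BCT3} for the fact that $\phi_1$ (after the shift) satisfies the open $g=1$ TRR, and observes that the TRR alone fixes every coefficient.

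Two remarks on your version. First, the open string and dilaton equations are unnecessary for the reconstruction: as you yourself argue, the TRR by itself closes under induction on $D=\sum_i d_i$, with the base case $D=0$ handled by Remark~\ref{rmk vanish without psi g=1}. In fact the paper proves string and dilaton (Theorem~\ref{thm:open_string_dilaton}) \emph{as a corollary} of Theorem~\ref{thm:wave_func}, not as an input to it; your proposal to establish them geometrically first, ``along the lines of the genus-zero arguments of \cite{BCT2}'', would be extra work and is not carried out in the paper. Second, the algebraic step you flag as the ``main obstacle'' --- extracting the $\lambda^0$-coefficient of \eqref{eq:wave_func_eqs} and matching it to \eqref{eq:g=1_trr} through the coordinate changes --- is precisely what the paper outsources to \cite[\S 3.4]{BCT3}; it is not redone here.
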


\begin{thm}\label{thm:open_string_dilaton}
Assume $k+2l>0,$ then the $g=1$ Open String equation \eqref{eq:open_string} and the Open Dilaton equation \eqref{eq:open_dilaton} hold as long as their left hand side satisfies \eqref{eq:dim_equal_rk}.
\end{thm}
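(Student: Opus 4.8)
The plan is to prove both relations geometrically, by pushing forward along the morphism $\rho$ that forgets a single internal marked point of twist $0$ and stabilizes, going from the genus-one point-insertion moduli with internal twists $0,a_1,\dots,a_l$ and $k$ boundary markings to the one with internal twists $a_1,\dots,a_l$ and $k$ boundary markings, in both cases with the strata removed as in Theorem~\ref{thm:int_nums_well_def}; here one assumes $k+2l>0$ so that the target is stable. The argument rests on three comparisons, exactly as in the closed $r$-spin theory and in the genus-zero open theory of \cite{PST14,BCT2}: (i) a twist-$0$ internal point imposes no condition on the $r$-spin bundle, so $R^1\pi_*\mathcal S$ — and hence the Witten bundle $\mathcal W$ together with the dimension-jump loci — is pulled back along $\rho$, and a canonical section on the target pulls back to a canonical section on the source over the locus where the forgotten point stays away from the other markings and from $\partial\Sigma$; (ii) for an old internal point $i$ one has $\CL_i\cong\rho^*\CL_i\otimes\mathcal O(D_{0i})$, where $D_{0i}$ is the divisor on which the forgotten point collides with the $i$-th point, so $\psi_i=\rho^*\psi_i+[D_{0i}]$; (iii) the relative cotangent line $\CL_0$ at the forgotten point restricts on each fibre of $\rho$ to the relative cotangent line of the corresponding marked bordered surface. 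Granting these, the string equation reduces to the relative form of the identity $\rho_*((\psi_i)^{d_i})=\psi_i^{d_i-1}$ (together with $\rho_*(1)=0$), and the dilaton equation to the relative form of $\rho_*(\psi_0)=g+l+k-1$; functoriality and multiplicativity of relative Euler classes then turn these fibrewise statements into the stated identities among the numbers $\langle\cdots\rangle^{\frac{1}{r},o}_1$.

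Concretely I would proceed in the following order. First, set up $\rho$ as a proper morphism between the modified moduli spaces of Theorem~\ref{thm:int_nums_well_def} and check that it is compatible with all the ad hoc modifications of the construction: that the dimension-jump loci, and the strata removed in the point-insertion construction of \cite{TZ1,TZ2}, are $\rho$-related, so that $\rho$ indeed restricts to a morphism of the spaces on which the integrals of Theorem~\ref{thm:int_nums_well_def} live. Second, promote (i) to an isomorphism of bundles-with-boundary-data and pin down the canonical section $\mathbf s_{\text{canonical}}$ on the source: on the part of the boundary lying over $\partial\bigl(\widetilde{\mathcal{PM}}^{1/r}_{1,k,\vec a}\bigr)$ take the pullback of a canonical section of the target, and near the extra boundary strata of the source — where the forgotten point bubbles off onto a disk attached at a boundary node, collides with a boundary marking, or meets another internal marking — extend it compatibly, exactly as in \cite{PST14,Tes15,BCT2}. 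Third, run the two fibre computations: for the string equation this is the standard divisor calculation using $\psi_i=\rho^*\psi_i+[D_{0i}]$, $[D_{0i}]|_{D_{0i}}=-\rho^*\psi_i|_{D_{0i}}$ and $[D_{0i}]\cdot\psi_0=0$, restricted to the open moduli; for the dilaton equation it is the computation of $\int_{\text{fibre}}e(\CL_0,\mathbf s_{\text{canonical}}|_{\partial})$, which with the canonical boundary condition evaluates to $g+l+k-1$ (for $r=2$ this is the computation of \cite{PST14,Tes15}). Finally, in both cases one must verify that the extra real-codimension-one strata of the source contribute nothing, which follows from the chosen boundary conditions there, as in the analogous cancellations in the genus-zero open proofs.

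The main obstacle I expect is precisely this interaction of $\rho$ with the three features that distinguish the present construction from its classical and genus-zero analogues: the removed dimension-jump loci, the point-insertion modification of the moduli, and the real-codimension-one boundary together with its new $g=1$ canonical boundary conditions. One must show, not merely set-theoretically but at the level of the bundles and of their prescribed non-vanishing behaviour near the removed loci, that forgetting the twist-$0$ point is simultaneously compatible with all of them, and that no correction terms are produced by the boundary strata of the source in which the forgotten point degenerates — including the subtler strata forced by the point-insertion boundary conditions. The remaining, ``bulk'' ingredients are the classical string and dilaton computations and should be routine. As a consistency check — and as a fallback should the direct argument stumble on some family of boundary strata — one can alternatively deduce both identities from the genus-one wave-function statement of Theorem~\ref{thm:wave_func}, since the Gelfand--Dikii wave function $\Phi$ satisfies the corresponding string- and dilaton-type ($L_{-1}$ and $L_0$) equations built into the hierarchy, which translate under the coordinate changes \eqref{eq:closed r-spin change of variables}, \eqref{eq:r-1_coord_change} into \eqref{eq:open_string} and \eqref{eq:open_dilaton} for $g=1$; or from the genus-one TRR of Theorem~\ref{thm:trr_g1} together with the genus-zero string and dilaton equations, along the lines of the $F$-cohomological field theory formalism of \cite{alexandrov2023construction}.
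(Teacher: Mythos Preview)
Your fallback is precisely what the paper does: it invokes Theorem~\ref{thm:wave_func} together with the fact, proven in \cite[\S 3.3]{BCT3}, that the wave-function potential on the right-hand side of \eqref{eq:bct_conj} satisfies the open string and dilaton equations in all genus. The whole proof in the paper is two sentences.

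Your main proposal --- a direct geometric pushforward along the forgetful map $\rho$ --- is a genuinely different route, and the one most people would try first. The advantage is conceptual transparency and independence from the integrable-systems machinery. The cost, however, is substantial in this particular setting, and I think you underestimate it. The intersection numbers here are \emph{defined} as zero counts of $\gamma$-canonical multisections (Definition~\ref{dfn canonical section}), and canonicity is a long list of structural constraints: decomposition over the point-insertion graph, pullback from $\hat{\mathcal B}\Gamma$, transverse decomposability at every $\Delta\in\partial^*\Gamma$, positivity on genus-zero pieces, and --- crucially --- $\gamma$-tameness on genus-one pieces, meaning each $t_j$ agrees with the explicit TRR multisection $\mathfrak s_{\nu(j),u}$ outside a controlled neighbourhood $K_{j,u}$. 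A section pulled back along $\rho$ does not obviously satisfy these constraints on the source, and for the dilaton equation you would in addition need a canonical $\CL_0$-component, which is not part of the definition at all (indeed $d_0=1$ would force $0$ to participate in the order $\gamma$, and the tameness conditions for $t_j$ at $j$ corresponding to the forgotten point would have to be checked against a TRR section for that point). You would therefore either have to enlarge the class of admissible multisections and re-prove independence of choices for the larger class, or argue directly that your pushforward construction represents the same Euler number --- neither of which is routine here, since the independence proof in the paper (Theorem~\ref{thm indenpdent of order}) itself goes through the TRR and an induction on $\sum d_i$, not through a general homotopy argument.

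So: your plan is not wrong in spirit, but step~(i) --- ``a canonical section on the target pulls back to a canonical section on the source'' --- is where the real work lies, and it is not a matter of checking compatibility with removed loci but of matching a rather rigid inductive definition. The paper sidesteps all of this by going through the wave function.
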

Again, with the exception of the special case $r=2$ studied in \cite{Tes2,BT17} this is the only work so far which has shown relation between an open theory and an integrable hierarchy in $g>0,$ as well as the first work to have proven the open string and dilaton for $g>0.$
\subsection{Discussion: Possible generalizations}
This works suggests several natural research avenues. 
\begin{itemize}
    \item \underline{$g>1:$ }It is desired to construct the $g>1$ sectors of open $r$ spin as well, and to prove Conjecture \ref{conj:main_conj_BCT} for them. All difficulties which were solved in this work also make their appearance in the $g>1$ cases, in an amplified manner. In particular the dimension jump loci are less understood, hence it may be unavoidable to wait for the development of virtual cycle machinery. 
    \item \underline{Higher rank open FJRWs: }\cite{GKT2} constructed higher rank open FJRW theories. Our construction does not extend to this setting, since it may include primary $g=1$ intersection numbers, but our construction leans on the presence of $\psi_i$ classes. Still, this obstacle might be overcame in different ways. We expect the resulting intersection numbers to be subject to \emph{wall crossings}, like in \cite{GKT2}, but that some polynomial combinations of them will be invariant, and satisfy $g=1$ TRR, which is \emph{structurally different} from the TRR of Theorem \ref{thm:trr_g1}.
    \item \underline{Theories with more types of boundary insertions: }\cite{TZ2} constructs $\lfloor\frac{r}{2}\rfloor$ open $r$-spin theories with more possible boundary twists. It is natural to generalize the construction of this paper to this setting, and to seek TRRs for them. 
    \item \underline{Open Gromov--Witten (OGW) theories: }
    Finally, a very natural generalization is to Open GW theories. Also in the Open GW case dimension jumps should be milder in $g=1$ than in $g>1.$ 
    As mentioned above, large class of $g=0$ Open GW theories are subject to Solomon's OWDVV \cite{sol_owdvv,solomon2023relative}. We expect that for these theories there should be a $g=1$ extension which satisfies the analogue of our $g=1$ TRR, in which the boundary conditions for the $\psi_i$ classes should resemble those we construct here.
\end{itemize}
\subsection{Plan of the paper}

The structure of the paper is as follows. In \S\ref{sec review} we review the open $r$-spin structures, their moduli spaces, the Witten bundles, and the point insertion technique studied in $\cite{BCT1,TZ1}$. In \S \ref{sec relative  cotangent and all}, we focus on the relative cotangent line bundles $\mathbb L$: we study the relation between them and construct special sections of them. In \S \ref{sec sections def etc} we introduce the notion of canonical sections and define the genus-one open $r$-spin intersection numbers as the zero count of a canonical section. In \S \ref{sec g=1 trr  and inde of choice} we show that the genus-one open $r$-spin intersection numbers are independent of the choice of the canonical section, and prove that they satisfy the genus-one open Topological Recursion Relations. In \S\ref{sec construction of section} we prove the existence of canonical sections.

\subsection{Acknowledgements}
R.T. thanks Tyler Kelly, Jake Solomon and Sasha Buryak for interesting discussion related to the content of this work. R.T. and Y.Z. were supported by the ISF (grant No. 1729/23).

\section{Review of graded $r$-spin disks and cylinders}\label{sec review}
\label{sec mod and bundle}
In this section, we review the definition of graded $r$-spin disks and cylinders, their moduli space, the relevant bundles and the point insertion technique.  More details can be found in \cite{BCT1,TZ1}.

Throughout the paper we will use the notation $[N],$ where $N$ is a natural number, to denote the set $\{1,2,\dots,N\}.$

\subsection{The moduli space of graded $r$-spin surfaces}

The main objects of study in this paper are genus-zero and genus-one marked Riemann surfaces with boundary; we view them as arising from closed spheres and torus with an orientation reversing involution.  More precisely, a \textit{nodal marked surface} is defined as a tuple
$$(C, \phi, \Sigma, \{z_i\}_{i \in I}, \{x_j\}_{j \in B}, m^I, m^B),$$
in which
\begin{itemize}
\item  $C$ is a nodal orbifold Riemann surface, which may be composed of disconnected components, and has isotropy only at special points;
\item $\phi: C \to C$ is an anti-holomorphic involution which, from a topological perspective, realizes the coarse underlying Riemann surface $|C|$  as the union of two Riemann surfaces, $\Sigma$ and $\overline{\Sigma}=\phi(\Sigma),$ glued along their common subset $\text{Fix}(|\phi|)$;
\item $\{z_i\}_{i \in I}\subset  C$ consists of distinct \textit{internal marked points} (or \textit{internal tails}) whose images in $|C|$ lie in $\Sigma\setminus\text{Fix}(|\phi|)$. We denote by $\overline{z_i}:= \phi(z_i)$ their \textit{conjugate marked points};
\item $\{x_j\}_{j \in B} \subset \text{Fix}(\phi)$ consists of distinct \textit{boundary marked points} (or \textit{boundary tails}) whose images in $|C|$ lie in ${\partial} \Sigma$;
\item $m^I$ (respectively $m^B$) is a marking of $I$ and (respectively $B$), \textit{i.e.} a one-to-one correspondence between  $I$ (respectively $m^B$) and $\left[\lvert I \rvert\right]$ (respectively $[\lvert B \rvert]$).
\end{itemize}
The genus of a nodal marked surface is the sum of the genus of each connected component of $C$. We call a nodal marked surface a disk if it is genus-zero, and a cylinder if it is genus-one.
We say that a nodal marked surface is \textit{stable} if each genus-zero irreducible component has at least three special points, and each genus-one irreducible component has at least one special point.

\begin{rmk}
    If $\text{Fix}(|\phi|)=\emptyset$, then $\Sigma$ is a closed nodal orbifold Riemann surface, and $C=\Sigma \sqcup \overline{\Sigma}$ is a disjoint union. Notice that in this case the genus is always even, and therefore for a genus-one nodal marked surface we always have $\text{Fix}(|\phi|)\ne \emptyset$.
\end{rmk}

A node of a nodal marked surface can be internal, boundary or contracted boundary; on genus-one connected component, an internal or boundary node can be separating or non-separating, as illustrated in Figure~\ref{fig node type} by shading $\Sigma \subseteq |C|$ in each case.  Note that ${\partial}\Sigma\subset\text{Fix}(|\phi|)$ is a union of circles, and $\text{Fix}(|\phi|)\setminus{\partial}\Sigma$ is the union of the contracted boundary nodes.
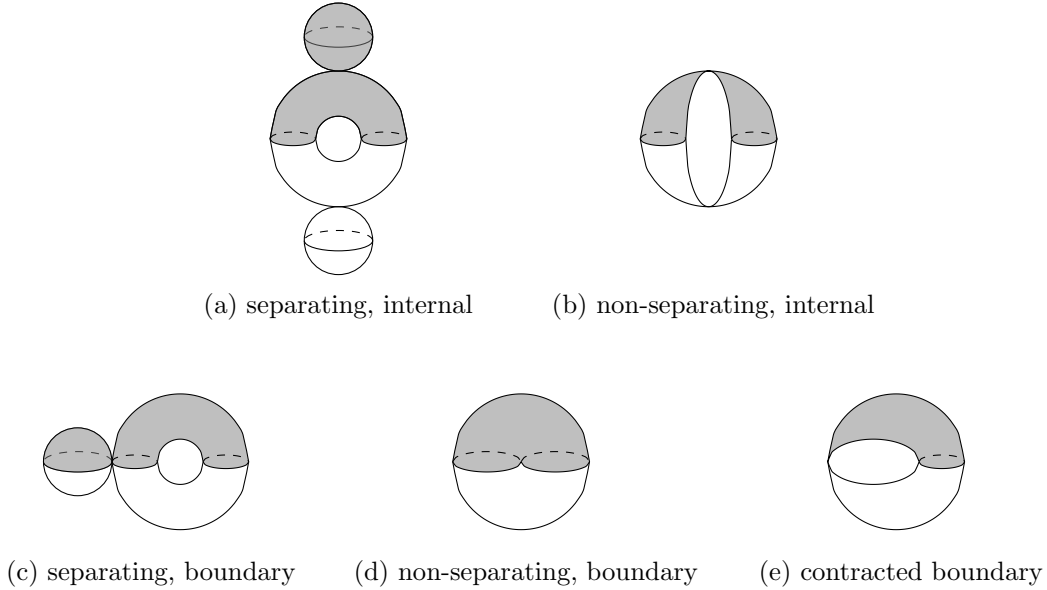
\begin{figure}[h]
\centering

  \begin{subfigure}{.3\textwidth}
  \centering

\begin{tikzpicture}[scale=0.3]
   \draw[ domain=-1:1,  smooth, variable=\x] plot ({\x}, {sqrt(1-\x*\x)});
   \draw[ domain=-3:3,   smooth, variable=\x] plot ({\x}, {sqrt(9-\x*\x)});

   \draw[ domain=-1:1, smooth, variable=\x] plot ({\x}, {-sqrt(1-\x*\x)});
   \draw[ domain=-3:3,  smooth, variable=\x] plot ({\x}, {-sqrt(9-\x*\x)});

    \draw [fill=gray,fill opacity=0.5] plot [domain=-1:1,  smooth]({\x}, {sqrt(1-\x*\x)}) --plot [domain=1:3,  smooth]({\x}, {-sqrt(1-(\x-2)*(\x-2))*0.3}) -- plot [domain=3:-3,  smooth ] ({\x}, {sqrt(9-\x*\x)})-- plot [domain=-3:-1,  smooth]({\x}, {-sqrt(1-(\x+2)*(\x+2))*0.3});

    \draw  [dashed, domain=1:3,  smooth] plot ({\x}, {sqrt(1-(\x-2)*(\x-2))*0.3});

    \draw  [dashed, domain=-3:-1,  smooth] plot ({\x}, {sqrt(1-(\x+2)*(\x+2))*0.3});

    \draw (0,4.5) circle (1.5);
    \draw (-1.5,4.5) arc (180:360:1.5 and 0.45);
    \draw[dashed] (1.5,4.5) arc (0:180:1.5 and 0.45);

     \draw (0,-4.5) circle (1.5);
    \draw (-1.5,-4.5) arc (180:360:1.5 and 0.45);
    \draw[dashed] (1.5,-4.5) arc (0:180:1.5 and 0.45);

    \draw [fill=gray,fill opacity=0.5] (0,4.5) circle (1.5);
   
\end{tikzpicture}

  \caption{separating, internal }
\end{subfigure}
  \begin{subfigure}{.3\textwidth}
  \centering

\begin{tikzpicture}[scale=0.3]

   \draw[ domain=-3:3,   smooth, variable=\x] plot ({\x}, {-sqrt(9-\x*\x)});
  \draw [fill=gray,fill opacity=0.5] plot [domain=-1:1,  smooth]({\x}, {sqrt(1-\x*\x)*3}) --plot [domain=1:3,  smooth]({\x}, {-sqrt(1-(\x-2)*(\x-2))*0.3}) -- plot [domain=3:-3,  smooth ] ({\x}, {sqrt(9-\x*\x)})-- plot [domain=-3:-1,  smooth]({\x}, {-sqrt(1-(\x+2)*(\x+2))*0.3});

  \draw  [dashed, domain=1:3,  smooth] plot ({\x}, {sqrt(1-(\x-2)*(\x-2))*0.3});

    \draw  [dashed, domain=-3:-1,  smooth] plot ({\x}, {sqrt(1-(\x+2)*(\x+2))*0.3});

    \draw[ domain=-1:1,   smooth, variable=\x] plot ({\x}, {-sqrt(1-\x*\x)*3});
\end{tikzpicture}
\vspace{0.9cm}
  \caption{non-separating, internal }
\end{subfigure}

\begin{subfigure}{.3\textwidth}
  \centering
\vspace{0.9cm}
\begin{tikzpicture}[scale=0.3]
  \draw[ domain=-1:1, smooth, variable=\x] plot ({\x}, {-sqrt(1-\x*\x)});
   \draw[ domain=-3:3,  smooth, variable=\x] plot ({\x}, {-sqrt(9-\x*\x)});

    \draw [fill=gray,fill opacity=0.5] plot [domain=-1:1,  smooth]({\x}, {sqrt(1-\x*\x)}) --plot [domain=1:3,  smooth]({\x}, {-sqrt(1-(\x-2)*(\x-2))*0.3}) -- plot [domain=3:-3,  smooth ] ({\x}, {sqrt(9-\x*\x)})-- plot [domain=-3:-1,  smooth]({\x}, {-sqrt(1-(\x+2)*(\x+2))*0.3});

    \draw  [dashed, domain=1:3,  smooth] plot ({\x}, {sqrt(1-(\x-2)*(\x-2))*0.3});

    \draw  [dashed, domain=-3:-1,  smooth] plot ({\x}, {sqrt(1-(\x+2)*(\x+2))*0.3});

    \draw (-4.5,0) circle (1.5);
    \draw (-6,0) arc (180:360:1.5 and 0.45);
    \draw[dashed] (-3,0) arc (0:180:1.5 and 0.45);
    \draw[fill = gray, opacity = 0.5] (-6,0) arc (180:360:1.5 and 0.45) arc (0:180:1.5);
\end{tikzpicture}
\vspace{0.15cm}

  \caption{separating, boundary }
\end{subfigure}
\begin{subfigure}{.3\textwidth}
  \centering

\begin{tikzpicture}[scale=0.3]
\vspace{0.15cm}

   \draw[ domain=-3:3,  smooth, variable=\x] plot ({\x}, {-sqrt(9-\x*\x)});

    \draw [fill=gray,fill opacity=0.5] plot [domain=0:3,  smooth]({\x}, {-sqrt(2.25-(\x-1.5)*(\x-1.5))*0.3}) -- plot [domain=3:-3,  smooth ] ({\x}, {sqrt(9-\x*\x)})-- plot [domain=-3:0,  smooth]({\x}, {-sqrt(2.25-(\x+1.5)*(\x+1.5))*0.3});

    \draw  [dashed, domain=0:3,  smooth] plot ({\x}, {sqrt(2.25-(\x-1.5)*(\x-1.5))*0.3});

    \draw  [dashed, domain=-3:0,  smooth] plot ({\x}, {sqrt(2.25-(\x+1.5)*(\x+1.5))*0.3});
  
\end{tikzpicture}
\vspace{0.15cm}

  \caption{non-separating, boundary}
\end{subfigure}
\begin{subfigure}{.3\textwidth}
  \centering

\begin{tikzpicture}[scale=0.3]
\vspace{0.15cm}
  \draw[ domain=-3:3,  smooth, variable=\x] plot ({\x}, {-sqrt(9-\x*\x)});

    \draw [fill=gray,fill opacity=0.5] plot [domain=1:3,  smooth]({\x}, {-sqrt(1-(\x-2)*(\x-2))*0.3}) -- plot [domain=3:-3,  smooth ] ({\x}, {sqrt(9-\x*\x)})-- plot [domain=-3:1,  smooth]({\x}, {sqrt(4-(\x+1)*(\x+1))*0.5});

    \draw  [dashed, domain=1:3,  smooth] plot ({\x}, {sqrt(1-(\x-2)*(\x-2))*0.3});

    \draw  [ domain=-3:1,  smooth] plot ({\x}, {-sqrt(4-(\x+1)*(\x+1))*0.5});
\end{tikzpicture}
\vspace{0.15cm}

  \caption{contracted boundary }
\end{subfigure}

\caption{The five types of nodes on a nodal marked cylinder.}
\label{fig node type}
\end{figure}

A \textit{rooted nodal marked disk} is a connected genus-zero nodal marked surface together with a choice of a distinguished boundary marked point (called the root). A \textit{2-rooted nodal marked disk} is a connected genus-zero nodal marked surface together with a choice of two distinguished boundary marked points (called the roots). An \textit{anchored nodal marked surface} is a nodal marked surface together with a $\phi$-invariant choice of a distinguished internal marked point (called the \textit{anchor}) on each connected component $C'$ of $C$ that is disjoint from the set $\text{Fix}(\phi)$. We denote by $Anc\subseteq I$ the set of indexes of anchors lying on $\Sigma$. Notice that for a genus-zero or genus-one anchored nodal marked surface, the anchors can only be on the genus-zero connected components.

\begin{rmk}\label{rmk root when degenerate}
We focus mainly on cylinders with boundaries and their degenerations, \textit{i.e.} connected genus-one nodal marked surfaces. (Disconnected) surfaces with marked surfaces as component will usually appear as the result of normalizations at nodes (and later also from the point insertion method we shall define). The roots and anchors help us keep track of these normalizations:
\begin{itemize}
    \item[-]  If we normalize at a separating boundary node of a connected nodal cylinder, then we get two connected components, one genus-zero and one genus-one. The genus-zero component is a rooted nodal marked disk if we regard the half-node from normalization on the genus-zero side as a root.
    \item[-] If we normalize at a non-separating boundary node of a connected nodal cylinder, then we get a connected nodal marked disk, which is a 2-rooted nodal marked disk if we regard the two half-nodes from normalization as roots.
    \item[-]  If we normalize at a pair (preserved by $\phi$) of separating internal nodes of a connected nodal surface with non-empty $\operatorname{Fix}(\phi)$, then we get three connected components, one intersects $\operatorname{Fix}(\phi)$ and a pair (preserved by $\psi$) disjoint from $\operatorname{Fix}(\phi)$. We choose the half-nodes from normalization on the sides disjoint from $\operatorname{Fix}(\phi)$ to be the anchors.
\end{itemize}
We use two similar terminologies `root' and `anchor', where `root' will only be used for half-nodes on open components and `anchor' is reserved for internal half-nodes on a closed surface. The usage of `anchor' is also consistent with the terminology in \cite{BCT1,BCT2,TZ1,TZ2}.

\end{rmk}

\subsubsection{Open $r$-spin structures}
Let $C$ be an anchored nodal marked surface with
order-$r$ cyclic isotopy groups at markings and nodes, a \textit{$r$-spin structure} on $C$ is 
\begin{itemize}
    \item 
 an orbifold complex line bundle $L$ on $C$,

\item an isomorphism 
$$\kappa:L^{\otimes r} \cong \omega_{C,log}:=\omega_{C} \otimes {\mathcal{O}}\left(\sum_{i \in I}  [z_i] + \sum_{i \in I}  [\overline{z_i}] + \sum_{j \in B} [x_j]\right),$$
where $[p]$ is the degree-$1/r$ divisor for each marking $p$. 

\item an involution $\widetilde{\phi}: L \to L$ lifting $\phi$.
\end{itemize}
The local isotopy of $L$ at a point $p$ is characterized by an integer $\operatorname{mult}_p(L)\in \{0,1,\dots,r-1\}$ in the following way: the local structure of the total space of $L$ near $p$ is $[\mathbb C^2/(\mathbb Z/r\mathbb Z)]$, where the canonical generator $\xi \in \mathbb Z/r\mathbb Z$ acts by $\xi\cdot (x,y)=(\xi x, \xi^{\operatorname{mult}_p(L)}y)$.

We denote by $RI\subseteq I$ and $RB\subseteq B$ the subsets of internal and boundary marked points $p$ satisfying $\operatorname{mult}_p(L)=0$. An \textit{associated twisted r-spin structure} $S$ is defined by
$$
S:=L\otimes \mathcal O\left( -\sum_{i \in \widetilde{RI}} r [z_i] - \sum_{i \in \widetilde{RI}} r [\overline{z_i}] - \sum_{j \in RB} r [x_j]\right),
$$
where $\widetilde{RI}\subseteq RI$ is a subset satisfying $\widetilde{RI}=RI \setminus Anc   $.

For an internal marked point $z_i$, we define the  \textit{internal twist} at $z_i$ to be $a_i:=\operatorname{mult}_{z_i}(L)-1$ if $i\in I\setminus \widetilde{RI}$
and $a_i:=r-1$ if $i\in \widetilde{RI}$. For a boundary marked point $x_i$, we define the  \textit{boundary twist} at $x_j$ as $b_j:=\operatorname{mult}_{x_j}(L)-1$ if $i\in B\setminus {RB}$ and as $b_j:=r-1$ if $j\in {RB}$. Note that all the marked points with twist $-1$ are indexed by $RI\setminus \widetilde{RI}\subseteq Anc$. When the surface $C$ is smooth, the coarse underlying bundle $|S|$ over the coarse underlying sphere $|C|$ satisfies
$$|S|^{\otimes r} \cong \omega_{|C|} \otimes {\mathcal{O}}\left(-\sum_{i \in I} a_i [z_i] - \sum_{i \in I} a_i [\overline{z_i}] - \sum_{j \in B} b_j[x_j]\right),$$
here $[p]$ is the degree-one divisor on the coarse $|C|$ corresponding to each marking $p$.

\begin{obs}
\label{obs rank open}
A connected nodal marked disk admits a twisted $r$-spin structure with twists $a_i$ and $b_j$ if and only if
\begin{equation}\label{eq mod r condition g=0}
\frac{2\sum_{i \in I} a_i + \sum_{j\in B} b_j -r+2}{r}\in \mathbb Z,
\end{equation}
and a connected nodal marked cylinder admits a twisted $r$-spin structure with twists $a_i$ and $b_j$ if and only if
\begin{equation}\label{eq mod r condition rk g=1}
\frac{2\sum_{i \in I} a_i + \sum_{j\in B} b_j}{r}\in \mathbb Z.
\end{equation}
These formulae are the specializations to our setting of the more well-known fact \cite{Witten93}: a (closed) connected nodal marked genus-$g$ curve admits a twisted $r$-spin structure twists $a_i$ if and only if 
\begin{equation}\label{eq rank close}
\frac{\sum_{i\in I} a_i +(g-1)(r-2)}{r}\in {\mathbb{Z}}.
\end{equation}
\end{obs}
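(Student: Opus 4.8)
The plan is to deduce both equivalences from the closed-curve fact \eqref{eq rank close} by applying it to the closed orbifold curve $C$ itself, which is the ``double'' of the surface with boundary. Its coarse space $|C|$ is a connected (possibly nodal) closed curve of arithmetic genus $0$ in the disk case and arithmetic genus $1$ in the cylinder case, and, viewed as a marked closed curve, $|C|$ carries the points $z_i$ and $\overline{z_i}$ with internal twist $a_i$ together with the points $x_j$ with internal twist $b_j$. Substituting $g=g(|C|)\in\{0,1\}$ and $\sum_{\text{markings of }|C|}(\text{twist})=2\sum_{i\in I}a_i+\sum_{j\in B}b_j$ into \eqref{eq rank close}, and using $(g-1)(r-2)\equiv-(2g-2)\pmod r$, one gets exactly \eqref{eq mod r condition g=0} when $g=0$ and \eqref{eq mod r condition rk g=1} when $g=1$. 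So the task is to justify this reduction.

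\textbf{Existence of $L$ and the degree count.} Take first $C$ smooth and connected. A twisted $r$-spin structure is an orbifold line bundle $L$ with $L^{\otimes r}\cong\omega_{C,log}$ plus a lift $\widetilde\phi$ of $\phi$. The desired internal twists $a_i$ and boundary twists $b_j$ prescribe the isotropy weights $\operatorname{mult}_p(L)\in\{0,\dots,r-1\}$ at all marked points, and once these are fixed the remaining freedom in $L$ is an honest line bundle on $|C|$; hence an $L$ with the right $\kappa$ (ignoring $\widetilde\phi$) exists iff the coarse bundle $|S|$ exists on $|C|$, i.e. iff $\deg\bigl(\omega_{|C|}(-\sum_{i\in I}a_i[z_i]-\sum_{i\in I}a_i[\overline{z_i}]-\sum_{j\in B}b_j[x_j])\bigr)=(2g(|C|)-2)-2\sum_{i\in I}a_i-\sum_{j\in B}b_j$ is divisible by $r$ — which is the stated congruence. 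For nodal $C$ the same works verbatim since \eqref{eq rank close} already holds for nodal curves and neither the marking data nor the arithmetic genus of $|C|$ is affected by the nodes; alternatively one normalizes at all nodes, uses the node complementarity of twists ($a'+a''\equiv r-2\pmod r$ at an internal node, and the analogous relations at boundary and contracted-boundary nodes), and checks that the component-wise congruences sum — node contributions cancelling in pairs, genus bookkeeping telescoping — to the global congruence, which indeed involves only $\{a_i\}$ and $\{b_j\}$; the converse direction propagates a consistent choice of complementary node twists along a spanning tree of the dual graph.

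\textbf{The reality condition, and the main obstacle.} It remains to produce the involution $\widetilde\phi$, and this is the step that actually uses the structure theory of graded $r$-spin surfaces from \cite{BCT1,TZ1} rather than just \eqref{eq rank close}. Using the canonical real structure on $\omega_{C,log}$, the conjugation $\phi$ acts on the set of isomorphism classes of $r$-th roots of $\omega_{C,log}$ with the prescribed weights; when this set is nonempty it is a torsor under the $r$-torsion of $\operatorname{Pic}^0(|C|)$, and a twisted $r$-spin structure (with its involution) exists precisely when this $\mathbb Z/2$-action has a fixed point. In the disk case $g(|C|)=0$, so $\operatorname{Pic}^0(|C|)=0$, the torsor is a point, and there is nothing to check. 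In the cylinder case $g(|C|)=1$, so the torsor sits over $(\mathbb Z/r)^2$; here one uses that the real locus $\operatorname{Fix}(|\phi|)$ is nonempty for every connected genus-one nodal marked surface (cf. the Remark above) to construct a ``real'' $r$-th root, as in the constructions recalled from \cite{BCT1,TZ1}. Establishing this fixed-point statement — together with its compatibility with contracted-boundary nodes and with the constraint $\operatorname{mult}_p(L)\in\{0,\dots,r-1\}$ — is the only genuinely delicate part; the degree count above is routine.
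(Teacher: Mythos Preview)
Your approach is correct and is exactly what the paper intends: the Observation is stated there without proof beyond the remark that it is a specialization of the closed-curve criterion \eqref{eq rank close} from \cite{Witten93}, and you have carried out that specialization explicitly by applying \eqref{eq rank close} to the doubled curve $C$ with its full marking set $\{z_i,\overline{z_i},x_j\}$ and genus $g(|C|)\in\{0,1\}$. Your further discussion of the reality condition (the existence of the lift $\widetilde\phi$) actually goes beyond what the paper records and correctly isolates the one step not covered by the pure degree count, deferring it --- as the paper implicitly does --- to the constructions in \cite{BCT1,TZ1}.
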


We can extend the definition of twists to \textit{half-nodes}. Let $n: \widehat{C} \to C$ be the normalization morphism. For a half-node $q\in \widehat{C}$, we denote by $\sigma_0(q)$ the other half-node corresponding to the same node $n(q)$ as $q$. The isotopies of $n^*L$ at $q$ and $\sigma_0(q)$ satisfy $$\operatorname{mult}_{q}(n^*L)+\operatorname{mult}_{\sigma_0(q)}(n^*L)\equiv0 \mod r.$$ It is important to note that $n^*S$ may not be a twisted $r$-spin structure (associated with $n^*L$), because its connected components could potentially contain too many marked points with twist $-1$ (note that marked points with twist $-1$ are anchors).  Nevertheless, in genus zero or one, there is a canonical way to choose a minimal subset $\mathcal{R}$ of the half-nodes making
\begin{equation}
\label{eq normalize S}
\widehat{S}:= n^* S \otimes {\mathcal{O}}\left(-\sum_{q \in \mathcal{R}} r [q]\right)
\end{equation}
a twisted $r$-spin structure: denoting by $\mathcal T$ the set of half-nodes $q$ of $C$ where $\operatorname{mult}_{q}(n^*L)=0$, we define
\begin{equation*}
\mathcal A:= \left\{q \in \mathcal T\colon 
\begin{array}{ccc}
    \text{$n(q)$ is an internal node; after normalizing the }\\
    \text{node $n(q)$, the half-node $\sigma_0(q)$ belongs to a connected }\\
    \text{component meeting $\text{Fix}(\phi)$ or containing an anchor.}
\end{array} \right\}
\end{equation*}
and set $\mathcal R := \mathcal T \setminus \mathcal A$. Note that elements of $\mathcal R$ always come from a separating internal node and lie on the genus-zero irreducible component disjoint from $\text{Fix}(\phi)$, see \cite[\S 2.3]{BCT1} for more details. 
We define $c_t$, \textit{the twist of $S$ at a half-node} $h_t$, as $c_t:=\operatorname{mult}_{h_t}(n^*L)-1$ if $h_t \notin \mathcal R$ and as $c_t:=r-1$ if $h_t\in \mathcal R$.

For each irreducible component $C_l$ of $\widehat{C}$ with half-nodes $\{h_t\}_{t \in N_l}$, we have
$$\bigg(|\widehat{S}|\big|_{|C_l|}\bigg)^{\otimes r} \cong \omega_{|C_l|} \otimes {\mathcal{O}}\left(-\sum_{\substack{i \in I\\z_i\in C_l}} a_i[z_i] - \sum_{\substack{i \in I\\ \overline{z_i}\in C_l}} a_i [\overline{z}_i] - \sum_{\substack{j \in B\\ x_j\in C_l}} b_j[x_j] - \sum_{t \in N_l} c_t [h_t]\right);$$
note that in the case where $C_l$ intersects with $\partial \Sigma$, the set $\{h_t\}_{t \in N_l}$ is invariant under $\phi$, and the half-nodes conjugated by $\phi$ have the same twist.

Note that if $h_{t_1}=\sigma_0(h_{t_2})$, then
\begin{equation}\label{eq sum of twist at node}c_{t_1} + c_{t_2} \equiv -2 \mod r.\end{equation}
We say a node is \textit{Ramond} if one (hence both) of its half-nodes $h_t$ satisfy $c_t \equiv -1 \mod r$,  and it is said to be \textit{Neveu--Schwarz (NS)} otherwise. Note that if a node is Ramond, then both of its half-nodes lie in the set $\mathcal T$; moreover, a half-node has twist $-1$ if and only if it lies in $\mathcal A$. The set $\mathcal{R}$ in equation~\eqref{eq normalize S} is chosen in a way that each separating internal Ramond node has precisely
one half-edge in $\mathcal{R}.$

Associated to each twisted $r$-spin structure $S$, we define a Serre-dual bundle \begin{equation}\label{def of J}
    J:=S^{\vee} \otimes \omega_{C}.
\end{equation}
Note that the involutions on $C$ and $L$ induce involutions on $S$ and $J$; by abuse of notation, we denote the involutions on $S$ and $J$ also by $\widetilde{\phi}$.

\subsubsection{Gradings}
For a nodal marked surface, the boundary ${\partial}\Sigma$ of $\Sigma$ is endowed with a well-defined orientation, determined by the complex orientation on the preferred half $\Sigma \subseteq |C|$. This orientation induces the notion of positivity for $\phi$-invariant sections of $\omega_{|C|}$ over ${\partial} \Sigma$: let $p$ be a point of $\partial \Sigma$ which is not a node, we say a section $s$ is \textit{positive} at a $p$ if, for any tangent vector $v \in T_p({\partial} \Sigma)$ in the orientation direction, we have $\langle s(p), v \rangle > 0$, where $\langle -, - \rangle$ is the natural pairing between cotangent and tangent vectors.

Let $C$ be an anchored nodal marked surface, and let $A$ be the complement of the special points in $\partial \Sigma$.  We say a twisted $r$-spin structure on such $C$ is \textit{compatible on the boundary components} if there exists a $\widetilde{\phi}$-invariant section $v \in C^0\left(A, |S|^{\widetilde{\phi}}\right)$ (called a \textit{lifting} of $S$  on boundary components) such that the image of $v^{\otimes r}$ under the map on sections induced by the inclusion $|S|^{\otimes r} \to  \omega_{|C|}$ is positive.  We say $w \in C^0\left(A, |J|^{\widetilde{\phi}}\right)$ is a \textit{Serre-dual lifting of $J$ on the boundary components with respect to $v$} if $\langle w, v \rangle \in C^0(A, \omega_{|C|})$ is positive, where $\langle -, - \rangle$ is the natural pairing between $|S|^{\vee}$ and $|S|$.  This $w$ is uniquely determined by $v$ up to positive rescaling.

 Equivalence classes of liftings of $J$ (or equivalently, $S$) on the boundary components up to positive rescaling are equivalent to continuous sections of $S^0(|J|^{\widetilde\phi}),$ the $S^0$-bundle $\left(|J|^{\widetilde{\phi}}\setminus |J|_0\right)\big/ \mathbb R_+$ over $A$, where $|J|_0$ denotes the zero section of $|J|^{\widetilde{\phi}}$. Given an equivalence class $[w]$  of liftings, we say a boundary marked point or boundary half-node $x_j$ is \textit{legal}, or that $[w]$ \textit{alternates} at $x_j$, if $[w]$, as a section of $S^0(|J|^{\widetilde\phi})$, cannot be continuously extended to $x_j$.
We say an equivalence class $[w]$ of liftings of $J$ on boundaries is a \textit{grading of a twisted $r$-spin structure on boundary components} if, for every Neveu--Schwarz boundary node, one of the two half-nodes is legal and the other is illegal. 
\begin{rmk}
    The requirement that every NS boundary node has one legal and one illegal half-node arises from the behaviour of a grading on boundary components at degenerations. This condition, together with \eqref{eq sum of twist at node} allows smoothing the boundary node. See \cite{BCT1} for more details.
\end{rmk}

Let $q$ be a contracted boundary node of $C$, we say a twisted $r$-spin structure on $C$ is \textit{compatible} at $q$ if $q$ is Ramond and there exists a $\widetilde{\phi}$-invariant element $v \in |S|\big|_q$  (called a \textit{lifting} of $S$ at $q$) such that the image of $v^{\otimes r}$ under the map $|S|^{\otimes r}\big|_q \to \omega_{|C|}\big|_q$ is positive imaginary under the canonical identification of $\omega_{|C|}\big|_q$ with ${\mathbb{C}}$ given by the residue. See \cite[Definition 2.8]{BCT1} for more details.  Such a $v$ also admits a Serre-dual lifting, \textit{i.e.}  a $\widetilde{\phi}$-invariant $w \in |J|\big\vert_q$ such that $\langle v, w \rangle$ is positive imaginary.  We refer to the equivalence classes $[w]$ of such $w$ up to positive rescaling as a \textit{grading at contracted boundary node $q$}.

We say a twisted $r$-spin structure is \textit{compatible} if it is compatible on boundary components and at all contracted boundary nodes.  A (total) \textit{grading} is the collection of grading on boundary components together with a grading at each contracted boundary node. We say a grading is \textit{legal} if every boundary marked point is legal.

The grading is crucial in determining a canonical relative orientation for the Witten bundle (in \cite{TZ1}) and defining canonical boundary conditions (in \S \ref{sec positivity constraint}), which are key ingredients in defining genus-zero open $r$-spin intersection numbers.

The relation between the twists and legality, and the obstructions to having a grading, are summarized in the following proposition.
\begin{prop}[{\cite[Proposition 2.3]{BCT2}}] 
\label{prop lifting}
\begin{enumerate}
\item\label{it lift odd exist} When $r$ is odd, any twisted $r$-spin structure is compatible, and there is a unique grading.
\item\label{it lift odd legal} When $r$ is odd, a boundary marked point, or boundary half-node, $x_j$ in a twisted $r$-spin structure with a grading is legal if and only if its twist is odd.
\item\label{it lift even compatible} When $r$ is even, the boundary twists $b_j$ in a compatible twisted $r$-spin structure must be even.  

\item\label{it NS nodes}
In a graded $r$-spin structure, any Neveu--Schwarz boundary node has one legal half-node and one illegal half-node.\footnote{This item is part of the definition of grading, we put it here since it is an important property of the grading.}

\item\label{it Ramond boundary node}
Ramond boundary nodes can appear in a graded $r$-spin structure only when $r$ is odd, and in this case, their half-nodes are illegal with twists $r-1.$

\item\label{it lift compatible parity}
For a twisted $r$-spin structure over a connected marked disk, there exists grading that alternates precisely at a subset $D \subset \{x_j\}_{j \in B}$ if and only if
\begin{equation}\label{eq parity}
   \frac{2\sum a_i + \sum b_j+ 2}{r}\equiv |D| \mod 2.
\end{equation}

\item\label{it lift compatible parity g=1}
For a twisted $r$-spin structure over a connected marked cylinder with two boundary components $\partial_1 \Sigma$ and $\partial_2 \Sigma$, there exists grading that alternates precisely at a subset $D \subset \{x_j\}_{j \in B}$ if and only if
\begin{equation}\label{eq boundarywise pairty}
    |D\cap \partial_i \Sigma| \equiv  \Theta_i \mod 2
\end{equation}

for $i=1,2$, where $\Theta_i:=0$ if $|J|^{\tilde\phi}_{\partial_i\Sigma}$ is orientable and $\Theta_i:=1$ if $|J|^{\tilde\phi}_{\partial_i\Sigma}$ is not orientable.
In particular, we have
\begin{equation}\label{eq compatitable pairty g=1}
   \frac{2\sum a_i + \sum b_j}{r} \equiv |D| \mod 2.
\end{equation}

\end{enumerate}
\end{prop}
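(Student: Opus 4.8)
\textbf{Proof proposal for Proposition~\ref{prop lifting}.}

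The plan is to prove the seven items in roughly the order listed, since the later parity statements build on the structural facts in the earlier ones. The parity analysis is really the heart of the matter, so I will organize everything around the orientability of the real bundle $|J|^{\widetilde\phi}$ over a boundary circle, and the way the $r$-th root $|S|$ interacts with it.

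First I would handle items \eqref{it lift odd exist}--\eqref{it Ramond boundary node}, which are largely local. For $r$ odd: over a boundary circle, $|S|^{\otimes r}\cong\omega_{|C|}$ and $r$ is a unit mod $2$, so taking an $r$-th root of a chosen positive lifting of $\omega_{|C|}$ is unobstructed and unique up to positive rescaling; this gives existence and uniqueness of the grading on boundary components, and at a contracted boundary node the ``positive imaginary'' normalization via the residue pins down $[w]$ uniquely as well. For the legality criterion \eqref{it lift odd legal}, I would compute the local monodromy of the $S^0$-section $[w]$ of $|J|^{\widetilde\phi}$ around a marked point or half-node with twist $c$: passing once around the point, $\omega_{|C|,\log}$ contributes a fixed sign and the twisting by $\mathcal O(-c[p])$ contributes $(-1)^{c}$ (since $|J|$ picks up a $c$-fold ``half-integer'' shift whose parity is $c$ mod $2$), so the section fails to extend — i.e.\ $x_j$ is legal — exactly when $c$ is odd. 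Item \eqref{it lift even compatible} is the observation that for $r$ even, $|S|^{\otimes r}$ has even degree contribution at $x_j$ forced by the pairing, so oddness of $b_j$ would obstruct the existence of a $\widetilde\phi$-invariant positive $r$-th root; and \eqref{it NS nodes} is definitional. For \eqref{it Ramond boundary node}: if $r$ is even and a boundary node is Ramond, then $c_t\equiv -1$ is odd on both sides, contradicting \eqref{it lift even compatible} applied to the half-nodes (which behave like boundary markings after normalization); when $r$ is odd, $c_t=r-1$ on both sides, and by \eqref{it lift odd legal} both half-nodes are then illegal since $r-1$ is even.

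Next, items \eqref{it lift compatible parity} and \eqref{it lift compatible parity g=1}, the existence-of-a-grading-with-prescribed-legality-set statements. The strategy is a counting/obstruction argument on each boundary circle. Fix a boundary component $\partial_i\Sigma$ (a circle) containing boundary marked points and boundary half-nodes; the line bundle $|J|^{\widetilde\phi}$ restricted to it is a real line bundle on $S^1$ minus the special points, and a grading on that circle is a nowhere-zero section up to $\mathbb R_+$, i.e.\ a section of the associated $S^0$-bundle on the punctured circle. Such a section exists and can be chosen to be legal exactly at a prescribed subset of the special points iff the product of the local monodromies over the \emph{non}-prescribed points equals the global monodromy (orientability class) of $|J|^{\widetilde\phi}$ over the circle. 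Using the local computation from \eqref{it lift odd legal} — each point of twist $c$ contributes $(-1)^{c}$, equivalently a legal point contributes $-1$ and an illegal point $+1$ — and summing the twist-relation $2\sum a_i+\sum b_j+\sum c_t \equiv 0 \pmod r$ reduced mod $2$, I get that the number of special points that \emph{must} be legal (to make the section extend elsewhere) has a fixed parity determined by $\sum a_i$, $\sum b_j$ on that circle plus the orientability invariant $\Theta_i$. This is exactly \eqref{eq boundarywise pairty}. For the disk \eqref{it lift compatible parity}, there is a single boundary circle bounding a disk, $|J|^{\widetilde\phi}$ over it has a computable orientability class — this is where the ``$+2$'' in \eqref{eq parity} comes from, reflecting the $\omega$ versus $\omega_{\log}$ twist and the fact that the disk caps off the circle — giving \eqref{eq parity}. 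Finally \eqref{eq compatitable pairty g=1} and the ``in particular'' in \eqref{eq parity} follow by summing \eqref{eq boundarywise pairty} over $i=1,2$ and invoking Observation~\ref{obs rank open} (the mod-$r$ conditions \eqref{eq mod r condition g=0}, \eqref{eq mod r condition rk g=1}), which forces $\Theta_1+\Theta_2$ to have the right parity so the $\Theta_i$'s cancel out of the total count.

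I expect the main obstacle to be the precise bookkeeping of the orientability invariant $\Theta_i$ of $|J|^{\widetilde\phi}$ over a boundary circle, and its interaction with the residue normalization and with the $\omega_{C,\log}$-versus-$\omega_C$ discrepancy: getting the sign conventions exactly right so that the mod-$2$ reduction of the mod-$r$ twist relation lands on the stated parities, rather than off by one, is the delicate part. A clean way to control this is to treat the circle as the fixed locus of $\phi$ on a nearby smooth curve and track how the $\widetilde\phi$-invariant part of $|S|$ restricts there — essentially a real-orientation computation — but I would want to cross-check against the genus-zero case already established in \cite[Proposition 2.3]{BCT2}, whose proof this extends, and against the low-complexity examples (a disk with a few twist-$(r-2)$ boundary points, and a cylinder with none). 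The rest — the local monodromy computations and the circle-by-circle counting — is routine once the conventions are fixed.
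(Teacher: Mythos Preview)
The paper does not give its own proof of this proposition: it is stated as a quotation of \cite[Proposition~2.3]{BCT2} (items~\ref{it lift odd exist}--\ref{it lift compatible parity}), with item~\ref{it lift compatible parity g=1} added for the cylinder case but likewise left without argument. Indeed, the footnote on item~\ref{it NS nodes} explicitly flags that item as definitional, and the surrounding text simply moves on to the next definition. So there is no in-paper proof to compare against.

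That said, your sketch is a reasonable reconstruction of the underlying argument and matches the spirit of how such statements are proved in \cite{BCT1,BCT2}: local monodromy of $|J|^{\widetilde\phi}$ at a boundary special point detects legality, and the global obstruction to choosing a section of the $S^0$-bundle over a boundary circle is the first Stiefel--Whitney class, i.e.\ the orientability invariant $\Theta_i$. Your identification of the delicate point --- getting the constant ``$+2$'' in \eqref{eq parity} versus ``$+0$'' in \eqref{eq compatitable pairty g=1} from the $\omega$-versus-$\omega_{\log}$ bookkeeping and the Euler-characteristic contribution of the bounding surface --- is exactly right, and cross-checking against the genus-zero statement already established in \cite{BCT2} is the appropriate sanity check. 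One small caution: in your derivation of \eqref{eq compatitable pairty g=1} you say ``$\Theta_1+\Theta_2$ has the right parity so the $\Theta_i$'s cancel out,'' but this is really the content of the claim, not an input --- you need the degree computation of $|J|^{\widetilde\phi}$ over the doubled curve (via \eqref{eq rank close} and Riemann--Roch) to pin down $\Theta_1+\Theta_2$ mod $2$, rather than invoking Observation~\ref{obs rank open} alone.
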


When a Ramond contracted boundary node is normalized, the grading at this boundary node induces an additional structure at the corresponding half-node. We call such half-nodes by \textit{normalized contracted boundary marked point}. Because such an additional structure is not necessary in this paper, we refer the readers to \cite[Definition 2.5]{TZ1} for the precise definition.

We can now define the primary objects of interest in this paper:

\begin{definition}
\label{def graded rspin disk}
A \textit{stable graded $r$-spin disk or cylinder} (respectively, \textit{legal stable graded $r$-spin disk or cylinder}) is a stable anchored nodal marked disk or cylinder, together with
\begin{enumerate}
\item a compatible twisted $r$-spin structure $S$ in which all contracted boundary nodes are Ramond;
\item a choice of grading (respectively, legal gradings such that all boundary marking have twist $r-2$ );

\item  an additional structure of normalized contracted boundary marked point at a subset of markings with twist $r-1$.

\end{enumerate}
\end{definition}

\begin{rmk}
   The ``legal stable graded $r$-spin disk'' here is referred to as ``legal stable graded level-$0$ $r$-spin disk'' in \cite{TZ1,TZ2}. In this paper we will focus on the case where all the legal markings have twist $r-2$ as in \cite{BCT2}. We will omit the term ``level-$0$''.
\end{rmk}

We denote by $\Mbarstar_{g,k,l}^{1/r}$ ($\Mbar_{g,k,l}^{1/r}$ respectively) the moduli space of connected stable graded $r$-spin surfaces (legal connected stable graded $r$-spin surfaces respectively) of genus $g$ with $k$ boundary and $l$ internal marked points. In \cite[Theorem 3.4]{BCT1} and \cite[Theorem 2.8]{TZ1}, $\Mbar_{g,k,l}^{1/r}$ and $\Mbarstar_{g,k,l}^{1/r}$ are shown to be a compact smooth orientable orbifold with corners, of real dimension 
\begin{equation}\label{eq dim of moduli}
\dim_{\mathbb R} \Mbarstar_{g,k,l}^{1/r}=k+2l+3g-3.
\end{equation}

\begin{nn}
    Assuming that the internal marked points $\{z_i\}_{i\in I}$ have twists $\{a_i\}_{i\in I}$, by an abuse of notation, we also denote the multiset\footnote{We use the word ``multiset'' here because the set $I$ may contain multiple $a_i$ with a same value, but we view them as different elements.} $\{a_i\}_{i\in I}$ by $I$.
    
Similarly, we denote by $B$ the multiset $\{b_j\}_{j\in B}$ equipped with a preselected legality for each of its elements. Furthermore, if $B$ is equipped with a cyclic order on $\sigma\colon B\to B$, we denote it by $\bar B$ and write $\bar{B}=\overline{\{b_1,b_2,\dots,b_{\lvert B \rvert}\}}$ to make the cyclic order manifest, where $\sigma_2(b_i)=b_{i+1}$ for $1\le i \le \lvert B \rvert$ and $\sigma_2(b_{\lvert B \rvert})=b_{1}$.
\end{nn}

We denote by $\Mbarstar_{0,B,I}^{1/r}$ the moduli space of graded $r$-spin disks with boundary points marked by $B$, and internal points marked by $I$. Note that for a graded $r$-spin disk, the canonical orientation on $\partial\Sigma$ induces a cyclic order on $B$; given a cyclic order $\sigma_2\colon B \to B$, we denote by $\Mbarstar_{0,\bar{B},I}^{1/r}\subseteq \Mbarstar_{0,{B},I}^{1/r}$ the connected component parametrizing the $r$-spin disks such that the induced cyclic order on $B$ coincides with $\sigma_2$. 
Similarly, We denote by $\Mbarstar_{1,B,I}^{1/r}$ the moduli space of graded $r$-spin cylinders with boundary points marked by $B$, and internal points marked by $I$. Note that for a graded $r$-spin cylinder with two boundary components $\partial_i\Sigma$ where $i=1,2$, the canonical orientation on $\partial_i\Sigma$ induces a cyclic order on $B^i$, where $B^i\subset B$ is the markings corresponding to marked points on $\partial_i\Sigma$; given a partition $B=B^1\sqcup B^2$, and a cyclic order $\sigma_2^i\colon B^i \to B^i$ for each $B^i$, we denote by $\Mbarstar_{1,\{\bar{B}^1,\bar B^2\},I}^{1/r}\subseteq \Mbarstar_{1,{B},I}^{1/r}$ the connected component parametrizing the $r$-spin cylinders such that the induced cyclic order on $B^i$ coincides with $\sigma^i_2$.

\begin{rmk}
   In most parts of this article, we primarily consider moduli of legal graded $r$-spin disks of cylinders. We use the notation with a superscript $*$ only when we want to emphasize that there might be illegal boundary markings. 
\end{rmk}

\subsubsection{Stable graded $r$-spin graphs}
Each connected anchored marked surface $\Sigma$ can be characterized by a decorated dual graph $\Gamma(\Sigma)$ as follow.
\begin{itemize}
\item The set of vertices of $\Gamma(\Sigma)$ is the set of irreducible components of $\Sigma$, which is decomposed into \textit{open} and \textit{closed} vertices  $V = V^O \sqcup V^C$\footnote{Since $V,V^O$ or $V^C$ depend on the graph $\Gamma(\Sigma)$, we should write them as $V(\Gamma(\Sigma)),V^O(\Gamma(\Sigma))$ or $V^C(\Gamma(\Sigma))$ to be precise. We omit $\Gamma(\Sigma)$ from the notation here (and in following items) for simplicity because there is no no ambiguity.} depending on whether the corresponding irreducible component meets $\partial \Sigma$. 
Each vertex has a genus: the genus of the irreducible component.

\item The set of half-edges $H(v)$ emanating from a vertex $v\in V$ is the set of the special points (\textit{i.e.} half-nodes and marked points) on the irreducible component corresponding to $v$. The set $H(v)$ is decomposed into \textit{boundary} and \textit{internal} half-edges $H(v) = H^B(v) \sqcup H^I(v)$ depending on whether the corresponding special point lies on $\partial \Sigma$. We write $H:=\sqcup_{v\in V}H(v)$ and $H = H^B \sqcup H^I$ the correspond decomposition. Two half-edges correspond to an (internal or boundary, separating or non-separating) edge $e$ in the set of edges $$E=E^I\sqcup E^B=E_{sp}\sqcup E_{nsp}=E^I_{sp}\sqcup E^I_{nsp}\sqcup E^B_{sp}\sqcup E^B_{nsp}$$ if their corresponding special points are two half-nodes of the same (internal or boundary, separating or non-separating) node.  We denote by $H^I,H^B,H_{sp},H_{nsp},H^I_{sp},H^I_{nsp},H^B_{sp},H^B_{nsp}$ the corresponding sets of half-edges.

\item
When $\Sigma$ is a cylinder, as mentioned in Remark \ref{rmk root when degenerate}, for each genus-zero $v \in V$, there are one or two special boundary half-edge(s) in $RT(v)\subseteq H^B(v)$ corresponding to the root(s). 

\item The set of \textit{tails} $T$ is the set of all marked points together with the contracted boundary nodes. We write $T^B:=T\cap H^B$ and $T^I:=T\cap H^I$. The set of \textit{contracted boundary tails} $H^{CB}\subseteq T^I$ corresponds to the contracted boundary node and the set $T^{anc}\subseteq T^I\setminus H^{CB}$ corresponds to the anchor.
\item 
The canonical orientation on $\partial\Sigma$ induced a map $\sigma_2\colon H^B(v) \to H^B(v)$ for each $v\in V^O$, which can be viewed as a cyclic order on each of its orbits.
\end{itemize}
We say $\Gamma(\Sigma)$ is \textit{smooth} if $E = H^{CB} = \emptyset$, or equivalently, $\Sigma$ is smooth.
If $\Sigma$ is endowed with a graded $r$-spin structure $S$, we have the additional decorations:
\begin{itemize}
    \item a map $\text{tw}: H \rightarrow \{-1,0,1,\ldots, r-1\}$ encoding the twist of $S$ at each special points;
    \item a map
$\text{alt}: H^B \rightarrow \mathbb{Z}/2\mathbb{Z}$
given by $\text{alt}(h) = 1$ if the special point corresponding to $h$ is legal and $\text{alt}(h) = 0$ otherwise.
\end{itemize}

A \textit{genus-zero (respectively, genus-one) graded $r$-spin graph} is a decorated graph for which each connected component is the dual graph of a connected graded $r$-spin disk (respectively, cylinder) as above; an intrinsic definition can be found in \cite[\S 3.2]{TZ1}. A graph $\Gamma$ is \textit{stable} if $\lvert H^B(v)\rvert +2\lvert H^I(B)(v)\ge 3$ for each genus-zero $v\in V(\Gamma)$ and $\lvert H^B(v)\rvert +2\lvert H^I(B)(v)\ge 1$ for each genus-one $v\in V(\Gamma)$.

For an edge $e$ (or a set of edges $S$) of a stable graded $r$-spin graph $\Gamma=\Gamma(\Sigma)$,  the \textit{smoothing} of $\Gamma$ along $e$ (or $S$) is the stable graded $r$-spin graph $d_e\Gamma$ (or $d_S \Gamma$) that is dual to the stable graded $r$-spin surface obtained by smoothing the node(s) in $\Sigma$ corresponding to $e$ (or $S$). The \textit{detaching} of $\Gamma$ along $e$ (or $S$) is the stable graded $r$-spin graph $\text{Detach}_e\Gamma$ (or $\text{Detach}_S\Gamma$) that is dual to the stable graded $r$-spin surface obtained by normalizing the node(s) in $\Sigma$ corresponding to $e$ (or $S$). See \cite[\S 3.2]{TZ1} for intrinsic definitions. We set
\begin{align*}
&\partial^!\Gamma = \{\Lambda \; \colon \; \Lambda \text{ stable, }\Gamma= d_S\Lambda \text{ for some } S\subset E(\Lambda)\},\\
&\partial \Gamma = \partial^!\Gamma \setminus \{\Gamma\},\\
&\partial^B \Gamma = \{\Lambda \in \partial \Gamma \; \colon \; E^B(\Lambda) \cup H^{CB}(\Lambda) \neq \emptyset\}.
\end{align*}

We say a stable graded $r$-spin graph is \textit{legal} if every boundary tail is legal, \textit{i.e.} ${\text{alt}}(t)=1~\forall t \in T^B$, and of twist $r-2$ (which is referred to as legal level-$0$ graded $r$-spin graph in \cite{BCT2}). 

 If $\Gamma$ is a connected stable graded $r$-spin graph, we denote by ${\mathcal M^*}_{\Gamma}^{1/r}\subseteq {\Mbarstar}_{g,  T^B, T^I\setminus H^{CB},}^{1/r}$ the  submoduli consisting of $r$-spin surfaces whose dual graph is exactly $\Gamma$, and by $\overline{{\mathcal M^*}}_{\Gamma}^{1/r}$ its closure. If $\Gamma$ is disconnected, we define $\Mbarstar_{\Gamma}^{1/r}$ as the product of the moduli spaces associated to its connected components. When there is no room for confusion (which is always the case, except in \S \ref{subsec decomp}), we omit the superscript $1/r$ in $\overline{{\mathcal M^*}}_{\Gamma}^{1/r}$. 
 As the twist and legality of the boundary half-edges (\textit{i.e.} the map $\tw$ and $\alt$) are parts of data of $\Gamma$, we will also omit the superscript $*$ in the notation when there is no ambiguity.
 If $\Gamma$ is single-vertex and unstable, we formally take $\Mbarstar^{1/r}_{\Gamma}$ to be a single point.

 For a stable dual graph $\Gamma$ and a subset $S\subseteq T^I(\Gamma)$ of tails, we denote by $\operatorname{for}_S(\Gamma)$ the dual graph obtained by forgetting all the tails in $S$. When $\Gamma$ is a stable graded $r$-spin graph and all the tails in $S$ are either twist-zero internal tail or twist-zero illegal boundary tail, $\operatorname{for}_S(\Gamma)$ is also a graded $r$-spin graph. Note that $\operatorname{for}_S(\Gamma)$ is not necessary stable, we denote by $\overline{\operatorname{for}}_S(\Gamma)$ the graph obtained by $\operatorname{for}_S(\Gamma)$ after contracting all the unstable vertices; $\overline{\operatorname{for}}_S(\Gamma)$ is either stable, or single-vertex unstable. We denote by 
 $$
 \operatorname{For_{\Gamma \to \overline{\operatorname{for}}_S(\Gamma)}}\colon \Mbar_{\Gamma}\to \Mbar_{\overline{\operatorname{for}}_S(\Gamma)}
 $$
 or 
 $$
 \operatorname{For_{\Gamma \to \overline{\operatorname{for}}_S(\Gamma)}}\colon \Mbarstar^{1/r}_{\Gamma}\to \Mbarstar^{1/r}_{\overline{\operatorname{for}}_S(\Gamma)}
 $$
 the corresponding forgetful morphism.

\subsection{The Witten bundle and the relative cotangent line bundles}
\label{subsec Witten bundle}
\subsubsection{Genus-zero Witten bundle}
The \textit{Witten bundle} is the protagonist of the $r$-spin theory. Roughly speaking, let $\pi: \mathcal{C} \to \Mbarstar_{0,k,l}^{1/r}$ be the universal curve and $\mathcal{S} \to \mathcal{C}$ be the twisted universal spin bundle with the universal Serre-dual bundle 
\begin{equation}\label{eq universal serre dual bundle}
\mathcal{J}:= \mathcal{S}^{\vee} \otimes \omega_{\pi},
\end{equation}
then we define a sheaf
\begin{equation}
\label{eq Witten bundle def}
{\mathcal{W}}:= (R^0\pi_*\mathcal{J})_+,
\end{equation}
where the subscript $+$ denotes invariant sections under the universal involution $\widetilde{\phi}: \mathcal{J} \to \mathcal{J}.$ 
To be more precise, defining~${\mathcal{W}}$ by \eqref{eq Witten bundle def} would require dealing with derived pushforward in the category of orbifold-with-corners. To avoid this technicality, we define ${\mathcal{W}}$ by pullback of the analogous sheaf from a subset of the closed moduli space $\Mbar_{0,k+2l}^{1/r}$; see \cite[\S 4.1]{BCT1}. 

In this $g=0$ case, $\mathcal W$ is in fact a vector bundle, this follows from a direct Riemann--Roch computation showing that 
\begin{equation}\label{eq dimension not jump}
    R^0\pi_*\mathcal{S}=0.
\end{equation}

 On a non-empty component of the moduli space which parametrizes $r$-spin disks with internal twists $\{a_i\}$ and boundary twists $\{b_j\}$, the (real) rank of the Witten bundle is
\begin{equation}\label{eq rank of witten bundle}\frac{2 \sum_{i\in I} a_i + \sum_{j\in B} b_j - (r-2)}{r}.\end{equation}

\subsubsection{Genus-one Witten bundle}
In the $g=1$ case, we still have  the universal curve $\pi: \mathcal{C} \to \Mbarstar_{1,k,l}^{1/r}$ and the twisted universal spin bundle $\mathcal{S} \to \mathcal{C}$   with the universal Serre-dual bundle 
$\mathcal{J}:= \mathcal{S}^{\vee} \otimes \omega_{\pi}$
as in $g=0$ case. However,
\eqref{eq dimension not jump} is not true in this case so $\mathcal W$ is not a vector bundle over the entire moduli. What we do is to remove the ``dimension-jump locus'' $\mathcal Z^{dj}$, \textit{i.e.,} the support of $R^0\pi_*\mathcal S$,  from the $\Mbarstar_{1,k,l}^{1/r}$, and define 
\begin{equation*}
{\mathcal{W}}:= (R^0\pi_*\mathcal{J})_+,
\end{equation*}
as a vector bundle over $\Mbarstar_{1,k,l}^{1/r}\setminus \mathcal Z^{dj}$. 

\begin{dfn}
    For a $r$-spin (nodal) cylinder $\Sigma$, we say $\Sigma$ is \textit{dimension-jump} if there exists $\Sigma'\subseteq \Sigma$ such that
    \begin{itemize}
        \item $\Sigma'$ is a union of irreducible components of $\Sigma$;
        \item $\Sigma'$ is genus-one;
        \item the restriction of $S$ to $\Sigma'$ is a trivial line bundle.
    \end{itemize}
\end{dfn}
\begin{rmk}
    Note that $S\vert_{\Sigma'}$ is trivial implies that all the internal and boundary markings on $\Sigma'$ have twist $0$. Moreover, all the half-nodes on $\Sigma'$ connecting $\Sigma'$ to $\Sigma\setminus\Sigma'$ have twist $0$.
\end{rmk}

\begin{dfn}
    We define the dimension-jump loci inside $\Mbar^{1/r}_{1,k,l}$ to be the 
    \begin{equation*}
         \mathcal Z^{dj}:=\left\{[\Sigma]\in \Mbar^{1/r}_{1,k,l} \colon \Sigma \text{ is dimension-jump}\right\}.
    \end{equation*}

    We write 
    \begin{equation*}
        \oQMb_{1,k,l}:=\Mbar^{1/r}_{1,k,l}\setminus \mathcal Z^{dj};
    \end{equation*}
    for a $r$-spin graph $\Gamma$, we also write 
    \begin{equation*}
        \mathcal Z^{dj}_{\Gamma}:=\mathcal Z^{dj}\cap \Mbar_{\Gamma}\quad\oQMb_\Gamma:=\Mbar_\Gamma\setminus \mathcal Z^{dj}_{\Gamma}.
    \end{equation*}
\end{dfn}

\begin{rmk}\label{rmk dim jump non separating}
    Note that all non-separating boundary nodes for $\Sigma$ in $\mathcal Z^{dj}$ are Ramond nodes (whose half-nodes have twist $r-1$), therefore for any $\Gamma$ with a non-separating NS edge with have $\oQMb_{\Gamma}=\Mbar_{\Gamma}$.     
   Moreover, when $r$ is even, by item \ref{it Ramond boundary node} of Proposition \ref{prop lifting}, Ramond boundary nodes can not appear; therefore, we have $\mathcal Z^{dj}=\emptyset$ when $r$ is even.
\end{rmk}

\begin{ex}
    When $r$ is odd, let $\Delta$ be a smooth $r$-spin graph with a single genus-one vertex, $k$ boundary and $l$ internal markings with twist zero. Then topologically the moduli $\Mbar_\Delta$ is the disjoint union of $r$ copies  $\Mbar_{1,k,l}$ and the dimension-jump loci $\mathcal Z^{dj}\cap\Mbar_\Delta$ is one of these $\Mbar_{1,k,l}$. Therefore in this case $\oQMb_\Delta$ is the disjoint union of $r-1$ copies  $\Mbar_{1,k,l}$.
\end{ex}

We define the Witten  bundle  $\mathcal W$ over $\oQMb_{1,k,l}$ as 
$${\mathcal{W}}:= (R^0\pi_*\mathcal{J})_+.$$
On a non-empty component of the moduli space which parametrizes $r$-spin cylinders with internal twists $\{a_i\}$ and boundary twists $\{b_j\}$, the (real) rank of the Witten bundle is
\begin{equation}\label{eq rank of witten bundle g=1}\frac{2 \sum_{i\in I} a_i + \sum_{j\in B} b_j }{r}.\end{equation}

For genus-zero $\Gamma$, we will formally write $\oQMb_\Gamma:=\Mbar_{\Gamma}$. 
In \cite[\S 3.2]{TZ1}, a canonical relative orientation $o_\Gamma$ of the Witten bundle $\mathcal W_\Gamma \to \oQMb_\Gamma$ is defined for every connected genus-zero or genus-one legal stable graded $r$-spin graph $\Gamma$.

\subsubsection{Decomposition properties of the Witten bundle}\label{subsec decomp}
In \cite{BCT1,TZ1} the Witten bundle is proven to satisfy certain decomposition properties along nodes.  We state these properties here, further details and proofs can be found in \cite{TZ1} and \cite[\S 4.2]{BCT1}.

Given a stable graded $r$-spin graph $\Gamma$ of genus-$g$ for $g=0$ or $1$, let $\widehat{\Gamma}$ be obtained by detaching either an edge or a contracted boundary tail of $\Gamma$.  In order to state the decomposition properties of the Witten bundle, we need the morphisms
\begin{equation}
\label{eq Wittendecompsequence genus one}
\oQMb_{\widehat{\Gamma}}^{1/r} \xleftarrow{q} \Mbar_{\widehat{\Gamma}} \times_{\Mbar_{\Gamma}} \oQMb_{\Gamma}^{1/r} \xrightarrow{\mu} \oQMb_{\Gamma}^{1/r} \xrightarrow{i_{\Gamma}} \overline{\mathcal{QM}^*}_{g,k,l}^{1/r},
\end{equation}
where $\Mbar_{\Gamma} \subseteq \Mbar_{g,k,l}$ is the moduli space of marked surfaces (without $r$-spin structure) corresponding to the dual graph $\Gamma$.  The morphism $q$ is defined by sending the $r$-spin structure $S$ to the $r$-spin structure $\widehat{S}$ defined by \eqref{eq normalize S}; it has degree one but is not an isomorphism because it does not induce an isomorphism between isotropy groups.  The morphism $\mu$ is the projection to the second factor in the fibre product; it is a surjective morphism, and is an isomorphism when $\Gamma$ has no non-separating edges.  The morphism $i_{\Gamma}$ is the inclusion.

We denote by ${\mathcal{W}}$ and $\widehat{{\mathcal{W}}}$ the Witten bundles on $\overline{\mathcal{QM}^*}_{g,k,l}^{1/r}$ and $\oQMb_{\widehat{\Gamma}}^{1/r}$, the decomposition properties below show how these bundles are related under pullback via the morphisms \eqref{eq Wittendecompsequence genus one}.  

\begin{pr}[{\cite[Proposition 2.20]{TZ1}}]
\label{prop decomposition}
Let $\Gamma$ be a genus-zero or genus-one stable graded $r$-spin graph with a single edge $e$, and let $\widehat{\Gamma}$ be the detaching of $\Gamma$ along $e$.  Then the Witten bundle decomposes as follows:
\begin{enumerate}
\item\label{it NS} If $e$ is a Neveu--Schwarz edge, then \begin{equation}\label{eq NSdecompses}
\mu^*i_{\Gamma}^*{\mathcal{W}} = q^*\widehat{{\mathcal{W}}}.
\end{equation}

\item\label{it decompose Ramond boundary edge} If $e$ is a Ramond boundary edge, then there is an exact sequence
\begin{equation}
\label{eq decompose}0 \to \mu^*i_{\Gamma}^*{\mathcal{W}} \to q^*\widehat{{\mathcal{W}}} \to \underline{\mathbb{R}_+} \to 0,
\end{equation}
where $\underline{\mathbb{R}_+}$ is a trivial real line bundle.

\item If $e$ is a Ramond internal edge connecting two closed genus-zero vertices, write $q^*\widehat{\mathcal{W}} = \widehat{\mathcal{W}}_1 \boxplus \widehat{\mathcal{W}}_2$, where $\widehat{\mathcal{W}}_1$ is the Witten bundle on the component containing a contracted boundary tail or the anchor of $\Gamma$, and $\widehat{\mathcal{W}}_2$ is the Witten bundle on the other component.  Then there is an exact sequence
\begin{equation}
\label{eq decompose2}
0 \to \widehat{\mathcal{W}}_2 \to \mu^*i_{\Gamma}^*{\mathcal{W}} \to \widehat{\mathcal{W}}_1 \to 0.
\end{equation}
Furthermore, if $\widehat\Gamma'$ is defined to agree with $\widehat\Gamma$ except that the twist at each Ramond tail is $r-1$, and $q': \Mbar_{\widehat{\Gamma}} \times_{\Mbar_{\Gamma}} \Mbar_{\Gamma}^{1/r} \to \Mbar_{\widehat\Gamma'}^{1/r}$ is defined analogously to $q$, then there is an exact sequence
\begin{equation}
\label{eq decompose3}
0 \to \mu^*i_{\Gamma}^*{\mathcal{W}} \to (q')^*\widehat{{\mathcal{W}}}' \to {\underline{\mathbb{C}}}^{1/r} \to 0,
\end{equation}
where $\widehat{{\mathcal{W}}}'$ is the Witten bundle on $\Mbar_{\widehat\Gamma'}^{1/r}$ and ${\underline{\mathbb{C}}}^{1/r}$ is a line bundle whose $r$-th power is trivial.

\item If $e$ is a separating Ramond internal edge connecting an open vertex to a closed vertex, write $q^*\widehat{\mathcal{W}} = \widehat{\mathcal{W}}_1 \boxplus \widehat{\mathcal{W}}_2$, where $\widehat{\mathcal{W}}_1$ is the Witten bundle on the open component (defined via $\widehat{\mathcal{S}}|_{\mathcal{C}_1}$) and $\widehat{\mathcal{W}}_2$ is the Witten bundle on the closed component.  Then the exact sequences \eqref{eq decompose2} and \eqref{eq decompose3} both hold.
\end{enumerate}
Analogously, for genus-zero $\Gamma$ which has a single vertex, no edges, and a contracted boundary tail~$t$, and $\widehat{\Gamma}$ is the detaching of $\Gamma$ along~$t$, then there is a decomposition property:
\begin{enumerate}
\setcounter{enumi}{4}
\item\label{it decompose cb tail} If ${\mathcal{W}}$ and $\widehat{\mathcal{W}}$ denote the Witten bundles on $\Mbarstar_{0,k,l}^{1/r}$ and $\Mbar_{\widehat\Gamma}^{1/r}$, respectively, then the sequence \eqref{eq decompose} holds.
\end{enumerate}
\end{pr}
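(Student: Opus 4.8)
All five parts are instances of one mechanism: apply $R^\bullet\pi_*$ to the normalization exact sequence of the universal spin bundle on the universal curve, use relative Serre duality to pass between $\mathcal S$ and $\mathcal J=\mathcal S^\vee\otimes\omega_\pi$, and read off what survives from the behaviour of the $\mathbb Z/r$-isotropy and the anti-holomorphic involution at the node. First I would pull everything back to $\mathcal B:=\Mbar_{\widehat\Gamma}\times_{\Mbar_\Gamma}\oQMb_\Gamma^{1/r}$, so that over $\mathcal B$ one has simultaneously the universal curve $\pi\colon\mathcal C\to\mathcal B$ of $\Gamma$ with its distinguished node (a section $\mathcal N$), the partial normalization $n\colon\widehat{\mathcal C}\to\mathcal C$ along that node with universal curve $\widehat\pi\colon\widehat{\mathcal C}\to\mathcal B$ of $\widehat\Gamma$, and the universal twisted spin structures $\mathcal S$, $\widehat{\mathcal S}$, linked by $\widehat{\mathcal S}=n^*\mathcal S\otimes\mathcal O(-\sum_{q\in\mathcal R}r[q])$ as in \eqref{eq normalize S}. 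Since $R^0\pi_*\mathcal S=0$ on $\oQMb_\Gamma$ — this is \eqref{eq dimension not jump} in genus zero, and in genus one it holds by construction because $\oQMb_\Gamma$ omits $\mathcal Z^{dj}$ — relative Serre duality along the fibres identifies $\mathcal W$, as a bundle with anti-holomorphic involution, with $(R^1\pi_*\mathcal S)^\vee$, so I would run the argument with $\mathcal S$ and dualize at the end. One also needs that detaching a single edge of a surface in $\oQMb_\Gamma$ never produces a genus-one component on which the spin structure is trivial — any such component is already a component of $\Sigma$, carries the same spin structure there ($\mathcal R$ meets only genus-zero components), and triviality would put $\Sigma$ in $\mathcal Z^{dj}$; together with Remark \ref{rmk dim jump non separating} for non-separating edges, this keeps everything below a vector bundle.

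Tensoring the normalization sequence of $\mathcal C$ with $\mathcal S$ gives
\[
0\to\mathcal S\to n_*n^*\mathcal S\to\mathcal S|_{\mathcal N}\to 0,
\]
with $\mathcal S|_{\mathcal N}$ a single copy of the node-fibre of $\mathcal S$, supported along $\mathcal N$; when $e$ is a Ramond internal edge there is additionally, on $\widehat{\mathcal C}$, the twist-comparison sequence $0\to\widehat{\mathcal S}\to n^*\mathcal S\to(\text{skyscraper along the }\mathcal R\text{-divisors})\to 0$. Apply $R^\bullet\pi_*$ (using $R^\bullet\pi_*n_*=R^\bullet\widehat\pi_*$), then take $\mathbb Z/r$- and $\widetilde\phi$-invariants and dualize. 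The decisive local fact is that the cyclic isotropy at the node acts on $\mathcal S|_{\mathcal N}$ through $\xi\mapsto\xi^{\operatorname{mult}_q(\mathcal S)}$, trivially exactly when $\operatorname{mult}_q(\mathcal S)\equiv0$, i.e.\ exactly for a Ramond node. Hence for a Neveu--Schwarz edge the correction $\mathcal S|_{\mathcal N}$ has no $\mathbb Z/r$-invariants and one obtains the equality $\mu^*i_\Gamma^*\mathcal W=q^*\widehat{\mathcal W}$ of item (1); for a Ramond boundary edge, or a contracted-boundary tail treated identically, $\mathcal S|_{\mathcal N}$ is $\mathbb Z/r$-invariant, one-complex-dimensional and $\widetilde\phi$-stable, and the compatibility built into Definition \ref{def graded rspin disk} (positivity on the boundary, resp.\ positive-imaginary under the residue) together with the grading at the node cuts it down to a trivial real line bundle with a canonical positive ray, giving the sequence \eqref{eq decompose} — items (2) and (5).

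For the separating internal Ramond cases (items (3)--(4)), detaching produces two components and one must decide into which summand of $q^*\widehat{\mathcal W}=\widehat{\mathcal W}_1\boxplus\widehat{\mathcal W}_2$ the one-dimensional correction enters. The point is that, of the two half-nodes, the one on the component without the anchor/contracted-boundary tail lies in $\mathcal R$ (twist $r-1$) while the one on the other component lies in $\mathcal A$ (twist $-1$); thus $n^*\mathcal S=\widehat{\mathcal S}_1$ on the anchor side, whereas on the other side $n^*\mathcal S=\widehat{\mathcal S}_2\otimes\mathcal O(r[q])$, and running the pushforward of the normalization and twist-comparison sequences on each component separately, exactly as in the genus-zero argument of \cite[\S 4.2]{BCT1}, produces the extension \eqref{eq decompose2}; comparing instead with the graph $\widehat\Gamma'$, where the Ramond tail has twist $r-1$ on both sides so the correction sheaf acquires the nontrivial $\mathbb Z/r$-character and becomes $\underline{\mathbb C}^{1/r}$, gives \eqref{eq decompose3}.

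The part I expect to be genuinely delicate — and where essentially all the real work is — is not the homological algebra but two equivariance/positivity checks. First, one must verify that all the identifications above are $\mu$- and $q$-equivariant and descend correctly through $q$, which is a degree-one morphism that is \emph{not} an isomorphism precisely because it alters the node isotropy, so ``take $\mathbb Z/r$-invariants'' and ``pull back'' must be performed in the right order. Second, in the Ramond boundary case one must check that the real structure really cuts the complex correction line down to a \emph{one}-real-dimensional bundle equipped with a \emph{canonical} positive cone, i.e.\ that the grading datum is exactly what rigidifies $\underline{\mathbb R_+}$ (this is also what pins down the orientation used elsewhere in the paper). In genus one the sole new ingredient over the genus-zero template is the vanishing $R^0\pi_*\mathcal S=0$ on $\oQMb$ together with Remark \ref{rmk dim jump non separating}, which guarantee that detaching never leaves the locus on which all the sheaves above are vector bundles.
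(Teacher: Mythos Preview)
The paper does not supply its own proof of this proposition: it is quoted verbatim from \cite[Proposition 2.20]{TZ1}, with the remark ``further details and proofs can be found in \cite{TZ1} and \cite[\S 4.2]{BCT1}.'' So there is nothing in this paper to compare your argument against directly.

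That said, your sketch is the standard one --- normalization sequence tensored with $\mathcal S$, derived pushforward, Serre duality to pass to $\mathcal J$, then $\mathbb Z/r$- and $\widetilde\phi$-invariants --- and this is essentially how the argument runs in \cite[\S 4.2]{BCT1} for genus zero, with \cite{TZ1} extending it to genus one. Your identification of the two delicate points (the $q$-equivariance issue from the altered node isotropy, and the grading rigidifying $\underline{\mathbb R_+}$ in the Ramond boundary case) is accurate, and your observation that the only genus-one novelty is the vanishing $R^0\pi_*\mathcal S=0$ on $\oQMb$ plus the check that detaching stays inside the vector-bundle locus is exactly right. The sketch is sound as a plan; filling in the equivariance and positivity checks is where the cited references do the work.
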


\begin{rmk}\label{rmk decompose NS boundary node}
If the edge $e$ is a Neveu--Schwarz boundary edge, then the map $q$ is an isomorphism, and in this case, the proposition implies that the Witten bundle pulls back under the gluing morphism $\mu\circ q^{-1}\colon\overline{\mathcal{QM}^*}_{\widehat\Gamma}^{1/r} \to \overline{\mathcal{QM}^*}_{g,k,l}^{1/r}$.

\end{rmk}

\begin{nn}
    Assuming $\Gamma$ has only NS boundary edges and $\widehat{\Gamma}$ has $t$ connected components $\Delta_1,\dots, \Delta_t$. If we denote by $\mathcal W_{\Delta_i}\to \oQMb_{\Delta_i}$ the corresponding Witten bundles then we have $\widetilde{\mathcal W}=\bboxplus_{i=1}^t \mathcal W_{\Delta_i}$. For a multisection $s_{\Gamma}$ of $\mathcal W_{\Gamma}=i_{\Gamma}^* \mathcal W \to \oQMb_{\Gamma}$ and multisections $s_{\Delta_i}$ of $\mathcal W_{\Delta_i}$, 
    by abuse of notation, we will write 
    $
     s_{\Gamma}=\bboxplus_{i=1}^t s_{\Delta_i}
    $
    as long as
    $
     \mu^*s_{\Gamma}= q^*\left(\bboxplus_{i=1}^t s_{\Delta_i} \right).
    $
\end{nn}

\subsubsection{Coherent sections and the assembling operator}\label{sec ass} 

   With the same notation as in Proposition \ref{prop decomposition}, let $\Gamma$ be a genus-zero or genus-one connected graded $r$-spin graph and $e$ be a separating edge of $\Gamma$, let $\oQMb_{\widehat{\Gamma}_1}$ and $\oQMb_{\widehat{\Gamma}_2}$ be the two components of $\widehat\Gamma:={\text{detach} }_e{\Gamma}$. We write $q^*\widehat{\mathcal{W}} = \widehat{\mathcal{W}}_1 \boxplus \widehat{\mathcal{W}}_2$, where $\widehat{\mathcal{W}}_1$ and $\widehat{\mathcal{W}}_2$ are the (pullback of) Witten bundles on $\oQMb_{\widehat{\Gamma}_1}$ and $\oQMb_{\widehat{\Gamma}_2}$. 
   Given sections $s_1$ and $s_2$ of $\widehat{\mathcal{W}}_1$ and $\widehat{\mathcal{W}}_2$, we want to construct a section of $\mathcal W \to \oQMb_\Gamma$. In the case where $e$ is an NS boundary edge, according to Remark \ref{rmk decompose NS boundary node}, we can glue $s_1,s_2$ 
 to a section of $\mathcal W \to \oQMb_\Gamma$. However, when $e$ is an internal edge, we cannot glue $s_1$ and $s_2$ directly. In the case of an internal NS edge, this is because the automorphism groups of ${\mathcal{W}}$ and of the direct sum are not the same. Ramond internal edges introduce a more fundamental problem, since ${\mathcal{W}}$ does not decompose naturally as a direct sum of the $\widehat{\mathcal{W}}_1,\widehat{\mathcal{W}}_2,$ by Proposition \ref{prop decomposition} above. To construct a section of $\mathcal W \to \overline{\mathcal{QM}}_\Gamma$, we need the \textit{assembling operator} introduced in \cite{BCT2}, which is based on the following technical notion of \textit{coherent multisections}. 
  
  Let $ \Gamma_c$ be a genus-zero connected stable $r$-spin dual graph with an anchor $t\in T^I(\Gamma_c)$, \textit{i.e.} $ \Gamma_c$  has no open vertices or contracted boundary tails.
  \begin{itemize}
      \item If $\tw(t)=-1$, let $\mathcal J\to \Mbar_{ \Gamma_c}$ be the universal Serre-dual bundle as in \eqref{eq universal serre dual bundle}, we set $\mathcal{J}' := \mathcal{J} \otimes \mathcal O \left( r\Delta_{z_t}\right)$, where $\Delta_{z_t}$ is the divisor in the universal curve corresponding to the anchor $t$.
    We define an orbifold bundle ${\overline{\mathcal{R}}}_{\Gamma_c}$ on $\Mbar_{ \Gamma_c}$ by
    ${\overline{\mathcal{R}}}_{ \Gamma_c}:= \sigma_{z_t}^*\mathcal{J}'$,
     where $\sigma_{z_t}$ is the section corresponding to the  anchor $t$ in the universal curve. We also abuse notations to denote by ${\overline{\mathcal{R}}}_{ \Gamma_c}$ the total space of this bundle. We write
$\wp: {\overline{\mathcal{R}}}_{ \Gamma_c} \to \Mbar_{ \Gamma_c}$ for the projection.     We denote by ${\mathcal{W}}$ the  bundle $R^0\pi_*\mathcal{J}'$ on ${\overline{\mathcal{R}}}_{\Gamma_c}$.
\item If $\tw(t)\ne -1$, we set ${\overline{\mathcal{R}}}_{ \Gamma_c}:=\Mbar_{\Gamma_c}$ and set $\mathcal W\to {\overline{\mathcal{R}}}_{ \Gamma_c}$ the same as $\mathcal W\to \Mbar_{\Gamma_c}$.
  \end{itemize}

\begin{definition}[{\cite[Definition 4.2]{BCT2}}]
\label{def coherent}
 Let $ \Gamma_c$ be a connected genus-zero stable $r$-spin dual graph with an anchor $t\in T^I(\Gamma_c)$, and let $s$ be a section of $\mathcal{W}$ over a subset $U\subset{\overline{\mathcal{R}}}_{\Gamma_c}$. We say $s$ is \textit{coherent} if either the twist $\tw(t)\ne -1$, or, in the case where $\tw(t)= -1$, for any point $\zeta=(C, u_t) \in U$  corresponding to a graded $r$-spin disk $C$ and an element $u_t$ in the fibre over $z_t\in C$ of the line bundle $J':=J\otimes \mathcal O(r[z_t])\to C$, the element $s(\zeta) \in H^0(J')$ satisfies
$$\operatorname{ev}_{z_t}s(\zeta) = u_t.$$ A coherent multisection $s$ is defined as a multisection  (see \cite[Appendix A]{BCT2}) whose local branches are coherent. 

\end{definition}

Let $s$ be a coherent multisection of $\mathcal{W} \rightarrow {\overline{\mathcal{R}}}_{\Gamma_c}$. Note that, in the case $\tw(t)=-1$, for any $\zeta\in\Mbar_{\Gamma_c} \hookrightarrow {\overline{\mathcal{R}}}_{\Gamma_c}$, the evaluation $\operatorname{ev}_{z_t}(s(\zeta))$ is equal to zero, thus $s(\zeta)$ is induced by a multisection of $J$.  In other words, the restriction of a coherent multisection $s$ to $\Mbar_{\Gamma_c}$ is canonically identified with a multisection of ${\mathcal{W}} \rightarrow \Mbar_{\Gamma_c}$; we denote this induced multisection by $\overline{s}$.  In case $\tw(t)\ne -1$, we write $\overline{s} = s$.
\begin{lem}[{\cite[Lemma 2.12]{TZ2}}]
    Let $\Gamma_c$ be a connected graded $r$-spin graph with an anchor $t$, for any multisection $\hat s$ of $\mathcal W\to \Mbar_{\Gamma_c}$,
    there exist a coherent multisection $s$ of $\mathcal W\to {\overline{\mathcal{R}}}_{\Gamma_c}$ satisfying $\overline{s}=\hat s$.
\end{lem}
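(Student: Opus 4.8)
The statement to prove is the extension lemma (\cite[Lemma 2.12]{TZ2} as quoted): given a connected graded $r$-spin graph $\Gamma_c$ with an anchor $t$ and any multisection $\hat s$ of $\mathcal W\to\Mbar_{\Gamma_c}$, there is a coherent multisection $s$ of $\mathcal W\to{\overline{\mathcal{R}}}_{\Gamma_c}$ with $\overline s=\hat s$. The plan is to split into the two cases that already organize the definition of ${\overline{\mathcal{R}}}_{\Gamma_c}$. When $\tw(t)\neq -1$ there is nothing to do: by construction ${\overline{\mathcal{R}}}_{\Gamma_c}=\Mbar_{\Gamma_c}$, the bundle $\mathcal W\to{\overline{\mathcal{R}}}_{\Gamma_c}$ is literally $\mathcal W\to\Mbar_{\Gamma_c}$, coherence is vacuous, and $\overline s=s$ by definition; so one takes $s=\hat s$. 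The entire content is the case $\tw(t)=-1$.

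In that case, recall the setup: $\mathcal J':=\mathcal J\otimes\mathcal O(r\Delta_{z_t})$, the bundle ${\overline{\mathcal{R}}}_{\Gamma_c}=\sigma_{z_t}^*\mathcal J'$ with projection $\wp\colon{\overline{\mathcal{R}}}_{\Gamma_c}\to\Mbar_{\Gamma_c}$, and $\mathcal W=R^0\pi_*\mathcal J'$ over ${\overline{\mathcal{R}}}_{\Gamma_c}$. First I would record the evaluation map $\operatorname{ev}_{z_t}\colon\mathcal W\to\wp^*{\overline{\mathcal{R}}}_{\Gamma_c}$ (i.e.\ $H^0(J')\to J'|_{z_t}$, which is the tautological map pulled back over the total space), and observe that a section $s$ over $U\subseteq{\overline{\mathcal{R}}}_{\Gamma_c}$ is coherent exactly when $\operatorname{ev}_{z_t}\circ s=\mathrm{id}_{{\overline{\mathcal{R}}}_{\Gamma_c}}|_U$, the tautological section. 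Next, over $\Mbar_{\Gamma_c}=\{u_t=0\}\hookrightarrow{\overline{\mathcal{R}}}_{\Gamma_c}$ one has the short exact sequence $0\to\mathcal W|_{\Mbar_{\Gamma_c}}\to R^0\pi_*\mathcal J'|_{\Mbar_{\Gamma_c}}\xrightarrow{\operatorname{ev}_{z_t}}\overline{\mathcal R}_{\Gamma_c}\to 0$ coming from the twisting divisor — here the first term is the genuine Witten bundle on $\Mbar_{\Gamma_c}$ — which is exactly the identification $\overline s\leftrightarrow s|_{\Mbar_{\Gamma_c}}$ of the definition. The construction then proceeds: choose a splitting, i.e.\ a vector-bundle section $\rho$ of $\operatorname{ev}_{z_t}$ over all of ${\overline{\mathcal{R}}}_{\Gamma_c}$ (such a splitting exists because we are in the smooth orbifold-with-corners category, where one builds it from local splittings via a partition of unity; alternatively one checks $\operatorname{ev}_{z_t}$ is surjective from Riemann--Roch, since $\deg J'|_{C}$ on each component is large enough that $H^1(J')$ vanishes as needed, cf.\ the computations behind \eqref{eq dimension not jump}). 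Given $\hat s$, whose branches are multisections of the Witten bundle $\mathcal W\to\Mbar_{\Gamma_c}$, I would then define branch-by-branch
\[
s(\zeta):=\widetilde{\hat s}(\wp(\zeta))+\rho(\zeta),\qquad \zeta=(C,u_t)\in{\overline{\mathcal{R}}}_{\Gamma_c},
\]
where $\widetilde{\hat s}$ denotes the branch of $\hat s$ reinterpreted (via the inclusion $\mathcal W|_{\Mbar_{\Gamma_c}}\hookrightarrow R^0\pi_*\mathcal J'$) as lying in the kernel of $\operatorname{ev}_{z_t}$, pulled back along $\wp$. Since $\operatorname{ev}_{z_t}\circ\rho=\mathrm{id}$ and $\operatorname{ev}_{z_t}\circ\widetilde{\hat s}=0$, this $s$ is coherent; and restricting to $u_t=0$, where $\rho$ lands in $\ker\operatorname{ev}_{z_t}$ is \emph{not} automatic — instead one arranges $\rho$ so that it vanishes on the zero section, or equivalently subtracts $\wp^*(\rho|_{\Mbar_{\Gamma_c}})$, so that $s|_{\Mbar_{\Gamma_c}}=\widetilde{\hat s}$, giving $\overline s=\hat s$. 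One then checks the multisection bookkeeping: the number of branches of $s$ equals that of $\hat s$, the branches glue over overlaps compatibly because $\rho$ is globally defined and single-valued, and the whole thing is invariant under the relevant isotropy, so $s$ is a well-defined coherent multisection in the sense of \cite[Definition 4.2]{BCT2}.

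The main obstacle is the existence of the global splitting $\rho$ of $\operatorname{ev}_{z_t}$ — more precisely, producing a section that simultaneously (i) splits the evaluation over all of the total space ${\overline{\mathcal{R}}}_{\Gamma_c}$, including across the boundary strata where the dual graph degenerates and the decomposition properties of Proposition~\ref{prop decomposition} kick in, and (ii) vanishes along the zero section $\Mbar_{\Gamma_c}$ so that $\overline s=\hat s$ comes out on the nose rather than up to a correction term. Surjectivity of $\operatorname{ev}_{z_t}$ is the Riemann--Roch input and is not hard at generic points, but near the boundary one must know that twisting by $r[z_t]$ still kills the relevant $H^1$ on every component of every nodal degeneration in $\Gamma_c$ — this is where the genus-zero, anchored hypothesis on $\Gamma_c$ is used, exactly as in the proof of \eqref{eq dimension not jump}. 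Once surjectivity is in hand, the splitting is standard (the category of orbifolds-with-corners admits partitions of unity), and requirement (ii) is handled by the elementary replacement $\rho\rightsquigarrow\rho-\wp^*(\iota^*\rho)$ where $\iota\colon\Mbar_{\Gamma_c}\hookrightarrow{\overline{\mathcal{R}}}_{\Gamma_c}$, noting that $\iota^*\rho$ is a section of $\ker\operatorname{ev}_{z_t}=\mathcal W|_{\Mbar_{\Gamma_c}}$ so the subtraction does not disturb coherence. The multisection/orbifold bookkeeping is routine given the framework of \cite[Appendix A]{BCT2}.
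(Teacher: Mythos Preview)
The paper does not prove this lemma; it is quoted from \cite[Lemma~2.12]{TZ2} and used as a black box. Your argument is correct and is the natural one (and presumably the one in \cite{TZ2}): split the evaluation map and add the splitting to the pullback of $\hat s$.

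Two small simplifications remove the wrinkles you flag as the ``main obstacle''. First, if you choose a \emph{fibrewise-linear} splitting $\sigma\colon\sigma_{z_t}^*\mathcal J'\to R^0\pi_*\mathcal J'$ of $\operatorname{ev}_{z_t}$ over $\Mbar_{\Gamma_c}$ (this exists by partition of unity once surjectivity is known), then $\rho(\zeta):=\sigma(u_t)$ automatically vanishes on the zero section $\{u_t=0\}$, and the subtraction step $\rho\rightsquigarrow\rho-\wp^*(\iota^*\rho)$ is unnecessary. Second, surjectivity of $\operatorname{ev}_{z_t}$ over \emph{all} of $\Mbar_{\Gamma_c}$, including every boundary stratum, follows in one stroke from the global vanishing $R^1\pi_*\mathcal J=(R^0\pi_*\mathcal S)^\vee=0$ in genus zero (the same input as \eqref{eq dimension not jump}), applied to the long exact sequence of $0\to\mathcal J\to\mathcal J'\to\sigma_{z_t*}\sigma_{z_t}^*\mathcal J'\to 0$; there is no need for a separate component-by-component analysis at degenerations.
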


Given a connected graded $r$-spin graph $\Gamma$ with an open vertex or a contracted boundary, let $e$ be a separating internal edge $e\in E_{nsp}^I(\Gamma)$. We denote by $\Gamma_o$ and $\Gamma_c$ the connected components of ${\text{detach} }_e \Gamma$, where $\Gamma_o$ has an open vertex or a contracted boundary tail (hence $\Gamma_c$ has an anchor). Using the assembling operator $\Ass$ defined in \cite[\S 4.1.3]{BCT2}, we can glue a multisection $s_o$ of $\mathcal W^{}\to \oQMb_{\Gamma_o}$ and a coherent multisection $s_c$ of $\mathcal W^{}\to {\overline{\mathcal{R}}}_{\Gamma_c}$ to obtain a multisection $s_o\Ass s_c$ of $\mathcal W^{}\to \oQMb_{\Gamma}$. By construction the multisection $s_o\Ass s_c$ satisfies the following property.

\begin{lem} \label{lem zero of assembling}
    With the above notation, let $\pi_o\colon \oQMb_\Gamma \to \oQMb_{\Gamma_o}$ and $\pi_c\colon \oQMb_\Gamma \to \Mbar_{\Gamma_c}$ be the projections.
    For any multisection $s_o$ of $\mathcal W \to \oQMb_{\Gamma_o}$, and a coherent multisection $s_c$ of $\mathcal W \to \overline{\mathcal R}_{\Gamma_c}$, the multisection $s_o\Ass s_c$ vanishes at $p\in \oQMb_\Gamma$ if and only $\overline{s_c}(\pi_c(p))=0$ and $s_o(\pi_o(p))=0$.
\end{lem}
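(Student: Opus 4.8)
The statement is essentially a matter of unwinding the construction of the assembling operator $\Ass$ from \cite[\S 4.1.3]{BCT2}, and the plan is to do this case by case, following the case division of Proposition~\ref{prop decomposition} according to the type of the separating internal edge $e$. First I would make two routine reductions. Since $\Ass$ acts branchwise on multisections and a multisection vanishes at a point precisely when one of its local branches does, it suffices to treat single coherent branches $s_o$ and $s_c$; the multisection statement then follows formally, because for branch collections the condition ``some branch of $s_o\Ass s_c$ vanishes at $p$'' coincides with ``some branch of $s_o$ vanishes at $\pi_o(p)$ and some branch of $s_c$ vanishes at $\pi_c(p)$''. Second, the morphisms $\mu$ and $q$ of \eqref{eq Wittendecompsequence genus one} are fiberwise surjective, and a pulled-back section vanishes at a point exactly when the original section vanishes at its image; so it suffices to verify the vanishing equivalence after pulling back to $\Mbar_{\widehat\Gamma}\times_{\Mbar_\Gamma}\oQMb_\Gamma$, where the decomposition statements of Proposition~\ref{prop decomposition} are available.

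If $e$ is Neveu--Schwarz, the half-node defining the anchor $t$ of $\Gamma_c$ has twist $\tw(t)\neq -1$, so $\overline{s_c}=s_c$. By item~\ref{it NS} of Proposition~\ref{prop decomposition} we have $\mu^{*}i_{\Gamma}^{*}\mathcal W=q^{*}\widehat{\mathcal W}=q^{*}(\mathcal W_{\Gamma_o}\boxplus\mathcal W_{\Gamma_c})$, and by construction $\Ass$ is the descent of the direct sum, i.e. $\mu^{*}(s_o\Ass s_c)=q^{*}(s_o\boxplus s_c)$. Since a vector in a direct sum is zero iff both components are, $s_o\Ass s_c$ vanishes at $p$ iff $s_o$ vanishes at $\pi_o(p)$ and $s_c=\overline{s_c}$ vanishes at $\pi_c(p)$, which is the assertion.

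If $e$ is Ramond, then $\tw(t)=-1$, the space $\overline{\mathcal R}_{\Gamma_c}$ is the total space of the auxiliary bundle of Definition~\ref{def coherent}, and $\overline{s_c}$ is the restriction of $s_c$ to the zero section $\Mbar_{\Gamma_c}\hookrightarrow\overline{\mathcal R}_{\Gamma_c}$ (the locus $u_t=0$). I would use the exact sequences \eqref{eq decompose2} and \eqref{eq decompose3} attached to $e$. The first exhibits $q^{*}\mathcal W_{\Gamma_c}$ as a sub-bundle of $\mu^{*}i_{\Gamma}^{*}\mathcal W$ with quotient the pullback of $\mathcal W_{\Gamma_o}$; the second embeds $\mu^{*}i_{\Gamma}^{*}\mathcal W$ fiberwise into the pullback of $\mathcal W'_{\Gamma_o}\boxplus\mathcal W'_{\Gamma_c}$, the Witten bundles with twist $r-1$ at the half-nodes of $e$. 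By construction of $\Ass$, the class $\mu^{*}(s_o\Ass s_c)$ maps to the pullback of $s_o$ under the quotient map of \eqref{eq decompose2}, and the particular lift is the one cut out by the coherent section $s_c$: on the locus $u_t=0$ the coherence condition $\operatorname{ev}_{z_t}s_c=u_t$ forces that lift to be pulled back from $\overline{s_c}$ along the sub-bundle inclusion of \eqref{eq decompose2}, after the standard comparison of the twists $-1$ and $r-1$ at $z_t$ via the $\mathcal O(r[z_t])$-twist defining $\mathcal J'$. Consequently, if $s_o(\pi_o(p))\neq 0$ then already the quotient of $(s_o\Ass s_c)(p)$ is nonzero, hence $(s_o\Ass s_c)(p)\neq 0$; and if $s_o(\pi_o(p))=0$, then $(s_o\Ass s_c)(p)$ lies in the sub-bundle $\mathcal W_{\Gamma_c}$ and equals there the image of $\overline{s_c}(\pi_c(p))$ under a fiberwise injective bundle map, so it vanishes iff $\overline{s_c}(\pi_c(p))=0$. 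Combining the two sub-cases gives the claim.

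The main obstacle is the Ramond case: one must check carefully that the explicit gluing recipe defining $\Ass$ in \cite[\S 4.1.3]{BCT2} indeed produces the section that (a) projects to $s_o$ under the quotient map of \eqref{eq decompose2} and (b) restricts, on the sub-bundle $\mathcal W_{\Gamma_c}$, to $\overline{s_c}$ when $s_o$ vanishes --- this amounts to matching the ``gluing of sections of $J$'' recipe with the exact sequences, while keeping precise track of the twist change from $-1$ to $r-1$ at the Ramond node and of the coherent evaluation condition at the anchor. By contrast, the auxiliary trivial line bundles $\underline{\mathbb R_+}$ and $\underline{\mathbb C}^{1/r}$ appearing in \eqref{eq decompose}--\eqref{eq decompose3}, and the extra isotropy collapsed by $q$, are harmless for the statement: every structure map in sight is fiberwise injective or surjective, so none of them can create or destroy zeros of a section.
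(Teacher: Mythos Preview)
The paper does not actually prove this lemma: immediately before the statement it says that ``by construction the multisection $s_o\Ass s_c$ satisfies the following property,'' and immediately after it refers the reader to \cite[\S 4.1.3]{BCT2} for the definition of $\Ass$. Your argument is a correct explicit verification of that assertion --- the NS case via the direct-sum identification of item~\ref{it NS} in Proposition~\ref{prop decomposition}, and the Ramond case via the short exact sequence \eqref{eq decompose2} with $\mathcal W_{\Gamma_c}$ as sub-bundle and $\mathcal W_{\Gamma_o}$ as quotient --- so you are simply making explicit what the paper leaves implicit.

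One small expositional point in your Ramond case: the parameter $u_t$ at which the coherent section $s_c$ is evaluated is not chosen freely but is \emph{determined} by evaluating $s_o$ at the half-node; thus ``$u_t=0$'' is a consequence of $s_o(\pi_o(p))=0$, not a restriction to a sublocus of the moduli. Once this is read correctly, your two sub-cases (quotient nonzero if $s_o(\pi_o(p))\neq 0$; sub-bundle component equals $\overline{s_c}(\pi_c(p))$ if $s_o(\pi_o(p))=0$) give exactly the desired equivalence.
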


We refer the reader to \cite[\S 4.1.3]{BCT2} for further details and exact definitions.

\subsubsection{Relative cotangent line bundles}
Other important line bundles in open $r$-spin theory are the \emph{relative cotangent line bundles} or \emph{tautological line bundles} at internal marked points. These line bundles have already been defined on the moduli space $\Mbar_{g,k,l}$ of stable marked surfaces (without spin structure) in \cite{PST14}, as the line bundles with fibre $T^*_{z_i}\Sigma$. Equivalently, these line bundles are the pullback of the usual relative cotangent line bundles ${\mathbb{L}}_i\to\Mbar_{g,k+2l}$ under the doubling map $\Mbar_{g,k,l} \to \Mbar_{g,k+2l}$ that sends $\Sigma$ to $C=\Sigma\sqcup_{\partial\Sigma}\overline{\Sigma}$.  The bundle $\mathbb{L}_i\to\Mbarstar_{g,k,l}^{1/r}$ is the pullback of this relative cotangent line bundle on $\Mbar_{g,k,l}$ under the morphism $\text{For}_{\text{spin}}$ that forgets the spin structure.  Note that $\mathbb{L}_i$ is a complex line bundle, hence it carries a canonical orientation. Further discussion can be found in \S \ref{sec relative  cotangent and all}.

\subsection{Positivity boundary condition for sections of genus-zero Witten bundle}\label{sec positivity constraint}

In the construction of genus-zero $r$-spin theory \cite{BCT2,TZ2}, an important concept is the positivity boundary condition for the sections of the Witten bundle $\mathcal W$.

Let $\Gamma$ be a graded (genus-zero or genus-one) $r$-spin graph, we refer to a boundary edge $e\in E^B(\Gamma),$ or the corresponding node, as \textit{positive} if one half-edge $h_1$ of $e$ satisfies ${\text{alt}}(h_1) = 0$ and $\text{tw}(h_1) >0$.  For a graded $r$-spin graph $\Gamma$ we write $$H^+(\Gamma):=\left\{h\in H^B(\Gamma)\colon \text{either $h$ or $\sigma_1(h)$ is positive}\right\},$$ 
$$\partial^{+} \Gamma:=\{ {\Delta} \in \partial^{!} \Gamma \colon H^+({\Delta})\ne \emptyset \}.$$
$$\partial^{CB} \Gamma:=\{ {\Delta} \in \partial^{!} \Gamma \colon H^{CB}({\Delta})\ne \emptyset \}$$
and
$$
\partial^* \Gamma:= \{ {\Delta} \in \partial^! \Gamma \colon H^I_{nsp}=H^+({\Delta})= \emptyset \}
$$

For a graded $r$-spin graph $\Gamma$, we define
\[ \partial^+\Mbar_\Gamma:=\bigcup_{\Lambda\in\partial^+\Gamma}{\mathcal{M}}_\Lambda, \quad  \oPMb_\Gamma:= \oQMb_\Gamma\setminus
\partial^+\Mbar_\Gamma,\quad \partial^{CB}\Mbar_\Gamma:=\bigcup_{\Lambda\in\partial^{CB}\Gamma}{\mathcal{M}}_\Lambda\]
and similarly $\oPM_{g,B,I}\subseteq \Mbar^{1/r}_{g,B,I}$, $\oPM_{g,k,l}\subseteq \Mbar^{1/r}_{g,k,l}$ as disjoint unions over smooth $\Gamma$.

\begin{definition}
Let $C$ be a graded $r$-spin surface, $q\in C$ a point, and $v\in{\mathcal{W}}_C$. The \textit{evaluation} of $v$ at $q$ is $\text{ev}_q(v) := v(q) \in J_q$. In particular, if $q$ is a contracted boundary node, or a point on $\Sigma$ which is not a legal special point, we say $v$ \textit{evaluates positively} at $q$ if $\text{ev}_q(v)$ is positive with respect to the grading.
\end{definition}

Roughly speaking, for genus-zero $\Gamma$ a positive multisection of the  $\mathcal W\to \Mbar_\Gamma$ is a multisection that satisfies certain positivity constraints at $\partial^+\Mbar_\Gamma\cup \partial^{CB}\Mbar_\Gamma$.  The positivity constraint at a $\partial^{CB}\Mbar_\Gamma$ for a multisection $s$ is chosen to be that the evaluation of $s$ at the contracted boundary node is positive. A na\"ive definition of the positivity constraint at $\partial^+\Mbar_\Gamma$ for a multisection $s$ is that the evaluation of $s$ at the illegal half-node is positive; however, this constraint is too strong and the canonical multisection may not exist (see \cite[Example 3.23]{BCT2}).

As in \cite{BCT2}, instead of imposing 
positivity constraints at $\partial^+\Mbar_\Gamma$, we impose positive constraints at their neighbourhoods (therefore we work on $\oPMb_\Gamma$) and the positive evaluations will be required on certain ``intervals'' in the boundary of disk, which we now recall; they should be viewed as a smoothly-varying family of intervals that approximate  neighbourhoods of boundary nodes in a nodal disk. 

\begin{rmk}
    An alternative, equivalent, way for defining the boundary conditions but working on $\Mbar_{\Gamma}$ rather than $\oPMb_{\Gamma}$ is to allow vanishing of sections of the Witten bundle on the boundary, but requiring a Neumann-like boundary condition on the derivative, following \cite{BCT2}
 we chose to work on $\oPMb_{\Gamma}.$\end{rmk}

\begin{definition}[{\cite[Definition 3.4]{BCT2}}]\label{def positive neighbourhoods}
Let $\Gamma$ be a genus-zero graded $r$-spin graph and let $\Lambda \in \partial\Gamma$.  
We say an open set $U\subseteq \Mbar_\Gamma$ is a \textit{$\Lambda$-set with respect to $\Gamma$} if $U$ does not intersect with the strata ${\mathcal{M}}_\Xi$  for any $\Xi \in\partial^! \Gamma $ satisfying $\Lambda\notin \partial^!\Xi$.
We say a neighbourhood $U\subseteq\Mbar_\Gamma$ of $u\in\Mbar_\Gamma$ is a \textit{$\Lambda$-neighbourhood of $u$ with respect to $\Gamma$} if it is a $\Lambda$-set with respect to $\Gamma$.  Since there is a unique smooth graph $\Gamma$ with $\Lambda\in\partial^!\Gamma$, we refer to a $\Lambda$-neighbourhood with respect to a smooth $\Gamma$ simply as a $\Lambda$-neighbourhood.

Let $\Sigma$ be the preferred half of a graded marked disk. We write $\partial \Sigma = S/\sim$ for a space $S$ homeomorphic to $S^1$ and denote by $q\colon S \to \partial \Sigma$ the quotient map. An \emph{interval} $I$ is the image of a connected open set of $S$ under $q$. Note that the preimage of $I$ under the quotient map $q$ is the union of an open set with a finite number of isolated points.

Suppose $C\in{\mathcal{M}}_\Lambda$, and let $n_h$ be a boundary half-node corresponding to a half-edge $h\in H^B(\Lambda)$. We denote by $N: \widehat{C} \to C$ the normalization map and write $\widehat{\Sigma} = N^{-1}(\Sigma)$, We say that $n_h$ \textit{belongs} to the interval $I$ if the following two conditions hold:
\begin{itemize}
    \item the corresponding node $N(n_{h})$ lies in $I$;
    \item  $N^{-1}(I)$ contains a half-open interval with starting point $n_h$ and a half-open interval with endpoint $n_{\sigma_1(h)}$, where the starting and end points are determined by the canonical orientation of $\partial\widehat{\Sigma}$.
\end{itemize}

Let $U$ be a $\Lambda$-set. We denote by $\pi:\lvert\mathcal C\rvert\to U$ the coarse universal curve and by $\Sigma_u \subseteq \pi^{-1}(u)$ the preferred half in the fibre. A \textit{$\Lambda$-family of intervals} $\{I_{h}(u)\}_{h\in{H^+}(\Lambda),u\in U}$ for $U$ is a choice of an interval $I_{h}(u)$ for each $h\in{H^+}(\Lambda)$ and $u\in U$, such that:
\begin{enumerate}
\item
The endpoints of each $I_h(u)$ vary smoothly with respect to the smooth structure of the universal curve restricted to $U$.
\end{enumerate}
We say that $\Xi$ is \textit{a smoothing of $\Lambda$ away from $h\in H^+(\Lambda)$} if $\Lambda\in\partial^!\Xi$ and $h$ lies in the image of the injection $\iota:H^+(\Xi)\to H^+(\Lambda)$. We further require that:
\begin{enumerate}
\setcounter{enumi}{1}
\item
If $h \notin\{ h', \sigma_1(h')\}$, we have $I_h(u')\cap I_{h'}(u')=\emptyset$.  If $h=\sigma_1(h')$, we have $I_h(u')\cap I_{h'}(u')\neq\emptyset$ if and only if $u'\in{\mathcal M}_\Xi$ for some smoothing $\Xi$ of $\Lambda$ away from $h$.
  In this case, $I_h(u')\cap I_{\sigma_1(h)}(u')$ consists exactly of the node $N(n_{h})$.
\item There are no marked points in $I_h(u')$. the interval $I_h(u')$ contains at most one half-node; it contains one half-node if and only if $u'\in{\mathcal M}_\Xi$ for some smoothing $\Xi$ of $\Lambda$ away from $h$. The half-node that belongs to $I_h(u')$ is $n_{h}$ in this case.
\end{enumerate}

If the moduli point $u$ represents the stable graded disk $C$ we write $I_h(C)$ for $I_h(u).$ Figure \ref{fig intervals} shows the local picture of the intervals in the nodal and smooth disks.
\end{definition}

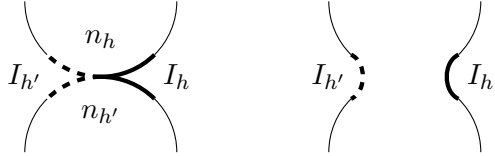
\begin{figure}[h]
\centering

\begin{tikzpicture}[scale=1]

\draw (1,1) arc (0:-45:1);
\draw[ultra thick] (0,0) arc (-90:-45:1);
\draw[ultra thick, dashed] (0,0) arc (-90:-135:1);
\draw (-1,1) arc (-180:-135:1);

\draw (1,-1) arc (0:45:1);
\draw[ultra thick] (0,0) arc (90:45:1);
\draw[ultra thick, dashed] (0,0) arc (90:135:1);
\draw (-1,-1) arc (180:135:1);

\node at (0,-0.5) {$n_{h'}$};
\node at (0,0.5) {$n_{h}$};
\node at (1,0) {$I_h$};
\node at (-1,0) {$I_{h'}$};

\draw (5,1) arc (0:-45:1);
\draw[ultra thick] (4.707,0.293) .. controls (4.5,0.2) and (4.5,-0.2) .. (4.707,-0.293);
\draw (3,1) arc (-180:-135:1);
\draw (5,-1) arc (0:45:1);
\draw[ultra thick,dashed] (3.293,0.293) .. controls (3.5,0.2) and (3.5,-0.2) .. (3.293,-0.293);
\draw (3,-1) arc (180:135:1);

\node at (5,0) {$I_h$};
\node at (3,0) {$I_{h'}$};

\end{tikzpicture}

\caption{The thicker lines representing the intervals $I_h$ and $I_{h'}$ associated to the half-nodes $n_h$ and $n_{h'}$ are drawn over the thinner boundary lines. The image on the right represents a point in the moduli space that is close to the image on the left, where the node is smoothed.}

\label{fig intervals}
\end{figure}

\begin{definition}\label{def positive section}
Let $C$ be a graded $r$-spin disk, and let $A \subseteq \partial\Sigma$ be a subset without legal special points.  Then an element $w\in{\mathcal{W}}_\Sigma$ \emph{evaluates positively at~$A$} if $\ev_x(w)$ is positive for every $x\in A$ with respect to the grading.

Let $\Gamma$ be a genus-zero graded graded graph, $U$ a $\Lambda$-set with respect to $\Gamma$, and $\{I_h\}$ a $\Lambda$-family of intervals for $U$.  Given a multisection $s$ of ${\mathcal{W}}$ defined in a subset of $\oPMb_\Gamma$ containing $U\cap\oPMb_\Gamma$, we say $s$ is \emph{$(U,I)$-positive} (with respect to $\Gamma$) if for any $u'\in U\cap \oPMb_\Gamma$, any local branch $s_i(u')$ evaluates positively at each $I_h(u')$.

We say a multisection $s$ defined in $W\cap\oPMb_\Gamma$, where $W$ is a neighbourhood of $u\in\Mbar_\Lambda$, is \emph{positive near} $u$ (with respect to $\Gamma$) if there exists a $\Lambda$-neighbourhood $U\subseteq W$ of $u$ and a $\Lambda$-family of intervals $I_*(-)$ for $U$ such that $s$ is $(U,I)$-positive.

If $W$ is a neighbourhood of $\partial^+\Mbar_\Gamma$, then a multisection $s$ defined in a set
\begin{equation}\label{eq U_+}
U_{+,\Gamma}=\left(W\cup \partial^{CB}\Mbar_{\Gamma}\right)\cap\oPMb_\Gamma
\end{equation}
is \emph{positive} (with respect to $\Gamma$) if it is positive near each point of $\partial^+\Mbar_\Gamma$ and evaluates positively at the contracted boundary nodes.  As above, we omit the phrase ``with respect to $\Gamma$'' if $\Gamma$ is smooth.
\end{definition}

For a genus-zero $r$-spin graph $\Gamma$, there always exist positive multisections of $\mathcal W_{\Gamma}\to \oPMb_{\Gamma}$ according to \cite[Proposition 3.20]{BCT2}. However, for genus-one $\Gamma$, although we can define similar positivity condition for multisections of Witten bundle, the positive multisection might not exist.

\subsection{The point insertion technique}\label{subsec PI}
Just like the closed $r$-spin theory considers an intersection theory over the moduli spaces of $r$-spin curves,  the (genus-zero or genus-one) open $r$-spin theory considers the intersection theory over the moduli of the $r$-spin surfaces $\Mbar^{1/r}_{g,B,I}$. However, since $\Mbar^{1/r}_{g,B,I}$ is a orbifold with corners, the intersection theory is not well-defined. The grading structure allows us to deal with certain type of boundaries using a notion of positivity (see \S \ref{sec positivity constraint}). The procedure of \textit{point insertion}, which also relies on the grading, was developed in \cite{TZ1} in order to treat the remaining boundaries: we can glue another moduli spaces  to $\Mbar^{1/r}_{g,B,I}$ along those strata and by that cancel them. 
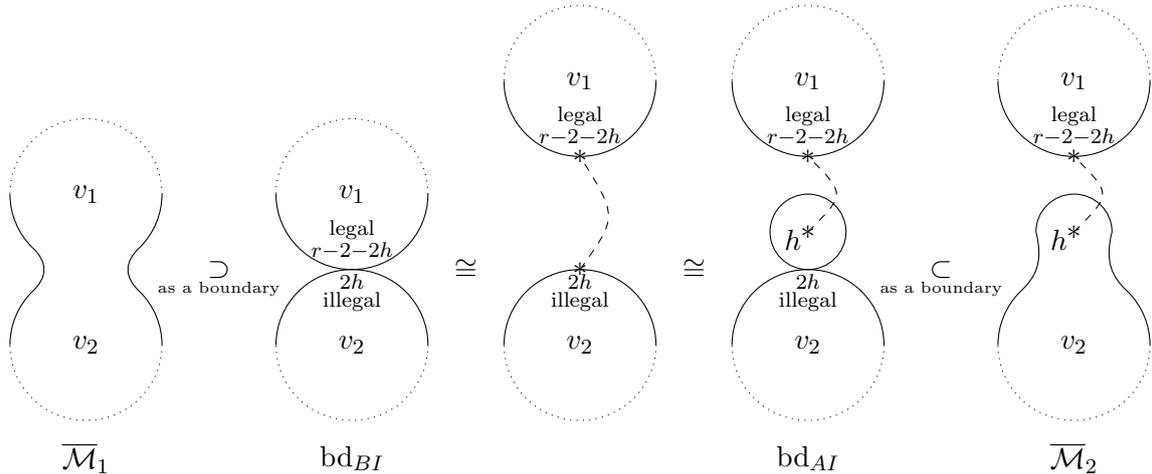
\begin{figure}[h]
    \centering
    \begin{tikzpicture}
        \draw (0,0) arc (0:180:1);
        \draw[dotted] (0,0) arc (0:-180:1);
        \node at (-1,1) {$*$};
        \node at (-1,0.7) {$\substack{2h\\ \text{illegal}}$};
        \node at (-1,0) {$v_2$};
                
        \draw[dotted] (0,3.5) arc (0:180:1);
        \draw (0,3.5) arc (0:-180:1);
        \node at (-1,2.5) {$*$};
        \node at (-1,2.9) {$\substack{\text{legal}\\r-2-2h}$};
        \node at (-1,3.5) {$v_1$};

        \draw[dashed] (-1,1) .. controls (-0.5,1.7) .. (-1,2.5);

        \draw (-3,0) arc (0:180:1);
        \draw[dotted] (-3,0) arc (0:-180:1);
        
        \node at (-4,0.7) {$\substack{2h\\ \text{illegal}}$};
        \node at (-4,0) {$v_2$};
        \node at (-4,-1.5) {$\text{bd}_{BI}$};

        \draw[dotted] (-3,2) arc (0:180:1);
        \draw (-3,2) arc (0:-180:1);
        
        \node at (-4,1.4) {$\substack{\text{legal}\\r-2-2h}$};
        \node at (-4,2) {$v_1$};

        \node at (-2.5,1) {$\cong$};

        \draw (3,0) arc (0:180:1);
        \draw[dotted] (3,0) arc (0:-180:1);
        \node at (2,1.5) {$*$};
        \node at (1.8,1.4) {$h$};
        \node at (2,0.7) {$\substack{2h\\ \text{illegal}}$};
        \node at (2,0) {$v_2$};
        \node at (2,-1.5) {$\text{bd}_{AI}$};
                
        \draw[dotted] (3,3.5) arc (0:180:1);
        \draw (3,3.5) arc (0:-180:1);
        \node at (2,2.5) {$*$};
        \node at (2,2.9) {$\substack{\text{legal}\\r-2-2h}$};
        \node at (2,3.5) {$v_1$};
        
        \draw (2.5,1.5) arc (0:360:0.5);
        \draw[dashed] (2,1.5) .. controls (2.5,2) .. (2,2.5);

        \node at (0.5,1) {$\cong$};

         \draw (-6.5,0) arc (0:45:1);
         \draw (-8.5,0) arc (180:135:1);
        \draw[dotted] (-6.5,0) arc (0:-180:1);
        
        \node at (-7.5,0) {$v_2$};
        \node at (-7.5,-1.5) {$\Mbar_1$};

        \draw[dotted] (-6.5,2) arc (0:180:1);
        \draw (-6.5,2) arc (0:-45:1);
        \draw (-8.5,2) arc (-180:-135:1);
        
        \node at (-7.5,2) {$v_1$};
        \draw (-6.793,1.293) .. controls (-7,1.1) and (-7,0.9).. (-6.793,0.707);
        \draw (-8.207,1.293) .. controls (-8,1.1) and (-8,0.9).. (-8.207,0.707);

        \node at (-5.75,1) {$\supset$};
        \node at (-5.75,0.75) {\tiny{as a boundary}};

        \draw (6.5,0) arc (0:45:1);
        \draw (4.5,0) arc (180:135:1);
        \draw[dotted] (6.5,0) arc (0:-180:1);
        \node at (5.5,1.5) {$*$};
        \node at (5.3,1.4) {$h$};
        
        \node at (5.5,0) {$v_2$};
        \node at (5.5,-1.5) {$\Mbar_2$};
                
        \draw[dotted] (6.5,3.5) arc (0:180:1);
        \draw (6.5,3.5) arc (0:-180:1);
        \node at (5.5,2.5) {$*$};
        \node at (5.5,2.9) {$\substack{\text{legal}\\r-2-2h}$};
        \node at (5.5,3.5) {$v_1$};
        
        \draw (6,1.5) arc (0:180:0.5);
        
        \draw[dashed] (5.5,1.5) .. controls (6,2) .. (5.5,2.5);
        
        \draw (6.207,0.707) .. controls (6,0.9) and (5.9,1.1).. (6,1.5);
        \draw (4.793,0.707) .. controls (5,0.9) and (5.1,1.1).. (5,1.5);

        \node at (3.75,1) {$\subset$};
        \node at (3.75,0.75) {\tiny{as a boundary}};
        
    \end{tikzpicture}
    \caption{In point insertion procedure we glue $\Mbar_1$ and $\Mbar_2$ together along their isomorphic boundaries $\text{bd}_{BI}$ and $\text{bd}_{AI}$. The first isomorphism follows from the decomposition property for boundary NS nodes; the second isomorphism holds because the moduli $\Mbar^{1/r}_{0.\{r-2-2h\},\{h\}}$ (represented by the smallest bubble in the figure) is a single point. The new markings coming from the point insertion procedure is represented by $*$; the dashed line between the new markings indicates that they come from the same node.} 
    \label{fig point insertion demonstration}
\end{figure}

More precisely, as shown in Figure \ref{fig point insertion demonstration}, let $\Mbar_1$ be a moduli of $r$-spin surfaces, and $\text{bd}_{BI}\subset \Mbar_1$ be a boundary corresponding to an NS boundary node with twist $2h$ at the illegal half-node. We can glue to $\Mbar_1$, along the boundary $\text{bd}_{BI}$, another moduli $\Mbar_2$ which has a boundary $\text{bd}_{AI}$ diffeomorphic to $\text{bd}_{BI}$. Note that $\Mbar_2$ is a moduli of possibly disconnected $r$-spin surfaces, obtained by first detaching the boundary node, then ``inserting'' the illegal twist-$2h$ boundary marked point to the interior as a twist-$h$ internal marked point. The boundary strata along which we glue the moduli spaces are called \emph{spurious boundaries}.

By applying this procedure repeatedly we get a glued moduli (see \cite[\S 4.6]{TZ1}). A point in the pre-glued space represents a disjoint union of graded $r$-spin surfaces, together with the combinatorial data of dashed lines connecting each pair of boundary marking and internal marking which appear together in the point insertion procedure. A point in the glued space represents an equivalence class of such objects under the equivalence relation induced by point insertion procedure.

In this paper, we will only do the ``$\h=0$ point insertion'' in \cite{TZ1}, \textit{i.e.}, we will  do point insertion at an NS boundary node $n$ if and only if the twist of the illegal half-node of $n$ is zero, \textit{i.e.}, in the case $h=0$ in Figure \ref{fig point insertion demonstration}.

\subsubsection{$(r,0)$-surfaces, $(r,0)$-graphs and moduli}

We now more formally describe the objects of the pre-glued moduli space.
\begin{dfn}[{\cite[Definition 4.3]{TZ1}}]\label{dfn rh disk}
    An $(r,0)$-surface is a collection of legal connected  stable graded $r$-spin surfaces (the components) such that $\text{Fix}(\phi)\ne \emptyset$ and all boundary twists are $r-2$, together with
    \begin{enumerate}
        \item a bijection (denoted by dashed lines) between a subset $B^{PI}$ of the boundary tails with twist $r-2$ and a subset $I^{PI}$ of the internal tails with twist $0$ which are not contracted boundary nodes;
        \item markings on the set of unpaired boundary tails $B^{up}$ and internal tails $I^{up}$ which are not contracted boundary nodes, \textit{i.e.} identifications $I^{up}=\{1,2,\dots,\lvert I^{up}\rvert\}$ and $B^{up}=\{1,2,\dots,\lvert B^{up}\rvert\}$.
    \end{enumerate}
    We require that, in the collection of surfaces, there exists no genus-zero stable graded $r$-spin disk with only two tails, where both of them are in $I^{PI}\sqcup B^{PI}$.
    \begin{itemize}
        \item  We say an $(r,0)$-surface is an $(r,0)$-disk (genus-zero) if all the connect surfaces in the collection are genus-zero, and the graph $\hat{\mathbf{G}}$, whose vertices are connected disks in the collection and there is an edge between two vertices if they contain a pair of points corresponding to a dashed line, is a connected genus-zero graph.
        \item  We say an $(r,0)$-surface is an $(r,0)$-cylinder (genus-one) if one of the following two condition holds. 
        \begin{itemize}
            \item All the connect surfaces in the collection are genus-zero, and the graph $\hat{\mathbf{G}}$ is a connected genus-one graph.
            \item One of the connect surfaces in the collection is genus-one while all of the other connect surfaces in the collection are genus-zero, and the graph $\hat{\mathbf{G}}$ is a connected genus-zero graph.
        \end{itemize}
    \end{itemize}

\end{dfn}

\begin{rmk}
    The $0$ in $(r,0)$ refers to $\h=0$ in the notation of \cite{TZ1,TZ2}.
\end{rmk}

The graph $\hat{\mathbf{G}}$ mentioned above characterizes the topological type of an $(r,0)$-surface. Since each graded $r$-spin surface in the collection of an $(r,0)$-surface is associated with a graded $r$-spin graph, by assigning each vertex of $\hat{\mathbf{G}}$ the corresponding graded $r$-spin graph, and specifying the corresponding pair of tails for each edge of $\hat{\mathbf{G}}$, we obtain the following combinatorial object, which can be viewed as a refined version of $\hat{\mathbf{G}}$.
\begin{dfn}[{\cite[Definition 4.7]{TZ1}}]\label{def rh graphs}
A \textit{$(r,0)$-graph}  $\mathbf{G}$ consists of 
\begin{itemize} 
    \item a set $V(\mathbf{G})$ of connected legal stable graded $r$-spin graphs with at least one open vertex or contracted boundary tail, and the boundary tails of them all have twist $r-2$,;
    \item two partitions of sets
    $$
    \bigsqcup_{\Gamma\in V(\mathbf{G})}\left(T^I(\Gamma)\backslash H^{CB}(\Gamma)\right)=I(\mathbf{G})\sqcup I^{PI}(\mathbf{G})
    $$
    and
    $$
    \bigsqcup_{\Gamma\in V(\mathbf{G})}T^B(\Gamma)=B(\mathbf{G})\sqcup B^{PI}(\mathbf{G})
    $$
    such that all $a\in I^{PI}(\bm{G})$ have twist $0$;
    \item a set of edges (the \textit{dashed lines}) $$E(\mathbf{G})\subseteq \{(a,b)\colon a\in I^{PI}(\mathbf{G}),b\in B^{PI}(\mathbf{G})\}$$ which induces an one-to-one correspondence $\delta$ between $I^{PI}(\mathbf{G})$ and $B^{PI}(\mathbf{G})$;
    \item a labelling of the set $I(\mathbf{G})$ by $\{1,2,\dots,l(\mathbf{G}):=\lvert I(\mathbf{G})\rvert\}$ and a labelling of the set $B(\mathbf{G})$ by $\{1,2,\dots,k(\mathbf{G}):=\lvert B(\mathbf{G})\rvert\}$.
\end{itemize}
    We require that 
    \begin{enumerate}
        \item there exists no $\Gamma\in V(\mathbf{G})$ satisfying 
    $
    H^I(\Gamma)\subseteq I^{PI}(\mathbf{G}), H^B(\Gamma)\subseteq B^{PI}(\mathbf{G})$ 
    and $
         \lvert H^I(\Gamma)\rvert+\lvert H^B(\Gamma)\rvert \le 2;
    $
    \end{enumerate}
    We define an auxiliary graph (in the normal sense) $\hat{\mathbf{G}}$ in the following way: the set of vertices of $\hat{\mathbf{G}}$ is $V(\mathbf{G})$, the set of edges of $\hat{\mathbf{G}}$ is $E(\mathbf{G})$; an element $(a,b)\in E(\mathbf{G})$ corresponds  to an edge between the vertices $\Gamma_a$ and $\Gamma_b$, where  $a\in T^I(\Gamma_a)$ and $b\in T^B(\Gamma_b)$. We also require that
\begin{enumerate}[resume*]
    \item  the graph $\hat{\mathbf{G}}$ is connected.
\end{enumerate}

The genus of an $(r,0)$-graph $\mathbf{G}$ is defined to be 
$$
g(\mathbf{G})=\sum_{\Gamma\in V(\bm{G})}g(\Gamma)+g(\hat{\mathbf{G}}),
$$
where $g(\Gamma)$ is the genus of graded $r$-spin graph $\Gamma\in V(\bm{G})$, and $g(\hat{\mathbf{G}})$ is the genus of the auxiliary graph $\hat{\mathbf{G}}$.

\end{dfn}

In this paper, we exclusively focus on the genus-zero and genus-one case.

\begin{dfn}
For a genus-one $(r,0)$-graph $\mathbf{G}$, we define a subset
$$I^{PI}_{sp}(\mathbf{G})\subseteq I^{PI}(\mathbf{G}) \quad\text{  (respectively } B^{PI}_{sp}(\mathbf{G})\subseteq B^{PI}(\mathbf{G})\text{)}$$ consisting all the elements $x$ in $I^{PI}(\mathbf{G})$ (respectively $B^{PI}(\mathbf{G})$) such that the graph $\operatorname{Detach}_{(x,\delta(x))}\hat{\mathbf{G}}$ (obtained by removing the edge in $\hat{\mathbf{G}}$ corresponding to the pair $(x,\delta(x))$) is non-connected. These tails are generated in point procedures for separating boundary nodes. 
We also define 
$$I^{PI}_{nsp}(\mathbf{G}):=I^{PI}(\mathbf{G})\setminus I^{PI}_{sp}(\mathbf{G}) \quad \text{  (respectively } B^{PI}_{nsp}(\mathbf{G}):=B^{PI}(\mathbf{G})\setminus B^{PI}_{sp}(\mathbf{G})\text{)},$$
which are tails generated in point insertion procedures for non-separating boundary nodes.

Moreover, we define a subset
$$I^{PI}_{sp,1}(\mathbf{G})\subseteq I^{PI}_{sp}(\mathbf{G}) \quad\text{  (respectively } B^{PI}_{sp,1}(\mathbf{G})\subseteq B^{PI}_{sp}(\mathbf{G})\text{)}$$ consisting all the elements $x$ in $I^{PI}_{sp}(\mathbf{G})$ (respectively $B^{PI}_{sp}(\mathbf{G})$) such either $\hat{\mathbf{G}}_{x}$, the connected component of $\operatorname{Detach}_{(x,\delta(x))}\hat{\mathbf{G}}$ containing $x$, is genus-one, or one of  the vertex of $\hat{\mathbf{G}}_{x}$ is a genus-one $r$-spin graph. We also define
$$I^{PI}_{sp,0}(\mathbf{G}):=I^{PI}_{sp}(\mathbf{G})\setminus I^{PI}_{sp,1}(\mathbf{G}) \quad \text{  (respectively } B^{PI}_{sp,0}(\mathbf{G}):=B^{PI}_{sp}(\mathbf{G})\setminus B^{PI}_{sp,1}(\mathbf{G})\text{)}.$$
For any $\Gamma\in V(\bm{G})$, we define the set of roots on $\Gamma$ to be
\begin{equation}\label{eq root for vertices of PI graph}
RT(\Gamma):=T(\Gamma)\cap(B^{PI}_{sp,0}(\mathbf{G})\sqcup I^{PI}_{sp,0}(\mathbf{G})\sqcup I^{PI}_{nsp}(\mathbf{G})\sqcup B^{PI}_{nsp}(\mathbf{G})).
\end{equation}
\end{dfn}

\begin{rmk}\label{rmk root for vertex of PI graph}
    For any genus-one $(r,0)$-graph $\mathbf{G}$ and $\Gamma\in V(\mathbf{G})$, if $\Gamma$ is a genus-one $r$-spin graph, then we always have $RT(\Gamma)=\emptyset$. For genus-zero $\Gamma$, there are two possibilities:
    \begin{enumerate}
        \item $RT(\Gamma)\subset I^{PI}_{nsp}(\mathbf{G})\sqcup B^{PI}_{nsp}(\mathbf{G})$ is a two-element set;
        \item $RT(\Gamma)\subset I^{PI}_{sp,0}(\mathbf{G})\sqcup B^{PI}_{sp,0}(\mathbf{G})$ is an one-element set.
    \end{enumerate}
    Therefore, for any genus-zero vertex $\Gamma$ of $\mathbf{G}$, we can regard it as a rooted or 2-rooted $r$-spin graph.
\end{rmk}

\begin{dfn}
Let $\mathbf{G}$ be an $(r,0)$-graph. Let $e$ be an edge or a contracted boundary tail of some $\Gamma\in V(\mathbf{G})$. Since $T^I(\Gamma)\backslash H^{CB}(\Gamma)=T^I(d_e \Gamma)\backslash H^{CB}(d_e \Gamma)$ and $T^B(\Gamma)=T^B(d_e \Gamma)$, we define the \textit{smoothing} of $\mathbf{G}$ along $e$ to be the $(r,0)$-graph $d_e \mathbf{G}$ obtained by replacing $\Gamma$ with $d_e \Gamma$.

We say $\mathbf{G}$ is smooth if all $\Gamma\in V(\mathbf{G})$ are smooth stable graded $r$-spin graphs.
We denote by $\GPI^{r,0}_g$ the set of all genus-$g$ $(r,0)$-graphs, by $\GPI^{r,0}_{g,B,I}$ the set of all genus-$g$ $(r,0)$-graphs $\mathbf{G}$ satisfying $I(\mathbf{G})=I$ and $B(\mathbf{G})=B$, by $\sGPI^{r,0}_{g,B,I}$ the set 
$$ 
\sGPI^{r,0}_{g,B,I}:=\{\mathbf{G}\in \GPI^{r,0}_{g,B,I}\colon \mathbf{G}\text{ smooth}\}.
$$
\end{dfn}

\begin{dfn}
    An isomorphism between two $(r,0)$-graphs $\mathbf{G_1}$ and $\mathbf{G_2}$ consists of a collection of isomorphism of stable graded $r$-spin graphs between elements of $V(\mathbf{G_1})$ and $V(\mathbf{G_2})$, which induces a bijection between $V(\mathbf{G_1})$ and $V(\mathbf{G_2})$, and preserves the partitions, dashed lines, and labellings.
\end{dfn}

For each $\mathbf{G}\in \GPI^{r,0}_g$, let $\operatorname{Aut} \mathbf{G}$ be the group of automorphisms of $\mathbf{G}\in \GPI^{r,0}_g$, then there is a natural action of $\operatorname{Aut} \mathbf{G}$ over the product $\prod_{\Gamma\in V(\mathbf{G})} \Mbar_\Gamma$. We define 
$$
\Mbar_\mathbf{G}:=\left(\prod_{\Gamma\in V(\mathbf{G})} \Mbar_\Gamma\right)\bigg\slash \operatorname{Aut} \mathbf{G}
$$
and (when $\mathbf{G}$ is genus zero or one)
$$
\oQMb_\mathbf{G}:=\left(\prod_{\Gamma\in V(\mathbf{G})} \oQMb_\Gamma\right)\bigg\slash \operatorname{Aut} \mathbf{G}
$$
and
$$
\oPMb_\mathbf{G}:=\left(\prod_{\Gamma\in V(\mathbf{G})} \oPMb_\Gamma\right)\bigg\slash \operatorname{Aut} \mathbf{G}.
$$
Notice that for genus-zero $\mathbf{G}$ we always have  $\Mbar_\mathbf{G}=\oQMb_\mathbf{G}$
\subsubsection{Witten bundle over $\Mbar_\mathbf{G}$}
Let $\mathcal W_\Gamma$ be the Witten bundle over $\oQMb_\Gamma$, we define the Witten bundle $\mathcal W_\mathbf{G}$ over $\oQMb_\mathbf{G}$ to be 
$$
 \mathcal W_\mathbf{G}:= \left(\bboxplus_{\Gamma\in V(\mathbf{G})}\mathcal W_\Gamma\right)\bigg\slash \operatorname{Aut} \mathbf{G}.
$$
\begin{rmk}\label{rmk aut trivial}
    In this paper, when we only consider genus-zero or genus-one  $(r,\h)$-graphs $\mathbf{G}$. In these two cases the automorphism groups $\operatorname{Aut}\mathbf{G}$ are always trivial. We denote by $\pi_{\mathbf{G}\to\Gamma}$ the projection maps 
    $$
\pi_{\mathbf{G}\to\Gamma}\colon \Mbar_{\mathbf{G}}\to \Mbar_\Gamma. 
$$
\end{rmk}
Let $o_\Gamma$ be the canonical relative orientation of $\mathcal W_\Gamma$ over $\oQMb_\Gamma$, we define the canonical relative orientation $o_\mathbf{G}$ of $\mathcal W_\mathbf{G}$ over $\oQMb_\mathbf{G}$ by
\begin{equation}\label{eq orientation point insertion}
    o_\mathbf{G}:=(-1)^{\lvert E(\mathbf{G})\rvert}\bboxtimes_{\Gamma\in V(\mathbf{G})}  o_\Gamma.
\end{equation}
Observe that $o_\mathbf{G}$ is independent of the order of the wedge product, since for each $\Gamma\in V(\mathbf{G})$ we have 
$$
\dim \Mbar_\Gamma \equiv \operatorname{rank} \mathcal W_\Gamma \mod 2.
$$
\begin{dfn} Given a finite set $I$ of internal markings with twist in $\{0,1,\dots,r-1\}$ and a finite set $B$ of boundary markings with twist $r-2$, we define the moduli space $\Mbar^{\frac{1}{r},0}_{g,B,I}$ of $(r,0)$-surfaces with markings $B,I$ to be
\begin{equation}\label{eq def moduli point insertion}
    \Mbar^{\frac{1}{r},0}_{g,B,I}:=\bigsqcup_{\mathbf{G}\in \sGPI^{r,0}_{0,B,I}}\Mbar_\mathbf{G}.
\end{equation}
For $g=0,1$, we also define the subspaces
\begin{equation}\label{eq def Q moduli point insertion}
    \oQMb^{\frac{1}{r},0}_{g,B,I}:=\bigsqcup_{\mathbf{G}\in \sGPI^{r,0}_{0,B,I}}\!\! \!\! \oQMb_\mathbf{G} \, \, \,\subseteq  \, \,\Mbar^{\frac{1}{r},0}_{g,B,I}
\end{equation}
and 
\begin{equation}\label{eq def P moduli point insertion}
    \oPMb^{\frac{1}{r},0}_{g,B,I}:=\bigsqcup_{\mathbf{G}\in \sGPI^{r,0}_{0,B,I}}\!\! \!\! \oPMb_\mathbf{G} \, \, \,\subseteq  \, \,\oQMb^{\frac{1}{r},0}_{g,B,I}.
\end{equation}

The Witten bundles with relative orientations over the connected components $\oQMb_\mathbf{G}$ of $\oQMb^{\frac{1}{r},0}_{g,B,I}$ induce the Witten bundle $\mathcal W^{\frac{1}{r},0}_{g,B,I}$ over $\oQMb^{\frac{1}{r},0}_{g,B,I}$ with relative orientation.
\end{dfn}
\subsubsection{Relative cotangent line bundles over $\Mbar_\mathbf{G}$}\label{sec uni cotangent line over glued moduli}
For any $i\in I$, there are three different notions of relative cotangent line bundles over $\Mbar^{\frac{1}{r},0}_{g,B,I}$.
\begin{enumerate}
    \item 
    For any $\bm{G} \in \sGPI^{r,0}_{g,B,I}$, let $\Gamma_{i,\bm{G}}\in V(\bm{G})$ be the unique vertex of $\bm{G}$ such that $i\in I \cap T^I(\Gamma_{i,\bm{G}})$, we denote by $\mathbb L'_i\to \Mbar_{\bm{G}}$ the line bundle pulled back from $\mathbb L_i\to \Mbar_{\Gamma_{i,\bm{G}}}$ via the projection $\pi_{\bm{G}\to \Gamma_{i,\bm{G}}}\colon \Mbar_{\bm{G}}\to \Mbar_{\Gamma_{i,\bm{G}}}$, and by $\mathbb L'_i\to \Mbar^{\frac{1}{r},0}_{g,B,I}$ the line bundle defined in this way on all connected components.
    \item With the above notation, we denote by $\mathcal B \Gamma_{i,\bm{G}}$ the graph obtained by forgetting all the inserted internal tails of $\Gamma_{i,\bm{G}}$ in $I^{PI}(\bm{G})\cap T^I(\Gamma_{i,\bm{G}})$ and by $\operatorname{For}_{\Gamma_{i,\bm{G}}\to \mathcal B\Gamma_{i,\bm{G}}}\colon \Mbar_{\Gamma_{i,\bm{G}}}\to \Mbar_{\mathcal B \Gamma_{i,\bm{G}}}$ the forgetful morphism (if $\mathcal B \Gamma_{i,\bm{G}}$ is unstable we formally take $\Mbar_{\mathcal B \Gamma_{i,\bm{G}}}$ to be a point). We denote by $\mathbb L_i\to \Mbar_{\bm{G}}$ the line bundle pulled back from $\mathbb L_i\to \Mbar_{\mathcal B\Gamma_{i,\bm{G}}}$ (if $\mathcal B \Gamma_{i,\bm{G}}$ is unstable we formally take it to be a trivial complex line) via 
    $$\operatorname{For}_{\Gamma_{i,\bm{G}}\to \mathcal B\Gamma_{i,\bm{G}}}\circ \pi_{\bm{G}\to \Gamma_{i,\bm{G}}} \colon \Mbar_{\bm{G}} \to \Mbar_{\mathcal B\Gamma_{i,\bm{G}}}$$ 
    and by $\mathbb L_i\to \Mbar^{\frac{1}{r},0}_{g,B,I}$ the line bundle defined in this way on all connected components.

    \item 
    In the case $g=1$, with the above notation, we denote by $\hat{\mathcal B} \Gamma_{i,\bm{G}}$ the graph obtained by forgetting all the inserted internal tails of $\Gamma_{i,\bm{G}}$ in $I^{PI}_{sp,1}(\bm{G})\cap T^I(\Gamma_{i,\bm{G}})$ and by $\operatorname{For}_{\Gamma_{i,\bm{G}}\to \hat{\mathcal B} \Gamma_{i,\bm{G}}}\colon \Mbar_{\Gamma_{i,\bm{G}}}\to \Mbar_{\hat{\mathcal B} \Gamma_{i,\bm{G}}}$ the forgetful morphism (if $\hat{\mathcal B} \Gamma_{i,\bm{G}}$ is unstable, again we formally take $\Mbar_{\hat{\mathcal B} \Gamma_{i,\bm{G}}}$ to be a point). We denote by $\mathbb L^*_i\to \Mbar_{\bm{G}}$ the line bundle pulled back from $\mathbb L_i\to \Mbar_{\hat{\mathcal B} \Gamma_{i,\bm{G}}}$ via 
    $$\operatorname{For}_{\Gamma_{i,\bm{G}}\to \hat{\mathcal B} \Gamma_{i,\bm{G}}}\circ \pi_{\bm{G}\to \Gamma_{i,\bm{G}}} \colon \Mbar_{\bm{G}} \to \Mbar_{\hat{\mathcal B} \Gamma_{i,\bm{G}}}$$ 
    and by $\mathbb L^*_i\to \Mbar^{\frac{1}{r},0}_{g,B,I}$ the line bundle defined in this way on all connected components.
    
    \end{enumerate}

 The relation between these relative cotangent line bundles will be discussed is \S \ref{sec cotangent lines relation}.

\begin{rmk}\label{rmk different notation for L in TZ}
    In \cite{TZ1,TZ2}, the notation ``$\mathbb L_i\to \Mbar^{\frac{1}{r},0}_{0,B,I}$'' refers to what we denote by ``$\mathbb L'_i\to \Mbar^{\frac{1}{r},0}_{0,B,I}$ '' in this paper. We make this change of notation because in this paper we mainly work with the line bundles $\mathbb L_i\to \Mbar^{\frac{1}{r},0}_{0,B,I}$ which `forget' all the inserted points in $I^{PI}$. 
\end{rmk}

\subsubsection{Boundary strata and point insertion}
For an $(r,0)$-graph $\bm{G}$, 
we write
$$
E(\bm{G}):=\bigsqcup_{\Gamma\in V(\bm{G})} E(\Gamma)
$$
and 
$$
H^{CB}(\bm{G}):= \bigsqcup_{\Gamma\in V(\bm{G})}  H^{CB}(\Gamma).
$$
For a set 
$S\subseteq E(\bm{G})\sqcup H^{CB}(\bm{G})$, 
we can perform a sequence of smoothings (in any order, since they lead to the same result) and obtain an $(r,0)$-graph $d_S \bm{G}$.  We set
\begin{align*}
&\partial^!\bm{G} = \{\bm{H} \; | \; \bm{G} = d_S\bm{H} \text{ for some } S\},\\
&\partial \bm{G} = \partial^!\bm{G} \setminus \{\bm{G}\},\\
&\partial^B \bm{G} = \{\bm{H} \in \partial \bm{G} \; | \; E^B(\bm{H}) \cup H^{CB}(\bm{H}) \neq \emptyset\}.
\end{align*}
For an $(r,0)$-graph $\mathbf{G}$, a boundary stratum of $\Mbar_\mathbf{G}$ corresponds to a graph in $\partial^!\mathbf{G}$, or more precisely, a choice of $\Delta_i\in \partial^! \Gamma_{i}$ for each $\Gamma_i\in V(\mathbf{G})$. In particular, a codimension-1 boundary of $\Mbar_\mathbf{G}$ for smooth $\mathbf{G}$ is determined by a choice of $\Gamma\in V(\mathbf{G})$ and a graph $\Delta \in \partial \Gamma$, where $\Delta$ has either one contracted boundary tail and no edges, or exactly one edge which is a boundary edge. There are five different types of codimension-1 boundaries of $\Mbar_\mathbf{G}$ depending on the type of the (half-)edge of $\Delta$ (or equivalently, the corresponding node of a surface $C\in \mathcal M_\Delta$):

\begin{enumerate}
    \item[{CB}] contracted boundary tails;
    \item[{R}] Ramond boundary edges;
    \item[{NS+}] NS boundary edges whose twist on the illegal side is greater than $0$;
    \item[AI] NS boundary edges whose twist on the illegal side is $0$, and the vertex containing the legal half-node only contains this half-edge and an internal tail $a\in T^I(\Delta)\cap I^{PI}(\mathbf{G})$;
    \item[BI] the remaining NS boundary edges whose twist on the illegal side is $0$.
\end{enumerate}
Therefore, the codimension-1 boundary of $\Mbar^{\frac{1}{r},0}_{g,B,I}$ is a union of five different types of boundaries. Similarly, the codimension-1 boundary of $\oQMb^{\frac{1}{r},0}_{g,B,I}$ is also a union of such five different types of boundaries since $\partial \oQMb^{\frac{1}{r},0}_{g,B,I}=\oQMb^{\frac{1}{r},0}_{g,B,I}\cap  \partial \Mbar^{\frac{1}{r},0}_{g,B,I}$, while $\oPMb^{\frac{1}{r},0}_{g,B,I}$ only has three types of boundaries since all type-NS+ and type-R boundaries are removed from $\oPMb^{\frac{1}{r},0}_{g,B,I}$. 

We will denote by $\partial^{CB} \Mbar_{\bm{G}}$, $\partial^{R} \Mbar_{\bm{G}}$, $\partial^{NS+} \Mbar_{\bm{G}}$, $\partial^{AI} \Mbar_{\bm{G}}$ and  $\partial^{BI} \Mbar_{\bm{G}}$ the corresponding boundaries in $\Mbar_{\bm{G}}$. We also write $\mathcal Z^{dj}_{\bm{G}}:=\Mbar_{\bm{G}}\setminus \oQMb_{\bm{G}}$.
\begin{rmk}
    The abbreviation ``BI'' stands for ``before-insertion'', while the abbreviation ``AI'' stands for ``after-insertion''. 
\end{rmk}

We claim that there is an one-to-one correspondence between the type-AI boundaries and the type-BI boundaries.

    \begin{thm}[{\cite[Theorem 4.12]{TZ1}}]\label{thm  PI boundaries paried}
 For fixed $I$ and $B$, there is a one-to-one correspondence $\PI$ between the type-BI boundaries and the type-AI boundaries of $\Mbar^{\frac{1}{r},0}_{g,B,I}$ (respectively $\oQMb^{\frac{1}{r},0}_{g,B,I}$ or $\oPMb^{\frac{1}{r},0}_{g,B,I}$). Two boundaries paired by the correspondence $\PI$ are canonically diffeomorphic, and this diffeomorphism can be lifted to the Witten bundles (when $g=1$ we restrict to $\oQMb^{\frac{1}{r},0}_{1,B,I}$) and the relative cotangent line bundles restricted to them. Moreover, the canonical relative orientations of the Witten bundles on the paired (spurious) boundaries induced by the canonical relative orientations are opposite to each other. 
 \end{thm}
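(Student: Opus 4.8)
The plan is to realize $\PI$ as an explicit combinatorial ``point-insertion move'' on $(r,0)$-graphs, following the scheme of \cite[Theorem 4.12]{TZ1}, and then to lift it one structure at a time to the geometry; the bundle $\mathbb L^*_i$, which does not appear in \cite{TZ1}, will need a separate check. A codimension-one type-BI boundary of $\Mbar^{\frac1r,0}_{g,B,I}$ is indexed by an $(r,0)$-graph $\bm H$ obtained from a smooth $\bm G\in\sGPI^{r,0}_{g,B,I}$ by degenerating one vertex $\Gamma_0$ along a single Neveu--Schwarz boundary edge $n$ whose illegal half-edge has twist $0$ and which is not of AI-shape. I would take $\PI(\bm H)=\bm H'$ to be the move that, inside the connected $r$-spin surface carrying $n$, detaches $n$, relabels its legal half-node (twist $r-2$) as a boundary tail $\ast$ placed in $B^{PI}$, glues onto the former illegal half-node a genus-zero bubble $v_0$ carrying a single internal tail $a$ of twist $0$ placed in $I^{PI}$, and adjoins the dashed line $(a,\ast)$; the new node between $v_0$ and the rest is then precisely an AI-edge $n'$. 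One checks this preserves the genus (a non-separating $n$ yields a non-separating dashed line, restoring the genus lost under detaching) and the labelled sets $I,B$, that the output satisfies the requirements of Definition~\ref{def rh graphs}, and that the move is inverted by contracting the $v_0$-bubble and regluing $(a,\ast)$ as a boundary node; hence $\PI$ is a bijection on codimension-one boundary strata. Since AI- and BI-edges are Neveu--Schwarz with illegal twist $0$ they are never positive, and a genus-zero bubble with trivial Witten bundle cannot create a genus-one subsurface with trivial spin bundle (compare Remark~\ref{rmk dim jump non separating}), so the move restricts to $\oQMb^{\frac1r,0}_{g,B,I}$ and to $\oPMb^{\frac1r,0}_{g,B,I}$.

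Next I would produce the canonical diffeomorphism and its lifts. Detaching $n$ and using that the map $q$ in \eqref{eq Wittendecompsequence genus one} is an isomorphism for Neveu--Schwarz boundary edges (Remark~\ref{rmk decompose NS boundary node}) identifies the type-BI stratum with $\oQMb_{\widehat{\Gamma}_0}\times\prod_{\Gamma\ne\Gamma_0}\oQMb_\Gamma$, where $\widehat{\Gamma}_0$ is the detaching of $\Gamma_0$ along $n$; detaching $n'$ identifies the type-AI stratum with $\oQMb_{\widehat{\Gamma}_0}\times\oQMb_{v_0}\times\prod_{\Gamma\ne\Gamma_0}\oQMb_\Gamma$, with the same $\widehat{\Gamma}_0$ (a dangling half-node and a boundary tail of equal twist and legality define the same moduli). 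Since $\oQMb_{v_0}=\Mbar^{1/r}_{0,\{r-2\},\{0\}}$ has real dimension $1+2-3=0$ by \eqref{eq dim of moduli} and is nonempty and connected, it is a single point, which gives the diffeomorphism $\PI$. For the Witten bundle, the Neveu--Schwarz case \eqref{eq NSdecompses} of Proposition~\ref{prop decomposition} applied at $n$ and at $n'$ writes both restrictions of $\mathcal W$ as $\mathcal W_{\widehat{\Gamma}_0}\boxplus\bboxplus_{\Gamma\ne\Gamma_0}\mathcal W_\Gamma$, the extra bubble summand $\mathcal W_{v_0}$ vanishing because its rank is $\tfrac{2\cdot0+(r-2)-(r-2)}{r}=0$ by \eqref{eq rank of witten bundle}; this is the lift to $\mathcal W$ (for $g=1$, over $\oQMb^{\frac1r,0}_{1,B,I}$). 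For the relative cotangent lines the move is supported away from the markings $i\in I$: the vertex $\Gamma_{i,\bm G}$ changes only by adjoining or forgetting a point of $I^{PI}$ or by relabelling a boundary half-node as a boundary tail, neither of which alters the reductions $\mathcal B\Gamma_{i,\bm G}$ or $\hat{\mathcal B}\Gamma_{i,\bm G}$, so $\mathbb L_i$ and $\mathbb L^*_i$ pull back compatibly, while $\mathbb L'_i$ is handled by the standard comparison of relative cotangent lines under a forgetful morphism, restricted to the stratum.

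Finally, the orientation sign. Writing $o_{\bm H}=(-1)^{|E(\bm H)|}\bboxtimes_{\Gamma}o_\Gamma$, the move adds exactly one dashed line, so $|E(\bm H')|=|E(\bm H)|+1$ and the prefactor $(-1)^{|E|}$ changes by $-1$. The remaining identifications should be sign-free: relabelling $n$'s legal half-node as the tail $\ast$ preserves moduli, Witten bundle and canonical orientation; the bubble $v_0$ carries the canonical orientation of a point with trivial Witten bundle; detaching $n$ and $n'$ is sign-free by the construction of $o_\Gamma$ in \cite[\S 3]{TZ1}; and the ensuing reorderings of $\bboxtimes$ are sign-free because $\dim\Mbar_\Gamma\equiv\operatorname{rank}\mathcal W_\Gamma\bmod2$. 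It then remains only to check that the sign by which $o_{\bm G}$ induces a relative orientation on a codimension-one boundary depends just on the local type of the degenerating node — its twist, legality and the side it lies on — so that it agrees for $n$ and for $n'$; granting this, the net sign is $-1$, i.e.\ the induced relative orientations on the paired spurious boundaries are opposite. The orientation bookkeeping is the step I expect to be the main obstacle: one must reconcile the outward-normal convention that induces orientations on codimension-one boundaries with the explicit formula for $o_\Gamma$ from \cite[\S 3]{TZ1}, and verify that all the above cancellations hold uniformly in $r$ and in the separating/non-separating and genus-placement type of $n$.
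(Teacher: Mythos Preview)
The paper does not give its own proof of this statement; it is imported from \cite[Theorem~4.12]{TZ1} and used as a black box, with only the informal description in \S\ref{subsec PI} and Figure~\ref{fig point insertion demonstration} by way of explanation. Your outline recapitulates exactly that point-insertion mechanism --- the identification of the paired strata via the zero-dimensional bubble $\Mbar^{1/r}_{0,\{r-2\},\{0\}}$, the lift to $\mathcal W$ via the NS decomposition \eqref{eq NSdecompses} and the rank-zero computation on the bubble, and the orientation flip coming from the prefactor $(-1)^{|E(\bm G)|}$ in \eqref{eq orientation point insertion} --- so there is nothing further in this paper to compare against.
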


 \begin{rmk}\label{rmk non injective in PI g>0}
     When $g=0$, the diffeomorphism a type-BI boundary and a type-AI boundary extend to a diffeomorphism between their closure. When $g\ge 1$, we can extend such diffeomorphism to a surjective morphism from the closure of type-AI boundary to the closure of type-BI boundary, which is not necessary injective on the higher-codimensional boundary strata. Such surjective morphism still pulls back Witten bundles and relative cotangent line bundles on the closures.
 \end{rmk}

 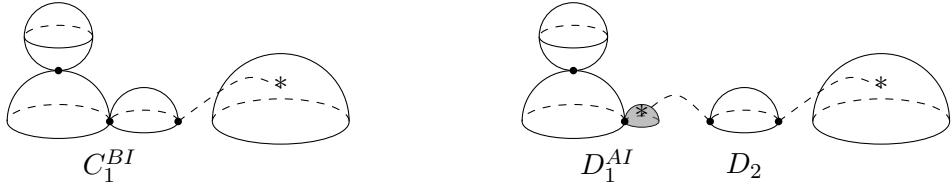
\begin{figure}[h]
         \centering

         \begin{subfigure}{.45\textwidth}
  \centering
\begin{tikzpicture}[scale=0.45]
\draw (0,-0.5) circle (1);
\draw (-1,-0.5) arc (180:360:1 and 0.333);
\draw[dashed](1,-0.5) arc (0:180:1 and 0.333);

\draw (-1.5,-3) arc (180:360:1.5 and 0.5);
\draw[dashed](1.5,-3) arc (0:180:1.5 and 0.5);
\draw(1.5,-3) arc (0:180:1.5);

\node at (0,-1.5) [circle,fill,inner sep=1pt]{};

\draw (1.5,-3) arc (180:360:1 and 0.333);
\draw[dashed](3.5,-3) arc (0:180:1 and 0.333);
\draw(3.5,-3) arc (0:180:1);

\node at (1.5,-3) [circle,fill,inner sep=1pt]{};

\draw (4.5,-3) arc (180:360:2 and 0.667);
\draw[dashed](8.5,-3) arc (0:180:2 and 0.667);
\draw(8.5,-3) arc (0:180:2);

\node at (3.5,-3) [circle,fill,inner sep=1pt]{};
\node at (6.5,-2){*};
\draw[dashed] (3.5,-3) .. controls (5.5,-1.5) ..(6.5,-2);

\node at (1.5,-4.3){$C_1^{BI}$};

\end{tikzpicture} 
\end{subfigure}
\begin{subfigure}{.45\textwidth}
  \centering
\begin{tikzpicture}[scale=0.45]
\draw (0,-0.5) circle (1);
\draw (-1,-0.5) arc (180:360:1 and 0.333);
\draw[dashed](1,-0.5) arc (0:180:1 and 0.333);

\draw (-1.5,-3) arc (180:360:1.5 and 0.5);
\draw[dashed](1.5,-3) arc (0:180:1.5 and 0.5);
\draw(1.5,-3) arc (0:180:1.5);

\node at (0,-1.5) [circle,fill,inner sep=1pt]{};

\draw (1.5,-3) arc (180:360:0.5 and 0.167);
\draw[dashed](2.5,-3) arc (0:180:0.5 and 0.167);
\draw(2.5,-3) arc (0:180:0.5);

\fill[color = gray, opacity = 0.5] (1.5,-3) arc (180:360:0.5 and 0.167) arc (0:180:0.5);

\node at (1.5,-3) [circle,fill,inner sep=1pt]{};

\draw (4,-3) arc (180:360:1 and 0.333);
\draw[dashed](6,-3) arc (0:180:1 and 0.333);
\draw(6,-3) arc (0:180:1);

\draw (7,-3) arc (180:360:2 and 0.667);
\draw[dashed](11,-3) arc (0:180:2 and 0.667);
\draw(11,-3) arc (0:180:2);

\node at (6,-3) [circle,fill,inner sep=1pt]{};
\node at (9,-2){*};
\draw[dashed] (6,-3) .. controls (8,-1.5) ..(9,-2);

\node at (4,-3) [circle,fill,inner sep=1pt]{};

\node at (2,-2.8){*};

\draw[dashed] (4,-3) .. controls (3,-2) ..(2,-2.8);

\node at (1,-4.3){$ D_1^{AI}$};
\node at (5,-4.3){$ D_2$};

\end{tikzpicture} 
\end{subfigure}

        \caption{An example of two $(r,0)$-disks lying on two boundaries $\text{bd}_{BI}$ and $\text{bd}_{AI}$ paired by $PI$. The component $C_1^{BI}$ of the $(r,0)$-disk on the left has a type-BI node, while the component $D_1^{AI}$ of the $(r,0)$-disk on the left has a type-AI node. The shaded irreducible component only contains a legal half-node (corresponding to a type-AI node) and an internal tail in $I^{PI}$. }
        \label{fig rh surface}
    \end{figure}

    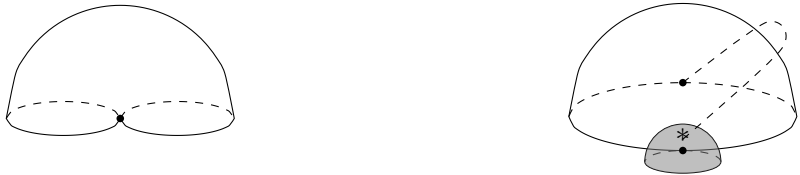
\begin{figure}
        \centering
         
         \begin{subfigure}{.45\textwidth}
  \centering
        \begin{tikzpicture}[scale=0.5]
    \vspace{0.15cm}

     \draw [] plot [domain=0:3,  smooth]({\x}, {-sqrt(2.25-(\x-1.5)*(\x-1.5))*0.3}) -- plot [domain=3:-3,  smooth ] ({\x}, {sqrt(9-\x*\x)})-- plot [domain=-3:0,  smooth]({\x}, {-sqrt(2.25-(\x+1.5)*(\x+1.5))*0.3});

    \draw  [dashed, domain=0:3,  smooth] plot ({\x}, {sqrt(2.25-(\x-1.5)*(\x-1.5))*0.3});

    \draw  [dashed, domain=-3:0,  smooth] plot ({\x}, {sqrt(2.25-(\x+1.5)*(\x+1.5))*0.3});

    \node at (0,0) [circle,fill,inner sep=1pt]{};
  
\end{tikzpicture}
\vspace{0.5cm}
        \end{subfigure}
        \begin{subfigure}{.45\textwidth}
  \centering
        \begin{tikzpicture}[scale=0.5]
    \vspace{0.15cm}

     \draw [] plot [domain=0:3,  smooth]({\x}, {-sqrt(9-(\x)*(\x))*0.3}) -- plot [domain=3:-3,  smooth ] ({\x}, {sqrt(9-\x*\x)})-- plot [domain=-3:0,  smooth]({\x}, {-sqrt(9-(\x)*(\x))*0.3});

    \draw  [dashed, domain=0:3,  smooth] plot ({\x}, {sqrt(9-(\x)*(\x))*0.3});

    \draw  [dashed, domain=-3:0,  smooth] plot ({\x}, {sqrt(9-(\x)*(\x))*0.3});

    \node at (0,0.9) [circle,fill,inner sep=1pt]{};

    \draw(1,-1.2) arc (0:180:1);
     \draw(-1,-1.2) arc (180:360:1 and 0.3);
     
     \draw[dashed](1,-1.2) arc (0:180:1 and 0.3);

    \fill[color = gray, opacity = 0.5] (-1,-1.2) arc (180:360:1 and 0.3) arc (0:180:1);

    \node at (0,-0.9) [circle,fill,inner sep=1pt]{};

    \node at (0,-0.6) {*};
    \draw [dashed] plot [smooth] coordinates {(0,-0.6) (2.6,1.8) (2.2,2.5) (0,0.9)};
  
\end{tikzpicture}
        \end{subfigure}
        \caption{An example of two $(r,0)$-cylinders paired by $\PI$, where the left one has a non-separating BI-type boundary node, and the right one has an separating AI-type boundary node, and a dashed line connecting a boundary tail and a internal tail on the same connected component.}
        \label{fig point insertion at non separationg}
    \end{figure}

  Let $\sim_{PI}$ be the equivalent relation induced by the correspondence $\PI$ on the boundaries of $\Mbar^{\frac{1}{r},0}_{g,B,I}$ (and its restriction to the boundaries of $\oQMb^{\frac{1}{r},0}_{g,B,I}$ or $\oPMb^{\frac{1}{r},0}_{g,B,I}$), where $p_{AI}$ in the closure of an AI-type boundary is equivalent to $p_{BI}$ in the closure of the corresponding BI-type boundary if $p_{BI}$ is the image of $p_{AI}$ under the surjective morphism in Remark \ref{rmk non injective in PI g>0}. Theorem \ref{thm  PI boundaries paried} shows that we can glue $\Mbar^{\frac{1}{r},0}_{g,B,I}$, $\oQMb^{\frac{1}{r},0}_{g,B,I}\subseteq \Mbar^{\frac{1}{r},0}_{g,B,I}$ and $\oPMb^{\frac{1}{r},0}_{g,B,I}\subseteq \oQMb^{\frac{1}{r},0}_{g,B,I}$ along the paired boundaries and obtain piecewise smooth glued moduli spaces
    $$
    \widetilde{\mathcal M}^{\frac{1}{r},0}_{g,B,I}:=\Mbar^{\frac{1}{r},0}_{g,B,I}\big/\sim_{PI},
    $$
    
    $$
    \widetilde{\mathcal{QM}}^{\frac{1}{r},0}_{g,B,I}:=\overline{\mathcal{QM}}^{\frac{1}{r},0}_{g,B,I}\big/\sim_{PI} \,\, \subseteq \widetilde{\mathcal M}^{\frac{1}{r},0}_{g,B,I}
    $$
    and
    $$
     \widetilde{\mathcal{PM}}^{\frac{1}{r},0}_{g,B,I}:=\overline{\mathcal{QM}}^{\frac{1}{r},0}_{g,B,I}\big/\sim_{PI} \,\,\subseteq \widetilde{\mathcal{QM}}^{\frac{1}{r},0}_{g,B,I}.
    $$
    The objects parametrized by $\widetilde{\mathcal M}^{\frac{1}{r},0}_{g,B,I}$  are called \textit{reduced $(r,0)$-surfaces} in \cite{TZ1}, they are equivalence classes of $(r,0)$-surfaces under the relation induced by $\sim_{PI}$. Note that $\widetilde{\mathcal M}^{\frac{1}{r},0}_{g,B,I}$ and $\widetilde{\mathcal {QM}}^{\frac{1}{r},0}_{g,B,I}$ have only boundaries of type CB, R, and NS+, while $\widetilde{\mathcal {PM}}^{\frac{1}{r},0}_{g,B,I}$ has only boundaries of type CB.

    The Witten bundles and the relative cotangent line bundles $\mathbb L_i$, $\mathbb L'_i$ or $\mathbb L_i^*$ over the different connected components of $\Mbar^{\frac{1}{r},0}_{g,B,I}$  can also be glued along the same boundaries into a glued Witten bundle $\widetilde{\mathcal W}\to\widetilde{\mathcal {QM}}^{\frac{1}{r},0}_{g,B,I}$ and glued relative cotangent line bundles $\widetilde{\mathbb L}_i\to\widetilde{\mathcal M}^{\frac{1}{r},0}_{g,B,I}$, $\widetilde{\mathbb L}'_i\to\widetilde{\mathcal M}^{\frac{1}{r},0}_{g,B,I}$ or $\widetilde{\mathbb L}^*_i\to\widetilde{\mathcal M}^{\frac{1}{r},0}_{g,B,I}$ with canonical complex structure.
\begin{rmk}
In the case $r=2$ and only NS insertions the Witten bundle is a trivial zero rank bundle. In this case the idea of gluing different moduli spaces to obtain an orbifold without boundary is due to Jake Solomon and the first named author \cite{ST_unpublished}. 
\end{rmk}
     By Theorem \ref{thm  PI boundaries paried} and the fact that the glued relative cotangent line bundles over $\widetilde{\mathcal M}^{\frac{1}{r},\h}_{g,B,I}$ carry canonical complex orientations, we obtain the following theorem:
    \begin{thm}\cite[Theorem 4.13]{TZ1}
      All bundles of the form
    \[\widetilde{{\mathcal{W}}}\oplus\bigoplus_{i=1}^l\widetilde{\mathbb{L}}_i^{\oplus d_i}\to \widetilde{\mathcal {QM}}^{\frac{1}{r},\h}_{g,B,I} 
    \text{ or } 
    \widetilde{{\mathcal{W}}}\oplus\bigoplus_{i=1}^l {\widetilde{\mathbb {L}'}}_i^{\oplus d_i}\to \widetilde{\mathcal {QM}}^{\frac{1}{r},\h}_{g,B,I}\text{ or } 
    \widetilde{{\mathcal{W}}}\oplus\bigoplus_{i=1}^l {\widetilde{\mathbb {L}^*_i}}^{\oplus d_i}\to \widetilde{\mathcal {QM}}^{\frac{1}{r},\h}_{g,B,I}\]are canonically relatively oriented.   
    \end{thm}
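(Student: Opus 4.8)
The plan is to produce the relative orientation one connected component at a time and then to check that it descends to the quotient $\widetilde{\mathcal{QM}}^{\frac{1}{r},\h}_{g,B,I}=\oQMb^{\frac{1}{r},\h}_{g,B,I}/\!\sim_{PI}$. On each piece $\oQMb_{\mathbf G}$ of $\oQMb^{\frac{1}{r},\h}_{g,B,I}$ we already have the canonical relative orientation $o_{\mathbf G}$ of the Witten bundle $\mathcal W_{\mathbf G}$ from \eqref{eq orientation point insertion}, and each of the line bundles $\widetilde{\mathbb L}_i$ (respectively $\widetilde{\mathbb L}'_i$, $\widetilde{\mathbb L}^*_i$) is complex, hence carries a canonical fibrewise orientation that is preserved by every complex-linear isomorphism. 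Using the canonical isomorphism $\det\bigl(\mathcal W_{\mathbf G}\oplus\bigoplus_i\mathbb L_i^{\oplus d_i}\bigr)\cong \det\mathcal W_{\mathbf G}\otimes\bigotimes_i(\det\mathbb L_i)^{\otimes d_i}$, I would combine $o_{\mathbf G}$ with these complex orientations to obtain a relative orientation $\tilde o_{\mathbf G}$ of $\mathcal W_{\mathbf G}\oplus\bigoplus_i\mathbb L_i^{\oplus d_i}$ over $\oQMb_{\mathbf G}$, fixing once and for all an ordering convention in the tensor product; this is legitimate because $\operatorname{rank}\mathcal W_{\mathbf G}$ and $\dim_{\mathbb R}\oQMb_{\mathbf G}$ have the same parity across any two components meeting along a common spurious boundary, just as already used after \eqref{eq orientation point insertion}. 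Over the strata of $\widetilde{\mathcal{QM}}^{\frac{1}{r},\h}_{g,B,I}$ which are not glued (types CB, R, NS+, and all higher-codimension strata) there is nothing further to do: the relative orientation is simply inherited from the ambient component.

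The heart of the argument is that the $\tilde o_{\mathbf G}$ are compatible along the pairs of boundaries identified by $\PI$, so that they glue. Recall that when two orbifolds with corners carrying oriented bundles are glued along a common codimension-one face, the outward normal directions from the two sides are opposite, so a globally consistent relative orientation must restrict to \emph{opposite} relative orientations on that face as seen from the two sides. For the Witten-bundle summand this is exactly the content of Theorem \ref{thm  PI boundaries paried}: the relative orientations induced on a type-BI boundary $\operatorname{bd}_{BI}\subset\oQMb_{\mathbf G_1}$ and on the paired type-AI boundary $\operatorname{bd}_{AI}\subset\oQMb_{\mathbf G_2}$ by $o_{\mathbf G_1}$ and $o_{\mathbf G_2}$ are opposite to each other. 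For each cotangent-line summand, the canonical diffeomorphism of Theorem \ref{thm  PI boundaries paried} lifts to a \emph{complex-linear} isomorphism of $\widetilde{\mathbb L}_i$ (resp. $\widetilde{\mathbb L}'_i$, $\widetilde{\mathbb L}^*_i$), hence preserves the fibrewise complex orientation. Tensoring these, the relative orientation $\tilde o_{\mathbf G_1}$ restricted to $\operatorname{bd}_{BI}$ is carried by the identification to $-\tilde o_{\mathbf G_2}$ restricted to $\operatorname{bd}_{AI}$, which is precisely the compatibility needed for $\tilde o$ to descend to the glued space; the three variants $\widetilde{\mathbb L}_i,\widetilde{\mathbb L}'_i,\widetilde{\mathbb L}^*_i$ are all handled identically because all three are complex line bundles and the lift of $\PI$ is complex-linear on each.

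The remaining work is bookkeeping, and this is where care is needed. First, one must check that tensoring $o_{\mathbf G}$ — which carries the normalization $(-1)^{|E(\mathbf G)|}$ of \eqref{eq orientation point insertion} — with the complex orientations interacts correctly with the change of combinatorial data when passing from $\mathbf G_1$ to $\mathbf G_2$ across a type-BI/type-AI boundary: Theorem \ref{thm  PI boundaries paried} records the sign behaviour for $\mathcal W$ alone, so one verifies that adjoining the cotangent summands introduces no further discrepancy, using that $\operatorname{rank}\mathcal W$ and $\dim_{\mathbb R}$ of the moduli change in tandem with the edge and tail data so that all reordering signs are the same on the two sides. Second, for $g\ge 1$ the correspondence $\PI$ is only surjective, not bijective, on the higher-codimension strata (Remark \ref{rmk non injective in PI g>0}); but since a relative orientation is a local, fibrewise datum and the surjection of Remark \ref{rmk non injective in PI g>0} still pulls back the bundles together with their relative orientations, the descent is unaffected. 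I expect this last sign bookkeeping, rather than any conceptual point, to be the main obstacle; once it is settled, assembling the above yields the asserted canonical relative orientation of $\widetilde{\mathcal W}\oplus\bigoplus_i\widetilde{\mathbb L}_i^{\oplus d_i}$, and of the two variants with $\widetilde{\mathbb L}'_i$ and $\widetilde{\mathbb L}^*_i$, over $\widetilde{\mathcal{QM}}^{\frac{1}{r},\h}_{g,B,I}$.
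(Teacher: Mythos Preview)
Your proposal is correct and follows essentially the same approach as the paper: the paper's entire argument is the single sentence preceding the theorem, namely that the result follows from Theorem~\ref{thm  PI boundaries paried} (the opposite induced orientations of the Witten bundle on paired spurious boundaries) together with the fact that the glued relative cotangent line bundles carry canonical complex orientations. Your write-up simply unpacks this in more detail, including the bookkeeping about parity and the higher-codimension surjection, none of which the paper spells out since it defers to \cite{TZ1}.
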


\subsection{Closed extended FJRW theories}\label{subsec:closed_ext_r_m}
In \cite{BCT_Closed_Extended,BCT2} it was observed that a certain extension of Witten's $r$-spin theory plays an important role in the open $r$-spin theory. This \emph{closed extended $r$-spin theory} is defined just like the usual genus $0$ closed $r$-spin theory, only that we allow one marking to have a negative twist of $-1.$ It was first observed in \cite{JKV2} that, as in the usual closed setting with only non-negative twists, also here $R^1\pi_*\mathcal{S}$ is an orbifold vector bundle in genus zero, so Witten's class $c_W$ can be defined by as $c_W=c_{top}((R^1\pi_*\mathcal{S})^\vee)$. The \textit{closed extended $r$-spin correlators} are
\begin{equation}\label{eq:define closed ext}
\left\langle\prod_{i\in I}\tau^{a_i}_{d_i}\right\rangle^{\frac{1}{r},\text{ext}}_0:=r\int_{\Mbar^{1/r}_{0,I}} \hspace{-0cm} c_W \cap \psi_1^{d_1} \cdots \psi_n^{d_n}.
\end{equation}
 All these numbers are calculated in \cite{BCT_Closed_Extended}. They satisfy the following relations.
 \begin{thm}[{\cite[Lemma 3.6]{BCT_Closed_Extended}}] \label{thm trr closed extended}  
 For any $i_1\in I$ and $j_1\ne j_2$ in $I\setminus \{i_1\}$, we have the Topological Recursion Relation with respect to the $(i_1,\{j_1,j_2\})$:
\begin{equation}\label{eq trr closed extended}
    \left\langle\tau^{a_{i_1}}_{d_{i_1}+1}\prod_{i\in I\setminus \{i_1\}}\tau^{a_i}_{d_i}\right\rangle^{\frac{1}{r},\text{ext}}_0=
    \sum_{\substack{R_1\sqcup R_2=I\setminus\{i_1\}\\j_1,j_2\in R_2\\-1 \le a \le r-2}} 
    \left\langle\tau^a_0 \tau^{a_{i_1}}_{d_{i_1}} \prod_{i\in R_1}\tau^{a_i}_{d_i}\right\rangle^{\frac{1}{r},\text{ext}}_0 
    \left\langle\tau^{r-2-a}_0\prod_{i\in R_2}\tau^{a_i}_{d_i}\right\rangle^{\frac{1}{r},\text{ext}}_0.
\end{equation}
     
 \end{thm}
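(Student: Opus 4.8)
The plan is to run the classical genus-zero topological recursion argument on the extended $r$-spin moduli space $\Mbar^{1/r}_{0,I}$. Recall from \S\ref{subsec:closed_ext_r_m} that $R^1\pi_*\mathcal{S}$ is a genuine vector bundle here, so $c_W=c_{\mathrm{top}}\big((R^1\pi_*\mathcal{S})^\vee\big)$ is defined and the correlator equals $r\int_{\Mbar^{1/r}_{0,I}}c_W\cap\prod_i\psi_i^{d_i}$ by \eqref{eq:define closed ext}. Fix the three distinct markings $i_1,j_1,j_2$. On the moduli of stable pointed curves $\Mbar_{0,|I|}$ one has the classical identity
\[
\psi_{i_1}\;=\;\sum_{\substack{P\sqcup P^c=I\\ i_1\in P,\ j_1,j_2\in P^c\\ |P|,\,|P^c|\ge 2}}[D_P].
\]
Here $D_P$ is the boundary divisor whose generic point has one node separating the markings in $P$ from those in $P^c$. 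Since $\mathbb{L}_{i_1}$ on $\Mbar^{1/r}_{0,I}$, and hence $\psi_{i_1}$, is pulled back along the spin-forgetting morphism $\mathrm{For}_{\mathrm{spin}}$, I would pull this identity back: the preimage of $D_P$ is a union of $r$-spin boundary divisors $D_{P,a}$ indexed by the twist $a$ of the half-node lying on the $P$-side, so that by \eqref{eq sum of twist at node} the other half-node has twist $r-2-a$. Admissibility of both sides, via \eqref{eq mod r condition g=0}, restricts $a$ to $\{-1,0,\dots,r-2\}$; the Ramond cases $a=-1$ and $a=r-1$ describe the \emph{same} divisor, and I fix the canonical choice \eqref{eq normalize S} so that the $(-1)$-twisted node-marking sits on the branch containing $i_1$.

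Next I would use the factorization of $c_W$ and of the $\psi$-classes along each $D_{P,a}$. For an NS node ($0\le a\le r-2$), the normalization sequence for $\mathcal{S}$ on the universal curve over $D_{P,a}$ shows that $R^1\pi_*\mathcal{S}$ restricts to the external direct sum of the two corresponding sheaves, so $c_W|_{D_{P,a}}=c_W'\boxtimes c_W''$. For the Ramond node $a=-1$, the skyscraper term in that sequence contributes a one-dimensional piece, and the argument of \cite{BCT_Closed_Extended} shows that $c_W$ still factorizes as $c_W'\boxtimes c_W''$, with $c_W'$ the \emph{extended} Witten class on the $i_1$-component (carrying a $(-1)$-twisted node-marking) and $c_W''$ the ordinary Witten class on the $\{j_1,j_2\}$-component (carrying an $(r-1)$-twisted node-marking). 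Meanwhile, each $\psi_i$ with $i\ne i_1$ restricts to the $\psi$-class at the corresponding marking of whichever of the two factors contains it, and one power of $\psi_{i_1}$ has been absorbed by the divisor, so $\psi_{i_1}^{d_{i_1}+1}$ restricts to $\psi_{i_1}^{d_{i_1}}$ on the $i_1$-factor.

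Substituting this into \eqref{eq:define closed ext} and applying the projection formula across each $D_{P,a}$ expands the left-hand side of \eqref{eq trr closed extended} as a sum, over $a\in\{-1,\dots,r-2\}$ and over partitions $R_1\sqcup R_2=I\setminus\{i_1\}$ with $j_1,j_2\in R_2$ (where $R_1=P\setminus\{i_1\}$ and $R_2=P^c$), of products of integrals over the two factors; re-expressing each factor via \eqref{eq:define closed ext} yields exactly the terms $\big\langle\tau^a_0\tau^{a_{i_1}}_{d_{i_1}}\prod_{i\in R_1}\tau^{a_i}_{d_i}\big\rangle^{\frac1r,\mathrm{ext}}_0\big\langle\tau^{r-2-a}_0\prod_{i\in R_2}\tau^{a_i}_{d_i}\big\rangle^{\frac1r,\mathrm{ext}}_0$ on the right-hand side, the terms containing two $(-1)$-insertions vanishing identically. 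I expect the genuinely delicate points to be: (i) the factorization of the \emph{extended} Witten class at the Ramond divisor together with the verification that the $(-1)$-twist is forced onto the $i_1$-side --- this is what sets the statement apart from the usual $r$-spin topological recursion relation and is the heart of \cite[Lemma~3.6]{BCT_Closed_Extended}; and (ii) the bookkeeping of the powers of $r$ (the factor $r$ in \eqref{eq:define closed ext}, the degrees of $\mathrm{For}_{\mathrm{spin}}$ and of the gluing morphisms, and the $\mathbb{Z}/r$-isotropy at the node) so that each boundary divisor contributes with coefficient exactly $1$. The remaining combinatorics of the partitions is routine.
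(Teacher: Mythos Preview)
The paper does not give its own proof of this statement: it is quoted verbatim from \cite[Lemma~3.6]{BCT_Closed_Extended} as a known input to the open theory, with no argument supplied. So there is nothing in the present paper to compare against.

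That said, your sketch is the standard genus-zero TRR argument adapted to the extended setting, and it tracks what one finds in \cite{BCT_Closed_Extended}: pull back the divisorial expression for $\psi_{i_1}$ along $\mathrm{For}_{\mathrm{spin}}$, stratify by the nodal twist, and use factorization of the Witten class. You have correctly isolated the two non-routine points --- the Ramond factorization forcing the $(-1)$-twist onto the $i_1$-side, and the $r$-bookkeeping --- and flagged them as the content of the cited lemma rather than pretending they are automatic. One small clarification: your claim that ``terms containing two $(-1)$-insertions vanish identically'' deserves a word of justification, since the extended theory only allows a single $(-1)$-twist, so any factor with two such markings is undefined (equivalently, zero by convention); this is implicit in the setup but worth saying. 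Otherwise the outline is sound and matches the approach of the reference.
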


\section{Relative cotangent line bundles and TRR multisections}\label{sec relative  cotangent and all}
The relative cotangent line bundles plays an important role in our construction of genus-one open $r$-spin theory.

\subsection{Relation between different relative cotangent line bundles}\label{sec cotangent lines relation}
Let $\Gamma$ be a dual graph, let $\pi_{\Gamma}\colon\mathcal C_{\Gamma}\to \Mbar_{\Gamma}$ be the universal curve. We denote by $\mathcal U_{\Gamma}\subseteq \mathcal C_{\Gamma}$ be the smooth locus, and by $\mathbb L_{\Gamma}\to \mathcal U_{\Gamma}$ the vertical cotangent line bundle, \textit{i.e.,} the cokernel of $d\pi^*_{\Gamma}\colon \pi^*_{\Gamma} T^*\Mbar_{\Gamma}\to T^* \mathcal U_{\Gamma}$. Then for $i\in T^I(\Gamma)$, denote by $\mu_{i,\Gamma}\colon \Mbar_{\Gamma}\to \mathcal U_{\Gamma}$ the section corresponding the internal marking marked by $i$, the relative cotangent line bundle at $i$ can be defined as $\mathbb L_{i,\Gamma}:=\mu_{i,\Gamma}^*\mathbb L_{\Gamma} \to \Mbar_{\Gamma}$. We will omit $\Gamma$ in the subscript and write $\mathbb L_{i,\Gamma}$ as $\mathbb L_{i}$ when there is no ambiguity.

Let $S^I\subset T^I(\Gamma)$ and $S^B\subset T^B(\Gamma)$ be two subsets of internal and boundary tails. Assuming $\Gamma$ is stable after forgetting all the tails in $S^I$ and $S^B$, we denote by $\for_{{S^I,S^B}}(\Gamma)$ the graph after forgetting, by $\For_{\Gamma \to \for_{{S^I,S^B}}(\Gamma)}\colon \Mbar_\Gamma\to \Mbar_{\for_{{S^I,S^B}}(\Gamma)}$ the forgetful morphism, and by $\widetilde{\For}_{\Gamma \to \for_{{S^I,S^B}}(\Gamma)}\colon \mathcal U_{\Gamma} \to \mathcal U_{\for_{{S^I,S^B}}(\Gamma)}$ the lift of $\For_{\Gamma \to \for_{{S^I,S^B}}(\Gamma)}$ to the universal curve. According to \cite[\S3.5]{PST14}, the morphism 
$$d\widetilde{\For}^*_{\Gamma \to \for_{{S^I,S^B}}(\Gamma)}\colon \widetilde{\For}_{\Gamma \to \for_{{S^I,S^B}}(\Gamma)}^*T^*\mathcal U_{\for_{{S^I,S^B}}(\Gamma)}\to T^*\mathcal U_{\Gamma}$$ induces a morphism between the vertical cotangent lines $$\mathfrak t_{\for_{{S^I,S^B}}(\Gamma)\to \Gamma}\colon \widetilde{\For}_{\Gamma\to \for_{{S^I,S^B}}(\Gamma)}^*\mathbb L_{\for_{{S^I,S^B}}(\Gamma)}\to \mathbb L_{\Gamma}$$
on $U_{\Gamma}$.
The morphism $\mathfrak t_{\for_{{S^I,S^B}}(\Gamma)\to\Gamma}$ is an isomorphism except on component of $\mathcal U_{\Gamma}$ that are contracted by $\widetilde{\For}_{\Gamma \to \for_{{S^I,S^B}}(\Gamma)}$, where it vanishes identically. 

For $i\in T^I(\Gamma)\setminus S^I$, notice that the image of $\mu_{i,\Gamma}\colon \Mbar_{\Gamma}\to \mathcal C_\Gamma$ always lies in the smooth locus $\mathcal U_{\Gamma}$, the morphism  $\mathfrak t_{\for_{{S^I,S^B}}(\Gamma)\to \Gamma}$ induce a morphism between the line bundles $$\tilde{\mathfrak t}_{i,\for_{{S^I,S^B}}(\Gamma)\to \Gamma}:=  \mu_{i,\Gamma}^*\mathfrak t_{\for_{{S^I,S^B}}(\Gamma)\to \Gamma}\colon \For_{\Gamma\to \for_{{S^I,S^B}}(\Gamma)}^*\mathbb L_{i,\for_{{S^I,S^B}}(\Gamma)}\to \mathbb L_{i,\Gamma}$$ over $\Mbar_\Gamma$ which vanishes at the locus $D_{i,S^I,\Gamma}\subset \Mbar_\Gamma$ where $i$-th internal marking lies on a component contracted by $\widetilde{\For}_{\Gamma\to \for_{{S^I,S^B}}(\Gamma)}$. So either $D_{i,S^I,\Gamma}= \Mbar_\Gamma$ and $\tilde{\mathfrak t}_{i,\for_{{S^I,S^B}}(\Gamma)\to \Gamma}$ is identically zero, or $D_{i,S^I,\Gamma}$ can be written as finite union of normal-crossing codimension-two subspaces as
$$
D_{i,S^I,\Gamma}=\bigcup_{\Delta \in \mathcal G_{i,S^I,\Gamma}} \Mbar_{\Delta} \subseteq\Mbar_{\Gamma},
$$
where $\mathcal G_{i,S^I,\Gamma}\subset \partial \Gamma$ consists of graphs such that $i$-th marking on a closed genus-zero vertex $v$, and there exist an separating internal edge $e\in E^I_{sp}(\Delta)$ such that $d_e\Delta=\Gamma$, where the connected component $\Delta_v$ of $\operatorname{Detach}_e \Delta$ containing $v$ is closed genus-zero, and  all the tails in $T^I(\Delta_v)$ are $i$, the half-edge from $e$, or elements in $S^I$. 

\begin{rmk}\label{rmk forget boundry point}
In the case $S^B=\emptyset$,  $\tilde{\mathfrak t}_{i,\Gamma \to \for_{{S^I,S^B}}(\Gamma)}$ is an isomorphism.
\end{rmk}
 We assume that $D_{i,S^I,\Gamma}\ne \Mbar_{\Gamma}$, in this case, the vanishing locus $D_{i,S^I}$ behave like a `divisor', \textit{i.e.,} it is a finite union of normal-crossing (real) codimension-two subspaces. Moreover, if we regard $\tilde{\mathfrak t}_{i,\for_{{S^I,S^B}}(\Gamma)\to \Gamma}$ as a section of $\mathcal O(D_{i,S^I,\Gamma}):=\For_{\Gamma \to \for_{{S^I,S^B}}(\Gamma)}^* \mathbb L_{i,\for_{{S^I,S^B}}(\Gamma)}^\vee \otimes \mathbb L_{i,\Gamma}$, we have the following lemma.
 
 \begin{lem}\label{lem zero locus of t between L}
Let $\Gamma$ be a smooth dual graph of genus 0 or 1 such that $\Mbar_{\Gamma}=\Mbar_{0,\overline{B},I}$ (or $\Mbar_{\Gamma}=\Mbar_{1,\{\overline{B}_1,\overline{B}_2\},I}$). The section $\tilde{\mathfrak t}_{i,\for_{{S^I,S^B}}(\Gamma)\to \Gamma}$ vanishes transversely at the codimension-two subspaces ${\mathcal M}_{\Delta} \subseteq\Mbar_{\Gamma}$ for each $\Delta \in \mathcal G_{i,S^I,\Gamma}$. 

Moreover, each $\Delta \in \mathcal G_{i,S^I,\Gamma}$ consists of one closed vertex $v^c$ and an open vertex with internal half-edges labelled by $I^o$. For any choice of $x \in \bar B$ (or $x,y$ in different $\bar B_1$ and  $\bar B_2$), the induced orientation by $\tilde{\mathfrak o}^{x}_{0,\bar{B},I}$ (or $\tilde{\mathfrak o}^{x,y}_{1,(\overline{B}_1,\overline{B}_2),I}$) and $\tilde{\mathfrak t}_{\for_{{S^I,S^B}}(\Gamma)\to \Gamma}$ on ${\mathcal M}_{\Delta}$ coincide with $\tilde{\mathfrak{o}}^x_{0,\bar B,I^o}\boxtimes \tilde{\mathfrak{o}}_{v^c}$ (or $\tilde{\mathfrak{o}}^{x,y}_{1,(\bar B_1,\bar B_2),I^o}\boxtimes \tilde{\mathfrak{o}}_{v^c}$), where $\tilde{\mathfrak{o}}$ are the orientations of moduli spaces of disks or cylinder constructed in \cite[\S 3.1]{TZ1}.
 \end{lem}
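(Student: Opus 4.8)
The plan is to work locally near a generic point of each codimension-two stratum $\mathcal M_\Delta$ and verify transversality together with the orientation identity by hand using explicit coordinates, following the standard argument for relative cotangent lines on $\overline{\mathcal M}_{g,n}$ adapted to the surface-with-boundary setting of \cite[\S 3.5]{PST14} and \cite[\S 3.1]{TZ1}. First I would reduce to the case $S^B = \emptyset$ by Remark \ref{rmk forget boundry point}: indeed forgetting boundary tails does not affect the map $\tilde{\mathfrak t}_{i,\for_{S^I,S^B}(\Gamma)\to\Gamma}$, so the zero locus and the comparison of orientations depend only on $S^I$, and we may assume we are forgetting only internal tails. Next I would recall the description of $\mathcal G_{i,S^I,\Gamma}$ already established in the text just above the lemma: each $\Delta$ has a single separating internal edge $e$ splitting off a closed genus-zero vertex $v$ (which becomes the contracted component), carrying the marking $i$, the half-edge of $e$, and a subset $I^o \subseteq S^I$ of the forgotten tails; since $v$ is genus-zero with all its tails forgotten except $i$ and the node, $\For_{\Gamma\to\for_{S^I}(\Gamma)}$ contracts exactly this component. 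This is why $\Delta$ has exactly one closed vertex $v^c$ (in the doubled/closed picture this is $v$ together with its conjugate, or just the relevant closed bubble) and one open vertex whose internal half-edges are labelled by a set $I^o$ — matching the statement.

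\textbf{Transversality.} For the transversality claim I would choose local coordinates: near a point of $\mathcal M_\Delta$ let $\zeta$ be the smoothing/gluing parameter of the node $e$ (a complex parameter since $e$ is internal, giving two real directions), and let the remaining coordinates parametrize $\mathcal M_\Delta$ itself. The section $\tilde{\mathfrak t}_{i,\for_{S^I}(\Gamma)\to\Gamma}$, viewed as a section of $\mathcal O(D_{i,S^I,\Gamma}) = \For^*\mathbb L_{i,\for_{S^I}(\Gamma)}^\vee \otimes \mathbb L_{i,\Gamma}$, is governed by the way the marked point $z_i$ moves on the contracted bubble as $\zeta\to 0$. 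The standard computation (e.g. the one showing $\psi_i = \mathfrak t^* + [\text{boundary divisor}]$ on $\overline{\mathcal M}_{g,n}$, or the relation $\pi^*\psi_i = \psi_i - D_i$) shows that in these coordinates $\tilde{\mathfrak t}_{i,\for_{S^I}(\Gamma)\to\Gamma}$ vanishes to first order precisely in the $\zeta$-direction: its linearization in $\zeta$ is an isomorphism onto the fibre of $\mathcal O(D_{i,S^I,\Gamma})$. This is exactly transverse vanishing along the real codimension-two locus $\{\zeta = 0\} = \mathcal M_\Delta$. Since the strata $\mathcal M_\Delta$ for distinct $\Delta\in\mathcal G_{i,S^I,\Gamma}$ are normal-crossing (as already noted in the text), the transversality is stratum-by-stratum and no further global argument is needed.

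\textbf{Orientation identity.} For the orientation statement I would compare, on the normal bundle of $\mathcal M_\Delta$, the co-orientation induced by the transverse section $\tilde{\mathfrak t}$ (equivalently, the complex orientation of the smoothing parameter $\zeta$ of the internal node $e$) against the difference of the ambient orientation $\tilde{\mathfrak o}^x_{0,\bar B,I}$ (resp. $\tilde{\mathfrak o}^{x,y}_{1,(\bar B_1,\bar B_2),I}$) and the product orientation $\tilde{\mathfrak o}^x_{0,\bar B,I^o}\boxtimes\tilde{\mathfrak o}_{v^c}$ (resp. $\tilde{\mathfrak o}^{x,y}_{1,(\bar B_1,\bar B_2),I^o}\boxtimes\tilde{\mathfrak o}_{v^c}$). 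Here the key input is the explicit formula for $\tilde{\mathfrak o}$ from \cite[\S 3.1]{TZ1}, which expresses the orientation of the open moduli in terms of an ordering of boundary tails (anchored at the distinguished point $x$, resp. $x,y$), the complex orientations at internal tails, and the complex orientations at internal nodes. Detaching along the internal separating edge $e$ introduces exactly one new pair of internal half-edges, hence one new complex gluing parameter, and the bookkeeping in \cite[\S 3.1]{TZ1} is set up so that the ambient orientation equals the product orientation wedged with (the complex orientation of) that gluing parameter — which is precisely the co-orientation coming from $\tilde{\mathfrak t}$. I expect the main obstacle to be this sign bookkeeping: keeping careful track of the placement of the new half-edge in the chosen orderings, the position of the contracted closed vertex relative to the anchor point $x$ (resp. $x,y$), and making sure the conventions for $\mathcal O(D)$ and for the normal direction of $\mathcal M_\Delta$ agree, so that the naive comparison produces no spurious sign. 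Once the local model is pinned down, the transversality is immediate and the orientation check is a finite, if delicate, sign computation internal to the conventions of \cite[\S 3.1]{TZ1}.
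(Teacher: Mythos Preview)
Your approach is essentially correct but differs from the paper's in its organization. You argue directly via local coordinates near a generic point of each stratum $\mathcal M_\Delta$, invoking the standard $\pi^*\psi_i = \psi_i - D_i$ comparison and then doing the sign bookkeeping by hand against the orientation conventions of \cite[\S 3.1]{TZ1}. The paper instead inducts on $|S^I|$: the base case $S^I=\{j\}$ is exactly \cite[Lemma~3.43]{PST14}, and for larger $S^I$ one picks $j\in S^I$ and factors
\[
\tilde{\mathfrak t}_{i,\for_{S^I}(\Gamma)\to\Gamma}
=\tilde{\mathfrak t}_{i,\for_{\{j\}}(\Gamma)\to\Gamma}
\otimes
\For_{\Gamma\to\for_{\{j\}}(\Gamma)}^*\,\tilde{\mathfrak t}_{i,\for_{S^I}(\Gamma)\to\for_{\{j\}}(\Gamma)},
\]
so the zero locus splits as $\Mbar_{\Delta^{\{i,j\}}}$ (handled by the base case) together with $\For^{-1}$ of the zero locus one level down (handled by the inductive hypothesis, after passing to a local product $U\times V$ along the fibre of the forgetful map). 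The orientation identity then follows from the last items of \cite[Propositions~3.1 and~3.3]{TZ1} rather than from a fresh sign computation.

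The paper's route is cleaner because the delicate local model and the orientation comparison are entirely absorbed into the cited single-point result, and the inductive step is pure bookkeeping. Your route is more self-contained but has a hidden cost: the ``standard computation'' you invoke is literally the one-forgotten-point case, and when the contracted bubble carries several forgotten markings you would still need to justify first-order vanishing in the single smoothing parameter~$\zeta$ --- this is true, but it is not quite the same local model, and spelling it out carefully is close to redoing the induction by hand. (Minor notational slip: in the statement $I^o$ labels the internal markings on the \emph{open} vertex, not the subset on the closed bubble.)
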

\begin{proof}
 For a subset $T\in I$, we denote by $\Delta^{T}\in \partial {\Gamma}$ the graph with two vertices $v^c$ and $v^c$ connected by a separating edge $e$, such that $H^I(v^c)=T\sqcup \{h_e\}$, where $h_e$ is the half-edge corresponding to $e$ on $v^c$ side.

We assume $S^B=\emptyset$. In the case $S^I=\{j\}$ consists of a single element, $D_{i,\{j\},\Gamma}=\Mbar_{\Delta^{\{i,j\}}}$ is a smooth codimension-two subspaces. According to \cite[Lemma 3.43]{PST14}, $\tilde{\mathfrak t}_{i,{i},\Gamma}$ is a section vanishes transversely $D_{i,\{j\},\Gamma}$, and satisfy the orientation property.

Assuming we have proved the lemma for $\lvert S^I\rvert\le n-1$, we now prove it in the case $S^I$ consists of $n\ge 2$ elements. Assuming $j\in S^I$ is one of them, we write the line bundle $\For_{\Gamma\to \for_{{S^I}}(\Gamma)}^* \mathbb L_{i,\for_{S^I}(\Gamma)}^\vee \otimes \mathbb L_{i,\Gamma}$ as
$$(\For_{\Gamma\to \for_{\{j\}}(\Gamma)}^* \mathbb L_{i,\for_{\{j\}}(\Gamma)}^\vee \otimes \mathbb L_{i,\Gamma})\otimes \For_{\Gamma\to \for_{\{j\}}(\Gamma)}^*(\For_{\for_{\{j\}}(\Gamma)\to \for_{S^I}(\Gamma)}^* \mathbb L_{i,\for_{S^I}(\Gamma)}^\vee \otimes \mathbb L_{i,\for_{\{j\}}(\Gamma)})$$
and we can write (since the forgetful morphisms compose)
$$\tilde{\mathfrak t}_{i,\for_{S^I}(\Gamma)\to\Gamma}= \tilde{\mathfrak t}_{i,\for_{\{j\}}(\Gamma)\to\Gamma}\otimes  \For_{\Gamma\to \for_{\{j\}}(\Gamma)}^* \tilde{\mathfrak t}_{i,\for_{S^I}(\Gamma)\to \for_{\{j\}}(\Gamma)}.$$
The section $\tilde{\mathfrak t}_{i,\for_{\{j\}}(\Gamma)\to\Gamma}$ vanishes at $$D_{i,\{j\},\Gamma}=\Mbar_{\Delta^{\{i,j\}}},$$ 
while $\For_{\Gamma\to \for_{\{j\}}(\Gamma)}^* \tilde{\mathfrak t}_{i,\for_{S^I}(\Gamma)\to \for_{\{j\}}(\Gamma)}$ vanishes at $$\For_{\Gamma \to \for_{\{j\}}(\Gamma)}^{-1}(D_{i,S^I\setminus\{j\},\for_{\{j\}}(\Gamma)})=\bigcup_{T\subseteq S^I, T\ne \{j\}}\Mbar_{\Delta^{T\sqcup\{i\}}}.$$
We need to prove the transversality and property for the induced orientation.  

The transversality and the orientation property at $\Mbar_{\Delta^{\{i,j\}}}$ is guaranteed by \cite[Lemma 3.43]{PST14}.

To prove the transversality and the orientation property at $\Mbar_{\Delta^{{T\sqcup\{i\}}}}$ for $T\ne \{j\}$, locally near $p\in \Mbar_{\Delta^{{T\sqcup\{i\}}}}$, we can take a small neighbourhood $p=(\For_{\{j\}}(p),0)\in U\times V \subset \Mbar_{\Gamma}$, where $U\subset \Mbar_{\for_{\{j\}}(\Gamma)}$ is a neighbourhood of $\For_{\Gamma \to \for_{\{j\}}(\Gamma)}(p)$ in $ \Mbar_{\for_{\{j\}}(\Gamma)}$ and $V$ is a neighbourhood of 0 in the complex plane, where the restriction of $\For_{\Gamma \to \for_{\{j\}}(\Gamma)}$ at $U\times V$ is the projection to $U$. Then locally near $p$, the zero locus of $\For_{\Gamma\to \for_{\{j\}}(\Gamma)}^* \tilde{\mathfrak t}_{i,\for_{S^I}(\Gamma)\to \for_{\{j\}}(\Gamma)}$ is $(\mathcal M_{\for_{\{j\}}(\Delta^{T\sqcup\{i\}})}\cap U)\times V$, and the induced orientation on it is the induced (by $\tilde{\mathfrak t}_{i,\for_{S^I}(\Gamma)\to\for_{\{j\}}(\Gamma)}$) orientation on $\mathcal M_{\for_{\{j\}}(\Delta^{T\sqcup\{i\}})}$ tensored by the canonical complex orientation of $V$. Then the lemma follows from the inductive hypothesis and the last items in \cite[Proposition 3.1 and Proposition 3.3]{TZ1}.

\end{proof}

\begin{rmk}\label{rmk divosr like section}
    For any section $s$ of a line bundle $K\to \Mbar_{\Gamma}$, there is a corresponding section $s \otimes \tilde{\mathfrak t }_{i,\for_{S^I}(\Gamma)\to\Gamma}$ of $K\otimes \mathcal O(D_{i,S^I,\Gamma})\to \Mbar_{\Gamma}$ with additional zero locus at $D_{i,S^I,\Gamma}$.

    On the other hand, if we denote by $\mathcal O(-D_{i,S^I,\Gamma})$ the dual of $\mathcal O(D_{i,S^I,\Gamma})$, then the dual section $\tilde{\mathfrak t }_{i,\for_{S^I}(\Gamma)\to\Gamma}^\vee$ has `poles' at $D_{i,S^I,\Gamma}$. We can construct a section of $\tilde{\mathfrak t' }_{i,\for_{S^I}(\Gamma)\to\Gamma}^\vee$ of $\mathcal O(-D_{i,S^I,\Gamma})$ which vanishes at $D_{i,S^I,\Gamma}$ and coincides with $\tilde{\mathfrak t }_{i,\for_{S^I}(\Gamma)\to\Gamma}^\vee$ outside a small neighbourhood of $D_{i,S^I,\Gamma}$, whose induced orientation on $\mathcal M_{\Delta^{T\sqcup\{i\}}}$ is the reverse of the orientation induced by $\tilde{\mathfrak t }_{i,\for_{S^I}(\Gamma)\to\Gamma}$. 
    
    In fact, when $\lvert S^I\rvert=1$, $D_{i,S^I,\Gamma}$ is a smooth codimension-2 subspace, we can take a small tubular neighbourhood of $D_{i,S^I,\Gamma}$ in $\Mbar_\Gamma$ which is isomorphic to $D_{i,S^I,\Gamma}\times \mathbb D_z$, where $\mathbb D_z$ is a small disk in the complex plane parametrized by $z$, and $\tilde{\mathfrak t }_{i,\for_{S^I}(\Gamma)\to\Gamma}^\vee$ on this tubular neighbourhood is given by $1/z$ under a suitable trivialization. We can take $\tilde{\mathfrak t' }_{i,\for_{S^I}(\Gamma)\to\Gamma}^\vee$ to be $\tilde{\mathfrak t }_{i,\for_{S^I}(\Gamma)\to\Gamma}^\vee$ when $\lvert z\rvert\ge \varepsilon$ for a small $\varepsilon$; when $\lvert z\rvert\le \varepsilon$ we take $\tilde{\mathfrak t' }_{i,\for_{S^I}(\Gamma)\to\Gamma}^\vee$ to be $\bar z/\varepsilon^2$.
    
    For $\lvert S^I\rvert\ge 2$,  assuming we have constructed $\tilde{\mathfrak t'}_{i,\for_{S^I}(\Gamma)\to\for_{\{j\}}(\Gamma)}^\vee$, we can construct $\tilde{\mathfrak t' }_{i,\for_{S^I}(\Gamma)\to\Gamma}^\vee$ inductively on $\lvert S^I\rvert$ by setting
    $$\tilde{\mathfrak t'}_{i,\for_{S^I}(\Gamma)\to\Gamma}^\vee= \tilde{\mathfrak t'}_{i,\for_{\{j\}}(\Gamma)\to\Gamma}^\vee\otimes  \For_{\Gamma\to \for_{\{j\}}(\Gamma)}^* \tilde{\mathfrak t'}_{i,\for_{S^I}(\Gamma)\to\for_{\{j\}}(\Gamma)}^\vee.$$ 
    The transversality and orientation property follows from the same argument as in the proof of the above lemma.
\end{rmk}

\subsection{Topological Recursion Relation multisections}\label{sec def of trr sections}

In this subsection, we construct a specific multisection of $\mathbb L_i\to \Mbar_{\Gamma}$, called \textit{Topological Recursion Relation (TRR) multisections}, for each smooth genus-zero dual graph $\Gamma$ with one or two tail(s) as root(s), and for each genus-one smooth dual graph $\Gamma$. For graded $r$-spin graphs $\Gamma$, TRR multisections of $\mathbb L_i\to \Mbar_{\Gamma}^{1/r}$  are the multisections pulled back from the spinless moduli via the forgetful morphism.

\subsubsection{Genus-zero TRR section with respect to one root}
Let $\Gamma$ be a smooth genus-zero dual graph and $i\in T^I(\Gamma)$ an internal tail, and $j\in T(\Gamma)$ be a internal or boundary tail different from $i$. We regard $j$ as a root of $\Gamma$ and define a section $\mathfrak s_{i,\Gamma,j}$ of $\mathbb L_i \to \Mbar_{\Gamma}$ in the following way.

For a smooth disk $[\Sigma]\in \mathcal M^{}_\Gamma$, we denote by $z_i$ the marking on $\Sigma$ corresponding to $i$, and by $z_j$ the (internal or boundary) marking on $\Sigma$ corresponding to $j$. We identify the preferred half $\Sigma$ with the upper half-plane and set
\begin{equation}\label{eq trr section}
\mathfrak s_{i,\Gamma,j}\left(\Sigma\right) = dz\left.\left(\frac{1}{z - z_j}-\frac{1}{z-\bar{z}_i}\right)\right|_{z = z_i} \in T_{z_i}^*C = T_{z_i}^*(\Sigma\cup \overline{\Sigma}).
\end{equation}
This fibre-by-fibre definition glues to a section of $\mathbb{L}_i\to\mathcal M^{}_\Gamma$. This section is easily seen to extend to a smooth global section $\mathfrak s_{i,\Gamma,j}$ over $\Mbar^{}_\Gamma,$: for a disk $\Sigma$ that is not necessarily smooth,  
let $\varphi_C$ be the unique meromorphic differential on $C=\Sigma\cup \overline{\Sigma}$ with simple poles at $\bar{z}_i$ and $z_j$ and at no other marked points or smooth points, such that the residue equals $1$ at $z_j$ and equals $-1$ at $\bar{z}_i$, and the residues at every pair of half-nodes sum to $0$.
Then $\mathfrak s_{i,\Gamma,j}(\Sigma)$ is the evaluation of $\varphi_C$ at $z_i\in C$. 

If $j$ is a boundary tail, in the simplest case $\Gamma=u_{0,1,1}$, a smooth genus-zero dual graph with only one internal tails $i$ and one boundary tail $j$, the section $\mathfrak s_{i,u_{0,1,1},j}$ is a non-vanishing section of $\mathbb L_i$ over the zero-dimensional moduli space $\Mbar_{u_{0,0,2}}$. For general $\Gamma$, the section $\mathfrak s_{i,\Gamma,j}$ is of the form
$$
\mathfrak s_{i,\Gamma,j}=\tilde{t}_{i,u_{0,1,1}\to \Gamma }\operatorname{For}^*_{\Gamma \to u_{0,1,1}}\mathfrak s_{i,u_{0,1,1},j}, 
$$
where $\operatorname{For}_{\Gamma \to u_{0,1,1}}\colon \Mbar_\Gamma \to \Mbar_{u_{0,1,1}}$ is the forgetful morphism forgetting all the tails except for $i$ and $j$. 

If $j$ is an internal tail, in the simplest case $\Gamma=u_{0,0,2}$, a smooth genus-zero dual graph with only two internal tails $i,j$, the section $\mathfrak s_{i,u_{0,0,2},j}$ is a section of $\mathbb L_i\to \Mbar_{u_{0,0,2}}$ which only vanishes at one endpoint (where the boundary of $\Sigma$ contracts to a contracted boundary node) of the one-dimensional moduli space $\Mbar_{u_{0,0,2}}$. For general $\Gamma$, the section $\mathfrak s_{i,\Gamma,j}$ is of the form
$$
\mathfrak s_{i,\Gamma,j}=\tilde{t}_{i,u_{0,0,2}\to \Gamma }\operatorname{For}^*_{\Gamma \to u_{0,0,2}}\mathfrak s_{i,u_{0,0,2},j}, 
$$
where $\operatorname{For}_{\Gamma \to u_{0,0,2}}\colon \Mbar_\Gamma \to \Mbar_{u_{0,0,2}}$ is the forgetful morphism forgetting all the tails except for $i$ and $j$. 

Moreover, for a graph $\Delta\in \partial \Gamma$ without internal edges, we denote by $v_{i,\Delta}\in V(\Delta)$ the unique vertex containing the tail $i$ and by $\pi_{\Delta \to v_{i,\Delta}}\colon \Mbar_{\Delta}\to \Mbar_{v_{i,\Delta}}$ the projection. If $j\in T(v_{i,\Delta})$, then we have 
$$
\mathfrak s_{i,\Gamma,j}\vert_{\Mbar_{\Delta} \subset \Mbar_{\Gamma}}= \pi^*_{\Delta \to v_{i,\Delta}}\mathfrak s_{i,v_{i,\Delta},j}.
$$
If $j\notin T(v_{i,\Delta})$, we denote by $r_j\in T^B(v_{i,\Delta})$ to boundary half-edge `closest' to $i$, \textit{i.e.,} the tails $i$ and $j$ are on different connected component after we detach the edge of $\Delta$ corresponding to $r_j$. In this case, we have 
$$
\mathfrak s_{i,\Gamma,j}\vert_{\Mbar_{\Delta} \subset \Mbar_{\Gamma}}= \pi^*_{\Delta \to v_{i,\Delta}}\mathfrak s_{i,v_{i,\Delta},r_j}.
$$

When there is no ambiguity about the root, we will simply write $\mathfrak s_{i,\Gamma,j}$ as $\mathfrak s_{i,\Gamma}$.

\subsubsection{Genus-zero TRR section with respect to two roots}
Let $\Gamma$ be a smooth genus-zero dual graph and $i\in T^I(\Gamma)$ be an internal tail, $j,k\in T(\Gamma)$ be two tails different from $i$. We regard $j$ and $k$ as two roots of $\Gamma$ and define a multisection $\mathfrak s_{i,\Gamma,j,k}$ of $\mathbb L_i \to \Mbar_{\Gamma}$ in the following way.

The multisection $\mathfrak s_{i,\Gamma,j,k}$ has two branches, each of them is of weight $1/2$. One branch is $\mathfrak s_{i,\Gamma,j}$, the other one is $\mathfrak s_{i,\Gamma,k}$. The behaviour of  $\mathfrak s_{i,\Gamma,j,k}$ when restricted to $\Mbar_{\Delta}\subset \Mbar_{\Gamma}$ follows the behaviours of its branches.

When there is no ambiguity about the root, we will simply write $\mathfrak s_{i,\Gamma,j,k}$ as $\mathfrak s_{i,\Gamma}$.

\subsubsection{Genus-one TRR multisections}\label{sec basic trr section}
Let $u_{1,0,1}$ be a smooth single-vertex open genus-one graph with one internal tail and no boundary tails, We first construct special multisections $\mathfrak s$, $s^x_{1,0,1}$ and $s^y_{1,0,1}$ of the universal line bundle $\mathbb L\to \Mbar_{u_{1,0,1}} =\Mbar_{1,0,1}$. We parametrize $\mathcal M_{1,0,1}$ by two real numbers $\tau$ and  $h$ such that $0<h<\tau$, the point $(\tau,h)$ represents a cylinder $(C,\Sigma,z_1,\bar z_1)$, where $C=\{z\in \mathbb C\colon 0\le \operatorname{Im} z\le 2\tau\}/<z \sim z+1>$, $\Sigma=\{z\in \mathbb C\colon 0\le \operatorname{Im} z\le \tau\}/<z \sim z+1>$, the mark point $z_1$ and its conjugate $\bar z_i$ are located at $h\sqrt{-1}$ and $-h\sqrt{-1}$. At each point $(\tau,h)$, $dz+d\bar z$ represents a vector in the fibre of $\mathbb L \to \mathcal M_{1,0,1}$ over $(\tau,h)$, which is a cotangent vector dual to the tangent vector `parallel to the boundaries of $\Sigma$'. Hence we can use $dz+d\bar z$ to represent a section of $\mathbb L \to \mathcal M_{1,0,1}$.  We denote by $s^y_{1,0,1}$ a multisection of $\mathbb L \to \mathcal M_{1,0,1}$, which has two branches, one branch is $dz+d\bar z$ with weight $\frac{1}{2}$, the other branch is $-dz-d\bar z$ with weight $\frac{1}{2}$. We also denote by $s^x_{1,0,1}:=\sqrt{-1}\cdot s^y_{1,0,1}$ another multisection of $\mathbb L \to \mathcal M_{1,0,1}$; note that at each fibre, the branches of $s^x_{1,0,1}$ are $\pm \sqrt{-1}\cdot (dz+d\bar z)=\pm (dz-d\bar z)$, which are cotangent vectors dual to the tangent vector `perpendicular to the boundaries of $\Sigma$'.

We consider the extension of  $s^y_{1,0,1}$ (and hence $s^x_{1,0,1}$) to $\Mbar_{1,0,1}$. Note that $\Mbar_{1,0,1}$ has three types of codimension-1 boundaries as shown in Figure \ref{fig basic moduli}.
\begin{enumerate}
    \item Boundary of the form $\Gamma_{sp}$, where $\Gamma_{sp}$ is a graph consisting of two vertices: one genus-one open vertex $v_{sp}^1$ and one genus-zero open vertex $v_{sp}^0$, where $v_{sp}^1$ and $v_{sp}^0$ are connected by an separating edge $e_{sp}$, and the unique marking (which is internal) corresponding to $z_1$ is attached to $v_{sp}^0$.

    The nodal cylinders in $\mathcal M_{\Gamma_{sp}}$ have a genus-one component and a genus-zero component. If we parametrize its genus-zero component by the unit disk in $\mathbb C$, where the marked internal point $z_1$ locates at the origin $0\in \mathbb C$ and the half-node corresponding to $e_{sp}$ locates at $\sqrt{-1}\in \mathbb C$. The limit of $s^y_{1,0,1}$ to $\Mbar_{\Gamma_{sp}}$ has two branches, under the above parametrization, one branch is represented (up to positive rescaling) by $dz-d\bar z$ with weight $\frac{1}{2}$ (\textit{i.e.} dual to the section `pointing' to the half-node), the other branch is represented (up to positive rescaling) by the negative of the previous one, \textit{i.e.} $-dz+d\bar z$ with weight $\frac{1}{2}$.
    
     \item Boundary of the form $\Gamma_{nsp}$, where $\Gamma_{nsp}$ is a graph consisting of one open genus-zero vertex $v_{nsp}$, which is connect to itself via a non-separating edge $e_{nsp}$. On $v_{nsp}$ there are three half-edges in total: two boundary half-edges $h_1$ and $h_2$ of $e_{nsp}$, and an internal half-edge corresponding to the internal marked point.

      The nodal cylinders in $\mathcal M_{\Gamma_{nsp}}$ have one genus-zero component. If we parametrize it by the unit disk in $\mathbb C$, where the marked internal point $z_1$ locates at the origin $0\in \mathbb C$ and the two boundary half-nodes corresponding to $h_1$ and $h_2$ locate at $\alpha,-1/\alpha\in \mathbb C$ for $\lvert \alpha\rvert=1$. The limit of $s^y_{1,0,1}$ to $\Mbar_{\Gamma_{nsp}}$ has two branches, under the above parametrization, one branch is represented (up to positive rescaling) by $dz-d\bar z$ with weight $\frac{1}{2}$ (\textit{i.e.} dual to the tangent vector `pointing' to the angle bisector between directions toward $\alpha$ and $-1/\alpha$), the other branch is represented (up to positive rescaling) by the negative of the previous one, \textit{i.e.} $-dz+d\bar z$ with weight $\frac{1}{2}$.
      
     \item Boundary of the form $\Gamma_{cb}$, where $\Gamma_{cb}$ is a graph consisting of one open genus-zero vertex $v_{cb}$, which is has a contracted boundary half-edge $e_{cb}$. On $v$ there is one internal half-edge corresponding to the internal marked point.

     The nodal cylinders in $\mathcal M_{\Gamma_{cb}}$ have one genus-zero component. If we parametrize it by the unit disk in $\mathbb C$, where the marked internal point $z_1$ locates at the origin $0\in \mathbb C$ and the contracted boundary node corresponding to $e_{cb}$ locates at $\lambda \sqrt{-1} \in \mathbb C$ for $0<\lambda <1$. The limit of $s^y_{1,0,1}$ to $\Mbar_{\Gamma_{cb}}$ has two branches, under the above parametrization, one branch is represented (up to positive rescaling) by $dz-d\bar z$ with weight $\frac{1}{2}$ (\textit{i.e.} dual to the tangent vector `pointing' to the contracted boundary node), the other branch is represented (up to positive rescaling) by the negative of the previous one, \textit{i.e.} $-dz+d\bar z$ with weight $\frac{1}{2}$.
\end{enumerate}

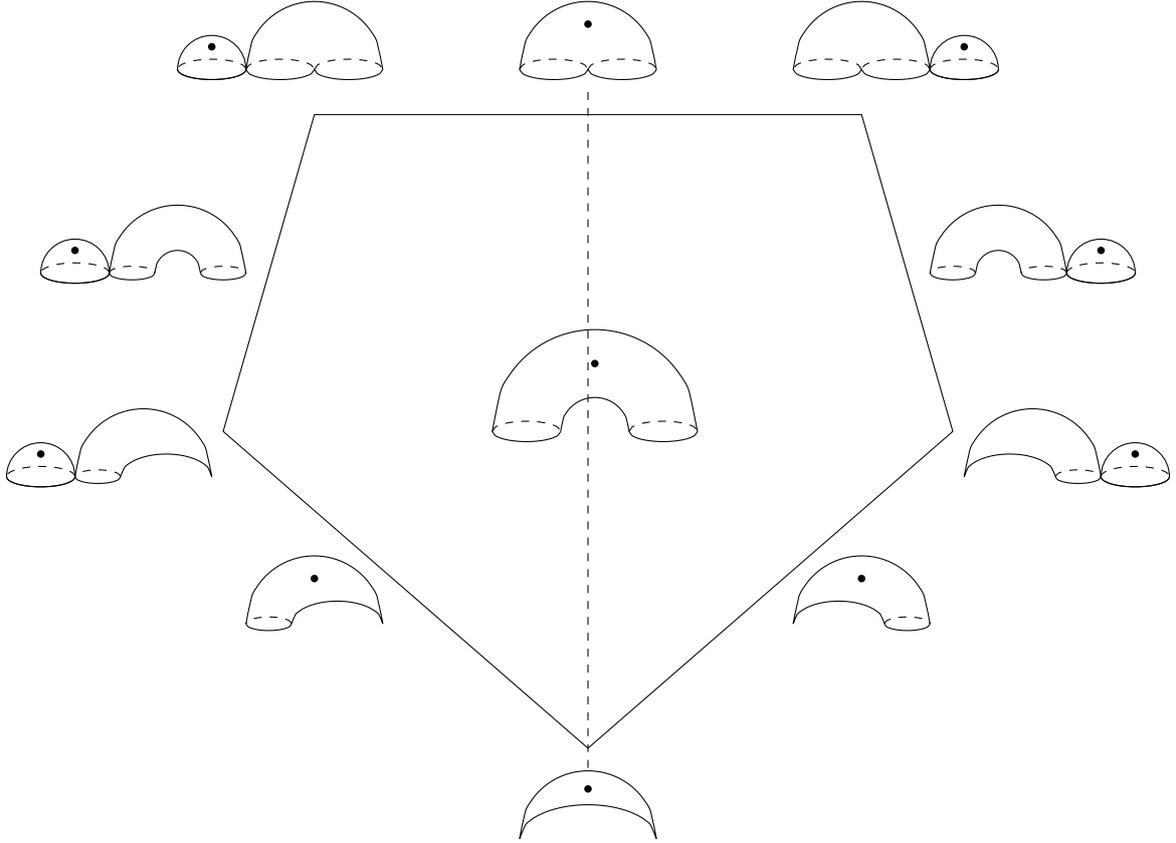
\begin{figure}[h]
\centering

\begin{tikzpicture}[scale=0.3]
   \draw (-12,14)--(12,14)--(16,0)--(0,-14)--(-16,0)--(-12,14);

   \draw[dashed] (0,15) -- (0,-15);

    \begin{scope}[shift={(-18,7)}]
    \draw [] plot [domain=-1:1,  smooth]({\x}, {sqrt(1-\x*\x)}) 
        -- plot [domain=1:3,  smooth]({\x}, {-sqrt(1-(\x-2)*(\x-2))*0.3}) 
        -- plot [domain=3:-3,  smooth ] ({\x}, {sqrt(9-\x*\x)}) 
        -- plot [domain=-3:-1,  smooth]({\x}, {-sqrt(1-(\x+2)*(\x+2))*0.3});

    \draw  [dashed, domain=1:3,  smooth] plot ({\x}, {sqrt(1-(\x-2)*(\x-2))*0.3});

    \draw  [dashed, domain=-3:-1,  smooth] plot ({\x}, {sqrt(1-(\x+2)*(\x+2))*0.3});

    \draw (-6,0) arc (180:360:1.5 and 0.45);
    \draw[dashed] (-3,0) arc (0:180:1.5 and 0.45);
    \draw[] (-6,0) arc (180:360:1.5 and 0.45) arc (0:180:1.5);

    \node at (-4.5,1) [circle,fill,inner sep=1pt]{};
\end{scope}

\begin{scope}[shift={(18,7)},xscale=-1]
    \draw [] plot [domain=-1:1,  smooth]({\x}, {sqrt(1-\x*\x)}) 
        -- plot [domain=1:3,  smooth]({\x}, {-sqrt(1-(\x-2)*(\x-2))*0.3}) 
        -- plot [domain=3:-3,  smooth ] ({\x}, {sqrt(9-\x*\x)}) 
        -- plot [domain=-3:-1,  smooth]({\x}, {-sqrt(1-(\x+2)*(\x+2))*0.3});

    \draw  [dashed, domain=1:3,  smooth] plot ({\x}, {sqrt(1-(\x-2)*(\x-2))*0.3});

    \draw  [dashed, domain=-3:-1,  smooth] plot ({\x}, {sqrt(1-(\x+2)*(\x+2))*0.3});

    \draw (-6,0) arc (180:360:1.5 and 0.45);
    \draw[dashed] (-3,0) arc (0:180:1.5 and 0.45);
    \draw[] (-6,0) arc (180:360:1.5 and 0.45) arc (0:180:1.5);

    \node at (-4.5,1) [circle,fill,inner sep=1pt]{};
\end{scope}

\begin{scope}[shift={(0.3,0)},xscale=1.5,yscale=1.5]
    \draw [] plot [domain=-1:1,  smooth]({\x}, {sqrt(1-\x*\x)}) --plot [domain=1:3,  smooth]({\x}, {-sqrt(1-(\x-2)*(\x-2))*0.3}) -- plot [domain=3:-3,  smooth ] ({\x}, {sqrt(9-\x*\x)})-- plot [domain=-3:-1,  smooth]({\x}, {-sqrt(1-(\x+2)*(\x+2))*0.3});

    \draw  [dashed, domain=1:3,  smooth] plot ({\x}, {sqrt(1-(\x-2)*(\x-2))*0.3});

     \draw  [dashed, domain=-3:-1,  smooth] plot ({\x}, {sqrt(1-(\x+2)*(\x+2))*0.3});

    \node at (0,2) [circle,fill,inner sep=1pt]{};
    
    \end{scope}

    \begin{scope}[shift={(12,-8.5)},xscale=1,yscale=1]

    \draw [] plot [domain=1:3,  smooth]({\x}, {-sqrt(1-(\x-2)*(\x-2))*0.3}) -- plot [domain=3:-3,  smooth ] ({\x}, {sqrt(9-\x*\x)})-- plot [domain=-3:1,  smooth]({\x}, {sqrt(4-(\x+1)*(\x+1))*0.5});

    \draw  [dashed, domain=1:3,  smooth] plot ({\x}, {sqrt(1-(\x-2)*(\x-2))*0.3});

    \node at (0,2) [circle,fill,inner sep=1pt]{};

    \end{scope}

    \begin{scope}[shift={(-12,-8.5)},xscale=-1,yscale=1]

    \draw [] plot [domain=1:3,  smooth]({\x}, {-sqrt(1-(\x-2)*(\x-2))*0.3}) -- plot [domain=3:-3,  smooth ] ({\x}, {sqrt(9-\x*\x)})-- plot [domain=-3:1,  smooth]({\x}, {sqrt(4-(\x+1)*(\x+1))*0.5});

    \draw  [dashed, domain=1:3,  smooth] plot ({\x}, {sqrt(1-(\x-2)*(\x-2))*0.3});

    \node at (-0,2) [circle,fill,inner sep=1pt]{};

    \end{scope}

     \begin{scope}[shift={(0,16)},xscale=1,yscale=1]

    \draw [] plot [domain=0:3,  smooth]({\x}, {-sqrt(2.25-(\x-1.5)*(\x-1.5))*0.3}) -- plot [domain=3:-3,  smooth ] ({\x}, {sqrt(9-\x*\x)})-- plot [domain=-3:0,  smooth]({\x}, {-sqrt(2.25-(\x+1.5)*(\x+1.5))*0.3});

    \draw  [dashed, domain=0:3,  smooth] plot ({\x}, {sqrt(2.25-(\x-1.5)*(\x-1.5))*0.3});

    \draw  [dashed, domain=-3:0,  smooth] plot ({\x}, {sqrt(2.25-(\x+1.5)*(\x+1.5))*0.3});

    \node at (0,2) [circle,fill,inner sep=1pt]{};
  \end{scope}

    \begin{scope}[shift={(0,-18)},xscale=1,yscale=1]

    \draw []  plot [domain=3:-3,  smooth ] ({\x}, {sqrt(9-\x*\x)})-- plot [domain=-3:3,  smooth]({\x}, {sqrt(9-\x*\x)*0.5});

    \node at (0,2.2) [circle,fill,inner sep=1pt]{};

  \end{scope}

  \begin{scope}[shift={(12,16)},xscale=-1,yscale=1]

    \draw [] plot [domain=0:3,  smooth]({\x}, {-sqrt(2.25-(\x-1.5)*(\x-1.5))*0.3}) -- plot [domain=3:-3,  smooth ] ({\x}, {sqrt(9-\x*\x)})-- plot [domain=-3:0,  smooth]({\x}, {-sqrt(2.25-(\x+1.5)*(\x+1.5))*0.3});

    \draw  [dashed, domain=0:3,  smooth] plot ({\x}, {sqrt(2.25-(\x-1.5)*(\x-1.5))*0.3});

    \draw  [dashed, domain=-3:0,  smooth] plot ({\x}, {sqrt(2.25-(\x+1.5)*(\x+1.5))*0.3});

    \draw (-6,0) arc (180:360:1.5 and 0.45);
    \draw[dashed] (-3,0) arc (0:180:1.5 and 0.45);
    \draw[] (-6,0) arc (180:360:1.5 and 0.45) arc (0:180:1.5);

    \node at (-4.5,1) [circle,fill,inner sep=1pt]{};
  \end{scope}

   \begin{scope}[shift={(-12,16)},xscale=1,yscale=1]

    \draw [] plot [domain=0:3,  smooth]({\x}, {-sqrt(2.25-(\x-1.5)*(\x-1.5))*0.3}) -- plot [domain=3:-3,  smooth ] ({\x}, {sqrt(9-\x*\x)})-- plot [domain=-3:0,  smooth]({\x}, {-sqrt(2.25-(\x+1.5)*(\x+1.5))*0.3});

    \draw  [dashed, domain=0:3,  smooth] plot ({\x}, {sqrt(2.25-(\x-1.5)*(\x-1.5))*0.3});

    \draw  [dashed, domain=-3:0,  smooth] plot ({\x}, {sqrt(2.25-(\x+1.5)*(\x+1.5))*0.3});

    \draw (-6,0) arc (180:360:1.5 and 0.45);
    \draw[dashed] (-3,0) arc (0:180:1.5 and 0.45);
    \draw[] (-6,0) arc (180:360:1.5 and 0.45) arc (0:180:1.5);

    \node at (-4.5,1) [circle,fill,inner sep=1pt]{};
  \end{scope}

   \begin{scope}[shift={(19.5,-2)},xscale=1,yscale=1]

    \draw [] plot [domain=1:3,  smooth]({\x}, {-sqrt(1-(\x-2)*(\x-2))*0.3}) -- plot [domain=3:-3,  smooth ] ({\x}, {sqrt(9-\x*\x)})-- plot [domain=-3:1,  smooth]({\x}, {sqrt(4-(\x+1)*(\x+1))*0.5});

    \draw  [dashed, domain=1:3,  smooth] plot ({\x}, {sqrt(1-(\x-2)*(\x-2))*0.3});
````
    \begin{scope}[xscale=-1]
        \draw (-6,0) arc (180:360:1.5 and 0.45);
    \draw[dashed] (-3,0) arc (0:180:1.5 and 0.45);
    \draw[] (-6,0) arc (180:360:1.5 and 0.45) arc (0:180:1.5);

    \node at (-4.5,1) [circle,fill,inner sep=1pt]{};
    \end{scope}
    \end{scope}

    \begin{scope}[shift={(-19.5,-2)},xscale=-1,yscale=1]

    \draw [] plot [domain=1:3,  smooth]({\x}, {-sqrt(1-(\x-2)*(\x-2))*0.3}) -- plot [domain=3:-3,  smooth ] ({\x}, {sqrt(9-\x*\x)})-- plot [domain=-3:1,  smooth]({\x}, {sqrt(4-(\x+1)*(\x+1))*0.5});

    \draw  [dashed, domain=1:3,  smooth] plot ({\x}, {sqrt(1-(\x-2)*(\x-2))*0.3});
````
    \begin{scope}[xscale=-1]
        \draw (-6,0) arc (180:360:1.5 and 0.45);
    \draw[dashed] (-3,0) arc (0:180:1.5 and 0.45);
    \draw[] (-6,0) arc (180:360:1.5 and 0.45) arc (0:180:1.5);

    \node at (-4.5,1) [circle,fill,inner sep=1pt]{};
    \end{scope}
    \end{scope}
   
\end{tikzpicture}

\caption{The double cover of the basic moduli $\Mbar_{1,0,1}$, \textit{i.e.,} the moduli space of pointed cylinders with labelled boundaries. We obtain $\Mbar_{1,0,1}$ after quotient the involution exchanging the label of two boundaries, \textit{i.e.,} the reflection along the vertical dashed axis.}
\label{fig basic moduli}
\end{figure}

The extensions of $s^x_{1,0,1}$ to $\Mbar_{\Gamma_{sp}}$, $\Mbar_{\Gamma_{nsp}}$ and $\Mbar_{\Gamma_{cb}}$ are the extensions of $s^y_{1,0,1}$ with an additional 90-degree rotation. We define a section $\mathfrak{s}$ (up to rescaling by positive function) on the boundaries of $\Mbar_{1,0,1}$  in the following way.
\begin{enumerate}
    \item On $\Mbar_{\Gamma_{sp}}$, if we denote by $i$ the unique internal tail (which is on $v^0_{sp}$) and by $h_0$ the boundary half-edge on $v^0_{sp}$ corresponding to $e_{sp}$, the multisection $\mathfrak s$ is pulled back from the section $\mathfrak s_{i,v^0_{sp},h}$ of $\mathbb L_i\to \Mbar_{v^0_{sp}}$ via the projection $\Mbar_{\Gamma_{sp}}\to \Mbar_{v^0_{sp}}$. Under the parametrization as above, $\mathfrak{s}$ (up to positive rescaling) has  only one branch $dz-d\bar z$ with weight 1, which is the dual to the section `pointing' to the half-node.

    \item On $\Mbar_{\Gamma_{nsp}}$, if we denote by $i$ the unique internal tail and by $h_1,h_2$ the two boundary half-edges on $v_{nsp}$, the multisection $\mathfrak s$ is pulled back from the section $\mathfrak s_{i,v_{nsp},h_1,h_2}$ of $\mathbb L_i\to \Mbar_{v_{nsp}}$ via the (isomorphic) projection $\Mbar_{\Gamma_{nsp}}\to \Mbar_{v_{nsp}}$. Then $\mathfrak{s}$ has two branches, each has weight $\frac{1}{2}$; under the parametrization as above, up to positive rescaling, one of them is $\alpha (dz+d\bar{z})$, the other one is $-\frac{1}{\alpha}(dz+d\bar{z})$, each of them is the dual of the tangent vector `pointing' to one of the boundary half-nodes.
    Notice that the restriction of such $\mathfrak s$ at the corner $\Mbar_{\Gamma_{nsp}}\cap \Mbar_{\Gamma_{sp}}$ is consistent with the $\mathfrak s$ defined on $\Mbar_{\Gamma_{sp}}$ in the previous item. Also notice that $\mathfrak s\vert_{\Mbar_{\Gamma_{nsp}}=\Mbar_{v_{nsp}}}$ constructed in this way is symmetry in the sense that it is invariant under the automorphism $\operatorname{Exc}_{h_1,h_2}\colon\Mbar_{v_{nsp}}\to\Mbar_{v_{nsp}} $ exchanging the two boundary half-edges.
    
    \item On $\Mbar_{\Gamma_{cb}}$, $\mathfrak{s}$ has two branches with weight $\frac{1}{2}$. On the banana-shape corner (see Figure \ref{fig basic moduli}), $\mathfrak{s}$ is the same as the limit of $s^x_{1,0,1}$; on the other corner (\textit{i.e.,} $\Mbar_{\Gamma_{cb}}\cap \Mbar_{\Gamma_{sp}}$), $\mathfrak s$ is already determined by its definition on $\Mbar_{\Gamma_{sp}}$, where the two branches coincide. In between these two corner, notices that the line determined by the two branches of $s^y_{1,0,1}$ divide the cotangent plane at the internal marking into two half-planes, we extend $\mathfrak s$ from the corners to $\mathcal M_{\Gamma_{cb}}$ in a way such that its two branches lie on different half-planes.

\end{enumerate}

At any point in $\Mbar_{1,0,1}$, the line determined by the two branches of $s^y_{1,0,1}$ divide the cotangent plane at the internal marking into two half-planes, and the two branches of $s^x_{1,0,1}$ lie in different half-planes. Moreover, at any point in $\partial \Mbar_{1,0,1}$, the two branches of $\mathfrak s$ either lie on in different half-planes, or they just coincide and lie on the dividing line. Therefore, there is a nowhere-vanishing homotopy between $s^x_{1,0,1}$ and $\mathfrak s$ on $\partial \Mbar_{1,0,1}$, which means $s^x_{1,0,1}$ and $\mathfrak s$ have the same winding number in the boundary $\partial \Mbar_{1,0,1}$.   Notice that $s^x_{1,0,1}$ is nowhere-vanishing in the entire $\Mbar_{1,0,1}$, we can extend $\mathfrak s$ to a nowhere-vanishing multisection $\mathfrak s$ on the entire $\Mbar_{1,0,1}$.

For any smooth single-vertex genus-one graph $u$, assuming $i$ is a internal tail of $u$, let 
$$
\For_{u\to u_{1,0,1}}\colon \Mbar_{u}\to \Mbar_{1,0,1}
$$
be the forgetful morphism forgetting all the half-edges except $i$. We define a multisection $\mathfrak{s}_{u}$ of $\mathbb L_i\to \Mbar_{u}$ by
$$
\mathfrak{s}_{u}:=\tilde{\mathfrak t}_{i,u_{1,0,1}\to u}\otimes \For_{u\to u_{1,0,1}}^*\mathfrak s.
$$
For any $\Gamma \in \partial u$ with out non-separating internal edges, assuming $i\in T^I(v)$ for an open vertex $v \in V^O(\Gamma)$, there are three possibilities.
\begin{enumerate}
    \item $v$ is a genus-one vertex. In this case, $\mathfrak s_{i,u}\vert_{\Mbar_{\Gamma}}$ is pulled back from $\mathfrak s_{i,v}$ via the projection morphism $\pi_{\Gamma\to v}\colon \Mbar_\Gamma \to \Mbar_v$, \textit{i.e.,}
    \begin{equation}\label{eq behaviour of trr genus one}
    \mathfrak s_{i,u}\vert_{\Mbar_{\Gamma}}=\pi^*_{\Gamma\to v} \mathfrak s_{i,v}.
    \end{equation}
    This is because the composition    \begin{equation*}
        \For_{v\to u_{1,0,1}}\circ\pi_{\Gamma\to v}=\For_{u\to u_{1,0,1}}\vert_{\Mbar_\Gamma}
    \end{equation*} 
    implies $\For_{u\to u_{1,0,1}}^*\mathfrak s\vert_{\Mbar_{\Gamma}}=\pi_{\Gamma\to v}^*\For_{v\to u_{1,0,1}}^*\mathfrak s$ and $\tilde{\mathfrak t}_{i,u_{1,0,1}\to u}\vert_{\Mbar_{\Gamma}}=\pi_{\Gamma\to v}^*\tilde{\mathfrak t}_{i,u_{1,0,1}\to v}$.
    \item $v$ is a rooted genus-zero vertex with an boundary root $r_v$. 
    By construction we have
    $$
     \mathfrak s_{i,v,r_v}=\tilde{\mathfrak t}_{i, v^0_{sp}\to v}\otimes \For^*_{v\to v_{sp}^0}\mathfrak s\vert_{v^0_{sp}},
    $$
    where $\For_{v\to v_{sp}^0}\colon \Mbar_v\to \Mbar_{v_{sp}^0}$ is the forgetful morphism forgetting all the half-edges of $v$ except $i$ and the root $j$, then 
    $$
    \For_{v\to v^0_{sp}}\circ\pi_{\Gamma\to v}=\pi_{\Gamma_{sp}\to v^0_{sp}}\circ\For_{u\to u_{1,0,1}}\vert_{\Mbar_\Gamma}
    $$
    implies 
    \begin{equation}\label{eq behaviour of trr 1 rooted}
    \mathfrak s_{i,u}\vert_{\Mbar_{\Gamma}}=\pi^*_{\Gamma \to v}\mathfrak s_{i,v,r_v}.
    \end{equation}
    
    \item $v$ is a 2-rooted genus-zero vertex with boundary roots $r_v^1$ and $r_v^2$.  
    By construction we have
    \begin{equation}\label{eq restriction of trr on nsp}
     \mathfrak s_{i,v,r_v^1,r_v^2}=\tilde{\mathfrak t}_{i, v_{nsp}\to v}\otimes \For^*_{v\to v_{nsp}}\mathfrak s\vert_{\Mbar_{v_{nsp}}},
    \end{equation}
    where $\For_{v\to v_{nsp}}\colon \Mbar_v \to \Mbar_{v_{nsp}}$ is a forgetful morphism forgetting all the half-edges of $v$ except $i$ and the two boundary roots $r_v^1$ and $r_v^2$. Although there are two such forgetful morphisms because both $v$ and $v_{nsp}$ have two boundary roots, \eqref{eq restriction of trr on nsp} holds for both forgetful morphisms since $\mathfrak s\vert_{\Mbar_{v_{nsp}}}$ is invariant under $\operatorname{Exc}_{h_1,h_2}$.
    Then 
    $$
    \For_{v\to v_{nsp}}\circ\pi_{\Gamma\to v}=\pi_{\Gamma_{nsp}\to v_{nsp}}\circ\For_{u\to u_{1,0,1}}\vert_{\Mbar_\Gamma}
    $$
    implies 
    \begin{equation}\label{eq behaviour of trr two rooted}
    \mathfrak s_{i,u}\vert_{\Mbar_{\Gamma}}=\pi^*_{\Gamma \to v}\mathfrak s_{i,v,r_v^1,r_v^2}.
    \end{equation}
\end{enumerate}

\begin{obs}\label{obs vanishing loci of single trr}
    For an open genus-one, single-vertex graph $u$, since $\For_{I\setminus\{i\},\bar B^1 \cup\bar B^2}^*\mathfrak s$ vanishes nowhere, the zero locus of $\mathfrak s_{i,u}$ in $\Mbar_u$ coincides with the zero locus of $\tilde{\mathfrak t}_{i,i,I\setminus\{i\},\bar B^1 \cup\bar B^2,\Gamma}$, which is of the form $\bigcup_{\Gamma\in \mathcal G_i(u)}\Mbar_\Gamma$, where the set $\mathcal G_i(u)\subset \partial u$ consists of graphs with two vertices: one genus-one open vertex $v_{\Gamma,o}$ and one genus-zero closed vertex $v_{\Gamma,c}$, and the internal half-edge labelled by $i$ is attached to $v_{\Gamma,c}$.

    Similarly, for an open genus-zero, single-vertex, rooted graph $u$ with  root $r_u$,  the zero locus of $\mathfrak s_{i,u,r_u}$  in $\Mbar_u$ is of the form $\bigcup_{\Gamma\in \mathcal G_i(u)}\Mbar_\Gamma$, where the set $\mathcal G_i(u)\subset \partial u$ consists of graphs with two vertices: one genus-zero open vertex $v_{\Gamma,o}$ and one genus-zero closed vertex $v_{\Gamma,c}$, and the  internal half-edge labelled by $i$ is attached to $v_{\Gamma,c}$, while the root $r_u$ is attached to $v_{\Gamma,o}$. 
    
    For an open genus-zero, single-vertex, 2-rooted graph $u$ with two boundary roots $r_u^1, r_u^2$, the zero locus of $\mathfrak s_{i,u,r_u^1,r_u^2}$ in $\Mbar_u$ is of the form $\bigcup_{\Gamma\in \mathcal G_i(u)}\Mbar_\Gamma$, where the set $\mathcal G_i(u)\subset \partial u$ consists of graphs with two vertices: one genus-zero open vertex $v_{\Gamma,o}$ and one genus-zero closed vertex $v_{\Gamma,c}$, and the  internal half-edge labelled by $i$ is attached to $v_{\Gamma,c}$, while at least one of the roots $r_u^1$ or $r_u^2$ is attached to $v_{\Gamma,o}$. Note that $\mathfrak s_{i,u,r_u^1,r_u^2}$ has two branches $\mathfrak s_{i,u,r_u^1}$ and $\mathfrak s_{i,u,r_u^2}$ with weight $1/2$, if only one of $r_u^1$ or $r_u^2$ is attached to $v_{\Gamma,o}$, then $\Mbar_\Gamma$ is only the zero locus of one branch $\mathfrak s_{i,u,r_u^1}$ or $\mathfrak s_{i,u,r_u^2}$; if both $r_u^1$ and $r_u^2$ are attached to $v_{\Gamma,o}$, then $\Mbar_\Gamma$ is is the zero locus of both branches.

    \begin{rmk}\label{rmk on vanishing order r}
    When the graph $u$ above is a (spinless) dual graph, the multiplicity of the zero locus in $\Mbar_u$ is one.  When $u$ is graded $r$-spin graded, the multiplicity of the zero locus in $\Mbar^{1/r}_u$ is $r$. This is because there is additional $r$-fold isotropy present in $\Mbar^{1/r}_u$ that was not present in the non-spin case, so the pulled-back section has order of vanishing $r$ in the orbifold sense.
    \end{rmk}

\end{obs}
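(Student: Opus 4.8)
The plan is to reduce the statement to the description, obtained in \S\ref{sec cotangent lines relation}, of the vanishing locus of the comparison morphism $\tilde{\mathfrak t}$, using that the ``basic'' (multi)sections of \S\ref{sec basic trr section} are nowhere-vanishing on their base moduli. Concretely: the value $\mathfrak s_{i,u_{0,1,1},j}$ given by \eqref{eq trr section} is a nonzero cotangent vector over the one-point moduli $\Mbar_{u_{0,1,1}}$ (the two simple poles $z_j,\bar z_i$ of the associated differential are distinct points of $C$); each branch of $\mathfrak s_{i,v_{nsp},h_1,h_2}$ equals $\tilde{\mathfrak t}_{i,u_{0,1,1}\to v_{nsp}}\otimes\For^*\mathfrak s_{i,u_{0,1,1},h_k}$ with $\tilde{\mathfrak t}_{i,u_{0,1,1}\to v_{nsp}}$ an isomorphism (no internal tail is forgotten, cf. Remark \ref{rmk forget boundry point}), so $\mathfrak s_{i,v_{nsp},h_1,h_2}$ vanishes nowhere on $\Mbar_{v_{nsp}}$; and the multisection $\mathfrak s$ on $\Mbar_{u_{1,0,1}}=\Mbar_{1,0,1}$ was built in \S\ref{sec basic trr section} precisely so as to be nowhere-vanishing on all of $\Mbar_{1,0,1}$.

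Next, by the defining formulas of \S\ref{sec basic trr section}, the TRR multisection on a single-vertex graph $u$ is a tensor product $\tilde{\mathfrak t}_{i,u_0\to u}\otimes\For^*_{u\to u_0}\mathfrak s_0$, where $u_0$ is $u_{1,0,1}$ in the genus-one case, $u_{0,1,1}$ (equivalently $v^0_{sp}$) in the one-root case, or $v_{nsp}$ in the two-root case, $\mathfrak s_0$ is the corresponding basic (multi)section, and $\For_{u\to u_0}$ forgets every tail except $i$ and the root(s). Since $\For^*_{u\to u_0}\mathfrak s_0$ is then the pullback of a nowhere-vanishing (multi)section, the zero locus of $\mathfrak s_{i,u}$ (respectively $\mathfrak s_{i,u,r_u}$, $\mathfrak s_{i,u,r_u^1,r_u^2}$) coincides with that of $\tilde{\mathfrak t}_{i,u_0\to u}$, i.e. of $\tilde{\mathfrak t}_{i,\for_{I\setminus\{i\},S^B}(u)\to u}$ for the appropriate set $S^B$ of forgotten boundary tails, which by \S\ref{sec cotangent lines relation} equals $D_{i,I\setminus\{i\},u}=\bigcup_{\Delta\in\mathcal G_{i,I\setminus\{i\},u}}\Mbar_\Delta$; one then sets $\mathcal G_i(u):=\mathcal G_{i,I\setminus\{i\},u}$. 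In the two-root case one argues branch by branch, using that $\mathfrak s_{i,u,r_u^k}$ vanishes exactly on those $\Delta$ for which the root $r_u^k$ lies on the open vertex, which yields the stated alternatives.

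It then remains to read off the combinatorics of $\mathcal G_{i,I\setminus\{i\},u}$ in the single-vertex case from the description in \S\ref{sec cotangent lines relation}: every $\Delta$ there arises from $u$ by un-smoothing a single separating internal edge, hence has exactly two vertices; the one carrying $i$ is forced to be a closed genus-zero vertex $v_{\Gamma,c}$, so the boundary $\partial\Sigma$ lies entirely on the other vertex $v_{\Gamma,o}$, which is therefore open, of genus $1$ in the genus-one case and $0$ otherwise (by additivity of genus), and carries all roots of $u$. For the multiplicity (Remark \ref{rmk on vanishing order r}), Lemma \ref{lem zero locus of t between L} gives that $\tilde{\mathfrak t}$ vanishes transversely on the spinless $\Mbar_u$, hence with multiplicity one; on $\Mbar^{1/r}_u$ the spin-forgetting morphism introduces an order-$r$ isotropy group at the separating node, so a normal coordinate to the zero locus downstairs pulls back to the $r$-th power of the corresponding coordinate upstairs, and the section vanishes to order $r$. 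This being essentially a bookkeeping statement, the only point requiring care is the exact matching of $\mathcal G_{i,S^I,\Gamma}$ with $\mathcal G_i(u)$ and, in the $r$-spin case, the correct accounting of the isotropy so that the order of vanishing comes out to be precisely $r$.
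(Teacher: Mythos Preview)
Your proposal is correct and follows essentially the same approach as the paper's own (very brief) reasoning, which is embedded in the first clause of the observation itself: the basic pulled-back multisection is nowhere-vanishing, so the zero locus of $\mathfrak s_{i,u}$ is exactly that of the comparison morphism $\tilde{\mathfrak t}$, and one reads off the combinatorics from the description of $D_{i,S^I,\Gamma}$ in \S\ref{sec cotangent lines relation}. Your expansion of this into a detailed argument (verifying non-vanishing of the basic sections on $\Mbar_{u_{0,1,1}}$ and $\Mbar_{1,0,1}$, handling the two-root case branch by branch, and spelling out the $r$-fold isotropy for the multiplicity remark) is appropriate and accurate; the paper leaves all of this implicit.
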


\section{Definition of open $r$-spin correlators}\label{sec sections def etc}

    \subsection{Canonical multisections via tame perturbations of TRR}\label{sec def of tame and canonical}
    Let $\Gamma$ be a connected smooth stable graded $r$-spin graph, $I$ be a set of internal markings (not necessarily contained in $T^I(\Gamma)$), and $S\subseteq T^I\setminus I$ a subset of twist zero internal tails.
    Let $\bm{s}_\Gamma$ be a multisection of 
    $$\mathcal W_{\Gamma}\oplus \bigoplus_{i\in I\cap T^I(\Gamma)} \operatorname{For}^*_{\Gamma \to \operatorname{for}_{S}(\Gamma)}{{\mathbb L}}_{i,\operatorname{for}_{S}(\Gamma)}^{\oplus d_i}\to \oPMb_\Gamma,$$ 
    we can write it in the form 
    $$\bm{s}_{\Gamma}=s^{\mathcal W}\oplus\bigoplus_{1\le i \le \lvert I^{d>0}\rvert}\bigoplus_{1\le j \le d_{i}} s^{j}_{i},$$
    where  $s^{\mathcal W}$ is a section of $\mathcal W$ and each $s^{j}_{i}$ is a multisection of $\operatorname{For}^*_{\Gamma \to \operatorname{for}_{S}(\Gamma)}{{\mathbb L}}_{i,\operatorname{for}_{S}(\Gamma)}$. Recall that if $\operatorname{for}_{S}(\Gamma)$ is unstable we formally take ${\mathbb L}_{i,\operatorname{for}_{S}(\Gamma)}$ to be trivial complex line over the single point $\oPMb_{\operatorname{for}_{S}(\Gamma)}$.

    For any $\Delta \in \partial^* \Gamma$, we define a set $S_\Delta \subseteq S$ by 
    $$
    S_{\Delta}:=\{a\in S\colon a\in T^I(u), u\in V(\Delta), \operatorname{for}_{S\cap T^I(u)}(u)\text{ needs to be contracted to stabilize }\operatorname{for}_{S}(\Gamma)\}.
    $$
    We define $\mathcal T_S \Delta$ the graph obtained by forgetting all tails in $S_{\Delta}$, and contract all the unstable vertices repeatedly. Note that if $\operatorname{for}_{S}(\Gamma)$ is stable, then $\mathcal T_S \Delta$ is a stable graph in $\partial \operatorname{for}_{S_\Delta}(\Gamma)$; if $\operatorname{for}_{S}(\Gamma)$ is unstable, then $\mathcal T_S \Delta$ is just the same as $\operatorname{for}_{S}(\Gamma)$. 
    
    \begin{rmk}\label{rmk on forget morphism on strata}
   If $\operatorname{for}_{S}(\Gamma)$ is stable, then $\oPMb_{\mathcal T_S \Delta}\subseteq \oPMb_{\operatorname{for}_{S_\Delta}(\Gamma)}$ is the image of $\oPMb_{\Delta}\subseteq \oPMb_{\Gamma}$ under the forgetful morphism
    $\operatorname{For}_{\Gamma \to \operatorname{for}_{S_\Delta}(\Gamma)}\colon \oPMb_{\Gamma}\to \oPMb_{\operatorname{for}_{S_\Delta}(\Gamma)}$. We denote such restricted forgetful morphism by
    $$
    \operatorname{For}_{\Delta \to {\mathcal T_S \Delta}}\colon \oPMb_{\Delta}\to \oPMb_{\mathcal T_S \Delta}.
    $$
        
    \end{rmk}

    \begin{rmk}\label{rmk on contract after forget}
    In the case $\mathcal T_S \Delta$ is stable, in the contracting procedure, if a vertex $v$ with a tail labelled by $i\in I$ is unstable and needs to be contracted, then the only possibility is that $v$ is closed and has only one other half-edge connecting it to another vertex $v'$. After contracting $v$, the marking $i$ goes to $v'$ and replaces the original half-edge on $v'$. After contracting all the unstable vertices in $\operatorname{for}_{S_{\Delta}}(\Gamma)$ repeatedly,  the tail $i$ is contracted to a unique vertex in $\mathcal T_S \Delta$, denoted by $\chi(i)$,  and stops being unstable. Similarly, the vertices $u\in V(\Delta)$ not being contracted are one-to-one corespondent to the vertices of  $\xi(u) \in \operatorname{for}_{S_{\Delta}}(\Gamma)$, and there is an natural identification between $\oPMb_u$ and $\oPMb_{\xi(u)}$.
    \end{rmk}

    \begin{nn}\label{nn bad notation}
        Let $\Delta$ be a stable graded $r$-spin graph, $F_\Delta\to \oPMb_\Delta$ be a vector bundle, and $F_v\to \oPMb_v$ be vector bundle for each $v\in V(\Delta)$, such that 
        $$
        q^*(\bboxplus_{v\in V(\Delta)}F_v)=\mu^* F_\Delta
        $$
        where $q$ and $\mu$ are the morphisms in \S\ref{subsec decomp}. If a multisecion $s_\Delta$ of $F_\Delta$ and multisections $s_v$ of $F_v$ satisfy
        $$
        \mu^* s_\Delta=q^*(\bboxplus_{v\in V(\Delta)}s_v),
        $$
        we will, by abuse of notation, write
        $$
        s_\Delta=\bboxplus_{v\in V(\Delta)}s_v\quad \text{or} \quad s_\Delta=\bigoplus_{v\in V(\Delta)} \operatorname{For}^*_{\Delta \to v} s_v
        $$
        in the sense that $s_\Delta$ is the gluing of $s_v$.
    \end{nn}

    \begin{dfn}
        In the above notations, for an $r$-spin graph $\Delta \in \partial^*\Gamma$, we say the multisection $\bm{s}_{\Gamma}$ is \textit{decomposable at $\Delta$} if 
        \begin{itemize}
            \item  The restriction $\bm{s}^{\mathcal W}_{\Delta}$ of $\bm{s}^{\mathcal W}$ to $\oPMb_\Delta\subset \oPMb_\Gamma$ is of the form 
    \begin{equation}\label{eq definition of decomposable of Witten}
        \bm{s}^{\mathcal W}_{\Delta}= \bboxplus_{u\in V^O(\Delta)} \bm{s}_u^{\mathcal W}\Ass\bass_{u\in V^C(\Delta)}\bm{s}_u^{\mathcal W},
    \end{equation}
    where for $u\in V^C(\Delta)$, $\bm{s}_u^{\mathcal W}$ is a coherent multisection (see Section \ref{sec ass}) of $\mathcal W_u\to \overline{\mathcal R}_u$ such that $\overline{\bm{s}_u^{\mathcal W}}$ is a multisection of $\mathcal W_u\to \Mbar_u=\oPMb_u$; for $u\in V^O(\Delta)$, $\bm{s}_u^{\mathcal W}$ is a multisection of $\mathcal W_u\to \oPMb_u$. Recall that, unless $u$ is closed and has an anchor of twist $-1$, we have $\overline{\mathcal R}_u=\Mbar_u$ and $\overline{\bm{s}_u^{\mathcal W}}=\bm{s}_u^{\mathcal W}$ is a  multisection of $\mathcal W_u\to \oPMb_u$. 
            
    \item
    For each $s^j_i$, its restriction $s^j_{i,\Delta}$ to $\oPMb_\Delta$, is of the form
    \begin{equation}\label{eq definition of decomposable of section of Li}
        s^j_{i,\Delta}=\operatorname{For}^*_{\Delta\to \mathcal T_S \Delta}\pi^*_{\mathcal T_S \Delta \to u^{\mathcal T_S \Delta}_{i}} \,\,s^j_{i,u^{\mathcal T_S \Delta}_{i}},
    \end{equation}
    under the natural identification between $$\operatorname{For}^*_{\Gamma \to \operatorname{for}_{S}(\Gamma)}{{\mathbb L}}_{i,\operatorname{for}_{S}(\Gamma)}\big\vert_{\oPMb_{\Delta}}$$ 
    and 
    $$\operatorname{For}^*_{\Delta\to \mathcal T_S \Delta}\pi^*_{\mathcal T_S \Delta \to u^{\mathcal T_S \Delta}_{i}}\operatorname{For}^*_{u^{\mathcal T_S \Delta}_{i} \to \operatorname{for}_{S\cap T^I(u^{\mathcal T_S \Delta}_{i})}(u^{\mathcal T_S \Delta}_{i})}\mathbb L_{i,\operatorname{for}_{S\cap T^I(u^{\mathcal T_S \Delta}_{i})}(u^{\mathcal T_S \Delta}_{i})},$$ 
    where $\operatorname{For}^*_{\Delta\to \mathcal T_S \Delta}\colon \oPMb_{\Delta}\to \oPMb_{\mathcal T_S \Delta}$ is the forgetful morphism, $u^{\mathcal T_S \Delta}_{i}$ is the unique vertex of $\mathcal T_S \Delta$ containing the internal marking $i\in I$, 
    and $s^j_{i,u^{\mathcal T_S\Delta}_{i}}$ is a multisection of
    $$\operatorname{For}^*_{u^{\mathcal T_S \Delta}_{i} \to \operatorname{for}_{S\cap T^I(u^{\mathcal T_S \Delta}_{i})}(u^{\mathcal T_S \Delta}_{i})}\mathbb L_{i,\operatorname{for}_{S\cap T^I(u^{\mathcal T_S \Delta}_{i})}(u^{\mathcal T_S \Delta}_{i})}\to \oPMb_{u^{\mathcal T_S \Delta}_{i}}.$$

    \end{itemize}
    Furthermore, we say  $\bm{s}_{\Gamma}$ is \textit{transversely decomposable at $\Delta$} if for each vertices $v\in \mathcal T_S \Delta$, 
    $$
    s_v:=\overline{\bm{s}_{\xi^{-1}(v)}^{\mathcal W}} \oplus \bigoplus_{i\in I \cap T^I(v)}\bigoplus_{1\le j \le d_i} s^j_{i,v}
    $$
    is transverse to zero as a multisection of 
    $$\mathcal W_{v}\oplus \bigoplus_{i\in I \cap T^I(v)}\mathbb L_i^{\oplus d_i}\to \oPMb_v\cong \oPMb_{\xi^{-1}(v)}.$$
    
    \end{dfn}

    \begin{rmk}
        Intuitively speaking, if we ignore the subtlety caused by the assembling operator, \textit{i.e.,} if we pretend the Witten bundles decompose as direct sum, then $s$ is decomposable at $\Delta$ means we can write
        $$
        s_\Delta=\operatorname{For}^*_{\Delta \to  \mathcal T_S \Delta}\bboxplus_{v\in \mathcal T_S \Delta} s_v,
        $$
        and  $s$ is transversely decomposable at $\Delta$ means every $s_v$ is transverse to zero.
        
    \end{rmk}

    In the case $S=\emptyset$, we always have $\mathcal T_S \Delta=\Delta$. In this case, 
    let $I^{d>0}_{\Gamma}:=\{i\in T^I(\Gamma)\colon d_i\ge 1\}$ be a subset of $T^I(\Gamma)$, let $\gamma\colon \{1,2,\dots, \lvert I_{\Gamma}^{d>0}\rvert\}\to I_{\Gamma}^{d>0}$ be a bijection, we regard it as an order (also denoted by $\le_{\gamma}$) on the set $I^{d>0}_{\Gamma}$.
    To simplify the notation, we refer to the multisections 
$$s_{\gamma(1)}^1, s_{\gamma(1)}^2, \dots, s_{\gamma(1)}^{d_{\gamma(1)}}, s_{\gamma(2)}^1, \dots, s_{\gamma(2)}^{d_{\gamma(2)}}, \dots, s_{\gamma(\lvert I^{d>0}\rvert)}^1, \dots, s_{\gamma(\lvert I^{d>0}\rvert)}^{d_{\gamma(\lvert I^{d>0}\rvert)}}$$
as 
$$t_1,t_2,\dots,t_{D},$$
where $D=\sum_{i\in I} d_i$, and we set $t_0:=\bm{s}^{\mathcal W}$. We also set 
$T_s:=\bigoplus_{m=0}^s t_m.$ We define a map $\nu \colon \{1,2,\dots, D\} \to  I_{\Gamma}^{d>0}$ in the way that $t_m$ is a multisection of $\mathbb L_{\nu(m)}$. If $t_m=s^j_{\nu(m)}$ is decomposable at $\Delta$, for any $u\in V(\Delta)$, we set $t_{m,u}=s^j_{\nu(m),u}$ if $u$ is the unique vertex of $\Delta$ such that $\nu(m)\in T^I(u)$, and formally set $t_{m,u}=0$ as a section of the rank-zero bundle if $\nu(m)\notin T^I(u)$.
    \begin{dfn}\label{def tame perturbation}
    With the above notations, for a smooth stable genus-one $r$-spin graph $\Gamma$, in the case $S=\emptyset\subseteq T^I\setminus I$, we say $\bm{s}_{\Gamma}$ is \textit{$\gamma$-tame perturbation of TRR} if 
    \begin{enumerate}
        \item For all $\Delta\in \partial^* \Gamma$, $\bm{s}_{\Gamma}$ is  decomposable at $\Delta$.
        \item For all $\Delta \in \partial^* \Gamma$ and all $u\in V(\Delta)$ and $0\le j \le D$, the multisection 
        $${T_{j,u}}:=\overline{\bm{s}^{\mathcal W}_u} \oplus t_{1,u}\oplus t_{2,u}\oplus\dots \oplus t_{j,u}$$ 
        of 
        $$E_{j,u}:=\mathcal W_{u}\oplus \bigoplus_{i\in T^I(u)\cap \nu(\{1,2,\dots,j\})} \mathbb L_i\to \oPMb_u$$ is transverse to zero.
    
    \item \label{item def tame K}
    For all open vertex $u\in V(\Delta)$ as above, if $\nu(j)\in T^I(u)$, there exists an open neighbourhood $K_{j,u}$  of the zero locus (see Observation \ref{obs vanishing loci of single trr}) $\left( \bigcup_{\Lambda\in \mathcal G_{\nu(j)}(u)}\Mbar_\Lambda\right)\cap \oPMb_u$ of $\mathfrak s_{\nu(j),u}$ in $\oPMb_u$, such that for any $p\in\oPMb_u\setminus K_{j,u}$, we have $t_{j,u}(p)=\mathfrak s_{\nu(j),u}(p)$, where $\mathfrak s_{\nu(j),u}$ is the TRR multisection of $\mathbb L_{\nu(j)}\to \oPMb_u$ (with respect to the rooted or 2-rooted structure of $u$ if $u$ is genus-zero) defined in \S \ref{sec def of trr sections}.

     \item \label{item def tame non vanish in K rk>dim}
    If $\rk  E_{s,u}> \dim \oPMb_u$ for an open vertex $u\in V(\Delta)$ and $\nu(s)\in T^I(u)$, then $T_{s-1,u}$ vanishes nowhere on $K_{s,u}\subseteq\oPMb_u$.

     \item \label{item def tame non vanish in K}
    If $\rk  E_{s,u}= \dim \oPMb_u$ for an open genus-one vertex $u\in V(\Delta)$ and $\nu(s)\in T^I(u)$, then $T_{s-1,u}$ vanishes nowhere on $K_{s,u}\subseteq\oPMb_u$.
   
    \end{enumerate}
\end{dfn}

\begin{rmk} \label{rmk on restriction of tame perturbation on g=1 vertex}
    Note that, assuming $\bm{s}_{\Gamma}$ is a $\gamma$-tame perturbation of TRR, for any $\Delta \in \partial^* \Gamma$, if $u\in V(\Delta)$ is genus-one, then for any $0\le j \le D$ the multisection $T_{j,u}$ is a $\gamma\vert_{u,j}$-tame perturbation of TRR, where $\gamma\vert_{u,j}$ is the order induced by $\gamma$ on the set $\{i\in I_{\Gamma}^{d>0} \cap T^I(u)\colon i\le_{\gamma} \nu(j)\}\subseteq I_{\Gamma}^{d>0}$.
\end{rmk}

\begin{rmk}\label{rmk on non vanish tame rk >= dim}
    For any open vertex $u$ with  $\rk  E_{s,u}> \dim \oPMb_u$,  $T_{s,u}$ vanishes nowhere on $\oPMb_u$ by transversality. Moreover, as a corollary of item \ref{item def tame K} and \ref{item def tame non vanish in K}, if $\rk  E_{s,u}= \dim \oPMb_u$ for an open genus-one vertex $u\in V(\Gamma)$, then $T_{s,u}$ also vanishes nowhere on $\oPMb_u$. 
\end{rmk}

\begin{dfn}\label{dfn canonical section}
 For $g=0$ or $1$, let $I,B$ be the sets of internal and boundary markings such that $\lvert B \rvert +2 \lvert I \rvert\ge 3-2g$, we consider the vector bundle
$$E:=\mathcal W\oplus \bigoplus_{i\in I} \mathbb L_i^{\oplus d_i}\to \oPMb^{1/r,0}_{g,B,I}.$$
Let $\gamma$ be an order on the set $I^{d>0}:=\{i\in I\colon d_i\ge 1\}$. 
For a multisection $\bm{s}$ of $E$,  we write $\bm{s}=s^{\mathcal W}\oplus\bigoplus_{i\in I}\bigoplus_{1\le t \le d_i}s_i^t,$ where $s^{\mathcal W}$ is a multisection of $\mathcal W\to \oPMb^{1/r,0}_{g,B,I}$, and each $s_i^t$ is a multisection of $\mathbb L_i\to \oPMb^{1/r,0}_{g,B,I}$.
     We call $s^{\mathcal W}$ and $\bigoplus_{i\in I}\bigoplus_{1\le t \le d_i}s_i^t$ the $\mathcal W$-part and $\mathbb L$-part of $\bm{s}$ respectively.
     We say the multisection $\bm{s}$ is \textit{$\gamma$-canonical} if the following conditions hold.
     \begin{enumerate}
         \item\label{item canonical glue} The multisection $\bm{s}$ glues to a multisection of 
         $$\widetilde{E}:=\widetilde{\mathcal W}\oplus \bigoplus_{i\in I} \widetilde{\mathbb L}_i^{\oplus d_i}\to \widetilde{\mathcal {PM}}^{1/r,0}_{g,B,I},$$
         \textit{i.e.,} the restrictions of $\bm{s}$ to the type-AI boundaries of $\oPMb^{1/r,0}_{g,B,I}$ are identified with the restrictions of $\bm{s}$ to the corresponding type-BI boundaries of $\oPMb^{1/r,0}_{g,B,I}$ under $\sim_{PI}$.
         \item\label{item canonical decompose} We denote by $\bm{s}_{\bm{G}}$ the restriction of $\bm{s}$ to $\oPMb_{\bm{G}}\subseteq  \oPMb^{1/r,0}_{1,B,I}$ for  $\bm{G}\in \sGPI^{r,0}_{1,B,I}$, we require that 
         \begin{equation}\label{eq canonical on each connected component}
            \bm{s}_{\bm{G}}=\bboxplus_{\Gamma\in V(\bm{G})} \bm{s}_{\Gamma},
         \end{equation}
         where $\bm{s}_{\Gamma}=s_{\Gamma}^{\mathcal W}  \oplus \bigoplus_{i\in I\cap T^I(\Gamma)}\bigoplus_{1\le t \le d_i}s_{i,\Gamma}^t$ is a transverse multisection of the vector bundle
         $$E_{\Gamma}:=\mathcal W_{\Gamma}\oplus \bigoplus_{i\in I\cap T^I(\Gamma)} \operatorname{For}^*_{\Gamma\to \mathcal B \Gamma}\mathbb L_{i,\mathcal B \Gamma}^{\oplus d_i}\to \oPMb_\Gamma.$$

        \item \label{item canonical forget g=1} In the case $\bm{G}$ is genus-one (\textit{i.e.}, $g=1$), for any $\Gamma\in V(\bm G)$, we denote by $\hat{\mathcal B }\Gamma$ the graph obtained by forgetting all the internal tails of $\Gamma$ in $I^{PI}_{sp,1}(\mathbf{G})\cap T^I(\Gamma)\subset T^I(I)$. 
        Notice that all the tails in $I^{PI}_{sp,1}$ have twist zero, therefore  $\hat{\mathcal B }\Gamma$ is still a $r$-spin graph, and it is always stable when $T(\Gamma)\setminus RT(\Gamma) \ne \emptyset$.
        We denote by $\operatorname{For}_{\Gamma \to \hat{\mathcal B }\Gamma}\colon \oPMb_{\Gamma} \to \oPMb_{\hat{\mathcal B }\Gamma}$ the forgetful morphism. We require that $\bm{s}_{\Gamma}^{}$ is of the form 
        \begin{equation}\label{eq canonical only see roots}
        \bm{s}_{\Gamma}^{}=\operatorname{For}^*_{\Gamma \to \hat{\mathcal B }\Gamma} \bm{s}_{\hat{\mathcal B }\Gamma}^{}
        \end{equation}
        under the natural identification between $\operatorname{For}_{\Gamma \to \hat{\mathcal B }\Gamma} E_{\hat{\mathcal B }\Gamma}$ and $E_\Gamma$ (when $\hat{\mathcal B }\Gamma$ unstable these bundles are rank-zero), where $\bm{s}_{\hat{\mathcal B }\Gamma}^{}$ is a transverse multisection of the vector bundle
        $$E_{\hat{\mathcal B }\Gamma}:=\mathcal W_{\hat{\mathcal B}\Gamma}\oplus \bigoplus_{i\in I\cap T^I(\Gamma)} \operatorname{For}^*_{\hat{\mathcal B}\Gamma\to \mathcal B \Gamma}\mathbb L_{i,\mathcal B \Gamma}^{\oplus d_i}\to \oPMb_{\hat{\mathcal B }\Gamma}.$$

        \item \label{item canonical transversely decompose}
         When $\hat{\mathcal B}\Gamma$ is stable, we can decompose $\bm{s}_{\hat{\mathcal B }\Gamma}^{}$ with respect to the order by writing
         $$
         \bm{s}_{\hat{\mathcal B }\Gamma}^{}=t_{0,\hat{\mathcal B }\Gamma}\oplus t_{1,\hat{\mathcal B }\Gamma}\oplus\dots \oplus t_{D,\hat{\mathcal B }\Gamma},
         $$
            where $t_{0,\hat{\mathcal B }\Gamma}$ is a multisection of $\mathcal W_{\hat{\mathcal B }\Gamma}\to \oPMb_{\hat{\mathcal B }\Gamma}$, and for $1\le j \le D$, $t_{j,\hat{\mathcal B }\Gamma}$ is a multisection of $\operatorname{For}^*_{\hat{\mathcal B }\Gamma\to {\mathcal B }\Gamma}\mathbb L_{\nu(j),{\mathcal B }\Gamma}\to \oPMb_{\hat{\mathcal B }\Gamma}$ which pulls back to $t_j$ if $\nu(j)\in T^I(\Gamma)$, and is just a formal section of rank-zero bundle otherwise.
             Then, for any $\Delta\in \partial^* \hat{\mathcal B}\Gamma$ and any $0\le s \le D$, the multisection $$\bigoplus_{j=0}^s t_{j,\hat{\mathcal B }\Gamma}$$ is  transversely decomposable at $\Delta$. 
         
         \item \label{item canonical positive} If $\Gamma\in V(\bm G)$ is genus-zero, then the $\mathcal W$-part $\bm{s}^{\mathcal W}_\Gamma$ of section $\bm{s}_\Gamma$, as a multisection of $\mathcal W_\Gamma \to \oPMb_\Gamma$, is positive (see Definition \ref{def positive section}).
         
         \item \label{item canonical tame g=1} If $\Gamma\in V(\bm G)$ is genus-one and $\hat{\mathcal B}\Gamma$ is stable, then the multisection $\bm{s}_{\hat{\mathcal B} \Gamma}$  is a $\gamma\vert_{\hat{\mathcal B} \Gamma}$-tame perturbation of TRR section, where  $\gamma\vert_{\hat{\mathcal B} \Gamma}$ is the order on $I^{d>0}_{\hat{\mathcal B} \Gamma}=T^I(\hat{\mathcal B} \Gamma)\cap I^{d>0}\subseteq I^{d>0}$ induced by $\gamma$.
         Note that in this case we have ${\hat{\mathcal B} \Gamma}={{\mathcal B} \Gamma}$.
     \end{enumerate}
\end{dfn}

\begin{prop}\label{prop exist canonical section}
    With the above notations, for $g=0$ or $1$, for any order $\gamma$ on $I^{d>0}$, there exist at least one $\gamma$-canonical multisection of $E\to \oPMb_{g,B,I}^{1/r}$.
\end{prop}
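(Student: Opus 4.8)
The plan is to construct the $\gamma$-canonical multisection by induction on the complexity of the moduli spaces involved, building it stratum by stratum and using the point-insertion structure to glue across the spurious boundaries. I would organize the induction on pairs $(|B|,|I|)$ (with an appropriate secondary parameter such as $\sum_i d_i$), so that when working on $\oPMb^{1/r,0}_{g,B,I}$ I may assume that $\gamma'$-canonical multisections already exist on all ``smaller'' moduli spaces, in particular on the moduli spaces attached to the vertices of every $(r,0)$-graph $\bm{G}$ and of every boundary graph $\Delta$. The genus-zero case is essentially the content of \cite[Proposition 3.20]{BCT2} combined with the assembling operator and the point-insertion gluing of \cite{TZ1}, so the real work is in the genus-one case, and I would treat $g=1$ in detail, reducing at the end to $g=0$ inputs wherever a vertex of $\bm{G}$ is genus zero.

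The core of the construction is the single genus-one vertex. So the first step is: for each genus-one smooth graded $r$-spin graph $\Gamma$ (with $T(\Gamma)\setminus RT(\Gamma)\neq\emptyset$, so that $\hat{\mathcal B}\Gamma$ is stable), construct a $\gamma|_{\hat{\mathcal B}\Gamma}$-tame perturbation of TRR on $E_{\hat{\mathcal B}\Gamma}\to\oPMb_{\hat{\mathcal B}\Gamma}$. Here I would build the multisection component-by-component following the order $\gamma$: start with a transverse positive multisection $t_0 = \bm{s}^{\mathcal W}$ of the Witten bundle (existence of such positive sections in $g=1$ requires care, but on $\hat{\mathcal B}\Gamma$ the relevant boundaries are handled by the point-insertion reduction so that we only need positivity near $\partial^+$, which by genus-zero inputs and the assembling operator can be arranged), then successively add the $\mathbb{L}$-sections $t_1,\dots,t_D$. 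Each $t_j$ is required, by Definition \ref{def tame perturbation}(\ref{item def tame K}), to agree with the TRR multisection $\mathfrak s_{\nu(j),u}$ of \S\ref{sec basic trr section} outside a neighbourhood $K_{j,u}$ of its zero locus, and inside $K_{j,u}$ we have freedom to perturb to achieve transversality of $T_{j,u}$ for every vertex $u$ of every $\Delta\in\partial^*\Gamma$ simultaneously. The key point enabling this is the explicit description of the zero loci in Observation \ref{obs vanishing loci of single trr}: the vanishing locus of $\mathfrak s_{\nu(j),u}$ is a union of $\Mbar_\Lambda$ where $i$ sits on a \emph{closed} genus-zero vertex, so $T_{j-1,u}$ restricted to $K_{j,u}$ is governed by lower-dimensional (closed) data, and items (\ref{item def tame non vanish in K rk>dim})–(\ref{item def tame non vanish in K}) — non-vanishing of $T_{s-1,u}$ on $K_{s,u}$ when $\rk E_{s,u}\geq\dim\oPMb_u$ — follow by a dimension count from the dimension-versus-rank relation \eqref{eq:dim_equal_rk}, since adding the class $\psi_{\nu(s)}$ bumps the rank past the dimension exactly on those strata. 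The transverse decomposability condition (\ref{item canonical transversely decompose}) is then a matter of choosing the perturbations on $K_{j,u}$ compatibly with the restriction maps to boundary strata, which one arranges by building first on deepest strata and extending outward, using the normal-crossing ``divisor-like'' structure of $D_{i,S^I,\Gamma}$ and the sections $\tilde{\mathfrak t'}$ from Remark \ref{rmk divosr like section} to interpolate.

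Next I would assemble these vertex-level multisections into a global one. Given the constructions on each vertex graph, for every $\bm{G}\in\sGPI^{r,0}_{1,B,I}$ I set $\bm{s}_{\bm{G}}:=\bboxplus_{\Gamma\in V(\bm{G})}\bm{s}_\Gamma$ using the assembling operator at separating internal edges (\S\ref{sec ass}) and direct sum at NS boundary edges — this makes (\ref{item canonical glue})–(\ref{item canonical decompose}) and (\ref{item canonical transversely decompose}) hold by construction, with transversality of the glued section following from Lemma \ref{lem zero of assembling} applied component-wise. The pullback condition (\ref{item canonical forget g=1}) is imposed by \emph{defining} $\bm{s}_\Gamma$ to be $\operatorname{For}^*_{\Gamma\to\hat{\mathcal B}\Gamma}$ of the section already built on $\hat{\mathcal B}\Gamma$; one has to check this is consistent with the gluing, which reduces to compatibility of the forgetful morphisms with the boundary maps, a bookkeeping exercise. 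Finally, condition (\ref{item canonical glue}) — that the pieces glue across the paired type-AI and type-BI boundaries — is the step where the point-insertion machinery does the work: by Theorem \ref{thm PI boundaries paried} the relevant boundaries are canonically diffeomorphic with their bundles identified, so I would build the multisection on the type-BI side and \emph{transport} it to the type-AI side via this identification, which is consistent precisely because the $\h=0$ point insertion only glues along NS boundary nodes with twist-$0$ illegal half-node, i.e. exactly the strata that do not carry $\mathbb{L}$-perturbation freedom conflicts. The main obstacle, and where I expect to spend the most effort, is the simultaneous transversality over \emph{all} boundary strata $\Delta\in\partial^*\Gamma$ at once while respecting the rigid prescriptions of Definition \ref{def tame perturbation}(\ref{item def tame K}): one cannot freely perturb, since outside the $K_{j,u}$ the section is forced to equal the TRR multisection, so the argument must verify that the forced part is already transverse there (this is where Lemma \ref{lem zero locus of t between L} and the orientation/transversality statements about $\tilde{\mathfrak t}$ enter) and that the residual freedom inside the $K_{j,u}$ is genuinely enough — an inductive-on-the-order Thom–Smale transversality argument carried out in the multisection setting of \cite[Appendix A]{BCT2}, compatibly with all restriction-to-boundary constraints.
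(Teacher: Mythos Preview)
Your overall architecture is essentially the same as the paper's: build a $\gamma$-tame perturbation vertex by vertex, then assemble across $(r,0)$-graphs and glue along the spurious boundaries. The $\mathbb L$-part of your sketch (build $t_j$ in $\gamma$-order, keep it equal to the TRR multisection outside $K_{j,u}$, perturb inside for transversality, and use a dimension count to obtain items (\ref{item def tame non vanish in K rk>dim})--(\ref{item def tame non vanish in K})) matches \S\ref{sec construction of tame L part} closely; the paper makes the ``dimension count'' precise via the short lemma in Step~2 there, and builds the neighbourhoods $K_{j,u}$ recursively (your item~(c) analogue) rather than all at once.

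There is, however, one genuine misconception in your $\mathcal W$-part. You write that you would ``start with a transverse \emph{positive} multisection $t_0=\bm{s}^{\mathcal W}$'' and argue that positivity near $\partial^+$ can be arranged in $g=1$ via genus-zero inputs and the assembling operator. But Definition~\ref{dfn canonical section} does \emph{not} impose positivity on genus-one vertices --- item~(\ref{item canonical positive}) applies only when $\Gamma$ is genus zero --- and, as the introduction warns, the positivity of sections of the Witten bundle ``surprisingly breaks in $g=1$''. If you try to impose it you may be unable to satisfy it. The paper's construction of $\bm{s}^{\mathcal W}_{\mathcal B u}$ for genus-one open $u$ (\S\ref{sec construction of tame W part}) simply extends the already-constructed genus-zero pieces to a \emph{transverse} multisection via Lemma~\ref{lem extend section from sub}, with no positivity at all; the boundary behaviour for genus-one components is instead controlled entirely by the tame-perturbation structure of the $\mathbb L$-sections (which is why at least one $\psi$-class is needed). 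So your $\mathcal W$-step should be: use BCT-positive sections on genus-zero vertices, then for genus-one vertices just extend transversely by induction on $\dim\oPMb_{\mathcal B u}$.

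A smaller organizational point: the paper inducts not on $(|B|,|I|)$ but on the dimension of the vertex moduli $\oPMb_{\hat{\mathcal B}u}$ (over the collection $\mathcal V^{root}_{1,B,I}$ of all possible vertices). This is what makes the recursion close cleanly, since vertices of a boundary graph $\Delta$ or of an $(r,0)$-graph $\bm{G}$ can carry extra inserted or half-edge markings not in $I\cup B$, so a naive induction on $(|B|,|I|)$ does not obviously cover them.
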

We postpone the construction of $\gamma$-canonical multisections in the $g=1$ case to \S \ref{sec construction of section}. A construction in the $g=0$ case can be found in Proposition \ref{prop compare with BCT in g=0}.

\begin{obs} \label{obs non vanish when exist g=1 component}
    Let $\bm{s}$ be a $\gamma$-canonical multisection of $E$ in the case $\rk E=\dim\oPMb_{g,B,I}^{1/r}$, then $\bm{s}$ vanishes nowhere on $\oPMb_{\bm{G}}$ for $\bm{G}\in \sGPI^{r,0}_{g,B,I}$ unless for any $\Gamma \in V(\bm{G})$ we have 
    $\hat{\mathcal B}\Gamma=\Gamma$ and $\rk E_{\Gamma}=\dim\Mbar_{\Gamma}$. This follows from the transversality of $\bm{s}_{\hat{\mathcal B}\Gamma}$, and the equations 
    $$\sum_{\Gamma \in V(\bm{G})} \rk E_\Gamma = \rk E, \quad \sum_{\Gamma \in V(\bm{G})} \dim \oPMb_\Gamma=\dim \Mbar_{g,B,I}^{1/r},$$
    $$ \rk E_{\hat{\mathcal B}\Gamma}=\rk E_\Gamma, \text{ and } \dim \oPMb_\Gamma \ge \dim \oPMb_{\hat{\mathcal B}\Gamma}.$$
    In particular,  as a result of Remark \ref{rmk on non vanish tame rk >= dim} and item \ref{item canonical tame g=1} in Definition \ref{dfn canonical section}, $\bm{s}$ vanishes nowhere on $\oPMb_{\bm{G}}$ as long as there exist a genus-one $\Gamma \in V(\bm{G})$. 
\end{obs}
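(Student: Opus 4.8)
The plan is to reduce the global vanishing question on $\oPMb_{\bm{G}}$ to a vertex‑by‑vertex transversality statement, and then close with a dimension count. Since $\bm{G}$ is a \emph{smooth} $(r,0)$‑graph, every $\Gamma\in V(\bm{G})$ is a single‑vertex smooth graded $r$‑spin graph, $\oPMb_{\bm{G}}$ is (by Remark~\ref{rmk aut trivial}) the product $\prod_{\Gamma\in V(\bm{G})}\oPMb_\Gamma$, and the relevant bundle is the external direct sum of the bundles $E_\Gamma\to\oPMb_\Gamma$ of item~\ref{item canonical decompose}. By items~\ref{item canonical decompose} and~\ref{item canonical forget g=1} of Definition~\ref{dfn canonical section} one has $\bm{s}_{\bm{G}}=\bboxplus_{\Gamma\in V(\bm{G})}\bm{s}_\Gamma$ with $\bm{s}_\Gamma=\operatorname{For}^*_{\Gamma\to\hat{\mathcal B}\Gamma}\bm{s}_{\hat{\mathcal B}\Gamma}$, and $\bm{s}_{\hat{\mathcal B}\Gamma}$ is transverse to the zero section of $E_{\hat{\mathcal B}\Gamma}\to\oPMb_{\hat{\mathcal B}\Gamma}$. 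An external direct sum over a product vanishes at a point of the product exactly when each summand vanishes at the corresponding factor, and the surjective forgetful morphism $\operatorname{For}_{\Gamma\to\hat{\mathcal B}\Gamma}$ has zero locus of $\operatorname{For}^*\bm{s}_{\hat{\mathcal B}\Gamma}$ equal to the preimage of that of $\bm{s}_{\hat{\mathcal B}\Gamma}$; hence $\bm{s}$ has a zero on $\oPMb_{\bm{G}}$ if and only if $\bm{s}_{\hat{\mathcal B}\Gamma}$ has a zero on $\oPMb_{\hat{\mathcal B}\Gamma}$ for \emph{every} $\Gamma\in V(\bm{G})$.

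Next I would record the two numerical facts. Because the real rank of the Witten bundle depends only on the twists (formula~\eqref{eq rank of witten bundle g=1}) while the tails forgotten by $\operatorname{For}_{\Gamma\to\hat{\mathcal B}\Gamma}$ all have twist zero and the $\mathbb L$‑summands are unaffected, one has $\rk E_{\hat{\mathcal B}\Gamma}=\rk E_\Gamma$; and forgetting a genuine internal marking strictly lowers the dimension, so $\dim\oPMb_{\hat{\mathcal B}\Gamma}\le\dim\oPMb_\Gamma=\dim\Mbar_\Gamma$, with equality if and only if $\hat{\mathcal B}\Gamma=\Gamma$. Now suppose $\bm{s}$ has a zero on $\oPMb_{\bm{G}}$. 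By transversality a multisection of a bundle whose rank exceeds the dimension of the base has empty zero locus, so for each $\Gamma$ we must have $\rk E_\Gamma=\rk E_{\hat{\mathcal B}\Gamma}\le\dim\oPMb_{\hat{\mathcal B}\Gamma}\le\dim\Mbar_\Gamma$. Summing over $\Gamma\in V(\bm{G})$ and using $\sum_\Gamma\rk E_\Gamma=\rk E$, $\sum_\Gamma\dim\oPMb_\Gamma=\dim\Mbar_{g,B,I}^{1/r}$, and the hypothesis $\rk E=\dim\oPMb_{g,B,I}^{1/r}$, the two end sums coincide, so every intermediate inequality is an equality vertex by vertex: $\rk E_\Gamma=\dim\Mbar_\Gamma$ and $\dim\oPMb_{\hat{\mathcal B}\Gamma}=\dim\oPMb_\Gamma$, hence $\hat{\mathcal B}\Gamma=\Gamma$, for every $\Gamma$. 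This is exactly the contrapositive of the ``unless'' clause.

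For the ``in particular'' assertion, let $\Gamma_0\in V(\bm{G})$ be genus‑one and suppose for contradiction that $\bm{s}$ has a zero on $\oPMb_{\bm{G}}$. By the previous paragraph $\rk E_{\Gamma_0}=\dim\Mbar_{\Gamma_0}$ and $\hat{\mathcal B}\Gamma_0=\Gamma_0$; in particular $\hat{\mathcal B}\Gamma_0=\mathcal B\Gamma_0$ is a single‑vertex open genus‑one graph with $\rk E_{\hat{\mathcal B}\Gamma_0}=\dim\oPMb_{\hat{\mathcal B}\Gamma_0}$, and by item~\ref{item canonical tame g=1} the multisection $\bm{s}_{\hat{\mathcal B}\Gamma_0}$ is a $\gamma\vert_{\hat{\mathcal B}\Gamma_0}$‑tame perturbation of TRR. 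Remark~\ref{rmk on non vanish tame rk >= dim} then applies with $u=\Gamma_0$: in the equal‑rank‑and‑dimension case on a genus‑one open vertex the full section $T_{D,\Gamma_0}=\bm{s}_{\hat{\mathcal B}\Gamma_0}$ vanishes nowhere on $\oPMb_{\Gamma_0}$, contradicting that $\bm{s}_{\hat{\mathcal B}\Gamma_0}$ has a zero. Hence $\bm{s}$ vanishes nowhere on $\oPMb_{\bm{G}}$.

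I expect the only real work to be the bookkeeping in the first two paragraphs: verifying that the zero locus of $\operatorname{For}^*_{\Gamma\to\hat{\mathcal B}\Gamma}\bm{s}_{\hat{\mathcal B}\Gamma}$ is precisely the preimage of that of $\bm{s}_{\hat{\mathcal B}\Gamma}$ (so that nonemptiness transfers), that the dimension drop under $\operatorname{For}_{\Gamma\to\hat{\mathcal B}\Gamma}$ is strict whenever $\hat{\mathcal B}\Gamma\ne\Gamma$, and that $\dim\oPMb_\Gamma=\dim\Mbar_\Gamma$ so that the additivity of dimensions coming from the stratification of the glued moduli space may be used verbatim. None of these is deep, but they are the points at which the argument must be pinned down carefully; everything else is a direct application of Definition~\ref{dfn canonical section} and Remark~\ref{rmk on non vanish tame rk >= dim}.
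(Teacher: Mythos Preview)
Your argument is correct and is exactly the elaboration the paper intends: the observation already records the key ingredients (transversality of $\bm{s}_{\hat{\mathcal B}\Gamma}$, the four displayed rank/dimension identities, and the invocation of Remark~\ref{rmk on non vanish tame rk >= dim} together with item~\ref{item canonical tame g=1}), and your write-up is the straightforward unpacking of that sketch into the contrapositive dimension count followed by the genus-one nonvanishing step.
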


\begin{obs}\label{obs non vanish of section near boundary}
     Let $\bm{s}$ be a $\gamma$-canonical multisection of $E$ in the case $\rk E=\dim\oPMb_{g,B,I}^{1/r}$, then for any $\bm{G}\in \sGPI^{r,0}_{g,B,I}$, there exists an open neighbourhood $U_{\bm{G}}\subseteq \Mbar_{\bm{G}}$ of 
     $$\partial^{R}\Mbar_{\bm{G}}\cup \partial^{NS+}\Mbar_{\bm{G}}\cup \mathcal Z^{dj}_{\bm{G}}\subset \Mbar_{\bm{G}},$$ 
     such that $\bm{s}$ vanishes nowhere on 
     $$
     (U_{\bm{G}} \cup \partial^{CB}\Mbar_{\bm{G}})\cap \oPMb_{\bm{G}}.
     $$
     In fact, when there exist genus-one $\Gamma \in V(\bm{G})$, we can take $U_{\bm{G}}=\Mbar_{\bm{G}}$ according to Observation \ref{obs non vanish when exist g=1 component}. When all $\Gamma \in V(\bm{G})$ are genus-zero, then $\mathcal Z^{dj}_{\bm{G}}=\emptyset$ and the existence of $U_{\bm{G}}$ follows from the positivity constraint for the $\mathcal W$-part on genus-zero components (\textit{i.e.} item \ref{item canonical positive} in Definition \ref{dfn canonical section}).  Note that $\bm{s}$ also nowhere vanishes on
     $$( \partial^{BI}\Mbar_{\bm{G}}\cup \partial^{AI}\Mbar_{\bm{G}})\cap \oPMb_{\bm{G}}$$
     as consequence of item \ref{item canonical transversely decompose} in Definition \ref{dfn canonical section}. 
\end{obs}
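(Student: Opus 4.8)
The plan is to treat the two possible shapes of $\bm{G}$ separately, as the statement suggests, and in each to reduce the non-vanishing of $\bm{s}$ to properties of its vertex-restrictions $\bm{s}_\Gamma$, $\Gamma\in V(\bm{G})$. The mechanism is the decomposition $\bm{s}_{\bm{G}}=\bboxplus_{\Gamma\in V(\bm{G})}\bm{s}_\Gamma$ of item~\ref{item canonical decompose} of Definition~\ref{dfn canonical section}: since distinct vertices of a smooth $(r,0)$-graph are joined only by dashed lines (not internal edges), the Witten bundle over $\oPMb_{\bm{G}}=\prod_{\Gamma}\oPMb_\Gamma$ (the automorphism group being trivial, Remark~\ref{rmk aut trivial}) is genuinely $\bboxplus_\Gamma\mathcal W_\Gamma$, so at a point $(p_\Gamma)_\Gamma$ the $\mathcal W$-part of $\bm{s}_{\bm{G}}$ vanishes only if every $\bm{s}^{\mathcal W}_\Gamma$ vanishes at $p_\Gamma$; as $\bm{s}_{\bm{G}}=\bm{s}^{\mathcal W}_{\bm{G}}\oplus(\mathbb L\text{-part})$, it therefore suffices, at each point of the loci in question, to exhibit one vertex at which the $\mathcal W$-part (or, for the type-BI/AI strata below, the full vertex component) does not vanish.

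\emph{Case 1: some $\Gamma\in V(\bm{G})$ is genus-one.} Observation~\ref{obs non vanish when exist g=1 component} already gives that $\bm{s}$ vanishes nowhere on $\oPMb_{\bm{G}}$. One then takes $U_{\bm{G}}:=\Mbar_{\bm{G}}$, and every asserted non-vanishing statement is immediate, the relevant sets being contained in $\oPMb_{\bm{G}}$.

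\emph{Case 2: all $\Gamma\in V(\bm{G})$ are genus-zero.} Here $\oQMb_\Gamma=\Mbar_\Gamma$ for every vertex, the Witten bundle being an honest bundle over genus-zero moduli by \eqref{eq dimension not jump}, so $\mathcal Z^{dj}_{\bm{G}}=\emptyset$ and only a neighbourhood of $\partial^{R}\Mbar_{\bm{G}}\cup\partial^{NS+}\Mbar_{\bm{G}}$ is needed. I would first check that these two boundary types are carried by \emph{positive} degenerations: a Ramond boundary edge has both half-edges illegal of twist $r-1>0$ by item~\ref{it Ramond boundary node} of Proposition~\ref{prop lifting}, while a type-NS+ edge has its illegal half-edge of positive twist by definition; in either case the degeneration $\Delta$ of the relevant vertex $\Gamma$ lies in $\partial^{+}\Gamma$, so $\partial^{R}\Mbar_{\bm{G}}\cup\partial^{NS+}\Mbar_{\bm{G}}\subseteq\bigcup_{\Gamma}\pi_{\bm{G}\to\Gamma}^{-1}(\partial^{+}\Mbar_\Gamma)$. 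Now item~\ref{item canonical positive} makes each $\bm{s}^{\mathcal W}_\Gamma$ a positive multisection of $\mathcal W_\Gamma\to\oPMb_\Gamma$; unwinding Definition~\ref{def positive section}, positivity provides a neighbourhood $W_\Gamma$ of $\partial^{+}\Mbar_\Gamma$ on which each local branch of $\bm{s}^{\mathcal W}_\Gamma$ evaluates positively along a non-empty family of boundary intervals, hence (a positively evaluating element of $\mathcal W$ being non-zero) $\bm{s}^{\mathcal W}_\Gamma$ vanishes nowhere on $W_\Gamma$; positivity equally forces positive evaluation at every contracted-boundary node, so $\bm{s}^{\mathcal W}_\Gamma$ is non-vanishing on $\partial^{CB}\Mbar_\Gamma\cap\oPMb_\Gamma$. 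Setting $U_{\bm{G}}:=\bigcup_\Gamma\pi_{\bm{G}\to\Gamma}^{-1}(W_\Gamma)$ and invoking the box-sum reduction above completes this part.

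\emph{The type-BI and type-AI strata, and the main obstacle.} A type-BI or type-AI degeneration $\Delta$ of a vertex carries a single edge, an NS boundary edge with twist $0$ on the illegal side; this edge is neither positive nor a non-separating internal edge, so $\Delta$ lies in the $\partial^*$ to which item~\ref{item canonical transversely decompose} applies, and one obtains that $\bm{s}$ is transversely decomposable at $\Delta$. Since $\mathcal W$ and the relative cotangent lines both pull back along the NS-boundary gluing (Remark~\ref{rmk decompose NS boundary node}), $\rk\bigl(E|_{\mathcal M_\Delta}\bigr)=\rk E=\dim\Mbar_{\bm{G}}=\dim\mathcal M_\Delta+1$, so the zero locus of $\bm{s}|_{\mathcal M_\Delta}$, having codimension $\rk E$ in $\mathcal M_\Delta$, is empty. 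The one step I expect to require real care is precisely this last one: deducing from ``each vertex component $s_v$ is transverse to zero'' that the glued (assembled) section $\bm{s}|_{\mathcal M_\Delta}$ cuts out the expected codimension $\rk E$ --- this must be done through the assembling operator and the various $\mathbb{L}$-versions, using Lemma~\ref{lem zero of assembling}, Proposition~\ref{prop decomposition}, and the behaviour of box-sums of multisections under transversality. Everything else reduces to a direct reading of the definitions of ``positive'' and ``$\gamma$-canonical'' together with Observation~\ref{obs non vanish when exist g=1 component}.
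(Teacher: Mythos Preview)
Your approach matches the paper's exactly: split by whether $\bm{G}$ carries a genus-one vertex, invoke Observation~\ref{obs non vanish when exist g=1 component} in the first case and the positivity of the $\mathcal W$-part in the second, and handle the BI/AI strata via transversal decomposability (item~\ref{item canonical transversely decompose}). The paper's justification is nothing more than the sentences already embedded in the statement.

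One clarification on your flagged ``obstacle'': the assembling operator $\Ass$ is not needed for the BI/AI strata. Type-BI and type-AI edges are NS \emph{boundary} edges, and by Remark~\ref{rmk decompose NS boundary node} the Witten bundle pulls back directly under the gluing morphism in that case --- the assembling operator of \S\ref{sec ass} only enters for \emph{internal} edges. So after detaching, $\bm{s}|_{\oPMb_\Delta}$ is a genuine box-sum $\bboxplus_v s_v$ of the transverse vertex components, and your dimension count (total rank exceeds $\dim\mathcal M_\Delta$ by one, hence some $\rk E_v>\dim\oPMb_v$, hence that $s_v$ is nowhere zero) goes through without further subtlety.
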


Because $\oPMb_{\bm{G}}\setminus U_{\bm{G}}$ is compact, we can define (see \cite[Appendix A]{BCT2}) the Euler number
$$\int_{\oPMb_{\bm{G}}}e(E ; \mathbf{s})\in\mathbb{Q}$$
with respect to the multisection $\bm{s}$  as the zero count (with multiplicity) $\# Z(\bm{s})$ of $\bm{s}$ in $\oPMb_{\bm{G}}$ for each $\bm{G}\in \sGPI^{r,0}_{1,B,I}$, using the canonical relative orientation of $E$ over $\oPMb_{\bm{G}}$ constructed in \cite{TZ1}.

\begin{dfn}\label{dfn correlator}
With the above notations, for $g=0$ or $1$, if $\text{rank} E=\dim\oPMb_{g,B,I}^{1/r}$, we refer to the integral 
\[
\left\langle \prod_{i\in I}\tau^{a_i}_{d_i}\prod_{i \in B}\sigma^{b_i}\right\rangle^{\frac{1}{r},o,\gamma,\mathbf{s}}_g := \int_{\oPMb^{1/r,0}_{g,B,I}}e(E ; \mathbf{s})\in\mathbb{Q}, 
\]
where $\mathbf{s}$ is a $\gamma$-canonical multisection, 
as an \emph{open genus-$g$ $r$-spin intersection number with respect to $\gamma$ and $\bm{s}$}. For shortness we sometimes call the above number an \emph{ordered open genus-one $r$-spin intersection number}, or a \emph{correlator}. When $\text{rank}(E)\neq\dim\oPMb_{g,B,I}^{1/r}$, the integral is defined to be zero. 
\end{dfn}

\begin{rmk}\label{rmk vanish without psi g=1}
    If $d_i=0$ for all $i\in I$, then the genus-one correlators are always zero because in this case by \eqref{eq rank of witten bundle g=1} we have 
    $
    \rk E=\rk \mathcal W_{1,B,I}<\dim \oPMb_{1,B,I}^{1/r}.
    $
\end{rmk}

\subsection{Genus-zero case}
     In this subsection, we will show the above correlators are independent of the choice of the order $\gamma$ and multisection $\mathbf{s}$ in $g=0$ case. We will also compare them with the genus-zero open $r$-spin correlators defined in \cite{BCT2}. The proof of independence of choice in the $g=1$ case is postponed to \S\ref{sec g=1 indepent of choice and trr}.

     In the $g=0$ case, the $\gamma$-canonicity in Definition \ref{dfn canonical section} is in fact independent of the order $\gamma$. We simply refer to the $\gamma$-canonical multisections as \textit{canonical multisections} in $g=0$ case.
     
     \begin{prop}\label{prop independent of choice g=0}
    For any two canonical multisections $\mathbf{s}^a$ and $\mathbf{s}^b$ of 
    $$E:=\mathcal W\oplus \bigoplus_{i\in I} \mathbb L_i^{\oplus d_i}\to \oPMb^{1/r,0}_{0,B,I},$$
    we have 
    $$
    \int_{\oPMb_{0,B,I}}e(E ; \mathbf{s}^a)=\int_{\oPMb_{0,B,I}}e(E ; \mathbf{s}^b),
    $$
    \textit{i.e.,} the genus-zero open $r$-spin correlators are independent of the choice of the order $\gamma$ and canonical multisection $\bm{s}$, and we can write 
    $$\left\langle \prod_{i\in I}\tau^{a_i}_{d_i}\prod_{i \in B}\sigma^{b_i}\right\rangle^{\frac{1}{r},o}_0:=\left\langle \prod_{i\in I}\tau^{a_i}_{d_i}\prod_{i \in B}\sigma^{b_i}\right\rangle^{\frac{1}{r},o,\gamma,\mathbf{s}}_0$$
    for any such choice.
    \end{prop}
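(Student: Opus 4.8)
The plan is to prove that the integral $\int_{\oPMb_{0,B,I}}e(E;\bm s)$ is independent of the canonical multisection, following the usual cobordism-type argument adapted to the point-insertion moduli, and then to identify the resulting number with the correlator of \cite{BCT2}. The key point is that a canonical multisection never vanishes on a neighbourhood of the part of $\partial\oPMb^{1/r,0}_{0,B,I}$ along which nothing can be cancelled (type CB, R, NS+ boundaries and, for the glued space, the spurious AI/BI boundaries after gluing), so the zero locus is a compact cycle in the interior, and two such choices are connected by a path of multisections whose zero loci sweep out a compact oriented cobordism.

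First I would set up the interpolation: given canonical $\bm s^a,\bm s^b$, form the convex combination $\bm s_\lambda = (1-\lambda)\bm s^a + \lambda\bm s^b$ on $\oPMb^{1/r,0}_{0,B,I}\times[0,1]$, then perturb it rel endpoints to a multisection $\bm S$ of $\mathrm{pr}^*\widetilde E$ over $\widetilde{\mathcal{PM}}^{1/r,0}_{0,B,I}\times[0,1]$ that is transverse to zero, still decomposes at all boundary strata in the sense of Definition \ref{dfn canonical section}, still satisfies the positivity constraint (item \ref{item canonical positive}) on genus-zero vertices, and still glues along the PI-paired boundaries (item \ref{item canonical glue}). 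The crucial preliminary lemma — essentially Observation \ref{obs non vanish of section near boundary} with a parameter — is that such an interpolating $\bm S$ automatically vanishes nowhere on $(U\cup\partial^{CB})\cap\oPMb$ for a fixed neighbourhood $U$ of the ``bad'' boundary, uniformly in $\lambda$, because the positivity/decomposition conditions that force nonvanishing there are stable under the perturbation and hold on the whole homotopy. This is exactly where the genus-zero structure is used: there are no genus-one vertices, hence no dimension-jump locus and no need for the tame-TRR machinery, so positivity alone does the job, and this is the only subtle step; the rest is bookkeeping.

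Then $Z(\bm S)$ is a compact $1$-manifold with boundary, its boundary lying over $\{0,1\}$, i.e. $\partial Z(\bm S) = Z(\bm s^b)\sqcup -Z(\bm s^a)$ as oriented $0$-manifolds once we use the canonical relative orientation of $E$ from \cite{TZ1} and the orientation of $[0,1]$. Counting with multiplicity (in the orbifold-multisection sense of \cite[Appendix A]{BCT2}) gives $\#Z(\bm s^b)-\#Z(\bm s^a)=0$, which is the claimed equality. Here one must check that the PI-gluing identification of AI with BI boundaries is compatible with the cobordism: the two identified boundary pieces carry \emph{opposite} induced relative orientations by Theorem \ref{thm PI boundaries paried}, so the contributions of $Z(\bm S)\cap(\text{AI}\times[0,1])$ and $Z(\bm S)\cap(\text{BI}\times[0,1])$ cancel in pairs and do not contribute spurious boundary points — but since $\bm S$ is nonvanishing there anyway by the preliminary lemma, this is automatic.

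Finally, independence of the order $\gamma$ is a special case of the same argument: a $\gamma_1$-canonical and a $\gamma_2$-canonical multisection are both canonical (the notion does not depend on $\gamma$ in $g=0$), so the same cobordism applies; this lets us drop $\gamma$ and $\bm s$ from the notation. The comparison with the \cite{BCT2} correlator — which the paper defers to Proposition \ref{prop compare with BCT in g=0} — would proceed by exhibiting one convenient canonical multisection whose $\mathcal W$-part is a positive section in the sense of \cite[\S 3]{BCT2} and whose $\mathbb L$-part is a transverse perturbation of the TRR sections $\mathfrak s_{i,\Gamma}$ of \S\ref{sec def of trr sections}, and observing that its zero count computes precisely $\int e(\mathcal W\oplus\bigoplus\mathbb L_i^{\oplus d_i},\bm s_{\mathrm{canonical}})$ as in \eqref{eq:intnums}; I expect the only real content there is matching the boundary behaviour of the $\mathbb L$-part with the forgetful boundary conditions of \cite{BCT2}, which is where the relation $\mathfrak s_{i,\Gamma}=\tilde t_{i,u_{0,1,1}\to\Gamma}\operatorname{For}^*\mathfrak s_{i,u_{0,1,1}}$ is used. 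The main obstacle in the whole proof is the uniform-nonvanishing-near-the-bad-boundary lemma for the parametrized family, i.e.\ verifying that transversality can be achieved \emph{while preserving} positivity and all the decomposition constraints simultaneously over $[0,1]$; everything else is standard oriented-cobordism accounting.
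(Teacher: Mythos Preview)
Your approach is correct and essentially the same as the paper's: both use the linear interpolation $(1-t)\bm s^a+t\bm s^b$, observe that positivity is convex so the interpolation remains nonvanishing near $\partial^{CB}$ and near a neighbourhood of $\partial^+$, and conclude via a homotopy/cobordism argument (the paper cites \cite[Lemma 4.12]{BCT2} as a black box for this last step). The only difference is that the paper does \emph{not} perturb the linear homotopy to achieve transversality or preserve the decomposition constraints---since positivity alone already forces nonvanishing near the bad boundary, the unperturbed linear interpolation suffices and \cite[Lemma 4.12]{BCT2} handles the zero-count directly, so the ``main obstacle'' you identify (perturbing while preserving all constraints) is simply bypassed.
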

    \begin{proof}
        This is almost a special case ($\h=0$) of \cite[Theorem 3.9]{TZ2}. Although in \cite{TZ2} the relative cotangent line bundles are $\mathbb L'_i$,  the proof is identical. We outline the key ideas of the proof. 

        We regard $\mathbf{s}^a$ and $\mathbf{s}^b$ as multisections over $\widetilde{\mathcal{PM}}^{1/r,0}_{0,B,I}\subseteq \widetilde{\mathcal{M}}^{1/r,0}_{0,B,I}$. Note that $\widetilde{\mathcal{M}}^{1/r,0}_{0,B,I}$ is a compact space with boundaries of type CB, NS+ or R, while $\widetilde{\mathcal{PM}}^{1/r,0}_{0,B,I}\subseteq \widetilde{\mathcal{M}}^{1/r,0}_{0,B,I}$ only has  boundaries of type CB. We write 
        $$
        \partial \widetilde{\mathcal{M}}^{1/r,0}_{0,B,I}= \partial^{CB}\widetilde{\mathcal{M}}^{1/r,0}_{0,B,I} \cup \partial^+ \widetilde{\mathcal{M}}^{1/r,0}_{0,B,I},
        $$
        where $\partial^+ \widetilde{\mathcal{M}}^{1/r,0}_{0,B,I}$ is the union of all type-NS+ and type-R boundaries.
        
        Since both $\mathbf{s}^a$ and $\mathbf{s}^b$ are canonical, for any $0\le t \le 1$, the multisection $(1-t)\mathbf{s}^a+t\mathbf{s}^b$ does not vanish at any point  $p\in \widetilde{\mathcal{PM}}^{1/r,0}_{0,B,I} \cap \partial^{CB}\widetilde{\mathcal{M}}^{1/r,0}_{0,B,I}$. In fact, assuming $p\in \oPMb_{\Gamma^{CB}}\times \prod_{\Gamma\in V(\mathbf{G})\setminus \{\Gamma^{CB}\}}\oPMb_{\Gamma}$ where $\Gamma^{CB}$ has a contracted boundary tail, then both $\mathbf{s}^a_{\Gamma^{CB}}$,  $\mathbf{s}^b_{\Gamma^{CB}}$ and $(1-t)\mathbf{s}^a_{\Gamma^{CB}}+t\mathbf{s}^b_{\Gamma^{CB}}$
        evaluate positively at the corresponding contracted boundary node. For similar reason, there exists an open neighbourhood $U^+\subseteq \widetilde{\mathcal{M}}^{1/r,0}_{0,B,I}$ of $\partial^+ \widetilde{\mathcal{M}}^{1/r,0}_{0,B,I}\subseteq \widetilde{\mathcal{M}}^{1/r,0}_{0,B,I}$, such that for any $0\le t \le 1$, the multisection $(1-t)\mathbf{s}^a+t\mathbf{s}^b$ does not vanish at any point  $p\in \widetilde{\mathcal{PM}}^{1/r,0}_{0,B,I} \cap U^+$.

        Therefore, $(1-t)\mathbf{s}^a+t\mathbf{s}^b$ is a homotopy between $\mathbf{s}^a$ and $\mathbf{s}^a$ which vanishes nowhere on $\widetilde{\mathcal{PM}}^{1/r,0}_{0,B,I} \cap (U^+\cup \partial^{CB}\widetilde{\mathcal{M}}^{1/r,0}_{0,B,I})$, we can conclude the proof using \cite[Lemma 4.12]{BCT2}.

    \end{proof}

    In \cite{BCT1,BCT2}, an genus-zero open $r$-spin theory, to which we refer as the BCT theory, was constructed in an different way. For a graded $r$-spin graph $\Gamma,$ we denote by $\mathcal {F}\Gamma$ the graph obtained by detaching all the NS edges of $\Gamma$ whose twist at the illegal half-edge is $0$, then forgetting all the twist-$0$ illegal half-edges. Note that forgetting tails may produce unstable components $\mathcal {F} \Gamma$; we view the moduli space corresponding to an unstable component, which is a direct summand of $\Mbar_{\mathcal F \Gamma}$, as a single point.  Let $F_\Gamma\colon \Mbar_\Gamma \to \Mbar_{\mathcal F\Gamma}$ be the induced map in the level of moduli spaces.
     \cite[Proposition 6.2]{BCT2} constructed the \emph{special BCT-canonical multisections}, which are collections of transverse multisections $\hat s^{\text{BCT}}_\Gamma$ of 
     $$E_{\Gamma}:=\mathcal W_{\Gamma}\oplus \bigoplus_{i\in T^I(\Gamma)\cap I} \mathbb L_i^{\oplus d_i} \to \oPMb_\Gamma$$
     for every graded $r$-spin graph $\Gamma$, which satisfy, in addition to the same positivity as in Definition \ref{def positive section}, that for every $\Gamma\in \partial^* \Delta$ 
        \begin{equation}\label{eq bct special canonical}
         \hat s^{\text{BCT}}_\Delta\big\vert_{ \Mbar_{\Gamma_{}}}=F_\Gamma^* \hat s^{\text{BCT}}_{\mathcal F \Gamma}.
        \end{equation}
        Note that the moduli space $\oPMb^{1/r}_{0,B,I}$ is the disjoint union
        $$
        \oPMb^{1/r}_{0,B,I}=\bigsqcup_{\substack{\Gamma \text{ smooth}\\ T^I(\Gamma)=I,T^B(\Gamma)=B}} \oPMb_{\Gamma},
        $$
        the collection of the multisections $\hat s^{\text{BCT}}_\Gamma$ for such $\Gamma$ is a multisection $\hat {s}_{0,B,I}^{BCT}$ of the vector bundle 
        $$
        \mathcal W_{\Gamma}\oplus \bigoplus_{i\in T^I(\Gamma)\cap I} \mathbb L_i^{\oplus d_i} \to \oPMb^{1/r}_{0,B,I}.
        $$
        The BCT-correlators are the number of zero of such BCT-canonical multisections, \textit{i.e.,}
     \[
    \left\langle \prod_{i\in I}\tau^{a_i}_{d_i}\prod_{i \in B}\sigma^{b_i}\right\rangle^{\frac{1}{r},o,BCT}_0 = \int_{\oPMb^{1/r}_{0,B,I}}e(E ; \hat {s}_{0,B,I}^{BCT})\in\mathbb{Q}. 
    \]

    \begin{prop}\label{prop compare with BCT in g=0} 
    There exists at least one canonical multisection of $$E=\mathcal W\oplus \bigoplus_{i\in I} \mathbb L_i^{\oplus d_i}\to \oPMb^{1/r,0}_{0,B,I}.$$
    Moreover, we have

   \begin{equation}\label{eq compare with BCT}
    \left\langle \prod_{i\in I}\tau^{a_i}_{d_i}\prod_{i \in B}\sigma^{b_i}\right\rangle^{\frac{1}{r},o}_0 
    =
    \left\langle \prod_{i\in I}\tau^{a_i}_{d_i}\prod_{i \in B}\sigma^{b_i}\right\rangle^{\frac{1}{r},o,BCT}_0.
    \end{equation}
        
    \end{prop}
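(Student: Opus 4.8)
The plan is to prove Proposition \ref{prop compare with BCT in g=0} in two stages: first exhibit a canonical multisection of $E\to\oPMb^{1/r,0}_{0,B,I}$ (thereby also completing the deferred existence claim of Proposition \ref{prop exist canonical section} in the $g=0$ case), and then compare the resulting correlator with the BCT one by constructing an explicit homotopy through non-vanishing multisections on the relevant part of the glued moduli space. Since by Proposition \ref{prop independent of choice g=0} the genus-zero correlator is independent of the choice of order $\gamma$ and of the canonical multisection, it suffices to compare \emph{one} convenient canonical multisection with the BCT-canonical multisection $\hat{s}_{0,B,I}^{BCT}$.

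For existence, I would take the special BCT-canonical multisections $\hat s^{\text{BCT}}_\Gamma$ of $E_\Gamma\to\oPMb_\Gamma$ provided by \cite[Proposition 6.2]{BCT2} and upgrade them slightly. The $\mathbb L$-part of $\hat s^{\text{BCT}}$ uses the line bundles $\mathbb L_i$ (in the sense of this paper, $\mathbb L_i\to\Mbar_{\mathcal B\Gamma}$) pulled back via forgetful maps, which is exactly the setup of Definition \ref{dfn canonical section}; the decomposition condition \eqref{eq bct special canonical} over $\partial^*\Delta$ is a strengthening of the decomposability conditions \ref{item canonical decompose}–\ref{item canonical transversely decompose} (in genus zero conditions \ref{item canonical forget g=1} and \ref{item canonical tame g=1} are vacuous, since there are no genus-one vertices, and $\hat{\mathcal B}\Gamma=\mathcal B\Gamma$), and the BCT positivity is literally condition \ref{item canonical positive}. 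The one point requiring care is the gluing condition \ref{item canonical glue} along the point-insertion boundaries of type AI/BI: one must check that the BCT prescription \eqref{eq bct special canonical} — detaching twist-$0$ illegal NS edges and forgetting the twist-$0$ illegal half-edges — is compatible with the identification $\sim_{PI}$ of Theorem \ref{thm PI boundaries paried}. This is essentially the content of \cite[\S 5]{TZ2} and \cite[Remark 5.7]{TZ2}, where the $\h=0$ point-insertion theory was shown to carry the same information as the BCT theory; I would invoke that comparison to conclude that $\hat s^{\text{BCT}}$, transported to $\oPMb^{1/r,0}_{0,B,I}$, glues to a multisection of $\widetilde E\to\widetilde{\mathcal{PM}}^{1/r,0}_{0,B,I}$, and hence is canonical in the sense of Definition \ref{dfn canonical section}.

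For the equality of correlators, I would run the same argument as in the proof of Proposition \ref{prop independent of choice g=0}: take the canonical multisection $\bm s:=\hat s^{\text{BCT}}$ constructed above and any other canonical $\bm s'$, form the linear homotopy $(1-t)\bm s+t\bm s'$ over $\widetilde{\mathcal{PM}}^{1/r,0}_{0,B,I}\subseteq\widetilde{\mathcal M}^{1/r,0}_{0,B,I}$, and observe that canonicity forces this family to be non-vanishing near all type-CB boundaries (positive evaluation at the contracted boundary node, which is a convex condition) and on a neighbourhood $U^+$ of the type-NS+ and type-R boundaries (again by positivity of the $\mathcal W$-part). Then \cite[Lemma 4.12]{BCT2} gives equality of the zero counts. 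The only genuinely new bookkeeping is to match the two ambient spaces: the BCT integral is taken over $\oPMb^{1/r}_{0,B,I}$, whereas our integral is over $\oPMb^{1/r,0}_{0,B,I}$, and these differ by the point-insertion gluing. Here the relevant fact is again Theorem \ref{thm PI boundaries paried}: the paired type-AI/type-BI boundaries are identified diffeomorphically with the Witten bundle and relative cotangent lines pulling back, and the canonical relative orientations on the two sides are opposite, so the contributions to the zero count from the spurious boundaries cancel in pairs and what remains is precisely the count over the BCT moduli; I would make this cancellation explicit using the orientation statement \eqref{eq orientation point insertion} and the sign in Theorem \ref{thm PI boundaries paried}.

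The main obstacle I anticipate is the last point — rigorously identifying $\int_{\oPMb^{1/r,0}_{0,B,I}}e(E;\hat s^{\text{BCT}})$ with $\int_{\oPMb^{1/r}_{0,B,I}}e(E;\hat s^{\text{BCT}}_{0,B,I})$, i.e.\ showing that passing from the BCT moduli to the point-insertion moduli does not change the Euler number. This requires checking that $\hat s^{\text{BCT}}$ on a type-AI stratum (where an irreducible component carries only a legal half-node and an inserted internal tail in $I^{PI}$, hence the moduli factor is a point and the Witten bundle on it is trivial by the NS-decomposition of Proposition \ref{prop decomposition}\eqref{it NS}) agrees under the diffeomorphism of Theorem \ref{thm PI boundaries paried} with its restriction to the corresponding type-BI stratum, and that the sign bookkeeping in \eqref{eq orientation point insertion} together with the opposite-orientation statement makes the two spurious-boundary contributions cancel. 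Once this is in place, both correlators equal the same zero count and \eqref{eq compare with BCT} follows.
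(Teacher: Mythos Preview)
Your existence argument is essentially correct in spirit and matches the paper's approach: one pulls back the BCT sections via the forgetful maps $\operatorname{For}_{\Gamma\to\mathcal B\Gamma}$ (forgetting all inserted internal tails) to define $\hat{\bm s}_{\bm G}:=\bboxplus_{\Gamma\in V(\bm G)}\operatorname{For}^*_{\Gamma\to\mathcal B\Gamma}\hat s^{\text{BCT}}_{\mathcal B\Gamma}$ on every $\oPMb_{\bm G}$, and the BCT decomposition property \eqref{eq bct special canonical} is exactly what makes item \ref{item canonical glue} hold. You should state this construction explicitly rather than only invoking \cite{TZ2}.

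The gap is in your equality argument. The two integrals are over genuinely different spaces: $\oPMb^{1/r}_{0,B,I}$ is the union of the single-vertex pieces $\oPMb_{\bm G}$ with $|V(\bm G)|=1$, whereas $\oPMb^{1/r,0}_{0,B,I}$ also contains all the multi-vertex pieces. These extra pieces are \emph{top-dimensional}, not boundaries, so talking about ``spurious-boundary contributions cancelling'' is beside the point---zero counts are not boundary integrals, and the type-AI/BI boundaries are exactly where the section glues, hence contribute nothing anyway. What you actually need, and never supply, is that the extended section has \emph{no zeros at all} on the multi-vertex pieces. The paper does this by a direct rank-versus-dimension count: for $|V(\bm G)|\ge 2$ at least one inserted internal tail is forgotten in passing from some $\Gamma$ to $\mathcal B\Gamma$, so
\[
\sum_{\Gamma\in V(\bm G)}\rk E_{\mathcal B\Gamma}=\sum_{\Gamma\in V(\bm G)}\rk E_\Gamma=\sum_{\Gamma\in V(\bm G)}\dim\oPMb_\Gamma>\sum_{\Gamma\in V(\bm G)}\dim\oPMb_{\mathcal B\Gamma},
\]
hence $\rk E_{\mathcal B\Gamma}>\dim\oPMb_{\mathcal B\Gamma}$ for some $\Gamma$, and transversality forces $\hat s^{\text{BCT}}_{\mathcal B\Gamma}$ (and thus $\hat{\bm s}_{\bm G}$) to be nowhere vanishing on that piece. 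Once you have this, the zero set of the extended section is literally contained in $\oPMb^{1/r}_{0,B,I}$ and the two Euler numbers agree on the nose; the homotopy argument and orientation cancellation are unnecessary.
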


    \begin{proof}
        Note that $\oPMb^{1/r}_{0,B,I}$ can be regard as a subspace of $\widetilde{\mathcal{PM}}^{1/r,0}_{0,B,I}$, similar to the proof of \cite[Theorem 5.2]{TZ2}, the strategy is to extend to multisection $\mathbf{s}^{BCT}$ over $\oPMb^{1/r}_{0,B,I}\subset \widetilde{\mathcal{PM}}^{1/r}_{0,B,I}$ to a canonical multisection over the entire $\widetilde{\mathcal{PM}}^{1/r,0}_{0,B,I}$ which vanishes nowhere on $\widetilde{\mathcal{PM}}^{1/r,0}_{0,B,I} \setminus \oPMb^{1/r}_{0,B,I}$.

    For any $\oPMb_{\bm{G}}\subseteq \widetilde{\mathcal{PM}}^{1/r,0}_{0,B,I} $, recall that for $\Gamma \in V(\bm{G})$ we can forget all inserted (twist-zero) internal tails in $T^I(\Gamma)\cap I^{PI}(\bm{G})$ and obtain a graph $\mathcal B \Gamma$. We construct $\mathbf{s}_{\mathbf{G}}$ as
    $$
    \hat{\mathbf{s}}_{\mathbf{G}}:=\bboxplus_{\Gamma \in V(\bm{G})}  \operatorname{For}^*_{\Gamma \to \mathcal B \Gamma} \hat {s}^{\text{BCT}}_{\mathcal B \Gamma}.
    $$
    which is in fact a multisection of $\mathcal W \oplus \bigoplus_{i=1}^l \mathbb L_i^{\oplus d_i}\to \oPMb_{\bm{G}}$ because we are working with $\mathbb L_i$ instead of $\mathbb L'_i$ (see \S \ref{sec uni cotangent line over glued moduli}).

    The collection $\hat {\mathbf{s}}_{\mathbf{G}}$ is canonical multisection in Definition \ref{dfn canonical section}. In fact, item \ref{item canonical glue} is a consequence of \eqref{eq bct special canonical}, item \ref{item canonical decompose} follows from the construction, item \ref{item canonical positive} is the property of the BCT-canonical multisections $\hat{s}^{BCT}_{\Gamma}$, while item \ref{item canonical forget g=1} and \ref{item canonical tame g=1} are irrelevant in the $g=0$ case.

    Over $\oPMb^{1/r}_{0,B,I}\subset \widetilde{\mathcal{PM}}^{1/r,0}_{0,B,I}$, \textit{i.e.,} the union of $\oPMb_{\bm{G}}=\oPMb_{\Gamma_{\bm{G}}}$ for single-vertex $\bm{G}$ with $V(\bm{G})=\{\Gamma_{\bm{G}}\}$, the multisection $\hat{\bm{s}}_{\mathbf{G}}$ coincides with $\hat{s}^{BCT}_{\Gamma_{\bm{G}}}=\hat {s}_{0,B,I}^{BCT}\vert_{\oPMb_{\bm{G}}}$ by definition. 
    
    Over $\widetilde{\mathcal{PM}}^{1/r}_{0,B,I}\setminus \oPMb^{1/r}_{0,B,I} $, \textit{i.e.,} the union of $\oPMb_{\bm{G}}$ for $(r,0)$-graphs $\bm{G}$ with more than one vertex; we show that $\hat{\bm{s}}_{\bm{G}}$ never vanishes for such $\bm{G}$. In fact, we have
    $$
   \sum_{\Gamma\in V(\bm{G})}\rk E_{\mathcal B\Gamma}= \sum_{\Gamma\in V(\bm{G})}\rk E_{\Gamma} =\sum_{\Gamma\in V(\bm{G})} \dim \oPMb_{\Gamma}>\sum_{\Gamma\in V(\bm{G})} \dim \oPMb_{\mathcal B\Gamma}
    $$
    because at least one inserted internal tail is forgotten, therefore $\rk E_{\mathcal B\Gamma}>\dim \oPMb_{\mathcal B\Gamma}$ for at least one $\Gamma \in V(\bm{G})$, then by transversality $\hat {s}^{\text{BCT}}_{\mathcal B \Gamma}$ vanishes nowhere, hence $\hat{\bm{s}}_{\bm{G}}$ vanishes nowhere. This proves \eqref{eq compare with BCT}.

    \end{proof}

    As a consequence, these genus-zero open $r$-spin correlators satisfy the same topological recursion relations as in \cite{BCT2}.
    
    \begin{thm}[{\cite[Theorem 4.1]{BCT2}}]\label{thm TRR BCT}
    The genus-zero open $r$-spin correlators satisfy the following two types of Topological Recursion Relations (note that all boundary twists $b_i$ are $r-2$):
\begin{itemize}
\item[(a)] Suppose $\lvert I \rvert, \lvert B \rvert\ge 1$, then for any $i_1\in I$ and $j_B\in B$, we have the following TRR with respect to $(i_1,j_B)$:
\begin{equation}\label{eq: bct trr 1}
\begin{split}
&\left\langle \tau_{d_{i_1}+1}^{a_{i_1}}\prod_{i\in I\setminus \{i_1\}}\tau^{a_i}_{d_i}\prod_{i\in B}\sigma^{b_i}\right\rangle^{\frac{1}{r},o}_0\hspace{-0.2cm}\\
=&\sum_{\substack{R_1 \sqcup R_2 = I \setminus \{{i_1}\}\\ -1\le a \le r-2}}\hspace{-0.1cm}\left\langle \tau_0^{a}\tau_{d_{i_1}}^{a_{i_1}}\prod_{i \in R_1}\tau_{d_i}^{a_i}\right\rangle^{\frac{1}{r},\text{ext}}_0
\hspace{-0.1cm}\left\langle \tau_0^{r-2-a}\prod_{i\in R_2}\tau^{a_i}_{d_i}\prod_
{i\in B}\sigma^{b_i}\right\rangle^{\frac{1}{r},o}_0\\
&+\hspace{-0.1cm}\sum_{\substack{R_1 \sqcup R_2 =  I \setminus \{{i_1}\} \\ T_1 \sqcup T_2 = B\setminus \{j_B\}}} \hspace{-0.1cm} \left\langle \tau^{a_{i_1}}_{d_{i_1}}\prod_{i \in R_1} \tau^{a_i}_{d_i}\prod_{i\in T_1}\sigma^{b_i}\right\rangle^{\frac{1}{r},o}_0 \hspace{-0.1cm}\left\langle \prod_{i \in R_2} \tau^{a_i}_{d_i} \sigma^{r-2}\sigma^{b_{j_B}}\prod_{i\in T_2}\sigma^{b_i}\right\rangle^{\frac{1}{r}, o}_0.
    \end{split}
\end{equation}

\item[(b)] Suppose $\lvert I \rvert \ge 2$,  then for any $i_1\in I$ and $j_I\in I \setminus \{i_1\}$,  we have the following TRR with respect to $(i_1,j_I)$:
\begin{equation}\label{eq: bct trr 2}
\begin{split}
&\left\langle \tau_{d_{i_1}+1}^{a_{i_1}}\prod_{i\in I \setminus \{i_1\}}\tau^{a_i}_{d_i}\prod_{i\in B}\sigma^{b_i}\right\rangle^{\frac{1}{r},o}_0\hspace{-0.2cm}\\=&
\sum_{\substack{R_1 \sqcup R_2 =  I\setminus \{i_1,j_I\}\\ -1\le a \le r-2}}\hspace{-0.1cm}\left\langle \tau_0^{a}\tau_{d_{i_1}}^{a_{i_1}}\prod_{i \in R_1}\tau_{d_i}^{a_i}\right\rangle^{\frac{1}{r},\text{ext}}_0
\hspace{-0.1cm}\left\langle \tau_0^{r-2-a}\tau^{a_{j_I}}_{d_{j_I}}\prod_{i\in R_2}\tau^{a_i}_{d_i}\prod_{i\in B}\sigma^{b_i}\right\rangle^{\frac{1}{r},o}_0\\
&+\hspace{-0.1cm}\sum_{\substack{R_1 \sqcup R_2 = I \setminus \{i_1,j_I\} \\ T_1 \sqcup T_2 = B}} \hspace{-0.1cm} \left\langle \tau^{a_{i_1}}_{d_{i_1}}\prod_{i \in R_1} \tau^{a_i}_{d_i}\prod_{i\in T_1}\sigma^{b_i}\right\rangle^{\frac{1}{r},o}_0 \hspace{-0.1cm}\left\langle\tau^{a_{j_I}}_{d_{j_I}} \prod_{i \in R_2} \tau^{a_i}_{d_i} \sigma^{r-2}\prod_{i\in T_2}\sigma^{b_i}\right\rangle^{\frac{1}{r}, o}_0.
\end{split}
\end{equation}
\end{itemize}
\end{thm}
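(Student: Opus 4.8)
The plan is to reduce the statement to its known counterpart in the BCT theory. By Proposition \ref{prop compare with BCT in g=0}, for every choice of internal and boundary markings the genus-zero open $r$-spin correlator defined in Definition \ref{dfn correlator} coincides with the corresponding BCT correlator:
\[
\left\langle \prod_{i\in I}\tau^{a_i}_{d_i}\prod_{i \in B}\sigma^{b_i}\right\rangle^{\frac{1}{r},o}_0
=
\left\langle \prod_{i\in I}\tau^{a_i}_{d_i}\prod_{i \in B}\sigma^{b_i}\right\rangle^{\frac{1}{r},o,BCT}_0 .
\]
The extended correlators $\langle\cdots\rangle^{\frac{1}{r},\mathrm{ext}}_0$ appearing on the right-hand sides of \eqref{eq: bct trr 1} and \eqref{eq: bct trr 2} are literally the closed extended $r$-spin correlators of \S\ref{subsec:closed_ext_r_m}, which are the same objects used in \cite{BCT2}. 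Hence both sides of the asserted identities \eqref{eq: bct trr 1} and \eqref{eq: bct trr 2} are, term by term, equal to the corresponding expressions in the BCT theory.

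First I would record this translation: rewrite each correlator in \eqref{eq: bct trr 1} and \eqref{eq: bct trr 2} via the identification above. Second, I would invoke \cite[Theorem 4.1]{BCT2}, which states precisely these two topological recursion relations for the BCT correlators (with all boundary twists equal to $r-2$). Since the equalities of Proposition \ref{prop compare with BCT in g=0} hold unconditionally for all marking data satisfying \eqref{eq:dim_equal_rk} with $g=0$ (and both sides vanish otherwise, by Definition \ref{dfn correlator}), the relations transfer verbatim to the correlators of this paper. This completes the proof.

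The only point requiring care is that the relative cotangent line bundles used here are the $\mathbb{L}_i\to\oPMb^{1/r,0}_{0,B,I}$ obtained by forgetting the inserted points in $I^{PI}$ (see \S\ref{sec uni cotangent line over glued moduli} and Remark \ref{rmk different notation for L in TZ}), rather than the $\mathbb{L}'_i$ of \cite{TZ2}; this is exactly the bundle for which Proposition \ref{prop compare with BCT in g=0} is proved, so no additional comparison of $\psi$-classes is needed. Thus there is no genuine obstacle: the theorem is a direct corollary of Proposition \ref{prop compare with BCT in g=0} together with \cite[Theorem 4.1]{BCT2}.
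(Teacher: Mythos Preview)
Your proof is correct and matches the paper's approach: the paper presents this theorem as a direct consequence of Proposition~\ref{prop compare with BCT in g=0} together with \cite[Theorem~4.1]{BCT2}, without giving a separate argument. The paper also remarks (after the proof of Proposition~\ref{prop trr g=1 ordered}) that one could alternatively prove it directly using the TRR-section argument developed there, but your reduction is exactly what the paper does.
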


    \begin{rmk}
        A slightly different version of genus-zero open $r$-spin theory is constructed in \cite{TZ2}. The $\h=0$ theory in \cite{TZ2} is very similar to the $g=1$ theory in this paper; the only difference is that we take different relative cotangent line bundles: in \cite{TZ2} the theory is constructed using $\mathbb L'_i$ (see Remark \ref{rmk different notation for L in TZ}), while in this paper we use $\mathbb L_i$ forgetting all the inserted points. This difference results in a polynomial relation between those correlators, see   \cite[Theorem 5.6, Remark 5.7]{TZ2}.
    \end{rmk}

\section{Genus-one Topological Recursion Relation}\label{sec g=1 trr  and inde of choice}
In this section, we show that the genus-one open $r$-spin correlators also satisfy topological recursion relations, an prove that the genus-one open $r$-spin correlators are independent of the choice of the order $\gamma$ and the canonical multisection $\bm{s}$ as a corollary. We will also prove Theorems \ref{thm:wave_func} and \ref{thm:open_string_dilaton} using topological recursion relations.

\subsection{Genus-one TRR for correlators respect to orders and multisections}

Let $\gamma$ be an order on the set $I^{d>0}=\{i\in I\colon d_i\ge 1\}$. We denote the maximal element in the order $\gamma$ by $i_1=\gamma(I)$. For any subset $J\subseteq I$, we denote by $\gamma\vert_J$ the order on $\{i\in J\colon d_i\ge 1\}$ induced by $\gamma$.
\begin{prop}\label{prop trr g=1 ordered}
Let $i_1=\gamma(I)$ be the maximal element in $I^{d>0}$ with respect to the order $\gamma$ and $\mathbf{s}$ be a $\gamma$-canonical multisection, we have
\begin{equation}\label{eq trr g=1 ordered}
\begin{split}
&\left\langle \tau_{d_{i_1}+1}^{a_{i_1}}\prod_{i\in I\setminus \{i_1\}}\tau^{a_i}_{d_i}\prod_{i\in B}\sigma^{b_i}\right\rangle^{\frac{1}{r},o,\gamma,\bm{s}}_1\hspace{-0.2cm}\\
=&\sum_{\substack{R_1 \sqcup R_2 = I\setminus\{i_1\}\\ -1\le a \le r-2}}\hspace{-0.1cm}\left\langle \tau_0^{a}\tau_{d_{i_1}}^{a_{i_1}}\prod_{i \in R_1}\tau_{d_i}^{a_i}\right\rangle^{\frac{1}{r},\text{ext}}_0
\hspace{-0.1cm}\left\langle \tau_0^{r-2-a}\prod_{i\in R_2}\tau^{a_i}_{d_i}\prod_{i\in B}\sigma^{b_i}\right\rangle^{\frac{1}{r},o,\gamma\vert_{R_2},\bm{s}\vert_{1,B,R_2\sqcup\{i^{r-2-a}_I\}}}_1\\
&+\hspace{-0.1cm}\sum_{\substack{R_1 \sqcup R_2 =  I\setminus\{i_1\} \\ T_1 \sqcup T_2 =  B}} \hspace{-0.1cm} \left\langle \tau^{a_{i_1}}_{d_{i_1}}\prod_{i \in R_1} \tau^{a_i}_{d_i}\prod_{i\in T_1}\sigma^{b_i}\right\rangle^{\frac{1}{r},o}_0 \hspace{-0.1cm}\left\langle \prod_{i \in R_2} \tau^{a_i}_{d_i} \sigma^{r-2}\prod_{i\in T_2}\sigma^{b_i}\right\rangle^{\frac{1}{r}, o,\gamma\vert_{R_2},\bm{s}\vert_{1,T_2\sqcup\{i^{r-2}_B\},R_2}}_1\\
&+\consta\left\langle \prod_{i\in I}\tau^{a_i}_{d_i}\sigma^{r-2}\prod_{i\in B}\sigma^{b_i}\right\rangle^{\frac{1}{r},o}_0,
    \end{split}
\end{equation}   
where the notation $\bm{s}\vert_{1,I',B'}$ in the superscripts of the genus-one correlators refers to $\gamma\vert_{I'}$-canonical multisections (determined by $\bm{s}$) of the vector bundle
$$\mathcal W\oplus \bigoplus_{i\in I\cap I'} \mathbb L_i^{\oplus d_i}\to \oPMb^{1/r,0}_{1,B',I'},$$
the notation $i^{r-2-a}_I$ represents an internal markings with twist $r-2-a$, and $i^{r-2}_B$ represents a boundary markings with twist $r-2$.

\end{prop}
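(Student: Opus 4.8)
The strategy is to compute the left-hand side correlator $\langle \tau_{d_{i_1}+1}^{a_{i_1}}\prod\tau^{a_i}_{d_i}\prod\sigma^{b_i}\rangle^{\frac{1}{r},o,\gamma,\bm{s}}_1$ by realizing the ``extra $\psi$'' via the divisor-like section $\tilde{\mathfrak t}_{i_1,\operatorname{for}_{\{j\}}(\cdot)\to\cdot}$ of Lemma~\ref{lem zero locus of t between L} and Remark~\ref{rmk divosr like section}, i.e. expressing $\mathbb L_{i_1}^{\oplus(d_{i_1}+1)}$ as $\mathbb L_{i_1}^{\oplus d_{i_1}}\otimes\bigl(\mathbb L_{i_1}\otimes\operatorname{For}^*\mathbb L_{i_1}^\vee\bigr)\oplus\operatorname{For}^*\mathbb L_{i_1}$ after forgetting a suitable auxiliary marking, so that the Euler-class count of the canonical section of $E':=\mathcal W\oplus\bigoplus\mathbb L_i^{\oplus d_i}\oplus\mathbb L_{i_1}^{\oplus(d_{i_1}+1)}$ breaks into two pieces: a contribution from the zero locus $D_{i_1,\ast}$ of $\tilde{\mathfrak t}$, and a contribution from the zero locus of the pulled-back section $\operatorname{For}^*\mathfrak s_{i_1}$. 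Here there is no auxiliary marking to forget on the genus-one vertex; instead the role of the forgetful map is played by the TRR multisection $\mathfrak s_{i_1,u_{1,0,1}}$ of \S\ref{sec basic trr section} and its compatibility with $\hat{\mathcal B}\Gamma$, so the ``$+1$'' $\psi$ is traded for the section $\mathfrak s_{\nu(j),u}$ prescribed by item~\ref{item def tame K} of Definition~\ref{def tame perturbation}. The first two lines of \eqref{eq trr g=1 ordered} will come from $D_{i_1,\ast}$, whose components $\Delta$ (by Observation~\ref{obs vanishing loci of single trr}) each have the internal marking $i_1$ on a closed genus-zero vertex $v^c$ connected by a separating internal edge to the rest; by the decomposition properties of Proposition~\ref{prop decomposition}, the Witten bundle and the $\mathbb L$'s split over such $\Delta$, the $v^c$ side gives a closed extended $r$-spin correlator $\langle\tau_0^a\tau_{d_{i_1}}^{a_{i_1}}\prod_{R_1}\rangle^{\text{ext}}_0$ (summing over the twist $a\in\{-1,\dots,r-2\}$ at the node via \eqref{eq sum of twist at node}), and the other side contributes either a genus-one correlator (the separating edge detaches a genus-zero from a genus-one piece, giving the first line) or, if that side is genus-zero, it must pair with a further genus-zero vertex — but this is where the third summand $\tfrac12\langle\prod\tau^{a_i}_{d_i}\sigma^{r-2}\prod\sigma^{b_i}\rangle^{\frac1r,o}_0$ enters, arising from the non-separating boundary node in $u_{1,0,1}$ that degenerates the cylinder to a genus-zero disk with two glued boundary half-nodes: the $\tfrac12$ is the weight-$\tfrac12$ of the two branches of the TRR multisection $\mathfrak s_{i,v_{nsp},h_1,h_2}$ (equivalently, the automorphism exchanging $h_1,h_2$), and the new boundary marking of twist $r-2$ is the inserted point from the point-insertion identification of the type-AI/type-BI boundaries (Theorem~\ref{thm PI boundaries paried}).

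\textbf{Key steps, in order.} (1) Set up the modified bundle $E'$ on $\oPMb^{1/r,0}_{1,B,I}$ with $d_{i_1}$ replaced by $d_{i_1}+1$, and choose a $\gamma'$-canonical multisection $\bm{s}'$ extending the given data, where $\gamma'$ puts the extra copy last; by Proposition~\ref{prop exist canonical section} such $\bm{s}'$ exists, and by Definition~\ref{dfn correlator} its zero count is the LHS. (2) On each genus-one vertex $\hat{\mathcal B}\Gamma$, use items~\ref{item def tame K}–\ref{item def tame non vanish in K} of Definition~\ref{def tame perturbation} to arrange that the last branch equals the TRR section $\mathfrak s_{i_1,u}$ outside a neighbourhood $K$ of its zero locus $\bigcup_{\Lambda\in\mathcal G_{i_1}(u)}\Mbar_\Lambda$, and that the remaining section $T_{D-1}$ is nowhere-zero on $K$ (this is exactly where the genus-one rank-equals-dimension hypothesis is used, via Remark~\ref{rmk on non vanish tame rk >= dim}). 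Hence all zeros of $\bm{s}'$ lie on the zero locus of the TRR section, i.e. on the boundary strata $\Lambda$ described above. (3) Enumerate those strata: each $\Lambda$ has $i_1$ on a closed genus-zero $v^c$ joined by a separating internal Ramond/NS edge, or the degeneration is the non-separating boundary node of the genus-one vertex; in the first case apply Proposition~\ref{prop decomposition}\,(\ref{it decompose Ramond boundary edge})–(4) and Lemma~\ref{lem zero of assembling} to see that the zero count factors as (closed extended correlator on $v^c$) $\times$ (the complementary correlator), summing over the node twist $a$; carefully track the canonical relative orientation using the last items of \cite[Propositions 3.1, 3.3]{TZ1} and Lemma~\ref{lem zero locus of t between L} to get the signs right and to confirm the complementary piece is itself a $\gamma\vert_{R_2}$-canonical count, hence one of the correlators on the RHS. (4) Handle the genus-zero-meets-genus-zero and the non-separating-boundary-node cases, producing the $\sum_{R_1\sqcup R_2,T_1\sqcup T_2}$ term (genus-zero $\times$ genus-one, via a separating boundary node) and the $\tfrac12(\cdots)_0$ term (non-separating boundary node of the cylinder), where the factor $\tfrac12$ comes from the two equal-weight branches of $\mathfrak s_{i,v_{nsp},h_1,h_2}$ and the inserted twist-$(r-2)$ boundary marking comes from the point-insertion gluing. (5) Assemble: the Euler number of $E'$ is independent of the section used, so equating the two computations of the same number yields \eqref{eq trr g=1 ordered}.

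\textbf{Main obstacle.} The hard part is Step~(3)–(4): controlling the \emph{signs} from the canonical relative orientation $o_\Gamma$ of $\mathcal W_\Gamma$ under the decomposition at a separating internal edge, and simultaneously keeping track of the orientation of $\mathbb L_{i_1}$ and of the divisor-section $\tilde{\mathfrak t}_{i_1,\cdot}$ (Lemma~\ref{lem zero locus of t between L}) — including the sign flips recorded in Remark~\ref{rmk divosr like section} when one passes from $\tilde{\mathfrak t}$ to its dual $\tilde{\mathfrak t}'^\vee$, and the orientation rule \eqref{eq orientation point insertion} on the point-insertion side. A second, more conceptual subtlety is ruling out ``unexpected'' zeros: one must verify that on every boundary stratum $\Delta\in\partial^*\Gamma$ that is \emph{not} of the listed types the canonical multisection is nowhere zero — this uses the positivity of the $\mathcal W$-part on genus-zero vertices (item~\ref{item canonical positive}), the tameness on genus-one vertices (item~\ref{item canonical tame g=1}), and the rank count of Observation~\ref{obs non vanish when exist g=1 component}, together with the dimension inequality $\dim\oPMb_\Gamma\ge\dim\oPMb_{\hat{\mathcal B}\Gamma}$ to exclude contributions from strata where an inserted point is forgotten. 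Once the bookkeeping of orientations and the exclusion of spurious zeros are settled, the rest is a direct application of the decomposition and assembling machinery already developed.
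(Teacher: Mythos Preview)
Your overall strategy---replace the last copy of $\mathbb L_{i_1}$ by a TRR-type section and localize the zero count on its vanishing locus---is the paper's strategy too, and you correctly identify the role of tameness (items~\ref{item def tame K}--\ref{item def tame non vanish in K} of Definition~\ref{def tame perturbation}) and the weight-$\tfrac12$ origin of the coefficient in the last term. But there is a genuine gap in Step~(2), and a misidentification of where the second and third summands come from.

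The gap in Step~(2): you argue that tameness on the genus-one vertex forces all zeros of $\bm s$ onto the zero locus of $\mathfrak s_{i_1,u}$. But by Observation~\ref{obs non vanish when exist g=1 component}, a $\gamma$-canonical $\bm s$ vanishes \emph{nowhere} on any $\oPMb_{\bm G}$ that has a genus-one vertex; every zero of $\bm s$ lies on a component whose vertices are all genus-zero, and on those components there is no tameness hypothesis on the $\mathbb L$-part---only the positivity of the $\mathcal W$-part. So your localization claim is false for the original $\bm s$. The paper's remedy is the homotopy of Step~4: one constructs a \emph{global} section $s^{trr}_{i_1}$ of $\mathbb L_{i_1}\to\oPMb^{1/r,0}_{1,B,I}$ (not just on genus-one pieces), sets $\bm s^{mod}:=\bm s^{re}\oplus s^{trr}_{i_1}$, and then shows $\#Z(\bm s)=\#Z(\bm s^{mod})$ via a homotopy that is controlled on genus-one components by tameness and on genus-zero components by positivity plus a transversality perturbation. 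This step is not optional and is where most of the analytic work is.

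The misidentification of the second and third summands: the natural TRR multisection lives in $\mathbb L^*_{i_1}$ (it is built from $\mathfrak s_{i_1,\hat{\mathcal B}\Gamma_{i_1,\bm G}}$ with respect to the root(s), and on genus-zero $\Gamma_{i_1,\bm G}$ the roots are \emph{internal} inserted points which $\mathbb L_{i_1}$ forgets but $\mathbb L^*_{i_1}$ does not). Converting $s^{trr*}_{i_1}$ to a section of $\mathbb L_{i_1}$ means tensoring with $\tilde{\mathfrak t}^\vee_{i_1,\mathbb L\to\mathbb L^*}$, which introduces \emph{poles} along the loci $Z^{R,T}_{sp}$ and $Z^1_{nsp}$ where an inserted internal root lies on the closed bubble with $i_1$ (in $Z^1_{nsp}$ one branch has a zero/pole cancellation, the other does not). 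After modifying these poles to zeros with reversed orientation via Remark~\ref{rmk divosr like section}, \emph{they} are what contribute the second and third summands---with the orientation reversal producing the correct sign via \eqref{eq orientation point insertion} and the dashed-line count. The first summand, by contrast, comes from $Z^R_{co}\subset\bm{\mathcal G}_{i_1,co}$, the genuine zeros of $s^{trr*}_{i_1}$ where the closed bubble carries only markings from $I$; by Observation~5.2 (the dimension/rank argument you allude to), $\bm s^{re}$ can vanish on these loci but not on the remaining zeros of $s^{trr*}_{i_1}$. Your plan to get the second term ``via a separating boundary node'' and the third ``from the non-separating boundary node of the cylinder'' captures the underlying degeneration type but not the mechanism: these terms arise from the poles of the $\mathbb L^*\to\mathbb L$ conversion, not directly from boundary-edge strata or from zeros of the TRR section.
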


\begin{proof}
    Let 
    $$\bm{s}=s^{\mathcal W}\oplus\bigoplus_{i\in I\setminus
    \{i_1\}}\bigoplus_{1\le t \le d_i}s_i^t \oplus \bigoplus_{1\le t \le d_{i_1}+1}s_{i_1}^t$$ 
    be a $\gamma$-canonical multisection of $$E_{1,B,I}:=\mathcal W_{1,B,I}\oplus \bigoplus_{i\in I\setminus \{i_1\}} \mathbb L_i^{\oplus d_i} \oplus \mathbb L_{i_1}^{\oplus(d_{i_1}+1)}\to \oPMb^{1/r,0}_{1,B,I}.$$
    Under the similar notation as in \S\ref{sec sections def etc} we write $$\bm{s}=t_0\oplus t_1\oplus\dots \oplus t_D,$$
    where $t_D$ is a multisection of $\mathbb L_{i_1}$ for the maximal element $i_1\in I^{d>0}$, and we write 
    \begin{equation}\label{eq def remaining sec}
    \bm{s}^{re}:=t_0\oplus t_1\oplus\dots \oplus t_{D-1}
    \end{equation}
    as a multisection of 
    $$
    E^{re}_{1,B,I}=\mathcal W_{1,B,I} \oplus \bigoplus_{i\in I} \mathbb L_i^{\oplus d_i}\to \oPMb^{1/r,0}_{1,B,I}.
    $$

      For a $(r,0)$-graph $\bm{G}\in \sGPI^{r,0}_{1,B,I}$, we denote by $\Gamma_{i_1,\bm{G}}\in V(\bm{G})$ the unique vertex of $\bm{G}$ such that $i_1=\nu(D)\in T^I(\Gamma_{i_1,\bm{G}})$.   Since $\bm{s}=\bm{s}^{re}\oplus t_D$ is $\gamma$-canonical, for any $\bm{G}\in \sGPI^{r,0}_{1,B,I}$, we have a decomposition 
        $$
         \bm{s}_{\Gamma_{i_1,\bm{G}}}= \bm{s}^{re}_{\Gamma_{i_1,\bm{G}}}\oplus t_{D,\Gamma_{i_1,\bm{G}}}
        $$
        for a multisection $t_{D,\Gamma_{i_1,\bm{G}}}$ of $\operatorname{For}^*_{\Gamma_{i_1,\bm{G}}\to \mathcal B \Gamma_{i_1,\bm{G}}}\mathbb L_{i_1,\mathcal B \Gamma_{i_1,\bm{G}}}\to \oPMb_{\Gamma_{i_1,\bm{G}}}$ and a multisection $\bm{s}^{re}_{\Gamma_{i_1,\bm{G}}}$ of 
        $$
        E^{re}_{\Gamma_{i_1,\bm{G}}}:= \mathcal W_{{\Gamma_{i_1,\bm{G}}}} \oplus   \bigoplus_{i\in I \cap T^I(\Gamma_{i_1,\bm{G}})} \operatorname{For}^*_{\Gamma_{i_1,\bm{G}}\to \mathcal B \Gamma_{i_1,\bm{G}}}\mathbb L_{i_1,\mathcal B \Gamma_{i_1,\bm{G}}}^{d_{i}}
        \to \oPMb_{\Gamma_{i_1,\bm{G}}},
        $$
        such that $$\bm{s}^{re}_{\bm{G}}=\bm{s}^{re}_{\Gamma_{i_1,\bm{G}}} \boxplus \bboxplus_{\Gamma \in V(\bm{G})\setminus \{\Gamma_{i_1,\bm{G}}\}}\bm{s}_{\Gamma}.$$
        We have a similar decomposition for the bundles and multisections on $\oPMb_{\hat{\mathcal B}\Gamma_{i_1,\bm{G}}}$, \textit{i.e,}
        $$
        E_{\hat{\mathcal B}\Gamma_{i_1,\bm{G}}}=E^{re}_{\hat{\mathcal B}\Gamma_{i_1,\bm{G}}} \oplus \operatorname{For}^*_{\hat{\mathcal B}\Gamma_{i_1,\bm{G}}\to \mathcal B \Gamma_{i_1,\bm{G}}}\mathbb L_{i_1,\mathcal B \Gamma_{i_1,\bm{G}}}\to \oPMb_{\hat{\mathcal B}\Gamma_{i_1,\bm{G}}}
        $$
        and 
        $$
        \bm{s}_{\hat{\mathcal B}\Gamma_{i_1,\bm{G}}}= \bm{s}^{re}_{\hat{\mathcal B}\Gamma_{i_1,\bm{G}}} \oplus t_{D,\hat{\mathcal B}\Gamma_{i_1,\bm{G}}}.
        $$

    This proof is split into five steps:
    \begin{enumerate}
        \item  Construct a specific multisection $\nontilde{s}^{trr*}_{i_1}$ of $\mathbb L^*_{i_1}\to \oPMb^{1/r,0}_{1,B,I}$.
        \item Construct a multisection $\tilde{\mathfrak{t}}_{i_1,\mathbb L \to \mathbb L^*}$ of $\mathbb L^\vee_{i_1}\otimes \mathbb L^*_{i_1}\to \oPMb^{1/r,0}_{1,B,I}$.
        \item  Modify the multisection $\tilde{\mathfrak{t}}^\vee_{i_1,\mathbb L \to \mathbb L^*}\otimes \nontilde{s}^{trr*}_{i_1}$ with `poles' to a multisection $\nontilde{s}^{trr}_{i_1}$ of $\mathbb L_{i_1}\to \oPMb^{1/r,0}_{1,B,I}$ without `poles'.
        \item Show that $\bm{s}$ has the same number of zeros (with multiplicity) as $\bm{s}^{mod}:=\bm{s}^{re} \oplus \nontilde{s}^{trr}_{i_1}$ in $\oPMb^{1/r,0}_{1,B,I}$.
        \item Count the number of zeros (with multiplicity) of the restriction of $\bm{s}^{re}$ to the zero locus of the multisection $\nontilde{s}^{trr}_{i_1}$.
        
    \end{enumerate}

   \noindent\textbf{Step 1.} 
    We first construct a specific multisection of $\nontilde{s}^{trr*}_{i_1}$ of $\mathbb L^*_{i_1}\to \oPMb^{1/r,0}_{1,B,I}$.

        We need to construct a multisection $\nontilde{s}^{trr*}_{i_1,\mathbf{G}}$ of ${\mathbb L}^*_{i_1}\to \oPMb_{\bm{G}}$ for each smooth $\bm{G}$. Recall that in \S \ref{sec uni cotangent line over glued moduli}, the line bundle ${\mathbb L}^*_{i_1}\to \oPMb_{\bm{G}}$ is pulled back from $\mathbb L_{i_1,\hat{\mathcal B}\Gamma_{i_1,\bm{G}}}\to \oQMb_{\hat{\mathcal B}\Gamma_{i_1,\bm{G}}}$.  Since $\bm{G}$ is genus-one, the set 
        $$RT(\hat{\mathcal B}\Gamma_{i_1,\bm{G}}):=T(\hat{\mathcal B}\Gamma_{i_1,\bm{G}})\cap \left(I^{PI}_{sp,0}(\bm{G})\sqcup B^{PI}_{sp,0}(\bm{G})\sqcup I^{PI}_{nsp}(\bm{G})\sqcup B^{PI}_{nsp}(\bm{G})\right)$$ 
        is always empty for genus-one $\Gamma_{i_1,\mathbf{G}}$, and is either a single-element set (where $RT(\hat{\mathcal B}\Gamma_{i_1,\bm{G}})\subseteq T(\hat{\mathcal B}\Gamma_{i_1,\bm{G}})\cap \left(I^{PI}_{sp,0}(\bm{G})\sqcup B^{PI}_{sp,0}(\bm{G})\right)$) or a two-element set (where $RT(\hat{\mathcal B}\Gamma_{i_1,\bm{G}})=T(\hat{\mathcal B}\Gamma_{i_1,\bm{G}})\cap \left(I^{PI}_{nsp}(\bm{G})\sqcup B^{PI}_{nsp}(\bm{G})\right)$) for genus-zero $\Gamma_{i_1,\mathbf{G}}$. 

        In the case $RT(\hat{\mathcal B}\Gamma_{i_1,\bm{G}})$ is empty, \textit{i.e.,} $\Gamma_{i_1,\bm{G}}$ is genus-one, we have $\hat{\mathcal B}\Gamma_{i_1,\bm{G}}={\mathcal B}\Gamma_{i_1,\bm{G}}$. We take the restriction $\nontilde{s}^{trr*}_{i_1,\bm{G}}$ of $\nontilde{s}^{trr*}_{i_1}$ to $\oPMb_{\bm{G}}\subset \oPMb^{1/r,0}_{1,B,I}$ to be the multisection pulled back from the TRR multisection 
        $\mathfrak s_{i_1,\mathcal B\Gamma_{i_1,\bm{G}}}$ of $\mathbb L_{i_1,\mathcal B\Gamma_{i_1,\bm{G}}}\to \oPMb_{\mathcal B\Gamma_{i_1,\bm{G}}}=\oPMb_{\hat{\mathcal B}\Gamma_{i_1,\bm{G}}}$ via the projection $\pi_{\mathbf{G}\to \Gamma_{i_1,\mathbf{G}}}\colon \oPMb_{\mathbf{G}}\to \oPMb_{\Gamma_{i_1,\mathbf{G}}}$ and the forgetful morphism $\operatorname{For}_{\Gamma_{i_1,\bm{G}}\to \mathcal B \Gamma_{i_1,\bm{G}}}\colon \oPMb_{\Gamma_{i_1,\bm{G}}}\to \oPMb_{\mathcal B \Gamma_{i_1,\bm{G}}}$ forgetting all the inserted tails. 
        
        In the case $RT(\hat{\mathcal B}\Gamma_{i_1,\bm{G}})$ is a single-element set, we denote by $r_0\in RT(\hat{\mathcal B}\Gamma_{i_1,\bm{G}})$ its unique element and take the section $\nontilde{s}^{trr*}_{i_1,\mathbf{G}}$ of ${\mathbb L}^*_{i_1}\to \oPMb_{\bm{G}}$ to be the section pulled back from the TRR section $\mathfrak s_{i_1,\hat{\mathcal B}\Gamma_{i_1,\bm{G}},r_0}$ of $\mathbb L_{i_1,\hat{\mathcal B}\Gamma_{i_1,\bm{G}},r_0}\to \oPMb_{\hat{\mathcal B}\Gamma_{i_1,\bm{G}}}$ via the projection $\pi_{\mathbf{G}\to \Gamma_{i_1,\mathbf{G}}}\colon \oPMb_{\mathbf{G}}\to \oPMb_{\Gamma_{i_1,\mathbf{G}}}$ and the forgetful morphism $\operatorname{For}_{\Gamma_{i_1,\bm{G}}\to \hat{\mathcal B} \Gamma_{i_1,\bm{G}}}\colon \oPMb_{\Gamma_{i_1,\bm{G}}}\to \oPMb_{\hat{\mathcal B} \Gamma_{i_1,\bm{G}}}$ forgetting the inserted tails in $I^{PI}_{sp,0}$.  
        
        In the case $RT(\hat{\mathcal B}\Gamma_{i_1,\bm{G}})$ is a two-element set, we denote by $r_1,r_2\in RT(\hat{\mathcal B}\Gamma_{i_1,\bm{G}})$ its elements and take the multisection $\nontilde{s}^{trr*}_{i_1,\mathbf{G}}$ of ${\mathbb L}^*_{i_1}\to \oPMb_{\bm{G}}$ to be the multisection pulled back from the TRR multisection $\mathfrak s_{i_1,\hat{\mathcal B}\Gamma_{i_1,\bm{G}},r_1,r_2}$ of $\mathbb L_{i_1,\hat{\mathcal B}\Gamma_{i_1,\bm{G}},r_1,r_2}\to \oPMb_{\hat{\mathcal B}\Gamma_{i_1,\bm{G}}}$ via the projection $\pi_{\mathbf{G}\to \Gamma_{i_1,\mathbf{G}}}\colon \oPMb_{\mathbf{G}}\to \oPMb_{\Gamma_{i_1,\mathbf{G}}}$ and the morphism $\operatorname{For}_{\Gamma_{i_1,\bm{G}}\to \hat{\mathcal B} \Gamma_{i_1,\bm{G}}}\colon \oPMb_{\Gamma_{i_1,\bm{G}}}\to \oPMb_{\hat{\mathcal B} \Gamma_{i_1,\bm{G}}}$ forgetting the inserted tails in $I^{PI}_{sp,0}$.

        The multisections $\nontilde{s}^{trr*}_{i_1,\mathbf{G}}$ defined above on each $\oPMb_{\mathbf{G}}$ can be glued in the a multisection $\nontilde{s}^{trr*}_{i_1}$ of the glued line bundle $\widetilde{\mathbb L}^*_{i_1}\to \widetilde{\mathcal{PM}}^{1/r,0}_{1,B,I}$. This follows from the behaviour of the multisections $\mathfrak s$ at the boundary strata, \textit{i.e.} \eqref{eq behaviour of trr 1 rooted}, \eqref{eq behaviour of trr 1 rooted} and \eqref{eq behaviour of trr two rooted}.

        Following from the above construction and Observation \ref{obs vanishing loci of single trr}, the zero locus $Z(\nontilde{s}^{trr*}_{i_1})$ of the multisection $\nontilde{s}^{trr*}_{i_1}$ of ${\mathbb L}^*_{i_1}\to \oPMb^{1/r,0}_{1,B,I}$ (or $\widetilde{\mathbb L}^*_{i_1}\to \widetilde{\mathcal{PM}}^{1/r,0}_{1,B,I}$ after gluing) is of the form
        \begin{equation} \label{eq of a single global trr of Lstar}
        Z(\nontilde{s}^{trr*}_{i_1})=\bigcup_{\bm{G}\in \bm{\mathcal G}_{i_1,g=1} } \oPMb_{\bm{G}} \cup \bigcup_{\bm{G}\in  \bm{\mathcal G}_{i_1,sp} } \oPMb_{\bm{G}} \bigcup_{\bm{G}\in  \bm{\mathcal G}^{1/2}_{i_1,nsp} } \oPMb_{\bm{G}} \bigcup_{\bm{G}\in \bm{\mathcal G}^{1/2+1/2}_{i_1,nsp}} \oPMb_{\bm{G}},
        \end{equation}
        where 
        \begin{equation*}
             \bm{\mathcal G}_{i_1}:=\left\{ \bm{G}\in \GPI_{1,B,I}^{r,0} \colon  \begin{array}{ccc} \text{For   $\Gamma_{i_1,\bm{G}}\in V(\bm{G})$  such that  $i_1\in T^I(\Gamma_{i_1,\bm{G}})$, $\Gamma_{i_1,\bm{G}}$ consists of one closed}\\ \text{vertex $v_{\Gamma_{i_1,\bm{G}},c}$ and one open vertex $v_{\Gamma_{i_1,\bm{G}},c}$ connected by an internal}
             \\ \text{edge $e_{\bm{G}}$, where $\hat{\mathcal B} v_{\Gamma_{i_1,\bm{G}},c}$ is stable, and all $\Gamma\in V(\bm{G})\setminus\{\Gamma_{i_1,\bm{G}}\}$ are smooth}
             \end{array}\right\},
        \end{equation*}
        \begin{equation*}
             \bm{\mathcal G}^{}_{i_1,g=1}:=\left\{ \bm{G}\in \bm{\mathcal G}_{i_1} \colon  \begin{array}{c} \text{$\Gamma_{i_1,\bm{G}}$ is genus-one}
             \end{array}\right\},
        \end{equation*}
     \begin{equation*}
             \bm{\mathcal G}^{}_{i_1,sp}:=\left\{ \bm{G}\in \bm{\mathcal G}_{i_1} \colon  \begin{array}{c} \text{$\Gamma_{i_1,\bm{G}}$ is genus-zero, $\lvert RT(\Gamma_{i_1,\bm{G}})\rvert=1$, $RT(\Gamma_{i_1,\bm{G}}) \subset T(v_{\Gamma_{i_1,\bm{G}},o})$}
             \end{array}\right\},
        \end{equation*}
        \begin{equation*}
             \bm{\mathcal G}^{1/2}_{i_1,nsp}:=\left\{ \bm{G}\in \bm{\mathcal G}_{i_1} \colon  \begin{array}{c} \text{$\Gamma_{i_1,\bm{G}}$ is genus-zero, $\lvert RT(\Gamma_{i_1,\bm{G}})\rvert=2$, $\lvert RT(\Gamma_{i_1,\bm{G}}) \cap T(v_{\Gamma_{i_1,\bm{G}},o})\rvert=1$}
             \end{array}\right\},
        \end{equation*}
        \begin{equation*}
             \bm{\mathcal G}^{1/2+1/2}_{i_1,nsp}:=\left\{ \bm{G}\in \bm{\mathcal G}_{i_1} \colon  \begin{array}{c} \text{$\Gamma_{i_1,\bm{G}}$ is genus-zero, $\lvert RT(\Gamma_{i_1,\bm{G}})\rvert=2$, $\lvert RT(\Gamma_{i_1,\bm{G}}) \cap T(v_{\Gamma_{i_1,\bm{G}},o})\rvert=2$}
             \end{array}\right\}.
        \end{equation*}
        
        \begin{rmk}\label{rmk on zero locus of the single global trr of Lstar}
            Note that $\oPMb_{\bm{G}}$ is contained in the zero locus of  $\nontilde{s}^{trr*}_{i_1}$ over $\oPMb_{d_e\bm{G}}$ for $\bm{G}\in \bm{\mathcal G}_{i_1}$. 
             When $\bm{G}\in \bm{\mathcal G}^{1/2}_{i_1,nsp} \sqcup \bm{\mathcal G}^{1/2+1/2}_{i_1,nsp}$, the multisection             $\nontilde{s}^{trr*}_{i_1}$  has two branches of weight $1/2$ over $\oPMb_{d_e\bm{G}}$; for $\bm{G}\in \bm{\mathcal G}^{1/2}_{i_1,nsp}$, only one branch out of two vanishes on $\oPMb_{\bm{G}}$; for $\bm{G}\in  \bm{\mathcal G}^{1/2+1/2}_{i_1,nsp}$, both branch vanish on $\oPMb_{\bm{G}}$. 
        \end{rmk}

        A subset of $\bm{\mathcal G}_{i_1,co}\subseteq \bm{\mathcal G}^{}_{i_1,g=1} \sqcup \bm{\mathcal G}^{}_{i_1,sp} \sqcup \bm{\mathcal G}^{1/2}_{i_1,nsp} \sqcup \bm{\mathcal G}^{1/2+1/2}_{i_1,nsp}$ is particularly important in this proof, which is given by
        \begin{equation*}
            \bm{\mathcal G}_{i_1,co}:=\left\{ \bm{G}\in \bm{\mathcal G}^{}_{i_1,g=1} \sqcup \bm{\mathcal G}^{}_{i_1,sp} \sqcup \bm{\mathcal G}^{1/2}_{i_1,nsp} \sqcup \bm{\mathcal G}^{1/2+1/2}_{i_1,nsp}\colon T(v_{\Gamma_{i_1,\bm{G}},c})\cap I^{PI}(\bm{G})=\emptyset \right\}.
        \end{equation*}
        For each $\bm{G}\in \bm{\mathcal G}_{i_1}$, let $n_{\bm{G}}\in T(v_{\Gamma_{i_1,\bm{G}},c})$ be the unique internal half-edge, then by definition $T(v_{\Gamma_{i_1,\bm{G}},c})\setminus\{n_{\bm{G}}\}$ is always a non-empty subset of $I$ containing $i_1$ for $\bm{G}\in \bm{\mathcal G}_{i_1,co}$, thus we have a decomposition 
        \begin{equation*}
            \bm{\mathcal G}_{i_1,co}= \bigsqcup_{R\subseteq I\setminus \{i_1\}, R\ne \emptyset}\bm{\mathcal G}^R_{i_1,co},
        \end{equation*}
        where 
        \begin{equation*}
            \bm{\mathcal G}^R_{i_1,co}:= \left\{ \bm{G}\in \bm{\mathcal G}_{i_1,co} \colon  T(v_{\Gamma_{i_1,\bm{G}},c})=I\sqcup \{i_1,n_{\bm{G}}\}\right\}.
        \end{equation*}

    \noindent \textbf{Step 2.}
        We will modify $\nontilde{s}^{trr*}_{i_1,\mathbf{G}}$ to a  multisection $\nontilde{s}^{trr}_{i_1,\mathbf{G}}$  of $\mathbb L_{i_1}\to \oPMb^{1/r,0}_{1,B,I}$ by tensoring with a specific section of the line bundle $\mathbb L_{i_1}^{*\vee}\otimes \mathbb L_{i_1} \to \oPMb^{1/r,0}_{1,B,I}$.
         We first look at the dual line bundle $\mathbb L_{i_1}^*\otimes \mathbb L_{i_1}^\vee \to \oPMb^{1/r,0}_{1,B,I}$. When we restrict it to $\oPMb_{\mathbf{G}}\subseteq \oPMb^{1/r,0}_{1,B,I}$, by definition it is pulled back from the line bundle 
        $$\mathbb L_{i_1,\hat{\mathcal B}\Gamma_{i_1,\bm{G}}}\otimes \operatorname{For}^*_{\hat{\mathcal B}\Gamma_{i_1,\bm{G}}\to {\mathcal B}\Gamma_{i_1,\bm{G}}}\mathbb L_{i_1,{\mathcal B}\Gamma_{i_1,\bm{G}}}^\vee\to \oPMb_{\hat{\mathcal B}\Gamma_{i_1,\bm{G}}} $$ via $\pi_{\mathbf{G}\to \Gamma_{i_1,\mathbf{G}}}\colon \oPMb_{\mathbf{G}}\to \oPMb_{\Gamma_{i_1,\mathbf{G}}}$ 
        and        $\operatorname{For}_{\Gamma_{i_1,\bm{G}}\to\hat{\mathcal B}\Gamma_{i_1,\bm{G}}}\colon \oPMb_{\Gamma_{i_1,\bm{G}}}\to \oPMb_{\hat{\mathcal B}\Gamma_{i_1,\bm{G}}}$.  
        Notice that ${\mathcal B}\Gamma_{i_1,\bm{G}}$ can be obtained by forgetting all the internal tails of $\hat{\mathcal B}\Gamma_{i_1,\bm{G}}$ in $T^I(\hat{\mathcal B}\Gamma_{i_1,\bm{G}})\cap RT(\hat{\mathcal B}\Gamma_{i_1,\bm{G}})$, we have a section (see \S \ref{sec cotangent lines relation}) $\tilde{\mathfrak t}_{i_1,{\mathcal B}\Gamma_{i_1,\bm{G}}\to\hat{\mathcal B}\Gamma_{i_1,\bm{G}}}$ of the line bundle $\mathbb L_{i_1,\hat{\mathcal B}\Gamma_{i_1,\bm{G}}}\otimes \operatorname{For}^*_{\hat{\mathcal B}\Gamma_{i_1,\bm{G}}\to {\mathcal B}\Gamma_{i_1,\bm{G}}}\mathbb L_{i_1,{\mathcal B}\Gamma_{i_1,\bm{G}}}^\vee$. We pullback $\tilde{\mathfrak t}_{i_1,{\mathcal B}\Gamma_{i_1,\bm{G}}\to\hat{\mathcal B}\Gamma_{i_1,\bm{G}}}$  to a section $\tilde{\mathfrak{t}}_{i_1,\mathbb L \to \mathbb L^*,\mathbf{G}}$ of $\mathbb L_{i_1}^*\otimes \mathbb L_{i_1}^\vee \to \oPMb_{\mathbf{G}}$, \textit{i.e.,}
        $$
        \tilde{\mathfrak{t}}_{i_1,\mathbb L \to \mathbb L^*,\mathbf{G}}:=\pi^*_{\mathbf{G}\to \Gamma_{i_1,\mathbf{G}}}\operatorname{For}^*_{\Gamma_{i_1,\bm{G}}\to\hat{\mathcal B}\Gamma_{i_1,\bm{G}}}\tilde{\mathfrak t}_{i_1,{\mathcal B}\Gamma_{i_1,\bm{G}}\to\hat{\mathcal B}\Gamma_{i_1,\bm{G}}}.
        $$
        These sections $\tilde{\mathfrak{t}}_{i_1,\mathbb L \to \mathbb L^*,\mathbf{G}}$ glue to a section $\tilde{\mathfrak{t}}_{i_1,\mathbb L \to \mathbb L^*}$ of $\widetilde{\mathbb L}^*_{i_1}\otimes \widetilde{\mathbb L}_{i_1}^\vee\to \widetilde{\mathcal{PM}}^{1/r,0}_{1,B,I}$. In fact, assuming $\mathbf{G_1}$ and $\mathbf{G_2}$ are smooth $(r,0)$-graphs such that $\oPMb_{\mathbf{G_1}}$ and $\oPMb_{\mathbf{G_2}}$ are glued along their identified codimensional-1 boundaries $\overline{\text{bd}}_{BI}$ and $\overline{\text{bd}}_{AI}$, then we have 
        $$\overline{\text{bd}}_{BI}=\oPMb_{\Gamma_1^{BI}}\times \prod_{\Gamma\in V(\mathbf{G_1})\setminus\{\Gamma_1\}}\oPMb_{\Gamma}$$
        for a vertex $\Gamma_1\in V(\mathbf{G_1})$ and $\Gamma^{BI}_1\in \partial \Gamma_1$ with a unique (boundary) edge $e^{PI}$. We assume $e^{PI}$ is separating  (the case where $e^{PI}$ is non-separating is similar), and we denote by $v_0$ and $v_{r-2}$ the two vertices of $\Gamma_1^{BI}$, where the half-edge of $e^{PI}$ on $v_0$ (respectively $v_{r-2}$) is illegal with twist $0$ (respective legal with twist $r-2$). In this case we can write
        $$\overline{\text{bd}}_{AI}=\oPMb_{\Gamma_2^{AI}}\times \oPMb_{v_{r-2}}\times  \prod_{\Gamma\in V(\mathbf{G_1})\setminus\{\Gamma_1\}}\oPMb_{\Gamma},$$
        where $\Gamma_2^{AI}\in \partial \Gamma_2$ (for $\Gamma_2\in V(\bm{G}_2)=V(\bm{G}_1)\sqcup \{\Gamma_2,v_{r-2}\}\setminus \{\Gamma_1\}$) is an $r$-spin graph with two vertices, one of them is the same as $v_0$, the other vertex $v'$ has only one (inserted) internal tail and one boundary half-edge connecting to the twist-zero half-edge of $v_0$. 
        \begin{itemize}
            \item[-]
        If $i_1\in T^I(\Gamma_{i_1,\mathbf{G_1}})$ for a $\Gamma_{i_1,\mathbf{G_1}}\in V(\mathbf{G}_1)\setminus{\Gamma_1}$, then $\tilde{\mathfrak{t}}_{i_1,\mathbb L \to \mathbb L^*,\mathbf{G_1}}$ and $\tilde{\mathfrak{t}}_{i_1,\mathbb L \to \mathbb L^*,\mathbf{G_2}}$ coincide on the identified boundary since both of them are pulled back from 
        $$\tilde{\mathfrak t}_{i_1,{\mathcal B}\Gamma_{i_1,\bm{G_1}}\to\hat{\mathcal B}\Gamma_{i_1,\bm{G_1}}}=\tilde{\mathfrak t}_{i_1,{\mathcal B}\Gamma_{i_1,\bm{G_2}}\to\hat{\mathcal B}\Gamma_{i_1,\bm{G_2}}}$$
        over $\oPMb_{\hat{\mathcal B}\Gamma_{i_1,\bm{G_1}}}=\oPMb_{\hat{\mathcal B}\Gamma_{i_1,\bm{G_2}}}$.
        
        \item[-] If $i_1\in T^I(v_{r-2})$, then $\Gamma_{i_1,\mathbf{G_1}}=\Gamma_1$. By definition the restriction of $\tilde{\mathfrak{t}}_{i_1,\mathbb L \to \mathbb L^*,\mathbf{G_1}}$ to $\overline{\text{bd}}_{AI}$ is pulled back from 
        $\tilde{\mathfrak t}_{i_1,{\mathcal B}\Gamma_{1}^{BI}\to\hat{\mathcal B}\Gamma_{1}^{BI}}$ 
        via the projection $\pi_{\mathbf{G_1}\to \Gamma_1^{BI}}\colon \overline{\text{bd}}_{BI}\to \oPMb_{\Gamma_1^{BI}}$ and $\operatorname{For}_{\Gamma_{1}^{BI}\to \mathcal{B}\Gamma_{1}^{BI}}\colon \oPMb_{\Gamma_{1}^{BI}}\to \oPMb_{\mathcal{B}\Gamma_{1}^{BI}}$. The section 
        $\tilde{\mathfrak t}_{i_1,{\mathcal B}\Gamma_{1}^{BI}\to\hat{\mathcal B}\Gamma_{1}^{BI}}$ 
        itself is pulled back from $\tilde{\mathfrak t}_{i_1,{\mathcal B}v_{r-2}\to\hat{\mathcal B}\Gamma_{r-2}}$ via the projection $\pi_{\hat{\mathcal B}\Gamma_1^{BI}\to \hat{\mathcal B}v_{r-2}}\colon\oPMb_{\hat{\mathcal B}\Gamma_1^{BI}}\cong \oPMb_{\hat{\mathcal B}v_0}\times \oPMb_{\hat{\mathcal B}v_{r-2}}\to \oPMb_{\hat{\mathcal B}v_{r-2}}$ by the construction in \S \ref{sec cotangent lines relation}, under the natural identifications between $\pi^*_{\hat{\mathcal B} \Gamma_{1}^{BI}\to \hat{\mathcal B} v_{r-2}}\mathbb L_{i_1,\hat{\mathcal B} v_{r-2}}$ and $\mathbb L_{i_1,\hat{\mathcal B} \Gamma_{1}^{BI}}$, and between $$ \pi^*_{\hat{\mathcal B} \Gamma_{1}^{BI}\to \hat{\mathcal B} v_{r-2}} \operatorname{For}^*_{\hat{\mathcal B} v_{r-2} \to \mathcal B v_{r-2}} \mathbb L_{i_1,\mathcal B v_{r-2}}=\operatorname{For}^*_{\hat{\mathcal B} \Gamma_{1}^{BI} \to \mathcal B \Gamma_{1}^{BI}} \pi^*_{\mathcal B \Gamma_{1}^{BI}\to \mathcal B v_{r-2}}\mathbb L_{i_1,\mathcal B v_{r-2}}$$ and $\operatorname{For}^*_{\hat{\mathcal B} \Gamma_{1}^{BI} \to \mathcal B \Gamma_{1}^{BI}} \mathbb L_{i_1,\mathcal B \Gamma_{1}^{BI}}$. 

          On the other hand, the restriction of  $\tilde{\mathfrak{t}}_{i_1,\mathbb L \to \mathbb L^*,\mathbf{G_2}}$ to $\overline{\text{bd}}_{BI}$ is also pulled back from $\tilde{\mathfrak t}_{i_1,{\mathcal B}v_{r-2}\to\hat{\mathcal B}v_{r-2}}$ via the same map (after identify $\overline{\text{bd}}_{BI}$ and $\overline{\text{bd}}_{AI}$) by definition, thus it coincides with $\tilde{\mathfrak{t}}_{i_1,\mathbb L \to \mathbb L^*,\mathbf{G_1}}$ on this identified boundary.

        \item[-] If $i_1\in T^I(v_{0})$, similar to the previous item, the restriction of $\tilde{\mathfrak{t}}_{i_1,\mathbb L \to \mathbb L^*,\mathbf{G_1}}$ to $\overline{\text{bd}}_{BI}$ is pulled back from  $\tilde{\mathfrak t}_{i_1,{\mathcal B}v_{0}\to\hat{\mathcal B}v_{0}}$, 
         while the restriction of $\tilde{\mathfrak{t}}_{i_1,\mathbb L \to \mathbb L^*,\mathbf{G_2}}$ to $\overline{\text{bd}}_{AI}$  is also pulled back 
         from $\tilde{\mathfrak t}_{i_1,{\mathcal B}v_0\to\hat{\mathcal B}v_0}$ via the same morphism after identifying $\overline{\text{bd}}_{BI}$ and $\overline{\text{bd}}_{BI}$.

        \end{itemize}

        According to Lemma \ref{lem zero locus of t between L} and the construction above, the zero $Z(\tilde{\mathfrak{t}}_{i_1,\mathbb L \to \mathbb L^*})$ locus of the section $\tilde{\mathfrak{t}}_{i_1,\mathbb L \to \mathbb L^*}$ of  ${\mathbb L}^*_{i_1}\otimes {\mathbb L}_{i_1}^\vee\to  \oPMb^{1/r,0}_{1,B,I}$ (or $\widetilde{\mathbb L}^*_{i_1}\otimes \widetilde{\mathbb L}_{i_1}^\vee \to \widetilde{\mathcal{PM}}^{1/r,0}_{1,B,I}$ after gluing) is 
        \begin{equation*}
        Z(\tilde{\mathfrak{t}}_{i_1,\mathbb L \to \mathbb L^*})=\bigcup_{\mathbf{G}\in \bm{\mathcal F}_{i_1,sp}} \oPMb_{\mathbf{G}} \cup \bigcup_{\mathbf{G}\in \bm{\mathcal F}^1_{i_1,nsp}} \oPMb_{\mathbf{G}} \cup \bigcup_{\mathbf{G}\in\bm{\mathcal F}^2_{i_1,nsp}} \oPMb_{\mathbf{G}},
        \end{equation*}
        where, in the notations in Step 1, 
        \begin{equation*}
            \bm{\mathcal F}_{i_1,sp}:=\left\{ \bm{G}\in \bm{\mathcal G}_{i_1} \colon \text{ $RT(\Gamma_{i_1,\bm{G}})=\{i^{\Gamma_{i_1,\bm{G}}}_{sp}\}\subseteq I^{PI}_{sp,0}(\bm{G})$, $T(v_{\Gamma_{i_1,\bm{G}},c})=\{i_1,n_{\bm{G}},i^{\Gamma_{i_1,\bm{G}}}_{sp}\}$}\right \},
        \end{equation*}
         \begin{equation*}
            \bm{\mathcal F}^1_{i_1,nsp}:=\left\{ \bm{G}\in \bm{\mathcal G}_{i_1} \colon \text{$\lvert RT(\Gamma_{i_1,\bm{G}})\rvert=2$, $T(v_{\Gamma_{i_1,\bm{G}},c})=\{i_1,n_{\bm{G}},i^{\Gamma_{i_1,\bm{G}}}_{nsp}\}$ for $i^{\Gamma_{i_1,\bm{G}}}_{nsp}\in I^{PI}_{nsp}(\bm{G})$}\right \}
        \end{equation*}
        and 
        \begin{equation*}
            \bm{\mathcal F}^2_{i_1,nsp}:=\left\{ \bm{G}\in \bm{\mathcal G}_{i_1} \colon \text{$\lvert RT(\Gamma_{i_1,\bm{G}})\rvert=2$, $RT(\Gamma_{i_1,\bm{G}})\subseteq I^{PI}_{nsp}(\bm{G})$,  $T(v_{\Gamma_{i_1,\bm{G}},c})=\{i_1,n_{\bm{G}}\}\sqcup RT(\Gamma_{i_1,\bm{G}}) $}\right \}.
        \end{equation*}
        Note that for each $a\in I^{PI}_{sp,0}({\bm{G}})$, it induces  partitions of $I=I_{a,0}\sqcup I_{a,1}$ and $B=B_{a,0}\sqcup B_{a,1}$ in the following way:  when we remove the edge of $\hat{\bm{G}}$ (see Definition \ref{def rh graphs}) corresponding to the dashed line between $a$ and $\delta(a)$, we get two connected components $\hat{\bm{G}}_1$ and $\hat{\bm{G}}_0$, where $a$ is on the $\hat{\bm{G}}_0$ side. We define 
        $I_{s}(a):=I\cap \sqcup_{\Gamma\in V(\hat{\bm{G}}_s)} T^I(\Gamma)$ and $B_{s}(a):=B\cap \sqcup_{\Gamma\in V(\hat{\bm{G}}_s)} T^B(\Gamma)$ for $s=0,1$. Such partitions induces a decomposition 
        $$
        \bm{\mathcal F}_{i_1,sp}=\bigsqcup_{R\subseteq I\setminus\{i_1\}, T\subseteq B} \bm{\mathcal F}_{i_1,sp}^{R,T},
        $$
        where
        $$
       \bm{\mathcal F}_{i_1,sp}^{R,T}:=\{\bm{G} \in  \bm{\mathcal F}_{i_1,sp}\colon I_{0}(i_{sp}^{\Gamma_{i_1,\bm{G}}})=R\sqcup \{i_1\}, B_{0}(i_{sp}^{\Gamma_{i_1,\bm{G}}})=T \}.
        $$

        We consider the dual section $\tilde{\mathfrak{t}}^{\vee}_{i_1,\mathbb L \to \mathbb L^*}$ of $\tilde{\mathfrak{t}}_{i_1,\mathbb L \to \mathbb L^*}$, which is a section  of  ${\mathbb L}^{*\vee}_{i_1}\otimes {\mathbb L}_{i_1}\to  \oPMb^{1/r,0}_{1,B,I}$ (or $\widetilde{\mathbb L}^{*\vee}_{i_1}\otimes \widetilde{\mathbb L}_{i_1} \to \widetilde{\mathcal{PM}}^{1/r,0}_{1,B,I}$ after gluing) defined away from the zero locus $Z(\tilde{\mathfrak{t}}_{i_1,\mathbb L \to \mathbb L^*})$, and $Z(\tilde{\mathfrak{t}}_{i_1,\mathbb L \to \mathbb L^*})$ behaves like its `poles' (see Remark \ref{rmk divosr like section}). Then $\tilde{\mathfrak{t}}^{\vee}_{i_1,\mathbb L \to \mathbb L^*}\otimes
        \nontilde{s}^{trr*}_{i_1}$ is a multisection of  $ {\mathbb L}_{i_1}\to  \oPMb^{1/r,0}_{1,B,I}$ (or $\ \widetilde{\mathbb L}_{i_1} \to \widetilde{\mathcal{PM}}^{1/r,0}_{1,B,I}$ after gluing) defined away from $Z(\tilde{\mathfrak{t}}_{i_1,\mathbb L \to \mathbb L^*})$.

        Parts of zeros of $\nontilde{s}^{trr*}_{i_1}$ coincide with `poles' of $\tilde{\mathfrak{t}}^{\vee}_{i_1,\mathbb L \to \mathbb L^*}$: we have $ \bm{\mathcal F}^1_{i_1,nsp}\subseteq \bm{\mathcal G}^{1/2}_{i_1,nsp}$. For any $\bm{G}\in \bm{\mathcal F}^1_{i_1,nsp}$, we denote by $\bm{H}:=d_{e_{\bm{G}}}\bm{G}\in \sGPI^{r,0}_{1,B,I}$, by $r_1=i^{\Gamma_{i_1,\bm{G}}}_{nsp}$ and $r_2$ the two roots in $RT(\Gamma_{i_1,\bm{G}})=RT(d_{e_{\bm{G}}}\Gamma_{i_1,\bm{G}})=RT(\Gamma_{i_1,\bm{H}})$. The restriction of $\tilde{\mathfrak{t}}^{\vee}_{i_1,\mathbb L \to \mathbb L^*}\otimes
        \nontilde{s}^{trr*}_{i_1}$ to $\oPMb_{\bm{H}}$ has two branches with weight $1/2$. One of them is pulled back from 
        \begin{equation}\label{eq one branch that zero and pole will cancel in proof}
        \tilde{\mathfrak t}^\vee_{i_1,{\mathcal B}\Gamma_{i_1,\bm{H}}\to\hat{\mathcal B}\Gamma_{i_1,\bm{H}}}\otimes \mathfrak s_{i_1,\hat{\mathcal B}\Gamma_{i_1,\bm{H}},r_2},
        \end{equation}
         the other one is pulled back from \begin{equation}\label{eq one branch that zero and pole will not cancel in the chosen strata}
        \tilde{\mathfrak t}^\vee_{i_1,{\mathcal B}\Gamma_{i_1,\bm{H}}\to\hat{\mathcal B}\Gamma_{i_1,\bm{H}}}\otimes \mathfrak s_{i_1,\hat{\mathcal B}\Gamma_{i_1,\bm{H}},r_1}.
        \end{equation}
        Notice that by construction we have 
        $$
        \mathfrak s_{i_1,\hat{\mathcal B}\Gamma_{i_1,\bm{H}},r_2}= \tilde{\mathfrak t}_{i_1,\operatorname{for}_{r_1}(\hat{\mathcal B}\Gamma_{i_1,\bm{H}}) \to \hat{\mathcal B}\Gamma_{i_1,\bm{H}}}\otimes \operatorname{For}^*_{\hat{\mathcal B}\Gamma_{i_1,\bm{H}} \to \operatorname{for}_{r_1}(\hat{\mathcal B}\Gamma_{i_1,\bm{H}})}\mathfrak s_{i_1,\operatorname{for}_{r_1}(\hat{\mathcal B}\Gamma_{i_1,\bm{H}}),r_2},
        $$
        and 
        \begin{equation*}
        \tilde{\mathfrak t}^\vee_{i_1,{\mathcal B}\Gamma_{i_1,\bm{H}}\to\hat{\mathcal B}\Gamma_{i_1,\bm{H}}} \otimes \tilde{\mathfrak t}_{i_1, \operatorname{for}_{r_1}(\hat{\mathcal B}\Gamma_{i_1,\bm{H}})\to \hat{\mathcal B}\Gamma_{i_1,\bm{H}}}
        = 
        \operatorname{For}^*_{\hat{\mathcal B}\Gamma_{i_1,\bm{H}} \to \operatorname{for}_{r_1}(\hat{\mathcal B}\Gamma_{i_1,\bm{H}})}
        \tilde{\mathfrak t}^\vee_{i_1,\operatorname{for}_{r_1}(\hat{\mathcal B}\Gamma_{i_1,\bm{H}})\to\hat{\mathcal B}\Gamma_{i_1,\bm{H}}},
        \end{equation*}
        therefore  \eqref{eq one branch that zero and pole will cancel in proof} can extends to a multisection
        \begin{equation}\label{eq extension of one branch when zero pole cancel}
        \operatorname{For}^*_{\hat{\mathcal B}\Gamma_{i_1,\bm{H}} \to \operatorname{for}_{r_1}(\hat{\mathcal B}\Gamma_{i_1,\bm{H}})}
        (\tilde{\mathfrak t}^\vee_{i_1,\operatorname{for}_{r_1}(\hat{\mathcal B}\Gamma_{i_1,\bm{H}})\to\hat{\mathcal B}\Gamma_{i_1,\bm{H}}} \otimes \mathfrak s_{i_1,\operatorname{for}_{r_1}(\hat{\mathcal B}\Gamma_{i_1,\bm{H}}),r_2})
        \end{equation}
        which has no zeros or `poles' along $\mathcal{PM}_{\Gamma_{i_1,\bm{G}}}\subset \oPMb_{\Gamma_{i_1,\bm{H}}}$. So the zeros and `poles' of corresponding branch of $\tilde{\mathfrak{t}}^{\vee}_{i_1,\mathbb L \to \mathbb L^*}\otimes
        \nontilde{s}^{trr*}_{i_1}$  along $\mathcal{PM}_{\bm{G}}\subset \oPMb_{\bm{H}}$ cancel out: we can extend it as pullback of \eqref{eq extension of one branch when zero pole cancel}, which vanishes no where on $\mathcal{PM}_{\bm{G}}$. Meanwhile, the other branch corresponding to \eqref{eq one branch that zero and pole will not cancel in the chosen strata} has `poles' along $\mathcal{PM}_{\bm{G}}$ since $\tilde{\mathfrak{t}}^{\vee}_{i_1,\mathbb L \to \mathbb L^*}$ has `poles' and $ \nontilde{s}^{trr*}_{i_1}$ while does not vanish.

        After the above natural extension, $\tilde{\mathfrak{t}}^{\vee}_{i_1,\mathbb L \to \mathbb L^*}\otimes \nontilde{s}^{trr*}_{i_1}$, as a multisection of  $\mathbb L_{i_1}\to \oPMb^{1/r,0}_{1,B,I}$ (or  $\mathbb L_{i_1}\to \oPMb^{1/r,0}_{1,B,I}$ after gluing),  has zeros along 
        $$
        \bigcup_{\bm{G}\in \bm{\mathcal G}_{i_1,g=1} } \oPMb_{\bm{G}} \cup \bigcup_{\bm{G}\in  \bm{\mathcal G}_{i_1,sp} } \oPMb_{\bm{G}} \bigcup_{\bm{G}\in  \bm{\mathcal G}^{1/2}_{i_1,nsp} \setminus \bm{\mathcal F}^1_{i_1,nsp}} \oPMb_{\bm{G}} \bigcup_{\bm{G}\in \bm{\mathcal G}^{1/2+1/2}_{i_1,nsp}} \oPMb_{\bm{G}}
        $$
        and 'poles' along
        $$
        \bigcup_{\mathbf{G}\in \bm{\mathcal F}_{i_1,sp}} \oPMb_{\mathbf{G}} \cup \bigcup_{\mathbf{G}\in \bm{\mathcal F}^1_{i_1,nsp}} \oPMb_{\mathbf{G}} \cup \bigcup_{\mathbf{G}\in\bm{\mathcal F}^2_{i_1,nsp}} \oPMb_{\mathbf{G}}.
        $$
        \noindent\textbf{Step 3.} The following observation is useful  not only in removing the `poles' of $\tilde{\mathfrak{t}}^{\vee}_{i_1,\mathbb L \to \mathbb L^*}\otimes \nontilde{s}^{trr*}_{i_1}$, but also in the computation in Step 5 below.
        
        \begin{obs}\label{obs zero locus of modified canonical}
            For any $\bm{G}\in \left(\bm{\mathcal G}^{}_{i_1,g=1} \sqcup \bm{\mathcal G}^{}_{i_1,sp} \sqcup \bm{\mathcal G}^{1/2}_{i_1,nsp} \sqcup \bm{\mathcal G}^{1/2+1/2}_{i_1,nsp})\right)\setminus (\bm{\mathcal G}_{i_1,co}\cup \bm{\mathcal F}^1_{i_1,nsp})$, or any $\bm{G}\in \bm{\mathcal F}^2_{i_1,nsp}$, the restriction of the multisection $\bm{s}^{re}$ (see \eqref{eq def remaining sec}) vanishes nowhere on $\oPMb_{\bm{G}}$. 
            
            To see this, we  write $\bm{H}=d_{e_{\bm{G}}}\bm{G}$ and  $\Gamma_{i_1,\bm{H}}=d_{e_{\bm{G}}}\Gamma_{i_1,\bm{G}}$. Since $\bm{s}$ is $\gamma$-canonical, the restriction of  $\bm{s}^{re}$ on $\oPMb_{\bm{H}}$ is of the form
            $$\pi^*_{\bm{H}\to \Gamma_{i_1,\bm{H}}} \bm{s}^{re}_{\Gamma_{i_1,\bm{H}}}\oplus\bigoplus_{\Gamma \in V(\bm{G})\setminus\{\Gamma_{i_1,\bm{G}}\}} \pi^*_{\bm{H} \to \Gamma} \bm{s}_{\Gamma}.$$
            By transversality, for $\Gamma\in V(\bm{G})\setminus \{\Gamma_{i_1,\bm{G}}\}$,  the multisection $\bm{s}_{\Gamma}$ vanishes nowhere unless $\rk E_{\Gamma}\le \dim \oPMb_{\Gamma}$, so we can focus on the case $\rk E_{\Gamma_{i_1,\bm{H}}}\ge \dim \oPMb_{\Gamma_{i_1,\bm{H}}}$. 

            We denote by $\bar{\mathcal B}\Gamma_{i_1,\bm{G}}\in \partial^* \hat{\mathcal B}\Gamma_{i_1,\bm{H}}$ the graph obtained after stabilizing $\hat{\mathcal B}\Gamma_{i_1,\bm{G}}$ (\textit{i.e.} contract every unstable vertices $\hat{\mathcal B}\Gamma_{i_1,\bm{G}}$); it is stable since it has $i_1$ and at least one internal root as tails. Because of transversely-decomposability, the restriction of $\bm{s}^{re}_{ \Gamma_{i_1,\bm{H}}}$ on $\oPMb_{\Gamma_{i_1,\bm{G}}}\subset \oPMb_{\Gamma_{i_1,\bm{H}}}$ is pulled back from a transverse multisection of  $E^{re}_{\mathcal T_S \bar{\mathcal B}\Gamma_{i_1,\bm{G}}}\to \oPMb_{\mathcal T_S \bar{\mathcal B}\Gamma_{i_1,\bm{G}}}$ (see \S \ref{sec def of tame and canonical}), where  $S=RT(\Gamma_{i_1,\bm{G}})\cap T^I({\Gamma_{i_1,\bm{G}}})$. We always have 
            $$\rk E^{re}_{\mathcal T_S \bar{\mathcal B}\Gamma_{i_1,\bm{G}}}=\rk E^{re}_{\hat{\mathcal B}\Gamma_{i_1,\bm{H}}}=\rk E_{\Gamma_{i_1,\bm{H}}}-2,$$
            and 
            $$
            \dim \oPMb_{\mathcal T_S \bar{\mathcal B}\Gamma_{i_1,\bm{G}}}\le 
            \dim \oPMb_{ \bar{\mathcal B}\Gamma_{i_1,\bm{G}}} \le \dim \oPMb_{\Gamma_{i_1,\bm{H}}}-2,
            $$
            but the first inequality  is strict for $\bm{G}\in \bm{\mathcal F}^2_{i_1,nsp}$, while the second inequality is strict for $\bm{G}\in \left(\bm{\mathcal G}^{}_{i_1,g=1} \sqcup \bm{\mathcal G}^{}_{i_1,sp} \sqcup \bm{\mathcal G}^{1/2}_{i_1,nsp} \sqcup \bm{\mathcal G}^{1/2+1/2}_{i_1,nsp})\right)\setminus (\bm{\mathcal G}_{i_1,co}\cup \bm{\mathcal F}^1_{i_1,nsp})$,           therefore for all the $\bm{G}$ in these sets we have $\rk E^{re}_{\mathcal T_S \bar{\mathcal B}\Gamma_{i_1,\bm{G}}}> \dim \oPMb_{\mathcal T_S \bar{\mathcal B}\Gamma_{i_1,\bm{G}}}$, hence  $\bm{s}^{re}_{ \Gamma_{i_1,\bm{H}}}$ vanishes nowhere on $\oPMb_{ \Gamma_{i_1,\bm{G}}}$.

        \end{obs}

         According to the above observation, we can take an open neighbourhood $U^{2}_{nsp}$ of $$Z^2_{nsp}:=\bigcup_{\bm{G}\in \bm{\mathcal F}^2_{i_1,nsp}} \oPMb_{\bm{G}}\subset \oPMb^{1/r,0}_{1,B,I}$$ 
         which glue to an open set in $\widetilde{\mathcal{PM}}^{1/r,0}_{1,B,I}$, such that $\bm{s}^{re}$ vanishes nowhere in $U^{2}_{nsp}$.

         For fixed $R\subseteq I\setminus\{i_1\}$ and $T\subseteq B$, the subspace
         $$
         Z_{sp}^{R,T}:=\bigsqcup_{\bm{G}\in \mathcal F^{R,T}_{i_1,sp}}\oPMb_{\bm{G}}\subset \oPMb_{1,B,I}^{1/r,0}
         $$
         parametrize all the $(r,0)$-cylinders with unpaired markings $(B,I)$ which have a separating dashed line connecting a twist-zero internal point $i_{sp}^{\Gamma_{i_1,\bm{G}}}\in I^{PI}(\bm{G})$ and twist-$(r-2)$ boundary point $b^{R,T}_{sp}\in B^{PI}(\bm{G})$, where $i_{sp}^{\Gamma_{i_1,\bm{G}}}$ together with all unpaired markings in $R$ and $T$ are on the genus-zero part, and  $i_{sp}^{\Gamma_{i_1,\bm{G}}}$  lies one a closed irreducible component with only three special points: the marking $i_1$, the half-node $n_{\bm{G}}$, and $i_{sp}^{\Gamma_{i_1,\bm{G}}}$. If we denote by $a^{R,T}_{sp}$ the other half-node of $n_{\bm{G}}$ (it has the same twist as $i_1$), then after detaching the edge $e_{\bm{G}}$, except the closed $r$-spin sphere with three markings in $\{i_1,n_{\bm{G}},i_{sp}^{\Gamma_{i_1,\bm{G}}}\}$, from the above data we get an $(r,0)$-disk with unpaired marking $(T,R\sqcup \{a^{R,T}_{sp}\})$ together with an $(r,0)$-cylinder with unpaired marking $(B\sqcup \{a^{R,T}_{sp} \setminus T\},I\setminus (R\sqcup \{i_1\}))$, thus we have an detach morphism
         \begin{equation}\label{eq detach sp}
         \operatorname{Detach}_{sp}^{R,T}\colon Z^{R,T}_{sp} \to \oPMb^{1/r,0}_{1,B\sqcup \{b^{R,T}_{sp}\} \setminus T,I\setminus (R\sqcup \{i_1\})} \times \oPMb^{1/r,0}_{0,T,R\sqcup \{a^{R,T}_{sp}\}}\times \Mbar_{v_{sp,c}^{R,T}},
         \end{equation}
         where $v_{sp,c}^{R,T}$ is a closed single-vertex $r$-spin graph with three tails $i_1,n_{\bm{G}}$ and $i_{sp}^{\Gamma_{i_1,\bm{G}}}$. Note that $\Mbar_{v_{sp,c}^{R,T}}$ is a zero-dimensional orbifold with an order-$r$ isotopy group and rank-zero Witten bundle.
         Moreover, the point insertion procedure glues $Z^{R,T}_{sp}$ to a subspace $\widetilde{Z}^{R,T} \subset \widetilde{\mathcal{PM}}^{1/r,0}_{1,B,I}$, the detach morphisms $\operatorname{Detach}_{e_{\bm{G}}}$ also glue to a morphism
         $$
         \operatorname{Detach}_{sp}^{R,T}\colon \widetilde{\mathcal{PM}}^{1/r,0}_{1,B\sqcup \{b^{R,T}_{sp}\} \setminus T,I\setminus (R\sqcup \{i_1\})} \times \widetilde{\mathcal{PM}}^{1/r,0}_{0,T,R\sqcup \{a^{R,T}_{sp}\}}\times \Mbar_{v_{sp,c}^{R,T}}. 
         $$

         Similar, the subspace 
         $$
         Z_{nsp}^{1}:=\bigsqcup_{\bm{G}\in \mathcal F^{1}_{i_1,nsp}}\oPMb_{\bm{G}}\subset \oPMb_{1,B,I}^{1/r,0}
         $$
         also has a detach morphism
         \begin{equation}\label{eq detach nsp}
         \operatorname{Detach}_{nsp}\colon  Z_{nsp}^{1} \to \oPMb^{1/r,0}_{0,B\sqcup\{b_{nsp}\}, I\sqcup \{a_{nsp}\}\setminus\{i_1\}}\times \Mbar_{v_{nsp,c}},
         \end{equation}
         where $\Mbar_{v_{nsp,c}}$ is a zero-dimensional orbifold with an order-$r$ isotopy group and rank-zero Witten bundle;  the new twist-$(r-2)$ boundary marking $b_{nsp}$ was the one paired with $i_{nsp}^{\Gamma_{i_1,\bm{G}}}$, and the new internal marking $a_{nsp}$ was the other half-node of $n_{\bm{G}}$, it has the same twist as $i_1$. Under point insertion, $\operatorname{Detach}_{nsp}$ also glues to 
         \begin{equation*}
         \operatorname{Detach}_{nsp}\colon \widetilde{Z}^1_{nsp}\to  \widetilde{\mathcal{PM}}^{1/r,0}_{0,B\sqcup\{b_{nsp}\}, I\sqcup \{a_{nsp}\}\setminus\{i_1\}}\times \Mbar_{v_{nsp,c}}.
         \end{equation*}

         The locus $\widetilde{Z}^{R,T}_{sp}$ is the 'pole' of $\tilde{\mathfrak{t}}^{\vee}_{i_1,\mathbb L \to \mathbb L^*}\otimes \nontilde{s}^{trr*}_{i_1}$ because it is the `pole' of $\tilde{\mathfrak{t}}^{\vee}_{i_1,\mathbb L \to \mathbb L^*}$. Note that $\widetilde{Z}^{R,T}_{sp}$ does not intersect with any other `poles', by the method in Remark \ref{rmk divosr like section}, we can modify $\tilde{\mathfrak{t}}^{\vee}_{i_1,\mathbb L \to \mathbb L^*}$ in a small neighbourhood $U^{R,T}_{sp}\subset \widetilde{\mathcal{PM}}^{1/r,0}_{1,B,I}$ of $\widetilde{Z}^{R,T}_{sp}$ to a section with zero at $\widetilde{Z}^{R,T}_{sp}$, but with the reversed order of vanishing as $\tilde{\mathfrak{t}}_{i_1,\mathbb L \to \mathbb L^*}$.

         Similarly, the locus $\widetilde{Z}^{1}_{nsp}$ is the 'pole' of one branch of $\tilde{\mathfrak{t}}^{\vee}_{i_1,\mathbb L \to \mathbb L^*}\otimes \nontilde{s}^{trr*}_{i_1}$ because it is the `pole' of $\tilde{\mathfrak{t}}^{\vee}_{i_1,\mathbb L \to \mathbb L^*}$, and in this branch $\nontilde{s}^{trr*}_{i_1}$ does not vanish (recall that in the other branch the zero of $\nontilde{s}^{trr*}_{i_1}$ canceled with the `pole' of $\tilde{\mathfrak{t}}^{\vee}_{i_1,\mathbb L \to \mathbb L^*}$). 
         Note that $\widetilde{Z}^{1}_{nsp}$ does not intersect with any other `poles' outside the neighbourhood $U^2_{nsp}\subset \widetilde{\mathcal{PM}}^{1/r,0}_{1,B,I}$,  by the method in Remark \ref{rmk divosr like section}, we can find a small neighbourhood $U^{1}_{nsp}\subset \widetilde{\mathcal{PM}}^{1/r,0}_{1,B,I}$ of $\widetilde{Z}^{1}_{nsp}$ and modify the branch of $\tilde{\mathfrak{t}}^{\vee}_{i_1,\mathbb L \to \mathbb L^*}\otimes \nontilde{s}^{trr*}_{i_1}$ with `poles' by modifying $\tilde{\mathfrak{t}}^{\vee}_{i_1,\mathbb L \to \mathbb L^*}$ in this branch, in $U^{1}_{nsp}\setminus U^{2}_{nsp}$, to a section with zero at $\widetilde{Z}^{R,T}_{sp}\setminus U^{2}_{nsp}$, but with the reversed order of vanishing as $\tilde{\mathfrak{t}}_{i_1,\mathbb L \to \mathbb L^*}$.

         After these modifications, we get a multisection of $\mathbb L_{i_1}$ on $\widetilde{\mathcal{PM}}^{1/r,0}_{1,B,I}\setminus U^{2}_{nsp}$. We can then extend it to an arbitrary multisection of $\mathbb L_{i_1}$ over the entire  $\widetilde{\mathcal{PM}}^{1/r,0}_{1,B,I}$.   We denote such an extension by $s_{i_1}^{trr}$. 
         As the modifications and extensions can all be done first on $\oPMb_{\hat{\mathcal{B}}\Gamma_{i_1,\bm{H}}}$, and then pulled back to  $\oPMb_{\bm{H}}$, we require, for every $\bm{H}\in \sGPI_{1,B,I}^{r,0}$, that 
         $$s_{i_1,\bm{H}}^{trr}:=s_{i_1}^{trr}\vert_{\oPMb_{\bm{H}}}=\left(\operatorname{For}_{\Gamma_{i_1,\bm{H}} \to \hat{\mathcal B}\Gamma_{i_1,\bm{H}}}\circ\pi_{\bm{H}\to \Gamma_{i_1,\bm{H}}}\right)^* s_{i_1,\hat{\mathcal B}\Gamma_{i_1,\bm{H}}}^{trr},$$
             where $s$ is a multisection of $\operatorname{For}^*_{\hat{\mathcal B}\Gamma_{i_1,\bm{H}} \to {\mathcal B}\Gamma_{i_1,\bm{H}}}\mathbb L_{i_1,{\mathcal B}\Gamma_{i_1,\bm{H}}}\to \oPMb_{\hat{\mathcal{B}}\Gamma_{i_1,\bm{H}}}$, which is a modification $        \tilde{\mathfrak t}^\vee_{i_1,{\mathcal B}\Gamma_{i_1,\bm{H}}\to\hat{\mathcal B}\Gamma_{i_1,\bm{H}}}\otimes \mathfrak s_{i_1,\hat{\mathcal B}\Gamma_{i_1,\bm{H}}}$ near its ``pole''.  
             This requirement is fulfilled via an induction process very similar to \S \ref{sec extend canonical to global glued}, and we omit the details.

    \noindent\textbf{Step 4.}
        Now we show that $\bm{s}$ has the same number of zeros as $\bm{s}^{mod}:=\bm{s}^{re} \oplus \nontilde{s}^{trr}_{i_1}$ in $\oPMb^{1/r,0}_{1,B,I}$ by constructing a homotopy between $\bm{s}$ and $\bm{s}^{mod}$.

        We denote by $\bm{\mathcal H}\subseteq \sGPI^{r,0}_{1,B,I}$ the subset consisting of smooth $(r,0)$-graphs $\bm{G}$ with a genus-one vertex $\Gamma_{g=1,\mathbf{G}}$. 
        Notice that for any $\bm{G}\in \bm{\mathcal H}$, if $\Gamma\in V(\bm{G})$ is genus-zero, then $\lvert RT(\Gamma) \rvert=1$.
        We also denote by $\bm{\mathcal H}_1$ the subset such that $\Gamma_{g=1,\mathbf{G}}=\Gamma_{i_1,\mathbf{G}}$, and set $\bm{\mathcal H}_0:=\bm{\mathcal H}\setminus \bm{\mathcal H}_1$.

    \begin{obs}\label{obs on genus one K}
        Note that for any $\bm{G}\in \bm{\mathcal H}_1$, $\oPMb_{\bm{G}}$ does not intersect with $Z^{R,T}_{sp}$, $Z^1_{nsp}$ or $Z^2_{nsp}$, so in this case the restriction $s_{i_i,\bm{G}}^{trr}$ of $s_{i_i}^{trr}$ to $\oPMb_{\bm{G}}$ is just  $\tilde{\mathfrak{t}}^{\vee}_{i_1,\mathbb L \to \mathbb L^*,\bm{G}}\otimes \nontilde{s}^{trr*}_{i_1,\bm{G}}$ since we took $U^{R,T}_{sp}$, $U^1_{nsp}$ and $U^2_{nsp}$ small enough. 
        Moreover, since $\Gamma_{i_1,\bm{G}}$ is genus-one, we have $\hat{\mathcal B}\Gamma_{i_1,\bm{G}}=\mathcal B \Gamma_{i_1,\bm{G}}$, therefore $\tilde{\mathfrak{t}}_{i_1,{\mathcal B}\Gamma_{i_1,\bm{G}} \to \hat{\mathcal B}\Gamma_{i_1,\bm{G}}}$, and hence $\tilde{\mathfrak{t}}_{i_1,\mathbb L \to \mathbb L^*,\bm{G}}$, are just constant identity sections of trivial complex line bundles, so we have        $s_{i_i,\bm{G}}^{trr}=s_{i_i,\bm{G}}^{trr*}$ and both of them are the pullback of $\mathfrak s_{i_1, \hat{\mathcal B}\Gamma_{i_1,\bm{G}}}$.
        On the other hand, since $\bm{s}$ is $\gamma$-canonical, by definition the restriction $t_{D,\bm{G}}$ of $t_D$ to $\oPMb_{\bm{G}}$ is pulled back from a multisection $t_{D,\hat{\mathcal B} \Gamma_{i_1,\bm{G}}}$ via $\pi_{\bm{G}\to \Gamma_{i_1,\bm{G}}}$ and $\operatorname{For}_{\Gamma_{i_1,\bm{G}}\to \hat{\mathcal B} \Gamma_{i_1,\bm{G}}}$, 
        and $t_{D,\hat{\mathcal B} \Gamma_{i_1,\bm{G}}}$ coincide with $\mathfrak s_{i_1,\hat{\mathcal B} \Gamma_{i_1,\bm{G}}}$ in $\oPMb_{\hat{\mathcal B} \Gamma_{i_1,\bm{G}}}\setminus K_{\hat{\mathcal B} \Gamma_{i_1,\bm{G}}}$, where $K_{\hat{\mathcal B} \Gamma_{i_1,\bm{G}}}$ is a neighbourhood of the zero locus of $\mathfrak s_{i_1,\hat{\mathcal B} \Gamma_{i_1,\bm{G}}}$. 
        Notice that, according to item \ref{item def tame non vanish in K} in Definition \ref{def tame perturbation}, 
        the multisection $\bm{s}^{re}_{\hat{\mathcal B} \Gamma_{i_1,\bm{G}}}$ vanishes no where in $K_{\hat{\mathcal B }\Gamma_{i_1,\bm{G}}}\subseteq \oPMb_{\hat{\mathcal B} \Gamma_{i_1,\bm{G}}}$ as long as 
        $
        \rk E_{\hat{\mathcal B} \Gamma_{i_1,\bm{G}}}\ge \dim \oPMb_{\hat{\mathcal B} \Gamma_{i_1,\bm{G}}}.
        $
    
        \end{obs}
        
        Now we construct a homotopy $H$ between $\nontilde{s}^{trr}_{i_1}$ and $t_D$ as a multisection of 
        $$\pi^*_{\mathcal M}\mathbb L_{i_1}\to \oPMb^{r,0}_{1,B,I}\times[0,1];$$ 
        where $\pi_{\mathcal M}$ is the projection to the first component; we require $H$ satisfy the following conditions.
       \begin{enumerate}
            \item $H$ glue to a global multisection of $\pi^*_{\mathcal M}\mathbb L_{i_1}\to \widetilde{\mathcal{PM}}^{r,0}_{1,B,I}\times[0,1]$.
            \item For each $\bm{G}\in \sGPI^{r,0}_{1,B,I}$, the restriction $H_{\bm{G}}$ of $H$ to $\oPMb_{\bm{G}}\times [0,1]$ is of the form 
           \begin{equation}\label{eq homotopy is pulled back}
            H_{\bm{G}}= (\pi_{\bm{G}\to \Gamma_{i_1,\bm{G}}}\times \Id_{[0,1]})^* H_{\Gamma_{i_1,\bm{G}}},
            \end{equation}
            where $H_{\Gamma_{i_1,\bm{G}}}$ is a multisection of $\pi^*_{\mathcal M} \operatorname{For}^*_{\Gamma_{i_1,\bm{G}}\to \mathcal B \Gamma_{i_1,\bm{G}}}\mathbb L_{i_1,\mathcal B \Gamma_{i_1,\bm{G}}}\to \oPMb_{\Gamma_{i_1,\bm{G}}}\times [0,1]$. 
            \item 
            When $\rk E_{\Gamma_{i_1,\bm{G}}}> \dim \oPMb_{\Gamma_{i_1,\bm{G}}}$, or $\rk E_{\Gamma_{i_1,\bm{G}}}= \dim \oPMb_{\Gamma_{i_1,\bm{G}}}$ in the case $\Gamma_{i_1,\bm{G}}$ is genus-one, the homotopy $H_{\Gamma_{i_1,\bm{G}}}$ vanishes nowhere on $Z\left(\bm{s}^{re}_{\Gamma_{i_1,\bm{G}}}\right)\times [0,1]\subset \oPMb_{\Gamma_{i_1,\bm{G}}}\times [0,1]$,
            where $Z\left(\bm{s}^{re}_{\Gamma_{i_1,\bm{G}}}\right)\subset \oPMb_{{\Gamma_{i_1,\bm{G}}}}$ of is the zero locus of $\bm{s}^{re}_{\Gamma_{i_1,\bm{G}}}$.
        \end{enumerate}

        To construct such $H$, we first take $H'$ to be the linear homotopy between $s^{trr}_{i_1}$ and $t_D$, \textit{i.e.,} $H'(p,\lambda):=(1-\lambda)\cdot s^{trr}_{i_1}+\lambda \cdot t_D$ for any $\lambda\in [0,1]$. Note that $H'$ satisfies the first two requirements. If $\Gamma_{i_1,\bm{G}}$ is genus-one, the third requirement is also satisfied: by Observation \ref{obs on genus one K} we have 
        $$Z\left(\bm{s}^{re}_{\Gamma_{i_1,\bm{G}}}\right)\subset \oPMb_{\Gamma_{i_1,\bm{G}}}\setminus \operatorname{For}^{-1}_{\Gamma_{i_1,\bm{G}} \to \hat{\mathcal B}\Gamma_{i_1,\bm{G}}} \left(K_{\hat{\mathcal B}\Gamma_{i_1,\bm{G}}}\right),$$
        while for any $p\in \oPMb_{\Gamma_{i_1,\bm{G}}}\setminus \operatorname{For}^{-1}_{\Gamma_{i_1,\bm{G}} \to \hat{\mathcal B}\Gamma_{i_1,\bm{G}}} \left(K_{\hat{\mathcal B}\Gamma_{i_1,\bm{G}}}\right)$ and $\lambda\in [0,1]$ we have $$H'_{\Gamma_{i_1,\bm{G}}}(p,\lambda)=(1-\lambda)\cdot \operatorname{For}^*_{\Gamma_{i_1,\bm{G}} \to \hat{\mathcal B}\Gamma_{i_1,\bm{G}}} \mathfrak s_{i_1,\hat{\mathcal B}\Gamma_{i_1,\bm{G}}}(p) +\lambda \cdot t_{D,\Gamma_{i_1,\bm{G}}}(p)=\operatorname{For}^*_{\Gamma_{i_1,\bm{G}} \to \hat{\mathcal B}\Gamma_{i_1,\bm{G}}} \mathfrak s_{i_1,\hat{\mathcal B}\Gamma_{i_1,\bm{G}}}(p)\ne 0$$
        as the zero locus of $\mathfrak s_{i_1,\hat{\mathcal B}\Gamma_{i_1,\bm{G}}}$ is contained in $K_{\hat{\mathcal B}\Gamma_{i_1,\bm{G}}}$.

        When $\Gamma_{i_1,\bm{G}}$ is genus-zero and $\rk E_{\Gamma_{i_1,\bm{G}}}> \dim \oPMb_{\Gamma_{i_1,\bm{G}}}$, the homotopy $H'_{\Gamma_{i_1,\bm{G}}}$ might vanish on $Z\left(\bm{s}^{re}_{\Gamma_{i_1,\bm{G}}}\right)\times [0,1]$; however, this can only happen when  $\rk E_{\Gamma_{i_1,\bm{G}}}= \dim \oPMb_{\Gamma_{i_1,\bm{G}}}+2$, because otherwise we have  $\rk E^{re}_{\Gamma_{i_1,\bm{G}}}= \rk E_{\Gamma_{i_1,\bm{G}}}-2> \dim \oPMb_{\Gamma_{i_1,\bm{G}}}$,\footnote{Since every boundary marking of $\Gamma_{i_1,\bm{G}}$ is legal,  by \eqref{eq parity}, \eqref{eq rank of witten bundle} and \eqref{eq dim of moduli} we have $\rk E_{\Gamma_{i_1,\bm{G}}}\equiv \dim \oPMb_{\Gamma_{i_1,\bm{G}}} \mod 2$.} the transversality of $\bm{s}^{re}_{\Gamma_{i_1,\bm{G}}}$ implies $Z\left(\bm{s}^{re}_{\Gamma_{i_1,\bm{G}}}\right)=\emptyset$. So we only need to modify $H'_{\Gamma_{i_1,\bm{G}}}$ for genus-zero $\Gamma_{i_1,\bm{G}}$ with $\rk E^{re}_{\Gamma_{i_1,\bm{G}}}= \dim \oPMb_{\Gamma_{i_1,\bm{G}}}$. By the transversely decomposability of $\bm{s}^{re}_{\Gamma_{i_1,\bm{G}}}$ (item \ref{item canonical transversely decompose} in Definition \ref{dfn canonical section}), and the positive constraint for its $\mathcal W$-part (item \ref{item canonical positive} in Definition \ref{dfn canonical section}), the zero locus $Z\left(\bm{s}^{re}_{\Gamma_{i_1,\bm{G}}}\right)$ is a finite union of dimensional-zero points in $\mathcal{PM}_{\Gamma_{i_1,\bm{G}}}$. We then perturb $H'_{\Gamma_{i_1,\bm{G}}}$ in small neighbourhood of each of them to a homotopy $H_{\Gamma_{i_1,\bm{G}}}$, which coincides with $H'_{\Gamma_{i_1,\bm{G}}}$ on  $\oPMb_{\Gamma_{i_1,\bm{G}}}\times \{0,1\}$ and vanishes nowhere on $Z\left(\bm{s}^{re}_{\Gamma_{i_1,\bm{G}}}\right)\times [0,1]$, this is always possible because the transversality of $\bm{s}_{\Gamma_{i_1,\bm{G}}}$ and the transversely-decomposability of $\bm{s}^{re}_{\Gamma_{i_1,\bm{G}}}$ guarantee $H'_{\Gamma_{i_1,\bm{G}}}$ dose not vanish on the end points, while
        $$\rk \left(\pi^*_{\mathcal M} \operatorname{For}^*_{\Gamma_{i_1,\bm{G}}\to \mathcal B \Gamma_{i_1,\bm{G}}}\mathbb L_{i_1,\mathcal B \Gamma_{i_1,\bm{G}}}\right)=2>1=\dim \left(Z\left(\bm{s}^{re}_{\Gamma_{i_1,\bm{G}}}\right)\times [0,1]\right),$$
        guarantee that we can perturb away from the end points to avoid vanishing.
        
        The collection of these perturbed $H_{\Gamma_{i_1,\bm{G}}}$, together with those $H'_{\Gamma_{i_1,\bm{G}}}$ who do not need to be perturbed, determine  a new homotopy $H$ between $\nontilde{s}^{trr}_{i_1}$ and $t_D$ via \eqref{eq homotopy is pulled back}. The first requirement is still satisfied because the perturbations are only made in the interior  $\mathcal{PM}_{\Gamma_{i_1,\bm{G}}}$ of $\oPMb_{\Gamma_{i_1,\bm{G}}}$, without changing anything on $\partial\oPMb_{\Gamma_{i_1,\bm{G}}}$.

        Now we have the desired homotopy $H$, then $\pi^*_{\mathcal M} \bm{s}^{re}\oplus H$ is a homotopy between $\bm{s}=\bm{s}^{re}\oplus t_D$ and $\bm{s}^{mod}:=\bm{s}^{re} \oplus \nontilde{s}^{trr}_{i_1}$. 
        We can show that for each $\bm{G}\in \sGPI^{r,0}_{1,B,I}$, there exists an open neighbourhood $U_{\bm{G}}\subseteq \Mbar_{\bm{G}}$ of 
     $$\partial^{R}\Mbar_{\bm{G}}\cup \partial^{NS+}\Mbar_{\bm{G}}\cup \mathcal Z^{dj}_{\bm{G}}\subset \Mbar_{\bm{G}},$$ 
     such that $\pi^*_{\mathcal M} \bm{s}^{re}\oplus H$ vanishes nowhere on 
     $$
     \left((U_{\bm{G}} \cup \partial^{CB}\Mbar_{\bm{G}})\cap \oPMb_{\bm{G}}\right)\times [0,1].
     $$
     
     In fact, for any $\bm{G}\in \bm{\mathcal H}$, we can take $U_{\bm{G}}=\Mbar_{\bm{G}}$. This is because $\rk E_{\bm{G}}=\dim \oPMb_{\bm{G}}$ implies that (see Observation \ref{obs non vanish when exist g=1 component}) we must have a vertex $\Gamma\in V(\bm{G})$ which such that either $\rk E_{\Gamma}>\dim \oPMb_{\Gamma}$, or $\rk E_{\Gamma}=\dim \oPMb_{\Gamma}$ and $\Gamma$ is genus-one. If $i_1 \notin T^I(\Gamma)$, then $\bm{s}_{\Gamma}$ vanishes nowhere on $\oPMb_{\Gamma}$ by transversality or Remark \ref{rmk on non vanish tame rk >= dim};  If $i_1 \in T^I(\Gamma)$, then $\pi^*_{\mathcal M}\bm{s}^{re}_{\Gamma}\oplus H$ vanishes nowhere on $\oPMb_{\Gamma}\times [0,1]$ by the third requirement above. In any case $\pi^*_{\mathcal M} \bm{s}^{re}\oplus H$ has a nowhere-vanishing direct summand so it is nowhere vanishing on the entire $\oPMb_{\bm{G}}$.

     If $\bm{G}\in \sGPI_{1,B,I}^{r,0}$, then any vertex of $\bm{G}$ is genus-zero, therefore $\mathcal Z^{dj}_{\bm{G}}=\emptyset$. In this case the existence of such $U_{\bm{G}}$ follows from the positivity constraint (item \ref{item canonical positive} in Definition \ref{dfn canonical section}) on the $\mathcal{W}$-part of $\bm{s}_{\Gamma}$ or $\bm{s}^{re}_{\Gamma}$ for all (genus-zero) vertices $\Gamma\in V(\bm{G})$.

     Since after gluing we have $\partial \widetilde{\mathcal{PM}}^{1/r,0}_{1,B,I}=\partial^{CB}\widetilde{\mathcal{PM}}^{1/r,0}_{1,B,I}$, the collection $\{U_{\bm{G}}\}$ induces an open neighbourhood $\widetilde{U}\subset \widetilde{\mathcal{PM}}^{1/r,0}_{1,B,I}$ of $\Mbar^{1/r,0}_{1,B,I}\setminus \widetilde{\mathcal{PM}}^{1/r,0}_{1,B,I}$, such that the glued homotopy $H$ between glued $\bm{s}$ and glued $\bm{s}^{re}$ vanishes nowhere in $$\left(\left(\widetilde{U}\cap \widetilde{\mathcal{PM}}^{1/r,0}_{1,B,I}\right)\cup \partial \widetilde{\mathcal{PM}}^{1/r,0}_{1,B,I}\right)\times [0,1].$$
     Note that $\widetilde{\mathcal{PM}}^{1/r,0}_{1,B,I}\setminus \left(\widetilde{U}\cap \widetilde{\mathcal{PM}}^{1/r,0}_{1,B,I}\right)= \Mbar^{1/r,0}_{1,B,I}\setminus \widetilde{U}$ is compact, by \cite[Lemma 4.12]{BCT2}, $\bm{s}$ has the same zero count (with multiplicity) as $\bm{s}^{mod}$ in $\widetilde{\mathcal{PM}}^{1/r,0}_{1,B,I}$, \textit{i.e.},
     \begin{equation}\label{eq zero count coincide}
     \# Z(\bm{s})  = \# Z(\bm{s}^{mod}).     
     \end{equation}

    \noindent\textbf{Step 5.} 
        Finally we count the number of zeros $Z(\bm{s}^{mod})$ of the multisection $\bm{s}^{mod}:=\bm{s}^{re} \oplus \nontilde{s}^{trr}_{i_1}$ in $\oPMb^{1/r,0}_{1,B,I}$. According the Observation \ref{obs zero locus of modified canonical}, the zeros of $\bm{s}^{mod}$ are contained in the loci 
         $$
         Z_{sp}^{R,T}:=\bigsqcup_{\bm{G}\in \mathcal F^{R,T}_{i_1,sp}}\oPMb_{\bm{G}}\subset \oPMb_{1,B,I}^{1/r,0}
         $$
         for $R\subseteq I\setminus\{i_1\}$ and $T\subseteq B$,
          $$
         Z_{nsp}^{1}:=\bigsqcup_{\bm{G}\in \mathcal F^{1}_{i_1,nsp}}\oPMb_{\bm{G}}\subset \oPMb_{1,B,I}^{1/r,0},
         $$
         together with
         $$
         Z_{co}^{R}:=\bigsqcup_{\bm{G}\in \mathcal G^{R}_{i_1,co}}\oPMb_{\bm{G}}\subset \oPMb_{1,B,I}^{1/r,0}
         $$
        for $R\subseteq I\setminus\{i_1\}$, $R\ne \emptyset$.
         
         The transversely-decomposability guarantees that $\bm{s}^{re}$ is transverse when restricted to these loci, and does not vanish on the intersection of any two of them. 

         We first count the  number of zeros (with multiplicity) of the multisection $\bm{s}^{re}$ on the locus $Z_{sp}^{R,T}$ for  $R\subseteq I\setminus\{i_1\}$ and $T\subseteq B$. 
          Recall that we have a detach morphism 
         $$
         \operatorname{Detach}_{sp}^{R,T}\colon Z^{R,T}_{sp} \to \oPMb^{1/r,0}_{1,B\sqcup \{b^{R,T}_{sp}\} \setminus T,I\setminus (R\sqcup \{i_1\})} \times \oPMb^{1/r,0}_{0,T,R\sqcup \{a^{R,T}_{sp}\}}\times \Mbar_{v_{sp,c}^{R,T}}.
         $$
         We denote by $\pi_{o}$ the projection
         \begin{equation*}
              \oPMb^{1/r,0}_{1,B\sqcup \{b^{R,T}_{sp}\} \setminus T,I\setminus (R\sqcup \{i_1\})} \times \oPMb^{1/r,0}_{0,T,R\sqcup \{a^{R,T}_{sp}\}}\times \Mbar_{v_{nsp,c}} \to  \oPMb^{1/r,0}_{1,B\sqcup \{b^{R,T}_{sp}\} \setminus T,I\setminus (R\sqcup \{i_1\})} \times \oPMb^{1/r,0}_{0,T,R\sqcup \{a^{R,T}_{sp}\}}
         \end{equation*}
         to the first two factors, it is a morphism of degree-$1/r$.  

         On the other hand, note that for any $\bm{G}\in \mathcal F_{i_1,sp}^{R,T}$, if we use the notation of Step 2 and take $S=\left\{i_{sp}^{\Gamma_{i_1,\bm{G}}}\right\}$, by construction in \S \ref{sec def of tame and canonical} we have 
         $\mathcal T_{S}\Gamma_{i_1,\bm{G}}=\operatorname{for}_{S}(d_{e_{\bm{G}}}\Gamma_{i_1,\bm{G}})$, thus 
         $$
         \operatorname{For}_{\Gamma_{i_1,\bm{G}}\to \mathcal T_{S}\Gamma_{i_1,\bm{G}}}\colon \oPMb_{\Gamma_{i_1,\bm{G}}}\to \oPMb_{\mathcal T_{S}\Gamma_{i_1,\bm{G}}}=\oPMb_{\operatorname{for}_{S}(d_{e_{\bm{G}}}\Gamma_{i_1,\bm{G}})}
         $$ 
         is just the restriction of 
         $\operatorname{For}_{d_{e_{\bm{G}}}\Gamma_{i_1,\bm{G}}\to \operatorname{for}_{S}(d_{e_{\bm{G}}}\Gamma_{i_1,\bm{G}})}$ to $\oPMb_{\Gamma_{i_1,\bm{G}}}\subset \oPMb_{d_{e_{\bm{G}}}\Gamma_{i_1,\bm{G}}}$. These forgetful morphisms for all $\bm{G}\in \mathcal F_{i_1,sp}^{R,T}$ induce a morphism
         $$
         \operatorname{For}_{i_{sp}}^{R,T}\colon Z_{sp}^{R,T}\to \oPMb^{1/r,0}_{1,B\sqcup \{b^{R,T}_{sp}\} \setminus T,I\setminus (R\sqcup \{i_1\})} \times \oPMb^{1/r,0}_{0,T,R\sqcup \{a^{R,T}_{sp}\}},
         $$
         and we have 
         $$
         \operatorname{For}_{i_{sp}}^{R,T}=\pi_o\circ  \operatorname{Detach}_{sp}^{R,T}
         $$
         after identifying the internal marking $a^{R,T}_{sp}$ in $ \oPMb^{1/r,0}_{0,T,R\sqcup \{a^{R,T}_{sp}\}}$ as the internal marking $i_1$ in $ \oPMb^{1/r,0}_{0,T,R\sqcup \{i_1\}}$; it is a degree-$1/r$ morphism.

         By the transversely-decomposability, the restriction of $\bm{s}^{re}$ to ${Z}^1_{nsp}$ is of the form
         \begin{equation}\label{eq split of remaining section at sp locus}
         \bm{s}^{re}\vert_{Z_{sp}^{R,T}}= \operatorname{For}_{i_{sp}}^{R,T*} \left( \bm{s}\vert_{1,B\sqcup \{b^{R,T}_{sp}\} \setminus T,I\setminus (R\sqcup \{i_1\})} \boxplus \bm{s}\vert_{0,T,R\sqcup \{i_1\}} \right),
         \end{equation}
         where $\bm{s}\vert_{1,B\sqcup \{b^{R,T}_{sp}\} \setminus T,I\setminus (R\sqcup \{i_1\})}$ is a multisection of 
         $$
         E_{1,B\sqcup \{b^{R,T}_{sp}\} \setminus T,I\setminus (R\sqcup \{i_1\})} := \mathcal W_{1,B\sqcup \{b^{R,T}_{sp}\} \setminus T,I\setminus (R\sqcup \{i_1\})}\oplus \bigoplus_{i\in I\setminus (R\sqcup \{i_1\})} \mathbb L_i^{\oplus d_i}\to \oPMb^{1/r,0}_{1,B\sqcup \{b^{R,T}_{sp}\} \setminus T,I\setminus (R\sqcup \{i_1\})}
         $$
         and 
          $\bm{s}\vert_{0,T,R\sqcup \{i_1\}}$ is a multisection of 
         $$
         E_{0,T,R\sqcup \{i_1\}} := \mathcal W_{0,T,R\sqcup \{i_1\}} \oplus \bigoplus_{i \in R\sqcup \{i_1\}} \mathbb L_i^{\oplus d_i}\to \oPMb^{1/r,0}_{0,T,R\sqcup \{i_1\}}.
         $$
         Moreover, because $\bm{s}$ is $\gamma$-canonical, the multisection $\bm{s}\vert_{0,T,R\sqcup \{i_1\}}$ is canonical, and the multisection $\bm{s}\vert_{1,B\sqcup \{b^{R,T}_{sp}\} \setminus T,I\setminus (R\sqcup \{i_1\})}$ is $\gamma\vert_{I\setminus (R\sqcup \{i_1\})}$-canonical (see Remark \ref{rmk on restriction of tame perturbation on g=1 vertex}).

         Now we show that the relative orientation of  $E^{re}_{1,B,I}\vert_{Z_{sp}^{R,T}}\to Z_{sp}^{R,T}$ induced by the canonical relative orientation (see \eqref{eq orientation point insertion}) of $E_{1,B,I}\to \oPMb^{r,0}_{1,B,I}$ and $s^{trr}_{i_1}$ (away from $U^2_{nsp}$), is the same as the one induced by the canonical relative orientations of $E_{1,B\sqcup \{b^{R,T}_{sp}\} \setminus T,I\setminus (R\sqcup \{i_1\})} \to \oPMb^{1/r,0}_{1,B\sqcup \{b^{R,T}_{sp}\} \setminus T,I\setminus (R\sqcup \{i_1\})})$ and $E_{0,T,R\sqcup \{i_1\}} \to \oPMb^{1/r,0}_{0,T,R\sqcup \{i_1\}}$ via $\operatorname{For}_{i_{sp}}^{R,T*}$. In fact, we only need to check it for the relative orientation of the corresponding Witten bundles as all the relative cotangent lines are orientated by their complex structure. For $\bm{G}\in \mathcal F_{i_1,sp}^{R,T}$, the relative orientation of $\mathcal W_{d_{e_{\bm{G}}}\bm{G}}\to \oPMb_{d_{e_{\bm{G}}}\bm{G}}$ is (see \eqref{eq orientation point insertion}) 
         $$
         (-1)^{\lvert E(d_{e_{\bm{G}}}\bm{G})\rvert}\bboxtimes_{\Gamma\in V(d_{e_{\bm{G}}}\bm{G})} o_{\Gamma},
         $$
         by  Lemma \ref{lem zero locus of t between L} and \cite[Proposition 3.1 and Corollary 3.20]{TZ1}
         the relative orientation on $\oPMb_{\bm{G}}\subset  \oPMb_{d_{e_{\bm{G}}}\bm{G}}$ induced by $s^{trr}_{i_1,d_{e_{\bm{G}}}\bm{G}}$ is 
         \begin{equation}\label{eq induce rative orientation sp case}
         (-1)^{\lvert E(d_{e_{\bm{G}}}\bm{G})\rvert+1}o_{v_{\Gamma_{i_1,\bm{G},c}}}\boxtimes o_{v_{\Gamma_{i_1,\bm{G},o}}} \boxtimes\bboxtimes_{\Gamma\in V(d_{e_{\bm{G}}}\bm{G})\setminus \{d_{e_{\bm{G}}}\Gamma_{i_1,\bm{G}}\}} o_{\Gamma},
         \end{equation}
         where we have an additional factor $-1$ because the orientation induced by $s^{trr}_{i_1,\bm{G}}$ on $\oPMb_{\bm{G}}$ is opposite to the one induced by $\tilde{\mathfrak t}_{i_1,\mathbb L \to \mathbb L^*,\bm{G}}$. One the other hand, assuming $\operatorname{For}_{i_{sp}}^{R,T}(\oPMb_{\bm{G}})=\oPMb_{\bm{G}_1}\times \oPMb_{\bm{G}_0}$ for $\bm{G}_1\in \sGPI^{r,0}_{1,B\sqcup \{b^{R,T}_{sp}\} \setminus T,I\setminus (R\sqcup \{i_1\})}$ and $\bm{G}_0\in \sGPI^{r,0}_{0,T,R\sqcup \{i_1\}}$, then the orientation on  $\oPMb_{\bm{G}}$ induces by $o_{\bm{G}_1}$ and $o_{\bm{G}_0}$ is 
         \begin{equation}\label{eq orientation product after remove dash line}
             (-1)^{\lvert E(\bm{G}_1)\rvert}\bboxtimes_{\Gamma\in V(\bm{G}_1)} o_{\Gamma} \boxtimes (-1)^{\lvert E(\bm{G}_0)\rvert}\bboxtimes_{\Gamma\in V(\bm{G}_0)} o_{\Gamma}.
         \end{equation}
         The two relative orientations \eqref{eq induce rative orientation sp case} and \eqref{eq orientation product after remove dash line} coincide because we have
         $$V(\bm{G}_1)\sqcup V(\bm{G}_2)=\{v_{\Gamma_{i_1,\bm{G},o}}\}\sqcup V(d_{e_{\bm{G}}}\bm{G})\setminus \{d_{e_{\bm{G}}}\Gamma_{i_1,\bm{G}}\},$$  
         $\Mbar_{v_{\Gamma_{i_1,\bm{G},c}}}$ is dimension-zero with rank-zero Witten bundle, and, since we removed a dashed line,
         $$\lvert E(\bm{G}_1)\rvert+\lvert E(\bm{G}_0)\rvert=\lvert E(d_{e_{\bm{G}}}\bm{G})\rvert-1.$$
        Under this relative orientation, \eqref{eq split of remaining section at sp locus} implies (recall that $\operatorname{For}_{i_{sp}}^{R,T}$ is degree-$1/r$)
         $$\# Z\left( \bm{s}^{re}\vert_{Z_{sp}^{R,T}}\right)=\frac{1}{r}\#Z(\bm{s}\vert_{1,B\sqcup \{b^{R,T}_{sp}\} \setminus T,I\setminus (R\sqcup \{i_1\})})\cdot \#Z(\bm{s}\vert_{0,T,R\sqcup \{i_1\}}).$$
         Because the vanishing order of $s^{trr}_{i_1}$ along $Z_{sp}^{R,T}$ is $r$ (see Remark \ref{rmk on vanishing order r}), the count of zeros of $\bm{s}^{mod}=\bm{s}^{re}\oplus s^{trr}_{i_1}$ in the locus $Z_{sp}^{R,T}$ with multiplicity is
         \begin{equation}\label{eq trr sp terms}
             \left\langle \tau^{a_{i_1}}_{d_{i_1}}\prod_{i \in R} \tau^{a_i}_{d_i}\prod_{i\in T}\sigma^{b_i}\right\rangle^{\frac{1}{r},o}_0 \hspace{-0.1cm}\left\langle \prod_{i \in I\setminus(R\sqcup \{i_1\})} \tau^{a_i}_{d_i} \sigma^{r-2}\prod_{i\in B\setminus T}\sigma^{b_i}\right\rangle^{\frac{1}{r}, o,\gamma\vert_{I\setminus(R\sqcup \{i_1\})},\bm{s}\vert_{1,B\sqcup \{b^{R,T}_{sp}\} \setminus T,I\setminus (R\sqcup \{i_1\})}}_1.
         \end{equation}

         By the same argument, the count of zeros of $\bm{s}^{mod}=\bm{s}^{re}\oplus s^{trr}_{i_1}$ in the locus $Z_{nsp}^{1}$ with multiplicity is
         \begin{equation}\label{eq trr nsp term}
             \consta\left\langle \prod_{i\in I}\tau^{a_i}_{d_i}\sigma^{r-2}\prod_{i\in B}\sigma^{b_i}\right\rangle^{\frac{1}{r},o}_0,
         \end{equation}
         there is an additional coefficient $1/2$ because $Z_{nsp}^{1}$ is the zero locus of only one branch of $s^{trr}_{i_1}$, which has weight $1/2$.
         
         The count of zeros of $\bm{s}^{mod}=\bm{s}^{re}\oplus s^{trr}_{i_1}$ in the locus $Z_{co}^{R}$ is 
             \begin{equation}\label{eq trr co terms}
                \hspace{-0.1cm}\left\langle \tau_0^{a}\tau_{d_{i_1}}^{a_{i_1}}\prod_{i \in R}\tau_{d_i}^{a_i}\right\rangle^{\frac{1}{r},\text{ext}}_0
                \hspace{-0.1cm}\left\langle \tau_0^{r-2-a}\prod_{i\in I\setminus(R \sqcup\{i_1\})}\tau^{a_i}_{d_i}\prod_{i\in B}\sigma^{b_i}\right\rangle^{\frac{1}{r},o,\gamma\vert_{I\setminus(R \sqcup\{i_1\})},\bm{s}\vert_{1,B,I\sqcup\{a_{co}^R\}\setminus(R \sqcup\{i_1\})\}}}_1.
            \end{equation}
         In fact, similar to \eqref{eq detach sp} and \eqref{eq detach nsp}, we have a detach morphism
        \begin{equation}\label{eq detach co}
         \operatorname{Detach}_{co}^{R}\colon Z^{R}_{co} \to \Mbar_{v_{co,c}^R}\times \oPMb^{1/r,0}_{1,B,I\sqcup \{a_{co}^R\}\setminus (R\sqcup \{i_1\})},
         \end{equation}
         where $v_{co,c}^R$ is a closed single-vertex $r$-spin graph with tails in $R\sqcup \{i_1, n_{\bm{G}}\}$, $n_{\bm{G}}$ and $a_{co}^R$ are the internal markings corresponding to the two half-nodes of $e_{\bm{G}}$ after detaching. Then \eqref{eq trr co terms} can be obtained by a similar argument as \eqref{eq trr sp terms} and \eqref{eq trr nsp term}; a fully detailed argument genus-zero case (which is identical) can be found in \cite[Proof of Lemma 4.14]{BCT2}.

         We can compute $\#Z(\bm{s}^{mod})$ by summing \eqref{eq trr co terms}, \eqref{eq trr sp terms} and \eqref{eq trr nsp term} over all possible partitions, then we get the right-hand side of \eqref{eq trr g=1 ordered}. The left-hand side of \eqref{eq trr g=1 ordered} equals $\#Z(\bm{s})$ by definition, therefore \eqref{eq trr g=1 ordered} follows from \eqref{eq zero count coincide}.

\end{proof}

\begin{rmk}
    In the genus-zero case, Theorem \ref{thm TRR BCT} can be proven directly using a similar argument as above.
\end{rmk}

\subsection{Independent of choice for genus-one open $r$-spin correlators}\label{sec g=1 indepent of choice and trr}
\begin{thm}\label{thm indenpdent of order}
     For any two orders $\gamma_1$ and $\gamma_2$ on the set $\{i\in I\colon d_i\ge 1\}$, and two choices of $\gamma_1$-canonical multisection $\bm{s}^1$ and $\gamma_2$-canonical multisection $\bm{s}^2$, we have
  \begin{equation}\label{eq indep of choice g=1}
  \left\langle \prod_{i\in I}\tau^{a_i}_{d_i}\prod_{i\in B}\sigma^{b_i}\right\rangle^{\frac{1}{r},o,\gamma_1,\bm{s}^1}_1=\left\langle \prod_{i\in I}\tau^{a_i}_{d_i}\prod_{i\in B}\sigma^{b_i}\right\rangle^{\frac{1}{r},o,\gamma_2,\bm{s}^2}_1.
  \end{equation}
\end{thm}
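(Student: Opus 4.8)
The plan is to reduce the independence statement \eqref{eq indep of choice g=1} to the genus-one TRR \eqref{eq trr g=1 ordered} just established in Proposition \ref{prop trr g=1 ordered}, combined with a direct homotopy argument that handles the ``primary'' base cases (those with no $\psi$-class to shift, or where the relevant moduli space is small enough). First I would fix the total descendent degree $D=\sum_{i\in I}d_i$ and argue by induction on $D$. The base case $D=0$ is Remark \ref{rmk vanish without psi g=1}: both sides of \eqref{eq indep of choice g=1} vanish because $\rk\mathcal W_{1,B,I}<\dim\oPMb^{1/r,0}_{1,B,I}$, so there is nothing to prove. For $D\geq 1$, pick any $i_1\in I$ with $d_{i_1}\geq 1$; then $\tau^{a_{i_1}}_{d_{i_1}}=\tau^{a_{i_1}}_{(d_{i_1}-1)+1}$, so the correlator $\langle\prod\tau^{a_i}_{d_i}\prod\sigma^{b_i}\rangle^{1/r,o,\gamma,\bm s}_1$ is, for any order $\gamma$ having $i_1$ as its maximal element, exactly the left-hand side of \eqref{eq trr g=1 ordered} applied to the shifted insertion set. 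Thus Proposition \ref{prop trr g=1 ordered} expresses this correlator as a universal polynomial in: genus-zero open $r$-spin correlators (already order-independent by Proposition \ref{prop independent of choice g=0}), closed extended correlators, and strictly lower-$D$ genus-one correlators of the form $\langle\cdots\rangle^{1/r,o,\gamma\vert_{R_2},\bm s\vert_{\cdots}}_1$. By the inductive hypothesis those lower-$D$ genus-one correlators are themselves independent of their order and canonical multisection, so the right-hand side of \eqref{eq trr g=1 ordered} depends neither on the order $\gamma$ (as long as $i_1$ is maximal) nor on $\bm s^1$.

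The next step is to remove the constraint that $i_1$ be the maximal element of the order. Here I would run the usual argument: given any two orders $\gamma_1,\gamma_2$ on $I^{d>0}$, it suffices to show the correlator is unchanged under a transposition of two adjacent elements in the order, since the symmetric group is generated by adjacent transpositions. For this I would pick $i_1$ and $i_2$ to be the two top elements and apply the TRR \eqref{eq trr g=1 ordered} ``peeling off'' $\tau^{a_{i_1}}_{d_{i_1}}$ first, then separately apply it peeling off $\tau^{a_{i_2}}_{d_{i_2}}$ first; in both cases the resulting expression involves only genus-zero correlators, closed extended correlators, and lower-$D$ genus-one correlators that are order-independent by induction. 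A short combinatorial check — matching the partitions $R_1\sqcup R_2$ and $T_1\sqcup T_2$ appearing in the two expansions — shows the two totals coincide. Actually, the cleanest formulation avoids even this comparison: one shows that \emph{any} correlator with $D\geq 1$ equals a fixed universal polynomial $P$ (depending only on the combinatorial data $I,B,\{a_i\},\{d_i\}$) in genus-zero and closed-extended data, by strong induction on $D$ using \eqref{eq trr g=1 ordered} with \emph{any} chosen maximal $i_1$; since $P$ is manifestly independent of $\gamma$ and $\bm s$, so is the correlator. The subtlety is that \eqref{eq trr g=1 ordered} already \emph{presupposes} one knows the lower-$D$ genus-one correlators are well defined to even state the right-hand side; this is precisely why the induction on $D$ is the right scaffolding, and it is consistent because at each stage the lower-$D$ correlators have already been proven independent of choices.

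Finally, I would handle the independence-of-multisection for fixed $\gamma$ separately, since a priori this is logically weaker than what the TRR gives but is cleaner to prove directly and is needed to make the induction self-contained. For a fixed order $\gamma$ and two $\gamma$-canonical multisections $\bm s^1,\bm s^2$ of $E\to\oPMb^{1/r,0}_{1,B,I}$, the linear homotopy $(1-t)\bm s^1+t\bm s^2$ is again a family of sections; I would check that it is nowhere vanishing on the relevant boundary loci for all $t\in[0,1]$. Concretely: on $\partial^{CB}$ the $\mathcal W$-part of every $\gamma$-canonical section evaluates positively at the contracted boundary node, and positivity is a convex condition, so the convex combination evaluates positively too; near $\partial^{R}$, $\partial^{NS+}$, and $\mathcal Z^{dj}$ Observation \ref{obs non vanish of section near boundary} gives a uniform neighbourhood on which both $\bm s^1$ and $\bm s^2$ (hence, after shrinking, the whole homotopy) are nonzero — using that whenever a genus-one vertex is present the tame-perturbation transversality of Definition \ref{dfn canonical section}(\ref{item canonical tame g=1}) and Observation \ref{obs non vanish when exist g=1 component} force nonvanishing on all of $\oPMb_{\bm G}$, and whenever only genus-zero vertices appear the positivity constraint of Definition \ref{dfn canonical section}(\ref{item canonical positive}) does the job; on the type-AI/BI glued boundaries Definition \ref{dfn canonical section}(\ref{item canonical transversely decompose}) gives nonvanishing again stable under convex combination. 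Then $\overline{\mathcal{PM}}^{1/r,0}_{1,B,I}$ glued along $\sim_{PI}$ has only type-CB boundary, the complement of the chosen neighbourhood of the ``bad'' locus is compact, and \cite[Lemma 4.12]{BCT2} applies to conclude $\#Z(\bm s^1)=\#Z(\bm s^2)$.

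\textbf{Main obstacle.} The genuinely delicate point is not the TRR reduction — that machinery is in place — but ensuring the induction is \emph{well-founded}: the right-hand side of \eqref{eq trr g=1 ordered} features genus-one correlators on moduli spaces $\oPMb^{1/r,0}_{1,B',R_2\sqcup\{\cdots\}}$ whose descendent degree $\sum_{i\in R_2}d_i$ is strictly smaller than $D$ only because the shifted index $i_1$ has been removed, and one must verify that \emph{every} genus-one term on the right has total descendent degree $<D$ (the $\tau^{r-2-a}_0$ and $\tau^{r-2}_0$ insertions carry $d=0$, so this holds), that the $\bm s\vert_{\cdots}$ appearing are themselves $\gamma\vert$-canonical (this is built into the statement of Proposition \ref{prop trr g=1 ordered}), and that no term secretly has the same $D$. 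Getting this bookkeeping exactly right — and confirming that the dimension constraint \eqref{eq:dim_equal_rk} propagates correctly to each sub-correlator so that the inductive hypothesis genuinely applies — is where the real care is needed.
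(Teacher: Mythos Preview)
Your overall strategy --- induction on $D=\sum d_i$ using the ordered TRR of Proposition~\ref{prop trr g=1 ordered} --- matches the paper's proof exactly. The base case $D=0$ and the observation that the right-hand side of \eqref{eq trr g=1 ordered} has only lower-$D$ genus-one factors are handled correctly. However, two steps in your proposal have real gaps.

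\textbf{Comparing two maximal elements.} Your ``short combinatorial check'' and the alternative ``universal polynomial $P$'' argument are both inadequate. The polynomial $P$ you obtain from \eqref{eq trr g=1 ordered} manifestly \emph{depends on the choice of $i_1$}; showing that the $i_1$-expansion and the $i_2$-expansion agree is exactly the content of the step, and it is not a trivial bijection of partitions. The paper handles this concretely: after writing out the expansion \eqref{eq apply trr D>1 v1} with $i_1$ maximal and \eqref{eq apply trr D>1 v2} with $i_2$ maximal, it applies a \emph{second} layer of TRRs --- the closed-extended TRR \eqref{eq trr closed extended} with respect to $(i_2,\{i_1,i_I^a\})$, the genus-zero open TRR \eqref{eq: bct trr 2} with respect to $(i_2,i_1)$, and the genus-one TRR \eqref{eq trr g=1 unordered} with respect to $i_2$ (available by the inductive hypothesis at lower $D$) --- to every factor of \eqref{eq apply trr D>1 v1} that contains $i_2$. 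The resulting expression is manifestly symmetric in $i_1,i_2$, hence equals the analogous double expansion of \eqref{eq apply trr D>1 v2}. Note in particular that this argument requires the \emph{unordered} genus-one TRR at strictly lower $D$, so Theorems \ref{thm indenpdent of order} and \ref{thm trr g=1 unordered} must be proven \emph{simultaneously} by the same induction.

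\textbf{The separate homotopy argument is both unnecessary and incorrect as stated.} The TRR reduction already establishes independence of $\bm s$ for fixed maximal element, so you do not need a direct homotopy between $\bm s^1$ and $\bm s^2$. More importantly, your claim that the linear homotopy $(1-t)\bm s^1+t\bm s^2$ is nowhere-vanishing near $\mathcal Z^{dj}$ and on type-AI/BI boundaries does not follow: the nonvanishing of $\gamma$-canonical sections on components $\oPMb_{\bm G}$ with a genus-one vertex (Observation~\ref{obs non vanish when exist g=1 component}) and on type-AI/BI strata (item~\ref{item canonical transversely decompose} of Definition~\ref{dfn canonical section}) comes from \emph{transversality} of pulled-back sections over lower-dimensional moduli, not from a convex condition like positivity. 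Two individually nowhere-vanishing transverse sections can have a vanishing convex combination. (Indeed, the paper's own homotopy in Step~4 of the proof of Proposition~\ref{prop trr g=1 ordered} explicitly perturbs the linear interpolation for exactly this reason.) Drop this paragraph entirely; the TRR argument already covers what you need.
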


Theorem \ref{thm indenpdent of order} (which we will prove in the end of this subsection) indicates the genus-one $r$-spin correlators with respect to $\gamma$ and $\bm{s}$ are in fact independent of any choice of order $\gamma$ or $\gamma$-canonical multisection $\bm{s}$, therefore we can make the following definition.
\begin{definition}\label{def correlator unordered}
The \textit{genus-one $r$-spin invariants (or correlators)} are defined to be
    $$\left\langle \prod_{i\in I}\tau^{a_i}_{d_i}\prod_{i\in B}\sigma^{b_i}\right\rangle^{\frac{1}{r},o}_1:=\left\langle \prod_{i\in I}\tau^{a_i}_{d_i}\prod_{i\in B}\sigma^{b_i}\right\rangle^{\frac{1}{r},o,\gamma,\bm{s}}_1
  $$
for any order $\gamma$ on the set $\{i\in I\colon d_i\ge 1\}$ and and choice of $\gamma$-canonical multisections $\bm{s}$.
  \end{definition}
  \begin{rmk}
    Since every boundary twist $b_i$ is $r-2$ in this paper, we can simply write $\sigma$ in stead of $\sigma^{b_i}$. Following the convention in \cite{BCT2}, (and by an abuse of notation,) the open correlators $\left\langle \prod_{i\in I}\tau^{a_i}_{d_i}\prod_{i\in B}\sigma^{b_i}\right\rangle^{\frac{1}{r},o}_g$ is also be written as $\left\langle \prod_{i=1}^{\lvert I \rvert}\tau^{a_i}_{d_i}\sigma^{\lvert B \rvert}\right\rangle^{\frac{1}{r},o}_g$ in \S \ref{sec intro} for simplicity.
\end{rmk}

As a corollary of Proposition \ref{prop trr g=1 ordered} and Theorem \ref{thm indenpdent of order}, we have the following Topological Recursion Relation for $g=1$ $r$-spin theory.
\begin{thm}[Topological Recursion Relation for $g=1$ $r$-spin theory]\label{thm trr g=1 unordered}
For any $i_1\in I$ we have a TRR with respect to $i_1$:
\begin{equation}\label{eq trr g=1 unordered}
\begin{split}
&\left\langle \tau_{d_{i_1}+1}^{a_{i_1}}\prod_{i\in I\setminus \{i_1\}}\tau^{a_i}_{d_i}\prod_{i\in B}\sigma^{b_i}\right\rangle^{\frac{1}{r},o}_1\hspace{-0.2cm}\\
=&\sum_{\substack{R_1 \sqcup R_2 = I\setminus\{i_1\}\\ -1\le a \le r-2}}\hspace{-0.1cm}\left\langle \tau_0^{a}\tau_{d_{i_1}}^{a_{i_1}}\prod_{i \in R_1}\tau_{d_i}^{a_i}\right\rangle^{\frac{1}{r},\text{ext}}_0
\hspace{-0.1cm}\left\langle \tau_0^{r-2-a}\prod_{i\in R_2}\tau^{a_i}_{d_i}\prod_{i\in B}\sigma^{b_i}\right\rangle^{\frac{1}{r},o}_1\\
&+\hspace{-0.1cm}\sum_{\substack{R_1 \sqcup R_2 =  I\setminus\{i_1\} \\ T_1 \sqcup T_2 =  B}} \hspace{-0.1cm} \left\langle \tau^{a_{i_1}}_{d_{i_1}}\prod_{i \in R_1} \tau^{a_i}_{d_i}\prod_{i\in T_1}\sigma^{b_i}\right\rangle^{\frac{1}{r},o}_0 \hspace{-0.1cm}\left\langle \prod_{i \in R_2} \tau^{a_i}_{d_i} \sigma^{r-2}\prod_{i\in T_2}\sigma^{b_i}\right\rangle^{\frac{1}{r}, o}_1\\
&+\consta\left\langle \prod_{i\in I}\tau^{a_i}_{d_i}\sigma^{r-2}\prod_{i\in B}\sigma^{b_i}\right\rangle^{\frac{1}{r},o}_0.
    \end{split}
\end{equation}
\end{thm}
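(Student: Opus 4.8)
The statement to be proven, Theorem~\ref{thm trr g=1 unordered}, is an immediate formal consequence of Proposition~\ref{prop trr g=1 ordered} and Theorem~\ref{thm indenpdent of order}, so the plan is simply to assemble these two inputs. First I would fix an arbitrary index $i_1\in I$; note that in Proposition~\ref{prop trr g=1 ordered} the distinguished marking was required to be the $\gamma$-maximal element of $I^{d>0}$, but this is no loss of generality: for any $i_1$ with $d_{i_1}\ge 0$ (after replacing $d_{i_1}$ by $d_{i_1}+1$ in the left-hand side, so $i_1\in I^{d>0}$ for the enlarged descendent datum) one may choose the order $\gamma$ on $I^{d>0}$ so that $i_1=\gamma(I)$ is maximal. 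Then Proposition~\ref{prop trr g=1 ordered} applied to this $\gamma$ and any $\gamma$-canonical multisection $\bm{s}$ gives \eqref{eq trr g=1 ordered}.

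Next I would invoke Theorem~\ref{thm indenpdent of order}: every correlator appearing in \eqref{eq trr g=1 ordered}, on both sides, carries superscripts recording a choice of order and of canonical multisection, but Theorem~\ref{thm indenpdent of order} asserts that all such numbers are independent of those choices. Concretely, the left-hand side $\langle \tau_{d_{i_1}+1}^{a_{i_1}}\prod_{i\ne i_1}\tau^{a_i}_{d_i}\sigma^{|B|}\rangle^{\frac{1}{r},o,\gamma,\bm{s}}_1$ equals $\langle \tau_{d_{i_1}+1}^{a_{i_1}}\prod_{i\ne i_1}\tau^{a_i}_{d_i}\sigma^{|B|}\rangle^{\frac{1}{r},o}_1$ by Definition~\ref{def correlator unordered}; the genus-one correlators on the right-hand side, which appear with superscripts $\gamma\vert_{R_2}$ and $\bm{s}\vert_{1,B',R_2\sqcup\{\cdot\}}$, likewise collapse to the choice-free notation $\langle\cdots\rangle^{\frac{1}{r},o}_1$; and the genus-zero open and closed-extended factors already carry no such decoration. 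Substituting these equalities into \eqref{eq trr g=1 ordered} yields exactly \eqref{eq trr g=1 unordered}, noting also that the dimension constraint hypothesis on the left-hand side of \eqref{eq trr g=1 ordered} (that it satisfies \eqref{eq dim_equal_rk}) is precisely the hypothesis under which the TRR is asserted — if it fails, all terms vanish by definition and the identity is trivial.

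\textbf{Main obstacle.} There is essentially no obstacle internal to this proof: it is a one-line deduction once Proposition~\ref{prop trr g=1 ordered} and Theorem~\ref{thm indenpdent of order} are in hand. The only point requiring a moment of care is the bookkeeping of the superscripts: one must check that the restricted orders $\gamma\vert_{R_2}$ and the restricted canonical multisections $\bm{s}\vert_{1,B',I'}$ appearing in \eqref{eq trr g=1 ordered} are themselves legitimate data of the type to which Theorem~\ref{thm indenpdent of order} applies (i.e.\ that $\bm{s}\vert_{1,B',I'}$ is genuinely a $\gamma\vert_{I'}$-canonical multisection of the relevant bundle over $\oPMb^{1/r,0}_{1,B',I'}$), which is exactly how it was constructed in the statement of Proposition~\ref{prop trr g=1 ordered}. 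Thus the real mathematical content lies entirely upstream, in the proof of Theorem~\ref{thm indenpdent of order} (deferred to the preceding subsection) and in the five-step argument establishing Proposition~\ref{prop trr g=1 ordered}; the present theorem is their clean corollary.
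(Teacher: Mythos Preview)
Your deduction is correct and is exactly the approach the paper announces: immediately before stating Theorem~\ref{thm trr g=1 unordered} the paper says it is ``a corollary of Proposition~\ref{prop trr g=1 ordered} and Theorem~\ref{thm indenpdent of order}'', and your argument (choose $\gamma$ with $i_1$ maximal, apply the ordered TRR, then drop all superscripts via independence) is precisely that corollary.

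One organisational point is worth flagging. You write that the proof of Theorem~\ref{thm indenpdent of order} is ``deferred to the preceding subsection'' and treat it as an independent black box. In the paper it is not: Theorems~\ref{thm indenpdent of order} and~\ref{thm trr g=1 unordered} are proven \emph{simultaneously} by a joint induction on $D=\sum_i d_i$, because the proof of independence at level $D$ invokes the unordered TRR \eqref{eq trr g=1 unordered} (with respect to the \emph{other} index $i_2=\gamma_2(I)$) on the genus-one factors appearing on the right-hand side of \eqref{eq trr g=1 ordered}, and those factors have strictly smaller $D$. So the logical flow is: at each level $D$, first derive \eqref{eq trr g=1 unordered} from Proposition~\ref{prop trr g=1 ordered} plus independence at level $D$ (your argument), and in turn establish independence at level $D$ using \eqref{eq trr g=1 unordered} at levels $<D$. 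Your proof is the easy half of this mutual induction; just be aware that the input Theorem~\ref{thm indenpdent of order} is not available in isolation.
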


\begin{proof}[Proof of Theorem \ref{thm indenpdent of order} and Theorem \ref{thm trr g=1 unordered}]
    We prove the \eqref{eq indep of choice g=1}, and hence \eqref{eq trr g=1 unordered}, by an induction on $D=\sum_{i\in I} d_i$.

    The case $D=0$ is trivially true: according to Remark \ref{rmk vanish without psi g=1} the correlators in this case are always zero.

    In the case $D=1$, there is only one element $i_1\in I$ with $d_{i_1}=1>0$. Note that in this case $I^{d>0}=\{i_1\}$ is a single-element set, so the choice of the order $\gamma$ is unique. We apply \eqref{eq trr g=1 ordered}  to the correlator $\left\langle \tau_{1}^{a_{i_1}}\prod_{i\in I\setminus \{i_1\}}\tau^{a_i}_{0}\prod_{i\in B}\sigma^{b_i}\right\rangle^{\frac{1}{r},o,\gamma,\bm{s}}_1$ and express it as 
    \begin{equation}\label{eq apply trr D=1}
        \begin{split}
    &\sum_{\substack{R_1 \sqcup R_2 = I\setminus\{i_1\}\\ -1\le a \le r-2}}\hspace{-0.1cm}\left\langle \tau_0^{a}\tau_{0}^{a_{i_1}}\prod_{i \in R_1}\tau_{0}^{a_i}\right\rangle^{\frac{1}{r},\text{ext}}_0
    \hspace{-0.1cm}\left\langle \tau_0^{r-2-a}\prod_{i\in R_2}\tau^{a_i}_{0}\prod_{i\in B}\sigma^{b_i}\right\rangle^{\frac{1}{r},o,\gamma\vert_{R_2},\bm{s}\vert_{R_2\sqcup\{i^{r-2-a}_I\},B}}_1\\
    &+\hspace{-0.1cm}\sum_{\substack{R_1 \sqcup R_2 =  I\setminus\{i_1\} \\ T_1 \sqcup T_2 =  B}} \hspace{-0.1cm} \left\langle \tau^{a_{i_1}}_{0}\prod_{i \in R_1} \tau^{a_i}_{0}\prod_{i\in T_1}\sigma^{b_i}\right\rangle^{\frac{1}{r},o}_0 \hspace{-0.1cm}\left\langle \prod_{i \in R_2} \tau^{a_i}_{0} \sigma^{r-2}\prod_{i\in T_2}\sigma^{b_i}\right\rangle^{\frac{1}{r}, o,\gamma\vert_{R_2},\bm{s}\vert_{R_2,B\sqcup\{i^{r-2}_B\}}}_1\\
    &+\consta\left\langle \prod_{i\in I}\tau^{a_i}_{0}\sigma^{r-2}\prod_{i\in B}\sigma^{b_i}\right\rangle^{\frac{1}{r},o}_0.
    \end{split}
    \end{equation}
    By Remark \ref{rmk vanish without psi g=1} all the genus-one factors in \eqref{eq apply trr D=1} vanishes, the only term left is 
    $$
    \consta\left\langle \prod_{i\in I}\tau^{a_i}_{0}\sigma^{r-2}\prod_{i\in B}\sigma^{b_i}\right\rangle^{\frac{1}{r},o}_0
    $$
    which is independent of the choice of $\bm{s}$.

    Assuming we have proven \eqref{eq indep of choice g=1} and  \eqref{eq trr g=1 unordered} in in all the cases where $\sum_i d_i\le D_0$ for a integer $D_0\ge 1$,  we now prove \eqref{eq indep of choice g=1} and  \eqref{eq trr g=1 unordered} in the case $\sum_i d_i= D_0+1$. We take $i_1=\gamma_1(I)$ and use \eqref{eq trr g=1 ordered}  to the correlator 
    $\left\langle \tau_{d_{i_1}}^{a_{i_1}}\prod_{i\in I\setminus \{i_1\}}\tau^{a_i}_{d_i}\prod_{i\in B}\sigma^{b_i}\right\rangle^{\frac{1}{r},o,\gamma_1,\bm{s}^1}_1$ on the left-hand side of \eqref{eq indep of choice g=1}  as
    \begin{equation}\label{eq apply trr D>1}
        \begin{split}
    &\sum_{\substack{R_1 \sqcup R_2 = I\setminus\{i_1\}\\ -1\le a \le r-2}}\hspace{-0.1cm}\left\langle \tau_0^{a}\tau_{d_{i_1}-1}^{a_{i_1}}\prod_{i \in R_1}\tau_{d_i}^{a_i}\right\rangle^{\frac{1}{r},\text{ext}}_0
    \hspace{-0.1cm}\left\langle \tau_0^{r-2-a}\prod_{i\in R_2}\tau^{a_i}_{d_i}\prod_{i\in B}\sigma^{b_i}\right\rangle^{\frac{1}{r},o,\gamma_1\vert_{R_2},\bm{s}^1\vert_{R_2\sqcup\{i^{r-2-a}_I\},B}}_1\\
    &+\hspace{-0.1cm}\sum_{\substack{R_1 \sqcup R_2 =  I\setminus\{i_1\} \\ T_1 \sqcup T_2 =  B}} \hspace{-0.1cm} \left\langle \tau^{a_{i_1}}_{d_{i_1}-1}\prod_{i \in R_1} \tau^{a_i}_{d_i}\prod_{i\in T_1}\sigma^{b_i}\right\rangle^{\frac{1}{r},o}_0 \hspace{-0.1cm}\left\langle \prod_{i \in R_2} \tau^{a_i}_{d_i} \sigma^{r-2}\prod_{i\in T_2}\sigma^{b_i}\right\rangle^{\frac{1}{r}, o,\gamma_1\vert_{R_2},\bm{s}^1\vert_{R_2,B\sqcup\{i^{r-2}_B\}}}_1\\
    &+\consta\left\langle \tau_{d_{i_1}-1}^{a_1}\prod_{i\in I\setminus\{i_1\}}\tau^{a_i}_{d_i}\sigma^{r-2}\prod_{i\in B}\sigma^{b_i}\right\rangle^{\frac{1}{r},o}_0.
    \end{split}
    \end{equation}
    Note that every genus-one factor in \eqref{eq apply trr D>1} is independent of any choice of the order or the canonical multisection by inductive hypothesis as $\sum_{i\in R_2}d_i \le D_0$, therefore we can rewrite \eqref{eq apply trr D>1} as 
    \begin{equation}\label{eq apply trr D>1 v1}
        \begin{split}
    &\sum_{\substack{R_1 \sqcup R_2 = I\setminus\{i_1\}\\ -1\le a \le r-2}}\hspace{-0.1cm}\left\langle \tau_0^{a}\tau_{d_{i_1}-1}^{a_{i_1}}\prod_{i \in R_1}\tau_{d_i}^{a_i}\right\rangle^{\frac{1}{r},\text{ext}}_0
    \hspace{-0.1cm}\left\langle \tau_0^{r-2-a}\prod_{i\in R_2}\tau^{a_i}_{d_i}\prod_{i\in B}\sigma^{b_i}\right\rangle^{\frac{1}{r},o}_1\\
    &+\hspace{-0.1cm}\sum_{\substack{R_1 \sqcup R_2 =  I\setminus\{i_1\} \\ T_1 \sqcup T_2 =  B}} \hspace{-0.1cm} \left\langle \tau^{a_{i_1}}_{d_{i_1}-1}\prod_{i \in R_1} \tau^{a_i}_{d_i}\prod_{i\in T_1}\sigma^{b_i}\right\rangle^{\frac{1}{r},o}_0 \hspace{-0.1cm}\left\langle \prod_{i \in R_2} \tau^{a_i}_{d_i} \sigma^{r-2}\prod_{i\in T_2}\sigma^{b_i}\right\rangle^{\frac{1}{r}, o}_1\\
    &+\consta\left\langle \tau_{d_{i_1}-1}^{a_1}\prod_{i\in I\setminus\{i_1\}}\tau^{a_i}_{d_i}\sigma^{r-2}\prod_{i\in B}\sigma^{b_i}\right\rangle^{\frac{1}{r},o}_0.
    \end{split}
    \end{equation} 
    This implies that the correlator 
    $\left\langle \tau_{d_{i_1}}^{a_{i_1}}\prod_{i\in I\setminus \{i_1\}}\tau^{a_i}_{d_i}\prod_{i\in B}\sigma^{b_i}\right\rangle^{\frac{1}{r},o,\gamma_1,\bm{s}^1}_1$ does not depend on the choice of $\bm{s}^1$. Note that for the moment it still depends on $\gamma_1$ since $i_1$ in \eqref{eq apply trr D>1 v1} is chosen as $i_1=\gamma_1(I)$.

    We can also take $i_2:=\gamma_2(I)$ and express the correlator 
    $\left\langle \tau_{d_{i_2}}^{a_{i_2}}\prod_{i\in I\setminus \{i_2\}}\tau^{a_i}_{d_i}\prod_{i\in B}\sigma^{b_i}\right\rangle^{\frac{1}{r},o,\gamma_2,\bm{s}^2}_1$ on the right-hand side of \eqref{eq indep of choice g=1} using \eqref{eq trr g=1 ordered} as
    \begin{equation}\label{eq apply trr D>1 v2}
        \begin{split}
    &\sum_{\substack{R_1 \sqcup R_2 = I\setminus\{i_2\}\\ -1\le a \le r-2}}\hspace{-0.1cm}\left\langle \tau_0^{a}\tau_{d_{i_2}-1}^{a_{i_2}}\prod_{i \in R_1}\tau_{d_i}^{a_i}\right\rangle^{\frac{1}{r},\text{ext}}_0
    \hspace{-0.1cm}\left\langle \tau_0^{r-2-a}\prod_{i\in R_2}\tau^{a_i}_{d_i}\prod_{i\in B}\sigma^{b_i}\right\rangle^{\frac{1}{r},o}_1\\
    &+\hspace{-0.1cm}\sum_{\substack{R_1 \sqcup R_2 =  I\setminus\{i_2\} \\ T_1 \sqcup T_2 =  B}} \hspace{-0.1cm} \left\langle \tau^{a_{i_2}}_{d_{i_2}-1}\prod_{i \in R_1} \tau^{a_i}_{d_i}\prod_{i\in T_1}\sigma^{b_i}\right\rangle^{\frac{1}{r},o}_0 \hspace{-0.1cm}\left\langle \prod_{i \in R_2} \tau^{a_i}_{d_i} \sigma^{r-2}\prod_{i\in T_2}\sigma^{b_i}\right\rangle^{\frac{1}{r}, o}_1\\
    &+\consta\left\langle \tau^{a_{i_2}}_{d_{i_2}-1}\prod_{i\in I\setminus \{i_2\}}\tau^{a_i}_{d_i}\sigma^{r-2}\prod_{i\in B}\sigma^{b_i}\right\rangle^{\frac{1}{r},o}_0,
    \end{split}
    \end{equation}
    which only depends on $i_2=\gamma_2(I)$. Therefore \eqref{eq indep of choice g=1} is proven in the case $\gamma_1(I)=\gamma_2(I)$.

    To conclude the proof, we need to show that \eqref{eq apply trr D>1 v1} coincides with \eqref{eq apply trr D>1 v2} even if $i_1=\gamma_1(I)\ne \gamma_2(I) = i_2$. This can be done by applying the Topological Recursion Relations \eqref{eq trr closed extended}, \eqref{eq: bct trr 2} and \eqref{eq trr g=1 unordered} to the closed extended, open genus-zero, and open genus-one correlators in  \eqref{eq apply trr D>1 v1} and \eqref{eq apply trr D>1 v2}, in the following way. 
    \begin{itemize}
        \item[-] For the closed extended correlators $\left\langle \tau_0^{a}\tau_{d_{i_1}-1}^{a_{i_1}}\prod_{i \in R_1}\tau_{d_i}^{a_i}\right\rangle^{1/r,\text{ext}}_0$ in \eqref{eq apply trr D>1 v1}, if $i_2\notin R_1$, we leave it unchanged; if $i_2\in R_1$, we apply \eqref{eq trr closed extended} with respect to $(i_2,\{i_1,i_I^a\})$, where $i_I^a$ is the internal tail corresponding to $\tau_0^a$ in the correlator.
        \item[-] For the genus-zero open correlators of the form $\left\langle \tau_{d_{i_1}-1}^{a_{i_1}}\prod_{i \in I'}\tau_{d_i}^{a_i} \prod_{i\in B'}\sigma^{b_i}\right\rangle^{1/r,o}_0$ in \eqref{eq apply trr D>1 v1}, if $i_2\notin I'$, we leave it unchanged; if $i_2\in I'$, we apply \eqref{eq: bct trr 2} with respect to $(i_2,i_1)$.
        \item[-] For the genus-one open correlators of the form $\left\langle \prod_{i \in I'}\tau_{d_i}^{a_i} \prod_{i\in B'}\sigma^{b_i}\right\rangle^{1/r,o}_1$ in \eqref{eq apply trr D>1 v1}, if $i_2\notin I'$, we leave it unchanged; if $i_2\in I'$, we apply \eqref{eq trr g=1 unordered} with respect to $i_2$. Note that \eqref{eq trr g=1 unordered} is already proven in this case by inductive hypothesis. 
    \end{itemize}

    After substituting these correlators with the results after applying TRRs, we get a new expression for \eqref{eq apply trr D>1 v1}. Note that in this expression $i_1$ and $i_2$ are symmetry.

    We can also apply TRRs to the correlators in \eqref{eq apply trr D>1 v2} in the same way after exchanging the roles of $i_1$ and $i_2$. In this way we get a new expression for \eqref{eq apply trr D>1 v2}, which is exactly the same as the new expression for \eqref{eq apply trr D>1 v1}. This proves \eqref{eq indep of choice g=1}.

\end{proof}

\subsection{Proof of Theorems \ref{thm:wave_func} and \ref{thm:open_string_dilaton}}
\begin{proof}[Proof of Theorem \ref{thm:wave_func}]
It was proven in \cite[\S 3.4]{BCT3} that the right hand side of \eqref{eq:bct_conj} satisfies the open $g=1$ TRR, Theorem~\ref{thm:trr_g1}. The TRR clearly fixed all coefficients of $\phi_1,$ hence Theorem~\ref{thm:wave_func} follows from Theorem~\ref{thm:trr_g1}.
\end{proof}
\begin{proof}[Proof of Theorem \ref{thm:open_string_dilaton}]
It was proven in \cite[\S 3.3]{BCT3} that the conjectural all genus potential given in Conjecture \ref{conj:main_conj_BCT} satisfies the Open String and Dilaton equations. Since we have proven this conjecture for $g=1,$ in Theorem \ref{thm:wave_func} it follows that these equations indeed hold for the open $r$-spin cylinder theory.
\end{proof}

\section{Proof of Proposition \ref{prop exist canonical section}}\label{sec construction of section}
In this section we construct a $\gamma$-canonical multisection (Definition \ref{dfn canonical section}) of
$$E=\mathcal W\oplus \bigoplus_{i\in I} \mathbb L_i^{\oplus d_i}\to \oPMb^{1/r,0}_{1,B,I}$$
for any order $\gamma$ on the set $I^{d>0}=\{i\in I\colon d_i\ge 1\}$. This will prove Proposition \ref{prop exist canonical section} in $g=1$ cases. Note that the $g=0$ cases are contained in Proposition \ref{prop compare with BCT in g=0}.

\subsection{Construction of $\gamma$-tame perturbations of TRR on $\oPMb^{1/r}_{1,B,I}$}\label{sec construction tame section}
 According to item \ref{item canonical tame g=1} in Definition \ref{dfn canonical section}, we need to construct a $\gamma\vert_{\hat{\mathcal B}\Gamma}$-tame perturbation of TRR for each genus-one $\Gamma\in V(\bm{G})$, $\bm{G}\in \sGPI_{1,B,I}^{r,0}$. In this subsection, we will construct such perturbations in the cases where $\bm{G}$ has only one vertex $\Gamma$ which is a smooth graded $r$-spin graph with internal and boundary markings $I$ and $B$. In other words, we will construct a multisection of $\mathcal W \oplus \bigoplus_{i\in I} \mathbb L_i^{\oplus d_i}\to \oPMb_{1,B,I}^{1/r}$ since each connected component of $\oPMb_{1,B,I}^{1/r}$ is just $\oPMb_{\Gamma}$ for one of such $\Gamma$.
 Note that in these cases we have $\hat{\mathcal B}\Gamma=\Gamma$ and $I^{d>0}\subseteq T^I(\hat{\mathcal B}\Gamma)$, thus we can write $\gamma\vert_{\hat{\mathcal B}\Gamma}$ as $\gamma$ for simplicity. 

A multisection of $\mathcal W \oplus \bigoplus_{i\in I} \mathbb L_i^{\oplus d_i}\to \oPMb_{1,B,I}^{1/r}$ is of the form
$$
\bm{s}^{\mathcal W}\oplus \bigoplus_{i\in I} \bigoplus_{j=1}^{d_i} s_i^j
$$
where $\bm{s}^{\mathcal W}$ is a multisection of $\mathcal W$ and each $s_i^j$ is a multisection of $\mathbb L_i$. 
We will first construct the $\mathcal W$-part $\bm{s}^{\mathcal W}$, then construct the $\mathbb L$-part $s_i^j$ one-by-one, in the order 
$$s_{\gamma(1)}^1, s_{\gamma(1)}^2, \dots, s_{\gamma(1)}^{d_{\gamma(1)}}, s_{\gamma(2)}^1, \dots, s_{\gamma(2)}^{d_{\gamma(2)}}, \dots, s_{\gamma(\lvert I^{d>0}\rvert)}^1, \dots, s_{\gamma(\lvert I^{d>0}\rvert)}^{d_{\gamma(\lvert I^{d>0}\rvert)}},$$
where we regard $\gamma$ as a bijection $\gamma \colon \{1,2,\dots, \lvert I^{d>0}\rvert\} \to I^{d>0}$.

\begin{nn}\label{notation collection of vertices}
    We denote by $G_{1,B,I}^*$ the set of graded $r$-spin graph $\Delta$ without  non-separating internal edge satisfying $\oPMb_\Delta\subseteq \oPMb^{1/r}_{1,B,I}$ and $\oPMb_\Delta\ne \emptyset$ (hence $\Delta$ has no positive boundary edges). 
    Notices that each genus-zero vertex of $\Delta \in G_{1,B,I}^*$  has a natural rooted or 2-rooted structure (see Remark \ref{rmk root when degenerate}), 
we denote by $\mathcal V_{1,B,I}^{root}$ the collection of vertices (as single-vertex graph) of all the graphs $\Delta\in G^*_{1,B,I}$ after forgetting a (possibly empty) subset of twist-zero illegal half-edges which are not roots.
We also denote by $\mathcal V_{1,B,I}$ the collection of vertices of all the graphs $\Delta\in G^*_{1,B,I}$ after forgetting the rooted or 2-rooted structure for open genus-zero vertices, and a arbitrary subset of twist-zero illegal half-edges.
\end{nn}

\subsubsection{Construction of the $\mathcal W$-part of $\gamma$-tame perturbations of TRR}\label{sec construction of tame W part}

We first construct the $\mathcal W$-part $\bm{s}^{\mathcal W}$ satisfying the following requirements.

\begin{enumerate}
    \item For each graph $\Delta\in G_{1,B,I}^*$, we require that $\bm{s}^{\mathcal W}_{\Delta}$, the restriction of $\bm{s}^{\mathcal W}$ to $\oPMb_\Delta$, is of the form
    \begin{equation}\label{eq docomposition of section of witten on central}
        \bm{s}^{\mathcal W}_{\Delta}= \bboxplus_{u\in V^O(\Delta)} \bm{s}_u^{\mathcal W}\Ass\bass_{u\in V^C(\Delta)}\bm{s}_u^{\mathcal W},
    \end{equation}
    where for $u\in V^C(\Delta)$, $\bm{s}_u^{\mathcal W}$ is a coherent multisection (see Section \ref{sec ass}) of $\mathcal W_u\to \overline{\mathcal R}_u$ such that $\overline{\bm{s}_u^{\mathcal W}}$ is transverse as a multisection of $\mathcal W_u\to \Mbar_u$; for $u\in V^O(\Delta)$, $\bm{s}_u^{\mathcal W}$ is a transverse multisection of $\mathcal W_u\to \oPMb_u$. Recall that, unless $u$ is closed and has an anchor of twist $-1$, we have $\overline{\mathcal R}_u=\Mbar_u$ and $\overline{\bm{s}_u^{\mathcal W}}=\bm{s}_u^{\mathcal W}$ is a transverse multisection of $\mathcal W_u\to \oPMb_u$.

    \item For $u\in V^O(\Delta)$ we denote by $\mathcal B u$ the (single-vertex) graph obtained by forgetting all the twist-zero illegal half-edges. The multisection is $\bm{s}_u^{\mathcal W}$ pulled-back from a multisection section $\bm{s}_{\mathcal B u}^{\mathcal W}$ of the bundle $\mathcal W_{\mathcal B u}\to \oPMb_{\mathcal B u}$ via the forgetful morphism $\oPMb_{u}\to \oPMb_{\mathcal B u}$.
    \item If $u\in V^O(\Delta)$ is genus-zero, we further require that $\bm{s}_{\mathcal B u}^{\mathcal W}$ is positive in the BCT sense.     
\end{enumerate}

 We will construct the multisection $\bm{s}^{\mathcal W}_{\mathcal B u}$ (hence $\bm{s}^{\mathcal W}_{ u}$) for all $u\in \mathcal V_{1,B,I}$ in a consistent way. We only do it when $\mathcal B u$ is stable since otherwise $\mathcal W_u$ is rank-zero.

We start from $\bm{s}^{\mathcal W}_{\mathcal B u}$ for (closed or open) genus-zero $u$. Notices such multisection are \textit{special canonical} in BCT sense, they are constructed inductively in \cite[\S 6.2]{BCT2}.

After constructing $\bm{s}^{\mathcal W}_{\mathcal B u}$ for genus-zero $u$, we now construct $\bm{s}^{\mathcal W}_{\mathcal B u}$ for genus-one open $u$. We induct on the dimension of $\oPMb_{\mathcal B u}$. Assuming we have constructed $\bm{s}^{\mathcal W}_{\mathcal B u}$ for all open genus-zero $u$ with $\dim \oPMb_{\mathcal B u}\le n-1$. When $\dim \oPMb_{\mathcal B u} = n$, for all $\Delta \in \partial^* \mathcal B u$, the restricted multisections $\bm{s}^{\mathcal W}_{\Delta}:=\bm{s}^{\mathcal W}_{\mathcal B u}\vert_{\oPMb_\Delta}$ are determined by \eqref{eq docomposition of section of witten on central}, and such restricted multisections are consistent in the sense they can be glued into a multisection on $\bigcup_{\Delta\in \partial^* \mathcal B u}\oPMb_\Delta \subset \oPMb_{\mathcal B u}$, thanks to the inductive procedure. We extend this glued multisection to a transverse multisection on entire $\oPMb_{\mathcal B u}$ using Lemma \ref{lem extend section from sub} to obtain $\bm{s}^{\mathcal W}_{\mathcal B u}$.

\begin{lem}\label{lem extend section from sub}
    Let $M$ be a smooth orbifold and $E\to M$ be a vector bundle. For smooth suborbifolds $S_1,S_2,\dots S_k\subset M$ which intersect transversally, assuming $s$ is a section of $E\to \bigcup_{i=1}^k S_i$ such that the restriction $s\vert_{\bigcap_{j\in J}S_j}$ is transverse for every subset $J\subseteq \{1,2,\dots,k\}$, then we can extend $s$ to a transverse section over the entire $M$. 
\end{lem}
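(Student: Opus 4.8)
The statement is the standard transversality-extension lemma adapted to a normal-crossing configuration of suborbifolds, so the plan is to reduce it to the classical Thom--Smale transversality theorem by induction on the number $k$ of suborbifolds, using the codimension bookkeeping that the transverse-intersection hypothesis provides. First I would set up the induction on $k$. The base case $k=1$ is exactly the classical statement: given a section $s$ of $E\to S_1$ which is transverse on $S_1$, extend it first to a section of $E\to M$ (using a partition of unity and the fact that $S_1$ is a smooth suborbifold, so a tubular-neighbourhood retraction exists), and then perturb this extension away from $S_1$ to make it transverse on all of $M$; the perturbation can be chosen supported in an arbitrarily small neighbourhood of $M\setminus S_1$, hence it does not disturb $s|_{S_1}$. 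Here one invokes transversality for orbifolds, which holds for multisections (or for sections when $M$ has no isotropy generically); since the paper works throughout with multisections and the ambient spaces are orbifolds-with-corners, I would phrase the extension in the multisection category exactly as in \cite[Appendix A]{BCT2}.

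For the inductive step, assume the lemma holds for any configuration of at most $k-1$ transversally-intersecting suborbifolds. Given $S_1,\dots,S_k$ and a section $s$ on $\bigcup_{i=1}^k S_i$ satisfying the hypothesis, I would first restrict attention to $S_k$: the traces $S_j\cap S_k$ for $j<k$ form a configuration of $k-1$ transversally-intersecting suborbifolds of $S_k$, and $s|_{S_k}$, together with the compatibility on the $\bigcap_{j\in J} S_j$, satisfies the inductive hypothesis inside $S_k$. Wait --- more carefully: $s$ is already defined on all of $S_k$ by assumption, and it is already transverse there. So the real content is to glue. I would proceed as follows. By the inductive hypothesis applied to $S_1,\dots,S_{k-1}$ inside $M$, the section $s|_{\bigcup_{i<k}S_i}$ extends to a transverse section $s'$ on all of $M$. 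This $s'$ need not agree with $s$ on $S_k$, so the next step is to interpolate: on a tubular neighbourhood $N_k$ of $S_k$ choose a retraction $\rho\colon N_k\to S_k$ and a bump function $\beta$ equal to $1$ near $S_k$ and supported in $N_k$; form $\tilde s := \beta\cdot \rho^*(s|_{S_k}) + (1-\beta)\cdot s'$. This agrees with $s$ on $S_k$ and with $s'$ outside $N_k$; on the overlap it may fail to be transverse, but one repairs this by a further perturbation supported in the region where $0<\beta<1$, i.e. away from $S_k$ and away from $M\setminus N_k$. The only subtlety is that this repair must preserve transversality on each $S_j\cap S_k$ for $j<k$ and on their further intersections; but there $\tilde s$ already equals $s$ (since $\beta\equiv 1$ on a neighbourhood of $S_k$, hence near $S_j\cap S_k$), which is transverse by hypothesis, so one simply chooses the repairing perturbation supported away from a neighbourhood of $S_k$ entirely.

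The step I expect to be the main (mild) obstacle is making the repeated cut-off-and-perturb arguments genuinely compatible with the \emph{normal-crossing} structure, i.e. ensuring that at each stage the perturbation used to restore transversality on the open stratum does not destroy transversality on the lower-dimensional strata that have already been handled. The clean way to organize this is a downward induction on the codimension of the strata of the stratified set $\bigcup_{\bigcap_{j\in J}S_j}$: first achieve transversality on $M\setminus\bigcup_i S_i$ (classical), then move inward one codimension at a time, at each step perturbing only in a neighbourhood of the relevant stratum that avoids all deeper strata, which is possible precisely because the $S_i$ meet transversally so the stratification is locally modelled on a coordinate subspace arrangement. Each individual perturbation statement is the standard parametric transversality theorem (Thom), applied to the multisection version; nothing beyond this is needed, and I would cite \cite[Appendix A]{BCT2} and the orbifold transversality discussion there rather than reproving it. The compactness of $M$ is not needed; local finiteness of the configuration suffices, which is automatic here since $M$ is an orbifold with corners and the $S_i$ are closed suborbifolds.
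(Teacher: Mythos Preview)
Your approach is substantially more elaborate than the paper's, and it also has a gap in the interpolation step. The paper's entire proof is two sentences: any smooth extension of $s$ to a sufficiently small neighbourhood of $\bigcup_i S_i$ is \emph{automatically} transverse, so one takes an arbitrary extension to all of $M$ and then perturbs only on the complement of that neighbourhood. The reason the automatic transversality holds is the observation you never use: if $p\in S_i$ is a zero of $s$, the hypothesis that $s|_{S_i}$ is transverse says $d_p(s|_{S_i})\colon T_pS_i\to E_p$ is already surjective; hence for \emph{any} extension $\tilde s$, the map $d_p\tilde s\colon T_pM\to E_p$ is surjective as well, and transversality being open then gives it on a neighbourhood of $p$. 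No induction on $k$, no interpolation, and no stratified bookkeeping is needed.

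As for the gap in your inductive scheme: the section $\tilde s=\beta\cdot\rho^*(s|_{S_k})+(1-\beta)\cdot s'$ does \emph{not} agree with $s$ on $S_j$ for $j<k$ in the region $S_j\cap N_k$ where $0<\beta<1$. There $s'=s$, but $\rho^*(s|_{S_k})$ has no reason to equal $s|_{S_j}$ away from $S_j\cap S_k$, so $\tilde s$ fails to extend the original $s$. Fixing this would require choosing the tubular neighbourhood $N_k$ and retraction $\rho$ compatibly with the normal-crossing structure (so that $\rho$ sends $S_j\cap N_k$ into $S_j\cap S_k$), which is possible but is exactly the sort of care your ``clean way'' paragraph defers. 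That stratified route can be made to work, but once you notice the key observation above it becomes unnecessary.
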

\begin{proof}
    Any extension of $s$ to a sufficient small neighbourhood of $\bigcup_{i=1}^k S_i$ is transverse. We then extend it to a global section on $M$, and perturb away from $\bigcup_{i=1}^k S_j$ to make it transverse.
\end{proof}

\subsubsection{Construction of the $\mathbb L$-part of $\gamma$-tame perturbations of TRR} \label{sec construction of tame L part}
Now we construct the multisections $s_i^j$ one-by-one. To simplify the notation, as in Definition \ref{def tame perturbation}, we refer to the multisections 
$$s_{\gamma(1)}^1, s_{\gamma(1)}^2, \dots, s_{\gamma(1)}^{d_{\gamma(1)}}, s_{\gamma(2)}^1, \dots, s_{\gamma(2)}^{d_{\gamma(2)}}, \dots, s_{\gamma(\lvert I^{d>0}\rvert)}^1, \dots, s_{\gamma(\lvert I^{d>0}\rvert)}^{d_{\gamma(\lvert I^{d>0}\rvert)}}$$
as 
$$t_1,t_2,\dots,t_{D},$$
where $D=\sum_{i\in I} d_i$, and we refer to $\bm{s}^{\mathcal W}$ as $t_0$. We also denote by 
$T_s:=\bigoplus_{j=0}^s t_j.$
We define a map $\nu \colon \{1,2,\dots, D\} \to  I^{d>0}$ in the way that $t_j$ is a multisection of $\mathbb L_{\nu(j)}$.

We construct multisections $t_j$ satisfying the following requirements, which are stronger than the requirements in Definition \ref{def tame perturbation}. 
\begin{enumerate}
    \item For each graph $\Delta \in G_{1,B,I}^*$ and $1\le j \le D$, we require that $t_{j,\Delta}$, the restriction of $t_j$ to $\oPMb_\Delta$, is of the form
    \begin{equation}\label{eq recursive of section of Li}
        t_{j,\Delta}=\pi^*_{\Delta \to u^{\Delta}_{\nu(j)}} t_{j,u^{\Delta}_{\nu(j)}},
    \end{equation}
    where $u^{\Delta}_{\nu(j)}$ is the unique vertex of $\Delta$ containing the internal marking $\nu(j)\in I$, 
    and $t_{j,u^{\Delta}_{\nu(j)}}$ is a multisection of $\mathbb L_{\nu(j)}\to \oPMb_{u^{\Delta}_{\nu(j)}}$. Note that if we formally set $t_{j,u}=0$ for vertices $u\in V(\Delta)$ not containing $\nu(j)$, as a section of a rank-zero bundle, then we can write 
     \begin{equation}
        t_{j,\Delta}=\bboxplus_{u\in V(\Delta)}t_{j,u}.
    \end{equation}

    \item 
    If $u^{\Delta}_{\nu(j)}$ is open, we further require that 
    \begin{equation}\label{eq section of Li pull back from modified base}
        t_{j,\nu(j)}=\operatorname{For}^*_{u^{\Delta}_{\nu(j)} \to \hat{\mathcal B}u^{\Delta}_{\nu(j)}} t_{j,\hat{\mathcal B}u^{\Delta}_{\nu(j)}}
    \end{equation}
     as multisections of $\mathbb L_{j,u^{\Delta}_{\nu(j)}} \cong \operatorname{For}^*_{u^{\Delta}_{\nu(j)} \to \hat{\mathcal B}u^{\Delta}_{\nu(j)}} \mathbb L_{j,\hat{\mathcal B}u^{\Delta}_{\nu(j)}}$ (see Remark \ref{rmk forget boundry point}),
    where $\hat{\mathcal B}u^{\Delta}_{\nu(j)}$ is the single-vertex graph obtained by forgetting all the twist-zero illegal boundary half-edges of $u^{\Delta}_{\nu(j)}$ except the root(s), and $\operatorname{For}_{u^{\Delta}_{\nu(j)} \to \hat{\mathcal B}u^{\Delta}_{\nu(j)}}\colon \oPMb_{u^{\Delta}_{\nu(j)}} \to \oPMb_{\hat{\mathcal B}u^{\Delta}_{\nu(j)}}$ is the corresponding forgetful morphism. 
    Notice that $\hat{\mathcal B}u^{\Delta}_{\nu(j)}$ is always stable because it has an internal marking $\nu(j)$, and if it is genus-zero it has at least a root.

    \item \label{item transverse of L together with W in construction}
    For all $\Delta \in G_{1,B,I}^*$, all $u\in V(\Delta)$ and $0\le j \le D$, the multisection 
        $${T_{j,u}}:=\overline{\bm{s}^{\mathcal W}_u} \oplus t_{1,u}\oplus t_{2,u}\oplus\dots \oplus t_{j,u}$$ 
        of 
        $$E_{j,u}:=\mathcal W_{u}\oplus \bigoplus_{i\in T^I(u)\cap \nu(\{1,2,\dots,j\})} \mathbb L_i\to \oPMb_u$$ is transverse to zero.
    
    \item \label{item construction tame K}
    For all open vertex $u\in V(\Delta)$ as above, if $\nu(j)\in T^I(u)$, there exists an open neighbourhood $K_{j,u}$  of the zero locus (see Observation \ref{obs vanishing loci of single trr}) $\left( \bigcup_{\Lambda\in \mathcal G_{\nu(j)}(u)}\Mbar_\Lambda\right)\cap \oPMb_u$ of $\mathfrak s_{\nu(j),u}$ in $\oPMb_u$, such that for any $p\in\oPMb_u\setminus K_{j,u}$, we have $t_{j,u}(p)=\mathfrak s_{\nu(j),u}(p)$, where $\mathfrak s_{\nu(j),u}$ is the TRR multisection of $\mathbb L_{\nu(j)}\to \oPMb_u$ (with respect to the rooted or 2-rooted structure of $u$ if $u$ is genus-zero) defined in \S \ref{sec def of trr sections}.

    \item \label{item lower bound rk>dim}
    If $\rk  E_{s,u}> \dim \oPMb_u$ for an open  vertex $u\in V(\Delta)$ and $\nu(s)\in T^I(u)$, then $T_{s-1,u}$ vanishes nowhere on $K_{s,u}\subseteq\oPMb_u$.

     \item \label{item lower bound rk>=dim}
    If $\rk  E_{s,u}= \dim \oPMb_u$ for an open genus-one vertex $u\in V(\Delta)$ and $\nu(s)\in T^I(u)$, then $T_{s-1,u}$ vanishes nowhere on $K_{s,u}\subseteq\oPMb_u$; as a consequence, $T_{s,u}=T_{j-1,u}\oplus t_{j,u}$ vanishes nowhere on $\oPMb_u$ in this case.
 \end{enumerate}
 \begin{rmk}\label{rmk on non vanishment of section}
     For any vertex $u\in \mathcal V^{root}_{1,B,I}$, if $\rk  E_{s,u}> \dim \oPMb_u$, then then $T_{s,u}$ also vanishes nowhere on $\oPMb_u$.
     This is a corollary of the transversality in item \ref{item transverse of L together with W in construction}.

Notices that since we set $t_0=T_0= \bm{s}^{\mathcal W}$, we have $T_{0,u}=\bm{s}^{\mathcal W}_u$ for genus-one open $u$, they already (trivially) satisfy 
item \ref{item lower bound rk>=dim} because for all open genus-one vertices $u$ we have $\rk \mathcal W_u< \dim \oPMb_u$ according to \eqref{eq rank of witten bundle g=1}.
 \end{rmk}

Assuming we have already constructed $t_0=\bm{s}^{\mathcal W}, t_1, \dots, t_{j-1}$ satisfying the requirements above, we now construct $t_j$. 

\noindent\textbf{Step 1.} We start from constructing $t_{j,u}$ for closed vertices $u\in \mathcal V^{root}_{1,B,I}$. If the marking $\nu(j)\in I$ is not attached to $u$, we formally take $t_{j,u}=0$ as a section the rank-zero bundle. If  $\nu(j)\in I\cap T^I(u)$, we construct $t_{j,u}$ by an induction on $\dim \Mbar_u$, such that:
\begin{itemize}
    \item For any $\Delta \in \partial u$, \eqref{eq recursive of section of Li} is satisfied.
    \item The multisection $\overline{T_{j,u}}:=\overline{\bm{s}^{\mathcal W}_u} \oplus t_{1,u}\oplus t_{2,u}\oplus\dots \oplus t_{j,u}$ is transverse to zero.
    \end{itemize}

For dimension-zero $\Mbar_u$, we take $t_{j,u}$ to be any non-zero multisection. 

Assuming we have constructed $t_{j,v}$ for all closed $v\in \mathcal V^{root}_{1,B,I}$ with $\dim \Mbar_v \le n-1$, we now construct a multisection $t_{j,u}$ for $u\in \mathcal V^{root}_{1,B,I}$ with $\dim \Mbar_u = n$. For any $\Delta \in \partial u$, the restriction $t_{j,\Delta}$ of $t_{j,u}$ on $\Mbar_{\Delta}$ is already determined by \eqref{eq recursive of section of Li} in a consistent way: for different $\Delta_1$ and $\Delta_2$, we have $t_{j,\Delta_1}\vert_{\Mbar_{\Delta_1}\cap \Mbar_{\Delta_2}}=t_{j,\Delta_2}\vert_{\Mbar_{\Delta_1}\cap \Mbar_{\Delta_2}}$ by construction. Therefore, the multisection $t_{j,u}$ is already defined on $\bigcup_{\Delta \in \partial u}\Mbar_\Delta \subset \Mbar_u$, then we  extend it to a transverse multisection $t_{j,u}$ on the entire $\Mbar_u$ using Lemma \ref{lem extend section from sub}.

\noindent\textbf{Step 2.} 
Now we construct $t_{j,u}$ and $K_{j,u}$ for open vertices $u\in \mathcal V^{root}_{1,B,I}$. Again, if the marking $\nu(j)\in I$ is not attached to $u$, we formally take $t_{j,u}=0$ as a section the rank-zero bundle, this includes all the cases where $\hat{\mathcal B}u$ is unstable. If  $\nu(j)\in I\cap T^I(u)$  (hence $\hat{\mathcal B}u$ is stable), we construct $t_{j,u}$ and $K_{j,u}$ such that:
\begin{itemize}
    \item For any $\Delta \in \partial^* u$, \eqref{eq recursive of section of Li} and \eqref{eq section of Li pull back from modified base} are satisfied.
    \item The multisection $\overline{T_{j,u}}:=\overline{\bm{s}^{\mathcal W}_u} \oplus t_{1,u}\oplus t_{2,u}\oplus\dots \oplus t_{j,u}$ is transverse to zero.
    \item The open neighbourhood $K_{j,u}$  of the zero locus $\left( \bigcup_{\Lambda\in \mathcal G_{\nu(j)}(u)}\Mbar_\Lambda\right)\cap \oPMb_u$ of $\mathfrak{s}_{\nu(j),u}$ in $\oPMb_u$ satisfies the following conditions.
    \begin{enumerate}[label=(\alph*)]
        \item \label{item psi non vanish away from zero locus} 
        For any $p\in\oPMb_u\setminus K_{j,u}$, we have $t_{j,u}(p)=\mathfrak{s}_{\nu(j),u}(p)$.

        \item $K_{j,u}=\operatorname{For}^{-1}_{u\to \hat{\mathcal B}u}(K_{j,\hat{\mathcal B}u})$.
        
        \item\label{item recursive perturbed area} 
        For any $\Delta \in \partial^* u$, if $\pi_{\Delta \to u^{\Delta}_{\nu(j)}}(p)\in K_{j,u^{\Delta}_{\nu(j)}}$\footnote{More precisely, by $\pi_{\Delta \to u^{\Delta}_{\nu(j)}}(p)\in K_{j,u^{\Delta}_{\nu(j)}}$ we mean $p$ is in the image of $K_{j,u^{\Delta}_{\nu(j)}}\times \prod_{v\in V(\Delta)\setminus \{u^{\Delta}_{\nu(j)}\}}\oPMb_v$ under the gluing morphism.}   for some $p\in \oPMb_{\Delta}$, then $p\in \oPMb_{\Delta}\cap K_{j,u} \subseteq \oPMb_u$, where $u^{\Delta}_{\nu(j)}$ is the vertex of $\Delta$ containing the marking $\nu(j)$, and we take $K_{j,u^{\Delta}_{\nu(j)}}=\Mbar_{u^{\Delta}_{\nu(j)}}$ if $u^{\Delta}_{\nu(j)}$ is a closed vertex.
        \item\label{item remaining part non vanish rk>dim} 
        If $\rk E_{j,u}> \dim \oPMb_u$, for any $p\in K_{j,u}$ we have $ T_{j-1,u}(p) \ne 0$.
        \item\label{item remaining part non vanish rk=dim} For open genus-one $u$, if $\rk E_{j,u}= \dim \oPMb_u$, for any $p\in K_{j,u}$ we also have $ T_{j-1,u}(p) \ne 0$. As a consequence, $T_{s,u}=T_{j-1,u}\oplus t_{j,u}$ vanishes nowhere on $\oPMb_u$.
    \end{enumerate}
    \end{itemize}

    We construct $t_{j,{\hat{\mathcal B}u}}$ and $K_{j,{\hat{\mathcal B}u}}$ (hence also $t_{j,u}$ and $K_{j,u}$) by an induction on $\dim \oPMb_{\hat{\mathcal B}u}$. For dimension-zero $\oPMb_{\hat{\mathcal B}u}$ (this only happens when $u$ is genus-zero), 
    we simply take $K_{j,u}=\emptyset$ and therefore $t_{j,\hat{\mathcal B}u}=\mathfrak{s}_{\nu(j),\hat{\mathcal B}u}$, $t_{j,u}=\mathfrak{s}_{\nu(j),u}$.

    Assuming we have constructed $t_{j,{\hat{\mathcal B}v}}$ and $K_{j,{\hat{\mathcal B}v}}$ for all open (and closed, constructed in Step 1) vertices $v\in \mathcal V^{root}_{1,B,I}$ with $\dim \Mbar_{\hat{\mathcal B}v} \le n-1$, we now construct  $t_{j,{\hat{\mathcal B}u}}$ and $K_{j,{\hat{\mathcal B}u}}$ for open $u\in \mathcal V^{root}_{1,B,I}$  with $\dim \Mbar_{\hat{\mathcal B}u} = n$.

    We first construct the set $K_{j,\hat{\mathcal B}u}$ (and hence $K_{j,u}$) satisfying items \ref{item recursive perturbed area}, \ref{item remaining part non vanish rk>dim}, and, in the case $\rk E_{j,{\hat{\mathcal B}u}}=\dim \oPMb_{\hat{\mathcal B}u}$ for genus-one $u$, item \ref{item remaining part non vanish rk=dim}.  $K_{j,\hat{\mathcal B}u}$ should contain the zero locus of $\mathfrak {s}_{\nu(j),\hat{\mathcal B}u}$ in $\oPMb_{\hat{\mathcal B}u}$, as well as some extra parts determined by item \ref{item recursive perturbed area}. We define the unions of such subspaces by
    $$\tilde{K}_{j,\hat{\mathcal B}u}:=\left( \bigcup_{\Lambda\in \mathcal G_{\nu(j)}({\hat{\mathcal B}u})}\Mbar_\Lambda\cap \oPMb_{\hat{\mathcal B}u}\right)\cup \bigcup_{\Delta \in \partial^* {\hat{\mathcal B}u}}\{p\in \oPMb_\Delta\subset \oPMb_{\hat{\mathcal B}u} \colon \pi_{\Delta \to u^{\Delta}_{\nu(j)}}(p)\in K_{j,u^{\Delta}_{\nu(j)}}\}.$$
    In the case ${\hat{\mathcal B}u}$ is genus-zero and $\rk E_{j,{\hat{\mathcal B}u}}\le \dim \oPMb_{\hat{\mathcal B}u}$, we can take $K_{j,u}$ to be any open neighbourhood of $\tilde{K}_{\hat{\mathcal B}u}$ in $\oPMb_{\hat{\mathcal B}u}$. In the case $\rk E_{j,{\hat{\mathcal B}u}}> \dim \oPMb_{\hat{\mathcal B}u}$ (or $\rk E_{j,{\hat{\mathcal B}u}}= \dim \oPMb_{\hat{\mathcal B}u}$ for genus-one open ${\hat{\mathcal B}u}$), we need to take $K_{j,\hat{\mathcal B}u}$ satisfying item \ref{item remaining part non vanish rk>dim} or \ref{item remaining part non vanish rk=dim}. This can be done due to the following lemma. 
    \begin{lem}
        In the case $\rk E_{j,u}> \dim \oPMb_u$ for any open $u$, or $\rk E_{j,u}= \dim \oPMb_u$ for genus-one open $u$, for any $p\in \tilde{K}_{j,u}$, we have 
        $T_{j-1,u}(p) \ne 0.$
    \end{lem}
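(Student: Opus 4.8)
The statement to be proved is that, in the ranges where $\rk E_{j,u} > \dim\oPMb_u$ for open $u$, or $\rk E_{j,u} = \dim\oPMb_u$ for genus-one open $u$, the partially-assembled multisection $T_{j-1,u}$ does not vanish at any point $p \in \tilde K_{j,u}$, where $\tilde K_{j,u}$ is the union of the zero locus of the TRR multisection $\mathfrak s_{\nu(j),u}$ together with the pullbacks of the previously-constructed $K_{j,\bullet}$ along boundary strata. The plan is to decompose according to which stratum of $\tilde K_{j,u}$ the point $p$ lies in, and in each case reduce the non-vanishing of $T_{j-1,u}(p)$ to a non-vanishing statement already guaranteed by the inductive construction on a lower-dimensional moduli space — either by transversality (Remark \ref{rmk on non vanishment of section}) on a closed vertex, or by item \ref{item lower bound rk>=dim}/\ref{item lower bound rk>dim} and the decomposition properties of the Witten bundle on the component carrying the marking $\nu(j)$.

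First I would treat the case $p \in \bigcup_{\Lambda \in \mathcal G_{\nu(j)}(u)}\Mbar_\Lambda \cap \oPMb_u$, i.e.\ $p$ lies on a boundary stratum $\Lambda$ where the marking $\nu(j)$ has bubbled off onto a closed genus-zero vertex $v_{\Lambda,c}$. Here the key dimension count is the one already spelled out in Observation \ref{obs zero locus of modified canonical}: writing $\Lambda = d_e\Gamma'$ with $\Gamma'$ the boundary graph, the open vertex $v_{\Lambda,o}$ (or the genus-one open vertex, in the genus-one subcase) satisfies $\rk E^{re}_{v_{\Lambda,o}} \ge \dim\oPMb_{v_{\Lambda,o}}$ strictly, because bubbling off a closed $\P^1$ with the single marking $\nu(j)$ decreases both rank and dimension by the same amount but the ``$-2$'' that comes from the two half-nodes of the separating edge only reduces the dimension. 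Concretely, $T_{j-1,u}$ restricted to $\oPMb_\Lambda$ decomposes (via the assembling operator and Proposition \ref{prop decomposition}) as a box-sum over $V(\Lambda)$, and the summand on the closed vertex $v_{\Lambda,c}$ is $\overline{\bm s^{\mathcal W}_{v_{\Lambda,c}}}$, which is a transverse multisection of a rank-zero Witten bundle on a zero-dimensional moduli point — hence nowhere vanishing — while the summand on $v_{\Lambda,o}$ is $T_{j-1,v_{\Lambda,o}}$, which is transverse to a bundle of strictly larger rank than the base dimension, hence nowhere vanishing by Remark \ref{rmk on non vanishment of section}. Since $\nu(j)$ is not a tail of $v_{\Lambda,o}$ here (it sits on $v_{\Lambda,c}$), $T_{j-1,v_{\Lambda,o}}$ equals the full $T_{j-1}$-data on that vertex, and $T_{j-1,u}(p) \ne 0$ follows from one nowhere-vanishing direct summand. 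The genus-one subcase is identical except the roles are played by the genus-one open vertex, where now $\rk E_{j-1,u}$ equaling $\dim$ is upgraded to a strict inequality for $T_{j-1,u}$ after removing the last $\mathbb L_{\nu(j)}$-summand and using that the genus-one vertex has $\rk \mathcal W < \dim$ to push the count.

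Second, I would treat $p \in \oPMb_\Delta$ for some $\Delta \in \partial^* \hat{\mathcal B}u$ with $\pi_{\Delta \to u^\Delta_{\nu(j)}}(p) \in K_{j, u^\Delta_{\nu(j)}}$. Again $T_{j-1,u}$ restricted to $\oPMb_\Delta$ box-decomposes over $V(\Delta)$; on every vertex $w$ other than $u^\Delta_{\nu(j)}$, either $\nu(j) \notin T^I(w)$ so the corresponding $\mathbb L$-summands are absent and $T_{j-1,w}$ is transverse of generic rank, or — more to the point — the relevant inequality for $w$ is inherited. On the distinguished vertex $u^\Delta_{\nu(j)}$, which has strictly smaller moduli dimension than $u$, the point $\pi(p)$ lies in $K_{j,u^\Delta_{\nu(j)}}$, and by the inductive hypothesis (items \ref{item lower bound rk>dim}, \ref{item lower bound rk>=dim} applied at the previous induction stage, or their closed-vertex analogue via transversality) we know $T_{j-1, u^\Delta_{\nu(j)}}$ is nowhere-vanishing on $K_{j,u^\Delta_{\nu(j)}}$ in precisely the relevant rank regime; the rank/dimension comparison $\rk E_{j,u} > \dim\oPMb_u$ (resp.\ $=$, genus-one) descends to the corresponding comparison on $u^\Delta_{\nu(j)}$ because detaching an edge changes rank and dimension compatibly — this is exactly the bookkeeping already done in Observation \ref{obs non vanish when exist g=1 component} and Observation \ref{obs zero locus of modified canonical}. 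Pulling back the non-vanishing along $\pi_{\Delta \to u^\Delta_{\nu(j)}}$ and box-summing gives $T_{j-1,u}(p) \ne 0$.

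The main obstacle I expect is \emph{bookkeeping the rank-versus-dimension inequalities coherently through the assembling operator}, rather than any single hard estimate. One has to verify that when the Witten bundle does not split as a genuine direct sum along a Ramond internal node (Proposition \ref{prop decomposition}(3)--(4), with the exact sequences and the assembling operator), the zero-set statement of Lemma \ref{lem zero of assembling} — namely that $s_o \Ass s_c$ vanishes at a point iff both $\overline{s_c}$ and $s_o$ vanish at the respective projections — still lets one read off non-vanishing of $T_{j-1,u}$ from non-vanishing of one assembled factor; this is where the coherence hypothesis on $\bm s^{\mathcal W}_{v}$ for closed vertices $v$ is used, and one must make sure the induced $\overline{\bm s^{\mathcal W}_v}$ is the transverse multisection posited in the construction. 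Once the decomposition is set up correctly the non-vanishing is immediate from transversality on the lower strata, so I would be careful to state the decomposition of $T_{j-1,u}|_{\oPMb_\Delta}$ explicitly (in the $\bboxplus/\Ass$ notation of Notation \ref{nn bad notation} and \S\ref{sec ass}) before invoking the inductive hypotheses, and to handle the genus-one ``$\rk = \dim$'' borderline separately, since there one has no slack and must use that the unique genus-one vertex always carries $\rk\mathcal W < \dim$ to convert the equality on $E_{j,u}$ into a strict inequality for the truncated bundle $E_{j-1,u}$.
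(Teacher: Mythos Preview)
Your overall strategy matches the paper's: split $\tilde K_{j,u}$ into the zero locus of $\mathfrak s_{\nu(j),u}$ and the pulled-back $K$-sets along $\partial^* u$, decompose $T_{j-1,u}$ over vertices, and invoke the inductive hypothesis or transversality on a lower-dimensional piece. However, two of your key claims are wrong as stated.

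First, in Case~1 you describe $v_{\Lambda,c}$ as ``a closed $\mathbb P^1$ with the single marking $\nu(j)$'' carrying a rank-zero Witten bundle on a zero-dimensional moduli point. This is not what $\mathcal G_{\nu(j)}(u)$ parametrizes (Observation~\ref{obs vanishing loci of single trr}): the closed vertex $v_{\Lambda,c}$ carries $\nu(j)$ together with an \emph{arbitrary} subset of the other internal tails, so in general $\dim\Mbar_{v_{\Lambda,c}} > 0$ and $\rk E_{j-1,v_{\Lambda,c}} > 0$. Consequently your assertion that $T_{j-1,v_{\Lambda,o}}$ is automatically over-determined (``strictly larger rank than the base dimension'') is unjustified.

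Second, in both cases you assert that the inequality $\rk E_{j,u} \ge \dim\oPMb_u$ ``descends'' to the distinguished vertex $u^\Delta_{\nu(j)}$. It need not. What the dimension count actually gives is
\[
\sum_{v\in V(\Lambda)} \rk E_{j-1,v} \;=\; \rk E_{j,u} - 2 \;\ge\; \dim\oPMb_u - 2 \;=\; \sum_{v\in V(\Lambda)} \dim\oPMb_v
\]
in Case~1 (and the analogous strict inequality in Case~2 coming from the lost edge(s)), so \emph{at least one} vertex $v$ satisfies $\rk E_{j-1,v} \ge \dim\oPMb_v$ (respectively $>$), but you cannot predict which. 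The paper's proof is a dichotomy: either the open (or genus-one) vertex has $\rk \ge \dim$ and one invokes the inductive hypothesis (items~\ref{item lower bound rk>dim}, \ref{item lower bound rk>=dim}) there, or some other vertex has $\rk > \dim$ and one invokes transversality there. In either branch a single non-vanishing summand forces $T_{j-1,u}(p)\ne 0$ via Lemma~\ref{lem zero of assembling}. Your argument as written only covers one branch of this dichotomy and would fail on examples where the rank surplus sits on the closed vertex (Case~1) or on a non-distinguished vertex (Case~2).
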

    \begin{proof}
        We focus on the case $\rk E_{j,u}= \dim \oPMb_u$ with genus-one open $u$. The argument for the case $\rk E_{j,u}> \dim \oPMb_u$ for all open $u$ is identical.

        If $p\in \Mbar_\Lambda\cap \oPMb_u$ for a graph $\Lambda\in \mathcal G_{\nu(j)}(u)$, then $\Lambda$ consists of an open genus-one vertex $v_{\Lambda,o}$ and a closed genus-zero vertex $v_{\Lambda,c}$. We have $$\dim \oPMb_{v_{\Lambda,o}}+\dim \Mbar_{v_{\Lambda,c}}=\dim \oPMb_u -2,$$ and 
        $$
            \rk E_{j-1,v_{\Lambda,o}}+ \rk E_{j-1,v_{\Lambda,c}}= \rk E_{j-1,u}= \rk E_{j,u}-2.
        $$
        Then $\rk E_{j,u}= \dim \oPMb_u$ implies we have either $$\rk E_{j-1,v_{\Lambda,o}}\ge \dim \oPMb_{v_{\Lambda,o}}$$ which means $T_{j-1,v_{\Lambda,o}}\left(\pi_{\Lambda\to v_{\Lambda,o}}(p)\right)\ne 0$ by inductive hypothesis (or transversality), or $$\rk E_{j-1,v_{\Lambda,c}}> \dim \oPMb_{v_{\Lambda,c}}$$
        which means $T_{j-1,v_{\Lambda,c}}\left(\pi_{\Lambda\to v_{\Lambda,c}}(p)\right)\ne 0$ by transversality. In any case we have $T_{j-1,u}\left(p\right)\ne 0$ by \eqref{eq docomposition of section of witten on central}, \eqref{eq recursive of section of Li} and Lemma \ref{lem zero of assembling}.

        If $p\in \oPMb_\Delta$ for some $\Delta \in \partial^* u$ and $\pi_{\Delta \to u^{\Delta}_{\nu(j)}}(p)\in K_{j,u^{\Delta}_{\nu(j)}}$, we have 
        $$
        \sum_{v\in \Delta} \dim \Mbar_v=\dim \oPMb_{u}-\lvert E^B(\Delta)\rvert-2 \lvert E^I(\Delta)\rvert<\dim \oPMb_{u},
        $$
        and
        $$
        \rk E_{j,u^{\Delta}_{\nu(j)}}+\sum_{v\in V(\Delta)\setminus\{u^{\Delta}_{\nu(j)}\}}\rk E_{j-1,v}= \rk E_{j,u}.
        $$
        Note that if $u^{\Delta}_{\nu(j)}$ is closed, then we are in the situation where $\Delta\in \mathcal G_{\nu(j)}(u)$, which is covered by the previous case. Therefore we can assume $u^{\Delta}_{\nu(j)}$ is open,         then $\rk E_{j,u}= \dim \oPMb_u$ implies we have either $$\rk E_{j,u^{\Delta}_{\nu(j)}}> \dim \oPMb_{u^{\Delta}_{\nu(j)}}$$ which means $T_{j-1,u^{\Delta}_{\nu(j)}}\left(\pi_{\Delta\to u^{\Delta}_{\nu(j)}}(p)\right)\ne 0$ by inductive hypothesis, or there exists $v\in V(\Delta)\setminus\{u^{\Delta}_{\nu(j)}\}$ such that
         $$\rk E_{j-1,v}> \dim \Mbar_{v}$$
        which means $T_{j-1,v}\left(\pi_{\Delta\to v}(p)\right)\ne 0$ by transversality. In any case we have $T_{j-1,u}\left(p\right)\ne 0$ by \eqref{eq docomposition of section of witten on central}, \eqref{eq recursive of section of Li} and Lemma \ref{lem zero of assembling}.
        \end{proof}
    Therefore, we can take $K_{j,\hat{\mathcal B}u}$ to be a small open neighbourhood of $\tilde{K}_{j,\hat{\mathcal B}u}$ where $T_{j-1,{\hat{\mathcal B}u}}(p)$ does not vanish.

    Now we construct the multisection $t_{j,{\hat{\mathcal B}u}}$ (and hence $t_{j,u}$). 
    On each $\oPMb_\Delta\subset \oPMb_{\hat{\mathcal B}u}$ for $\Delta\in \partial^*{\hat{\mathcal B}u}$, we take $t_{j,{\hat{\mathcal B}u}}$ according to \eqref{eq recursive of section of Li}. 
    On $\oPMb_{\hat{\mathcal B}u}\setminus K_{j,\hat{\mathcal B}u}$, we take  $t_{j,{\hat{\mathcal B}u}}$ to be $\mathfrak{s}_{\nu(j),\hat{\mathcal B}u}$. Such choices are consistent by the inductive construction of $t_{j,{\hat{\mathcal B}v}}$ and $K_{\hat{\mathcal B}v}$. 
    We need to extend such multisection to the entire $\oPMb_{\hat{\mathcal B}u}$ which is transverse to the zero locus of $T_{j-1,{\hat{\mathcal B}u}}$. 
    Notice that, by inductive construction, $t_{j,{\hat{\mathcal B}u}}$ is transverse to the zero locus of $T_{j-1,{\hat{\mathcal B}u}}$ when restricted to $\oPMb_\Delta\subset \oPMb_{\hat{\mathcal B}u}$, as well as when restricted to $\oPMb_{\hat{\mathcal B}u}\setminus K_{j,\hat{\mathcal B}u}$ (since $\mathfrak{s}_{\nu(j),\hat{\mathcal B}u}$ does not vanish outside $K_{j,\hat{\mathcal B}u}$). 
    Therefore, for any extension $\tilde{t}_{j,{\hat{\mathcal B}u}}$ of $t_{j,{\hat{\mathcal B}u}}$, there exists an open set $U$ containing  $\left(\oPMb_{\hat{\mathcal B}u}\setminus K_{j,\hat{\mathcal B}u}\right)\cup \bigcup_{\Delta\in  \partial^*{\hat{\mathcal B}u}}\oPMb_\Delta\subset \oPMb_{\hat{\mathcal B}u}$, such that $\tilde{t}_{j,{\hat{\mathcal B}u}}$ is transverse to the zero locus of $T_{j-1,{\hat{\mathcal B}u}}$ when restricted to $U$. 
    Then we can perturb $\tilde{t}_{j,{\hat{\mathcal B}u}}$ by a section supported in $\oPMb_{\hat{\mathcal B}u}\setminus U$ to make it transverse to the zero locus of $T_{j-1,{\hat{\mathcal B}u}}$ on the entire $\oPMb_{\hat{\mathcal B}u}$.

\subsection{Construction of a $\gamma$-canonical multisection on $\oPMb^{1/r,0}_{1,B,I}$} \label{sec extend canonical to global glued}

In this subsection, we construct a $\gamma$-canonical multisection of $E=\mathcal W \oplus \bigoplus_{i\in I} \mathbb L_i^{\oplus d_i}\to \oPMb^{1/r,0}_{1,B,I}$ using the $\gamma$-tame perturbations of TRR constructed in \S \ref{sec construction tame section}. 

Under the notation in Definition \ref{dfn canonical section}, in order to guarantee item \ref{item canonical forget g=1} and \ref{item canonical transversely decompose} in Definition \ref{dfn canonical section}, we need to construct a multisection $\tilde{\bm{s}}_{\hat{\mathcal B}\Gamma}$ of 
$$
E_{\hat{\mathcal B}\Gamma}=\mathcal W_{\hat{\mathcal B}\Gamma}\oplus \bigoplus_{i\in I\cap T^I(\hat{\mathcal B}\Gamma)} \operatorname{For}^*_{\hat{\mathcal B}\Gamma\to \mathcal B \Gamma}\mathbb L_{i,\mathcal B \Gamma}^{\oplus d_i}
$$
for each $\Gamma \in V(\bm{G})$, $\bm{G}\in \sGPI^{r,0}_{1,B,I}$ in a transversely-decomposable way.

\begin{rmk}\label{rmk consistent notation of base}
Note that every such (single-vertex) $r$-spin graph $\Gamma$ can be obtained from a (not necessary unique) graph $\mathcal P\Gamma\in \mathcal V_{1,B,I}^{root}$ (see Notation \ref{notation collection of vertices}) after turning every twist-zero illegal boundary tail of $\mathcal P\Gamma$ into an inserted twist-zero internal tail (\textit{i.e.,} point insertion procedure). The graph $\Gamma$ and $\mathcal P\Gamma$ coincide after forgetting these twist-zero  tails. 

Recall that (see \S \ref{sec uni cotangent line over glued moduli}) $\mathcal B \Gamma$ is the graph forgetting all inserted internal tails in $I^{PI}(\bm G)\cap T^I(\Gamma)$, and $\hat{\mathcal B} \Gamma$ is the graph forgetting all inserted internal tails in $I^{PI}_{sp,1}(\bm G)\cap T^I(\Gamma)$. Such notations are consistent with the notations in \S \ref{sec construction of tame W part} and \S \ref{sec construction of tame L part} (which forget the twist-zero illegal boundary tails) in the sense that all inserted internal tails in $I^{PI}(\bm G)$ come from twist-zero illegal tails, while $I^{PI}_{sp,1}(\bm G)\subseteq I^{PI}(\bm G)$ correspond to twist-zero illegal boundary tails which are not roots.

Moreover, for any $\Lambda\in \partial^* \Gamma$, and any $u\in V(\Lambda)$, we denote by $\hat{\mathcal B}\Lambda$ or $\hat{\mathcal B} u$ the graph obtained by forgetting all the internal tails in $I^{PI}_{sp,1}$ and all the twist-zero illegal boundary tails which are not roots (\textit{i.e.,} lies in $I^{PI}_{sp,1}$ after point insertion) from $\Lambda$ or $u$. Note that $\hat{\mathcal B} u$ is always stable if $T^I(u)\cap I\ne \emptyset$.
We also denote by $\mathcal B \Lambda$ or $\mathcal B u$ the graph obtained by forgetting all the internal tails in $I^{PI}$ and all the twist-zero illegal boundary tails from $\Lambda$ or $u$.

Note that we can always regard $\mathcal B u$ as element in $\mathcal V_{1,B,I}$. If $u$ has no internal roots (when $\Gamma$ is genus-one, we regard half-edges of $u$ as roots if they are either in $RT(\Gamma)$, or connect $u$ to a component containing tails in $RT(\Gamma)$), we can also regard  $\hat{\mathcal B} u$ as element in $\mathcal V_{1,B,I}^{root}$.
\end{rmk}

 We decompose $\tilde{\bm s}_{\hat{\mathcal B} \Gamma}$ into $\mathcal W$-part and $\mathbb L$-part as 
    $\tilde{\bm s}_{\hat{\mathcal B} \Gamma}=\tilde{\bm s}_{\hat{\mathcal B} \Gamma}^{\mathcal W}\oplus \tilde{\bm s}_{\hat{\mathcal B} \Gamma}^{\mathbb L}$. In the case $\mathcal B u$ is unstable, we don't need to construct $\tilde{\bm s}_{\hat{\mathcal B} \Gamma}^{\mathcal W}$ as $\mathcal W_{\hat{\mathcal B}u}$ is rank-zero. If  $\mathcal B u$ is stable, then we simply take the $\mathcal W$-part $\tilde{\bm s}_{\hat{\mathcal B} u}^{\mathcal W}$ to be
    $$
    \hat{\bm s}_{\hat{\mathcal B} u}^{\mathcal W}:= \operatorname{For}^*_{\hat{\mathcal B} u \to {\mathcal B} u} \bm{s}_{{\mathcal B} u}^{\mathcal W}
    $$
    as a multisection of the Witten bundle $\operatorname{For}^*_{\hat{\mathcal B} u \to {\mathcal B} u} \mathcal W_{{\mathcal B} u}\cong \mathcal W_{\hat{\mathcal B} u}\to \oPMb_{\hat{\mathcal B} u}$, where $\bm{s}_{{\mathcal B} u}^{\mathcal W}$ is the multisection of $\mathcal W_{\mathcal B u}\to \oPMb_{{\mathcal B} u}$ constructed in \S \ref{sec construction of tame W part} after regarding $\mathcal Bu$ as element in $\mathcal V_{1,B,I}$ as in Remark \ref{rmk consistent notation of base}. Such choices will guarantee item \ref{item canonical positive} in Definition \ref{dfn canonical section}.

    Then we need to construct the $\mathbb L$-part $\tilde{\bm s}_{\hat{\mathcal B} \Gamma}^{\mathbb L}$ for each $\Gamma$ such that $\hat{\mathcal B} \Gamma$ is stable (otherwise the bundle is rank-zero). In fact, we will construct a collection of multisection $\{\tilde{\bm{s}}^{\mathbb L}_{j,v}\}$ of 
    $$
     \operatorname{For}^*_{v\to \mathcal B v}\mathbb L_{\nu(j),\mathcal B v} \to \oPMb_{v}
    $$
    for each $v\in V(\mathcal T_{RT(\Gamma)\cap T^I(\Gamma)} \Delta)$, $ \Delta\in \partial^* \hat{\mathcal B} \Gamma$, $\Gamma \in V(\bm{G})$, $\bm{G}\in \sGPI^{r,0}_{1,B,I}$, $1\le j \le D=\sum_{i\in I} d_i$, $\nu(j)\in T^I(v)$, where $\mathcal T_{RT(\Delta\cap T^I(\Delta))} \Delta$ is the graph defined in \S \ref{sec def of tame and canonical} when taking $S=RT(\Gamma)\cap T^I(\Gamma)$,  $\mathcal B v$ is the graph forgetting all the tails in $T^I(v)\cap I^{PI}(\bm{G})$ and illegal twist-zero half-edges, and $\nu\colon \{1,2,\dots,D\}\to I^{d>0}$ is the same map in \S \ref{sec construction of tame L part}. By construction in \S \ref{sec def of tame and canonical} such $v$ is always stable. 

     Notice that each genus-zero vertex $u\in V(\hat{\mathcal B}\Gamma)$ has one or two half-edges as root(s): they are determined by the topological structure of the cylinder as in Remark \ref{rmk root when degenerate} when $\Gamma$ is genus-one, and determined by $RT(\Gamma)$ in a similar way when $\Gamma$ is genus-one. Moreover, there are also rooted or 2-rooted structures for $v\in V(\mathcal T_{S} \Delta)$: whenever a vertex with root(s) on it is contracted (see \S \ref{sec def of tame and canonical} for the construction of $\mathcal T_{S} \Delta$ via contracting unstable vertices), the nearby half-edges become new roots (after possible detaching).
    
    The collection $\{\tilde{\bm{s}}^{\mathbb L}_{j,v}\}$ will satisfy the following conditions.

    \begin{enumerate}
    \item For any $1\le s \le D$, the multisection 
    $$\bigoplus_{\substack{1\le j \le s\\ \nu(j)\in T^I(v)}}\tilde{\bm{s}}^{\mathbb L}_{j,v}$$ 
    is transverse to zero when restricted to the zero loci of $\bm{s}^{\mathcal W}_{\xi^{-1}(v)}$ in $\oPMb_{\xi^{-1}(v)}\cong \oPMb_v$ (see Remark \ref{rmk on contract after forget}) when $v$ is open, and is transverse to zero when restricted to the zero loci of $\overline{\bm{s}^{\mathcal W}_{\xi^{-1}(v)}}$ in $\Mbar_{ v}$ when $v$ is closed. This will guarantee the transversality requirement in item \ref{item canonical decompose} and \ref{item canonical transversely decompose} in Definition \ref{dfn canonical section}.
    
    \item If no internal tails of $v$ (except the anchor when $v$ is closed) are  internal roots,  we require that
    \begin{equation}\label{eq canonical L part in case when no internal root}
    \tilde{\bm s}^{\mathbb L}_{j,v}=  t_{j, \xi^{-1}(v)},
    \end{equation}
    for any $j$ such that $\nu(j)\in T^I(v)$ ,where $t_{j, \xi^{-1}(v)}$ is the multisections of $\mathbb L_{\nu(j),\xi^{-1}(v)}\to \oPMb_{\xi^{-1}(v)}\cong \oPMb_v$ constructed in \S \ref{sec construction of tame L part} after regarding $\xi^{-1}(v)$ as an element in $\mathcal V_{1,B,I}^{root}$ as in Remark \ref{rmk consistent notation of base}.
    Note that in this case $ \operatorname{For}^*_{v\to \mathcal B v}\mathbb L_{\nu(j),\mathcal B v}$ coincide with $ \mathbb L_{\nu(j), \xi^{-1}(v)}=\mathbb L_{\nu(j), v}$.
    This requirement will guarantee item \ref{item canonical tame g=1} in Definition \ref{dfn canonical section}.

      \item For any $\Lambda\in \partial^* v$, we denote by $S'=T^I(v)\cap RT(\Gamma)$ the set of original internal roots of $\Gamma$ on $v$ (not including the new roots from contracting), we require that  the restriction of the  
        $\tilde{\bm s}_{j,v}^{\mathbb L}$ to $\oPMb_{\Lambda}\subseteq \oPMb_{ v}$ satisfies
    \begin{equation}\label{eq decomp L part of canonical with roots}
    \tilde{\bm s}_{j,\Lambda}^{\mathbb L}:=\tilde{\bm s}_{j,v}^{\mathbb L}\vert_{\oPMb_{ \Lambda}}= \operatorname{For}^*_{\Lambda \to \mathcal T_{S'}\Lambda} 
     \pi^*_{\mathcal T_{S'}\Lambda \to w}   \tilde{\bm s}_{j,w}^{\mathbb L}
    \end{equation}
    as a multisection of 
    $$
     \operatorname{For}^*_{v\to \operatorname{for}_{S'}(v)}\mathbb L_{\nu(j),\operatorname{for}_{S'}(v)}\big\vert_{\oPMb_{\Lambda}} 
     \cong  
     \operatorname{For}^*_{\Lambda \to \mathcal T_{S'}\Lambda} 
     \pi^*_{\mathcal T_{S'}\Lambda \to w} \operatorname{For}^*_{w\to \mathcal B w}\mathbb L_{\nu(j),\mathcal B w} \to \oPMb_{ \Lambda},
    $$
    where $w\in V(\mathcal T_{S'}\Lambda)$ is the unique vertex of $\mathcal T_{S'}\Lambda$ such that $\nu(j)\in T^I(w)$.
    This will guarantee item \ref{item canonical glue} and \ref{item canonical transversely decompose} in Definition \ref{dfn canonical section}.
    \end{enumerate}

    Similar to the previous subsections, we construct $ \tilde{\bm s}^{\mathbb L}_{j,v}$ by an induction on $j$ and $\dim \oPMb_v$.

    Assuming we have constructed $ \tilde{\bm s}^{\mathbb L}_{s,w}$ for all $s<j$ and any $w$,  as well as  $s=j$ and all $w$  with $\dim \oPMb_w \le n$, satisfying \eqref{eq canonical L part in case when no internal root} and \eqref{eq decomp L part of canonical with roots}. Now we construct $ \tilde{\bm s}^{\mathbb L}_{j,v}$ for a $v$ with $\dim \oPMb_v = n+1$. Note that \eqref{eq decomp L part of canonical with roots} have already determined the restriction of  $ \tilde{\bm s}^{\mathbb L}_{j,v}$ to 
    $$\bigcup_{\Lambda\in \partial^* v}\oPMb_{\Lambda}\subset \oPMb_{v},$$ 
    we need to extend such multisection to the entire $\oPMb_{v}$. 

In the case where no internal tails of $v$ (except the anchor when $v$ is closed) are  internal roots, we are forced to take $ \tilde{\bm s}^{\mathbb L}_{j,v}$ according to \eqref{eq canonical L part in case when no internal root}.  Note that in this case,  for all $w\in V(\mathcal T_{S'}\Lambda)$ appearing in the right-hand side of \eqref{eq decomp L part of canonical with roots}, $w$ also has no internal roots (except the anchor when $w$ is closed), which means $ \tilde{\bm s}^{\mathbb L}_{j,w}$ is also determined by \eqref{eq canonical L part in case when no internal root}. Therefore   \eqref{eq canonical L part in case when no internal root} is compatible with \eqref{eq decomp L part of canonical with roots} in this case according to \eqref{eq recursive of section of Li} and \eqref{eq section of Li pull back from modified base} as $S'=\emptyset$. The transversality of $ \tilde{\bm s}^{\mathbb L}_{j,v}$ on the zero loci in this case is equivalent to the transversality of the multisections  $T_{j,v}$ or $\overline{T_{j,v}}$ in \S \ref{sec construction of tame L part}.

In the case $v$ has at least one internal root which is not the anchor, we denote by $Z\subseteq \oPMb_{v}$ the zero loci of $\tilde{\bm s}^{\mathcal W}_{v}$ or $\overline{\tilde{\bm s}^{\mathcal W}_{v}}$ direct summed with $\bigoplus_{\substack{1\le s \le j-1\\ \nu(s)\in T^I(v)}}\tilde{\bm{s}}^{\mathbb L}_{s,v}$,
we first extend the restriction of $ \tilde{\bm s}^{\mathbb L}_{j,v}$ to $Z \cap \bigcup_{\Lambda\in \partial^* v}\oPMb_{\Lambda}\subset \oPMb_{v}$ (which is transverse by inductive hypothesis and the fact that $\operatorname{For}_{\Lambda \to \mathcal T_{S'}\Lambda}$ is a submersion) to a transverse multisection over $Z$ using Lemma \ref{lem extend section from sub}, then further extend it to any smooth multisection over the entire $\oPMb_{v}$.

\bibliographystyle{abbrv}
\bibliography{OpenBiblio}

\end{document}